\definecolor{labelkey}{rgb}{0.6,0,0}
\def\R{{\mathbb R}}
\def\e{{\varepsilon}}
\def\r{{\rho}}
\def\dd{\mathrm{d}}
\def\p{{\prime}}
\def\supp{\,\mbox{supp}\,}
\def\+R{+_{_{ \!\! \R}}}
\def\bar{\overline}
\def\hat{\widehat}
\newtheorem{theorem}{Theorem}[section]
\newtheorem{corollary}{Corollary}[theorem]
\newtheorem{lemma}[theorem]{Lemma}
\newtheorem{proposition}[theorem]{Proposition}
\theoremstyle{definition}
\newtheorem{definition}{Definition}[section]
\theoremstyle{remark}
\newtheorem{remark}{Remark}[section]
\numberwithin{equation}{section}
\DeclareMathOperator*{\esssup}{ess\,sup}
\newcommand{\eqdef  }{\overset{\mbox{\tiny{def}}}{=}}
\providecommand{\abs}[1]{\left\lvert #1 \right\rvert}
\providecommand{\nm}[1]{\left\lVert #1 \right\rVert}
\providecommand{\br}[1]{\left\langle #1 \right\rangle}
\def\ud{\mathrm{d}}
\def\dt{\partial_t}
\def\p{\partial}
\def\ls{\lesssim}
\def\rt{\rightarrow}
\def\r{\mathbb{R}}
\def\no{\nonumber}
\def\ue{\mathrm{e}}
\def\ds{\displaystyle}
\def\pp{\mathcal{P}}
\def\zz{\mathcal{Z}_{\vartheta}}
\begin{document}

\title[Vlasov-Poisson-Landau with Specular Boundary]{The Vlasov-Poisson-Landau System with the Specular-Reflection Boundary Condition}

\author[H. Dong]{Hongjie Dong}
\address{Brown University}
\email{hongjie\_dong@brown.edu}

\author[Y. Guo]{Yan Guo}
\address{Brown University}
\email{yan\_guo@brown.edu}

\author[Z. Ouyang]{Zhimeng Ouyang}
\address{Institute for Pure and Applied Mathematics, University of California, Los Angeles}
\email{zhimeng\_ouyang@alumni.brown.edu}

%
%

\begin{abstract}
We consider the Vlasov-Poisson-Landau system, a classical
model for a dilute collisional plasma interacting through Coulombic
collisions and with its self-consistent electrostatic field.
We establish global stability and well-posedness near the Maxwellian equilibrium state with decay in time and some regularity results for small initial perturbations, in any general bounded domain (including a
torus as in a tokamak device), in the presence of specular reflection
boundary condition.
We provide a new improved $L^{2}\rightarrow L^{\infty }$ framework:
$L^{2}$ energy estimate combines only with $S_{ }^{p}$ estimate for the ultra-parabolic equation.

\bigskip
\noindent \textit{\textbf{Keywords:}}
Vlasov-Poisson-Landau system, Collisional plasma, Specular-reflection boundary condition, Ultraparabolic equation, Sobolev-Morrey embedding.
\end{abstract}

\maketitle


%
%
\setcounter{tocdepth}{1}

\begin{quote}
\tableofcontents
\end{quote}


%
%

\section{Introduction and Set-up} \label{Sec:Intro-Setup}

\subsection{The Vlasov-Poisson-Landau System}

In this paper, we are interested in the global well-posedness and stability of the Vlasov-Poisson-Landau (VPL) system in three dimensions, which is considered as a fundamental collisional plasma model.
In the absence of magnetic effects, the dynamics of dilute charged particles (e.g. electrons and ions) can be described by the Vlasov-Landau equations:
\begin{equation} \label{Eq:Vlasov-Landau-pm}
\begin{split}
& \partial_{t} F_+ + v\cdot \nabla_{\!x} F_+ + \frac{e_+}{m_+}\, \mathbf{E}\cdot \nabla_{\!v} F_+
\,=\, \mathcal{Q}\,[F_+,F_+] + \mathcal{Q}\,[F_-,F_+] \,,\\[3pt]
& \partial_{t} F_- + v\cdot \nabla_{\!x} F_- - \frac{e_-}{m_-}\, \mathbf{E}\cdot \nabla_{\!v} F_-
\,=\, \mathcal{Q}\,[F_+,F_-] + \mathcal{Q}\,[F_-,F_-] \,,\\[7pt]
& F_\pm(0,x,v) = F_{0,\pm}(x,v) . \\[5pt]
\end{split}
\end{equation}
Here the unknowns $F_\pm(t,x,v) \geq 0$ are the (real-valued) density distribution functions for ions ($+$) and electrons ($-$), respectively, at time $t\geq 0$, near the position $x=(x_1,x_2,x_3)\in\Omega\subset\R^3_x$, and having the velocity $v=(v_1,v_2,v_3)\in\R^3_v$, where $\Omega$ denotes a bounded domain in $\R^3$.
The function $\mathbf{E}(t,x)$\, is the self-consistent electrostatic field created by all plasma particles, which depends in a complex way on the distribution functions $F_\pm(t,x,v)$.
The constants $e_\pm$, $m_\pm$ represent the magnitude of the particles' electric charges and masses.

The classical VPL system models the dynamics of a collisional plasma interacting with its own electrostatic field as well as its grazing collisions. Such grazing collisions are given by the famous Landau (Fokker-Planck) collision operator, proposed by Landau in 1936\,\footnote{The constant $c_{12}=2\pi e_1^2 e_2^2 \ln\Lambda$, $\ln\Lambda=\ln\big(\frac{\lambda_D}{b_0}\big)$, where $\lambda_D=\big(\frac{T}{4\pi n_e e^2}\big)^{\!1/2}$ is the Debye shielding distance and $b_0=\frac{e^2}{3T}$\, a typical ``distance of closest approach'' for a thermal particle.}:
\begin{equation*}
\mathcal{Q}\,[F_1,F_2](v)
:= \,\frac{c_{12}}{m_1}\, \nabla_v\cdot\left\{\int_{\mathbb{R}^3}\Phi(v\!-\!v')\left[\frac{1}{m_1}F_1(v')\nabla_{\!v} F_2(v) - \frac{1}{m_2}F_2(v)\nabla_{\!v} F_1(v')\right]\dd v' \right\} ,
\end{equation*}
where the (generalized) Landau collision kernel $\Phi$ is defined as
\begin{equation} \label{Landau-kernel-general}
\Phi(v) := \left\{I_{3}-\frac{v}{|v|}\otimes\frac{v}{|v|}\right\}\cdot|v|^{\gamma+2},\quad \gamma\in[-d,1\,] .
\end{equation}
This is a non-negative symmetric $3\times 3$ matrix, and we will focus on the original physical case for Coulomb interaction corresponding to $\gamma=-d=-3$.

Since the Vlasov-Poisson equations are an approximation of the Vlasov-Maxwell equations in the non-relativistic zero-magnetic field limit, the self-consistent electric field $\mathbf{E}(t,x)$ is determined by a scalar electric potential $\phi(t,x)$\,:
\begin{equation} \label{E-field}
\mathbf{E} = -\nabla_{\!x}\phi ,
\end{equation}
and the electric potential $\phi$\, satisfies the Poisson equation for electrostatics, which is
\begin{equation} \label{Eq:Poisson}
-\Delta_x\phi = 4\pi\rho := 4\pi\int_{\R^3}\big[e_+ F_+ - e_- F_-\big]\dd v,
\end{equation}
with a Dirichlet or Neumann boundary condition (and some additional conditions to guarantee the solvability).
Here $\rho(t,x)$ stands for the electric charge density.

The coupled system \eqref{Eq:Vlasov-Landau-pm} with \eqref{E-field} and \eqref{Eq:Poisson} is called a Vlasov-Poisson-Landau (VPL) system.
For notational simplicity, from now on we will consider a model system (for single-species particles, e.g. ions (+)) with all constants normalized to be one (The analysis of the full two-species case is similar):
\begin{equation} \label{Eq:Vlasov-Landau-model}
\begin{split}
& \partial_{t} F + v\cdot \nabla_{\!x} F + \mathbf{E}\cdot \nabla_{\!v} F = \mathcal{Q}\,[F,F] ,\\[3pt]
& F(0,x,v) = F_0(x,v) ,
\end{split}
\end{equation}
where the distribution function $F(t,x,v) \geq 0$ is defined for $(t,x,v)\in [\,0,\infty)\times\Omega\times\R^3$,
and the Landau collision operator (for Coulomb interaction) takes the form
\begin{align*}
\mathcal{Q}\,[G,F](v)
:=& \;\nabla_v\cdot\left\{\int_{\mathbb{R}^3}\Phi(v\!-\!v')\left[G(v')\nabla_{\!v} F(v) - F(v)\nabla_{\!v} G(v')\right]\dd v' \right\} \\
=& \;\partial_i\int_{\mathbb{R}^3}\Phi^{ij}(v\!-\!v')\left[G(v')\partial_j F(v) - F(v)\partial_j G(v')\right]\dd v'
\end{align*}
with the Landau collision kernel ($\gamma=-3$)
\begin{equation} \label{Landau-kernel}
\Phi^{ij}(v) := \left\{\delta_{ij}-\frac{v_i v_j}{|v|^2}\right\}\cdot|v|^{-1}.
\end{equation}
The electrostatic field $\mathbf{E}(t,x) = -\nabla_{\!x}\phi(t,x)$, and the potential $\phi(t,x)$ satisfies the Poisson equation:
\begin{equation} \label{Eq:Poisson-model}
-\Delta_x\phi(t,x) = \rho(t,x) - \rho_0 := \int_{\R^3}F(t,x,v)\,\dd v -\rho_0 ,
\end{equation}
where the background density $\rho_0$ is a constant number (to be specified later).
The prescribed boundary condition for the Poisson equation is either of Dirichlet type (if we fix values of the potential at the boundary, for example,)
\begin{equation*} 
\phi(t,x) = 0 \quad\text{for}\;\, x\in\partial\Omega \;\text{ and }\, t\geq 0 ,
\end{equation*}
or of Neumann type (if we are given fixed normal component of the electric field across the surface)
\begin{equation*} 
\frac{\partial\phi(t,x)}{\partial n} = \nabla_{\!x}\phi(t,x)\cdot n_x = 0 \quad\text{for}\;\, x\in\partial\Omega \;\text{ and }\, t\geq 0 ,
\end{equation*}
which means that the electric field at the surface (of a perfect magnetic conductor) can only have a tangential component:
\begin{equation*}
\mathbf{E}_\perp =\, \mathbf{E}(t,x)\cdot n_x = 0 \quad\text{for}\;\, x\in\partial\Omega \;\text{ and }\, t\geq 0 .
\end{equation*}
For the Dirichlet boundary condition, no extra restriction is needed;
while solving Poisson equation for the potential $\phi$ with a Neumann boundary condition requires the neutral condition to ensure the existence:
\begin{equation*} 
\int_\Omega \left[\int_{\R^3}F(t,x,v)\,\dd v -\rho_0 \right]\!\dd x = 0 \quad\text{for all}\;\,t\geq 0 ,
\end{equation*}
which follows by setting $\rho_0 \!:= \frac{1}{|\Omega|} \iint_{\Omega\times\R^3}F_0(x,v)\,\dd v\dd x$ and the conservation law of mass (\ref{mass-conservation_F}).
Also a zero-mean condition
\begin{equation*} 
\int_\Omega \phi(t,x)\,\dd x = 0 \quad\text{for all}\;\,t\geq 0
\end{equation*}
guarantees the uniqueness of solutions.

\subsubsection{Maxwellian and Equations for the Perturbation}

Our goal in this article is to construct unique global solutions for the Vlasov-Poisson-Landau system \eqref{Eq:Vlasov-Landau-model} and \eqref{Eq:Poisson-model} near equilibrium state -- the (normalized) global Maxwellian:
\begin{equation*}
\mu(v) = e^{-|v|^2} .
\end{equation*}
Accordingly, we define the standard perturbation $f(t,x,v)$ to $\mu$ by
\begin{equation*}
F(t,x,v) = \mu(v) + \sqrt{\mu(v)}\,f(t,x,v) .
\end{equation*}
Letting $f(t,x,v)$ be our new unknown, the Vlasov-Poisson-Landau system for the perturbation now reads as follows (see the definitions of $L$ and $\Gamma$ in (\ref{L-opt}) - (\ref{Gamma-opt})):
\vspace{2pt}
\begin{equation} \label{Eq:Vlasov-Landau_f}
\begin{split}
& \partial_t f + v\cdot\nabla_{\!x}f + \mathbf{E}_f\cdot\nabla_{\!v}f + Lf = \Gamma[f,f] + \big\{\mathbf{E}_f\cdot v\big\}f + 2\big\{\mathbf{E}_f\cdot v\big\}\sqrt{\mu} ,\\[3pt]
& f(0,x,v) = f_0(x,v) ,\\[3pt]
\end{split}
\end{equation}
where $\mathbf{E}_f = -\nabla_{\!x}\phi_f$ is determined by
\begin{equation} \label{Eq:Poisson_f}
-\Delta_x\phi_f = \int_{\R^3} \sqrt{\mu}\,f\,\dd v
\end{equation}
with the Dirichlet BC $$\phi_f = 0 \quad\text{on}\;\, \partial\Omega$$
or the Neumann BC $$\frac{\partial\phi_f}{\partial n} = 0 \quad\text{on}\;\, \partial\Omega$$
with $\int_\Omega\phi_f \,\dd x = 0,\, \forall\,t\geq 0$,
and the neutral condition for solvability now becomes
\begin{equation*} 
\iint_{\Omega\times\R^3} f(t,x,v)\sqrt{\mu(v)} \,\dd v\dd x = 0 \quad\text{for all}\;\,t\geq 0 ,
\end{equation*}
which is automatically satisfied from the conservation of mass (\ref{mass-conservation_f}) under the assumption that the initial data $F_0$ has the same mass as the Maxwellian $\mu$, i.e.
\begin{equation*}
\int_{\R^3}\mu \,\dd v = \frac{1}{|\Omega|} \iint_{\Omega\times\R^3}\!F_0(x,v)\,\dd v\dd x = \rho_0 .
\end{equation*}

By expanding the bilinear collision operator $\mathcal{Q}\,[G,F] = \mathcal{Q}\,[\,\mu +\! \sqrt{\mu}g,\mu +\! \sqrt{\mu}f\,]$ and noting that $\mathcal{Q}\,[\mu,\mu]=0$, we can decompose it into a linear part and a nonlinear part:
\begin{align*}
\mathcal{Q}\,[\,\mu +\! \sqrt{\mu}g,\mu +\! \sqrt{\mu}f\,]
&\,=\, \mathcal{Q}\,[\mu,\mu] + \mathcal{Q}\,[\,\mu,\sqrt{\mu}f\,] + \mathcal{Q}\,[\sqrt{\mu}g,\mu\,] + \mathcal{Q}\,[\sqrt{\mu}g,\sqrt{\mu}f\,] \\
&\,=:\, \mu^{1/2} \big\{Af + Kg + \Gamma[g,f] \big\} .
\end{align*}
We define the linear operator
\begin{equation} \label{L-opt}
L := -A-K
\end{equation}
with the expressions
\begin{equation} \label{A-opt}
\begin{split}
Af := \mu^{\!-1/2} \mathcal{Q}\,[\,\mu,\mu^{1/2}f\,]
&= \mu^{\!-1/2}\,\partial_i\!\left\{\mu^{1/2}\sigma^{ij}[\partial_j f+v_j f\,]\right\} \\
&= \partial_i\left[\sigma^{ij}\partial_j f\right] - \sigma^{ij}v_i v_j f + \partial_i \sigma^i f \\
&= \nabla_{\!v}\cdot(\sigma\nabla_{\!v} f)+(\nabla_{\!v} -v)\cdot(\sigma v)f , \qquad
\end{split}
\end{equation}
\begin{equation} \label{K-opt}
\qquad\quad\quad\quad
Kf := \mu^{\!-1/2}\mathcal{Q}\,[\,\mu^{1/2}f,\mu\,]
= -\mu^{\!-1/2}\,\partial_i\!\left\{\mu\bigg[\Phi^{ij}\!\ast\!\left\{\mu^{1/2}\big[\partial_j f+v_j f\big]\right\}\bigg]\right\} ,
\end{equation}
and the nonlinear operator
\begin{equation} \label{Gamma-opt}
\begin{split}
\Gamma[g,f] := \mu^{\!-1/2}\mathcal{Q}\,[\,\mu^{1/2}\!g,\mu^{1/2}f\,]
=& \;\partial_i\left[\left\{\Phi^{ij}\!\ast\!\big[\mu^{1/2}g\big]\right\}\partial_j f \right] - \left\{\Phi^{ij}\!\ast\!\big[v_i\mu^{1/2}g\big]\right\}\partial_j f \\
&-\partial_i\left[\left\{\Phi^{ij}\!\ast\!\big[\mu^{1/2}\partial_j g\big]\right\}f\, \right] + \left\{\Phi^{ij}\!\ast\!\big[v_i\mu^{1/2}\partial_j g\big]\right\}f .
\end{split}
\end{equation}
Here we introduce notation for the diffusion matrix (collision frequency) that captures the dissipation of the Landau collision kernel:
\begin{align}
& \sigma_{u}^{ij}(v) := \left[\Phi^{ij}\!\ast u\right]\!(v) = \int_{\R^3}\Phi^{ij}(v-v')\,u(v')\,\dd v' , \label{diffusion-matrix-sigma-1} \\
& \sigma_u(v) := \big(\sigma_{u}^{ij}(v)\big),
\qquad \sigma := \sigma_{\mu} = \Phi\ast\mu , \label{diffusion-matrix-sigma-2} \\[5pt]
& \sigma^i := \sigma^{ij}v_j = \left[\Phi^{ij}\!\ast\mu\right]v_j = \Phi^{ij}\!\ast[v_j\mu] . \label{diffusion-matrix-sigma-3}
\end{align}
The last equality is due to the property that for any fixed $i$,
\begin{equation} \label{Landau-kernel-property}
\sum_j \Phi^{ij}(w)\, w_j \equiv 0,\quad w:=v-v'
\end{equation}
using the specific structure (\ref{Landau-kernel}) of the Landau kernel $\Phi$.

\subsubsection{Domain and Boundary Condition}

Throughout this paper, our domain $\Omega := \{x\in\R^3: \zeta(x)<0\,\}$ is connected and bounded with $\zeta(x)$ being a smooth function. We also assume that $\nabla\zeta(x) \neq 0$ on the boundary $\partial\Omega = \{x:\zeta(x)=0\}$.
The outward unit normal vector $n_x$ at $x\in\partial\Omega$ is given by
\begin{equation*}
n_x := \frac{\nabla \zeta(x)}{|\nabla\zeta(x)|}, \\[3pt]
\end{equation*}
and it can be extended smoothly near the boundary $\partial\Omega$.
Additionally, we say that $\Omega$ has a rotational symmetry if there exists a point $x_0$ and a vector $\omega$ such that
\begin{equation} \label{rotational-symmetry}
\big[(x-x_0)\times\omega\big] \cdot n_x = 0
\end{equation}
for all $x\in\partial\Omega$.
It is worth noting that our results and methods apply to a general class of ``non-convex'' and ``non-simply connected'' domains.

We denote also by $\gamma := \partial\Omega\times\R^3$ the phase boundary,
and then we split $\gamma$ into an outgoing boundary $\gamma_+$, an incoming boundary $\gamma_-$, and the singular boundary $\gamma_0$ (i.e., the ``grazing set'') for grazing velocities, respectively defined as
\begin{equation*}
\begin{split}
\gamma_+ &:= \{(x,v)\in\gamma:\, n_x\cdot v>0\,\} ,\\
\gamma_- &:= \{(x,v)\in\gamma:\, n_x\cdot v<0\,\} ,\\
\gamma_0 &:= \{(x,v)\in\gamma:\, n_x\cdot v=0\,\} .
\end{split}
\end{equation*}

Our problem concerns the \textit{specular-reflection boundary condition}, which in terms of the density distribution function $F$, can be formulated as
\begin{equation} \label{Specular-BC_F}
F(t,x,v)|_{\gamma_-} = F(t,x,\mathcal{R}_x v)|_{\gamma_+}
\end{equation}
for all $t\geq 0$,
where for $(x,v)\in\gamma$, $$\mathcal{R}_x v := v-2(n_x\!\cdot v)n_x.$$
This is equivalent to a specular-reflection boundary condition satisfied by the perturbation $f$:
\begin{equation} \label{Specular-BC_f}
f(t,x,v)|_{\gamma_-} = f(t,x,\mathcal{R}_x v)|_{\gamma_+} ,\quad \forall\; t\geq 0 .
\end{equation}

\subsubsection{Conservation Laws and H-Theorem}

The conservation laws will play crucial role in the energy estimates.

The conservation of electric charge is described by the continuity equation
\begin{equation} \label{continuity-eq}
\partial_t \rho \,+\, \nabla_{\!x}\cdot \mathbf{j} \,=\, 0 ,
\end{equation}
where $\rho(t,x) \,:=\, \int_{\R^3} F\,\dd v$ is the charge density,
and $\mathbf{j}(t,x) \,:=\, \int_{\R^3} v\, F\,\dd v$ is called the current density.
In terms of the perturbation $f$, we may take
\begin{align*}
\rho[f](t,x) &\,:= \int_{\R^3}\! \sqrt{\mu}\,f\,\dd v , \\
\mathbf{j}[f](t,x) &\,:= \int_{\R^3}\! v\sqrt{\mu}\,f\,\dd v .
\end{align*}

Under the specular-reflection boundary condition (\ref{Specular-BC_F}), It is well-known that both total mass and total energy are conserved for the Vlasov-Poisson-Landau system \eqref{Eq:Vlasov-Landau-model}:
\begin{align}
& \frac{\dd}{\dd t} \iint_{\Omega\times\R^3} F(t) \,\dd v\dd x \,\equiv 0 , \label{mass-conservation_F} \\
& \frac{\dd}{\dd t} \left\{\iint_{\Omega\times\R^3} |v|^2 F(t) \dd v\dd x + \int_{\Omega}|\mathbf{E}(t)|^2 \dd x \right\} \equiv 0.\notag
\end{align}
Assuming (without loss of generality, by some rescaling) that initially $F_0$ has the same mass and total energy as the Maxwellian $\mu$, we can then rewrite the mass-energy conservation laws for the system \eqref{Eq:Vlasov-Landau_f} - \eqref{Eq:Poisson_f} with (\ref{Specular-BC_f}), in terms of the perturbation $f$:
\begin{align}
& \iint_{\Omega\times\R^3} f(t)\sqrt{\mu} \,\dd v\dd x \,\equiv 0 , \label{mass-conservation_f} \\
& \iint_{\Omega\times\R^3} |v|^2 f(t)\sqrt{\mu} \,\dd v\dd x \,\equiv -\int_{\Omega}\big|\mathbf{E}_f(t)\big|^2 \dd x . \label{energy-conservation_f}
\end{align}

Through the coupled Poisson equation, we may further deduce
\begin{equation} \label{flux-conservation}
\begin{split}
\frac{\dd}{\dd t} \int_{\partial\Omega} \mathbf{E}_f(t)\cdot n \,\dd S
&
\;=\;  -\,\frac{\dd}{\dd t} \int_{\partial\Omega}\! \frac{\partial\phi_f}{\partial n}(t) \,\dd S \\
&\;=\; -\,\frac{\dd}{\dd t} \int_{\Omega} \Delta\phi_f(t)\,\dd x
\;=\; \frac{\dd}{\dd t} \int_{\Omega} \left(\int_{\R^3}\! \sqrt{\mu}\,f(t)\,\dd v\right)\dd x \\
&\;=\; \frac{\dd}{\dd t} \iint_{\Omega\times\R^3}\! \sqrt{\mu}\,f(t) \,\dd v\dd x
\,\equiv\, 0 ,
\end{split}
\end{equation}
which means that the flux of the self-consistent electric field is conserved as well. This property will be used in the energy estimate in Section\;\ref{Sec:Energy-est-decay}.
Under the same assumption, we also have
\begin{equation*}
\int_{\partial\Omega} \mathbf{E}_f(t)\cdot n \,\dd S \,\equiv\, 0 .
\end{equation*}

Moreover, if the domain $\Omega$ has a rotational symmetry (\ref{rotational-symmetry}), then we will assume additionally that the corresponding conservation of angular momentum is valid for all $t\geq 0$:
\begin{equation} \label{angular-momentumconservation_f}
\iint_{\Omega\times\R^3} \big[(x-x_0)\times\omega\big] \cdot v\, f(t,x,v)\sqrt{\mu(v)}\, \dd v\dd x = 0 .
\end{equation}

We also recall the celebrated H-Theorem of Boltzmann --
the ``entropy'' $$\mathcal{H}(t) := \iint_{\Omega\times\R^3} F(t) \ln\! F(t) \,\dd v\dd x$$ for the system \eqref{Eq:Vlasov-Landau-model}\eqref{Eq:Poisson-model} is non-increasing as time passes:
\begin{equation*}
\frac{\dd}{\dd t}\,\mathcal{H}(t) \leq 0 ,\quad \forall\;t\geq 0.
\end{equation*}
This suggests the asymptotic stability of the Maxwellian $\mu$, for which the entropy is minimized
(cf.\;\cite{Strain.Guo2004}).

\subsection{Linearization and Reformulation}

To apply the Picard iteration argument,
in the equation \eqref{Eq:Vlasov-Landau_f} we replace the nonlinear dependence on $f(t,x,v)$ with a given $g(t,x,v)$:
\vspace{2pt}
\begin{equation} \label{Eq:linearized-VL_f}
\begin{split}
& \partial_t f + v\cdot\nabla_{\!x}f + \mathbf{E}_g\cdot\nabla_{\!v}f + Lf = \Gamma[g,f] + \big\{\mathbf{E}_g\cdot v\big\}f + 2\big\{\mathbf{E}_f\cdot v\big\}\sqrt{\mu} ,\\[3pt]
& f(0,x,v) = f_0(x,v) ,\\[3pt]
\end{split}
\end{equation}
where $\mathbf{E}_g = -\nabla_{\!x}\phi_g$ and $\phi_g$ is solved from
\begin{equation} \label{Eq:Poisson_g}
-\Delta_x\phi_g = \int_{\R^3} \sqrt{\mu}\,g\,\dd v
\end{equation}
with either Dirichlet BC or Neumann BC and $\int_\Omega\phi_g \,\dd x =0,\, \forall\,t\geq 0$.

Next we rearrange the terms $-Lf = (A+K)f$ and $\Gamma[g,f]$ as
\begin{equation*}
-Lf + \Gamma[g,f] \,=\, \bar{A}_g f + \bar{K}_g f
\end{equation*}
with
\begin{equation*}
\begin{split}
\bar{A}_g f :=
&\; \partial_i\left[\left\{\Phi^{ij}\!\ast\!\big[\mu+\mu^{1/2}g\big]\right\}\partial_j f  \right] \\
&-\left\{\Phi^{ij}\!\ast\!\big[v_j\mu^{1/2}g\big]\right\}\partial_i f
-\left\{\Phi^{ij}\!\ast\!\big[\mu^{1/2}\partial_j g\big]\right\}\partial_i f \;\;\, \\
=: &\; \nabla_{\!v}\cdot\big(\sigma_{\!G}\nabla_{\!v} f\big) + a_g\cdot\nabla_{\!v}f
\end{split}
\end{equation*}
and
\begin{equation} \label{K_g-opt}
\begin{split}
 \bar{K}_{\!g} f :=
 &\; Kf + \partial_i \sigma^i f - \sigma^{ij}v_i v_j f \\
 & -\partial_i\left\{\Phi^{ij}\!\ast\!\big[\mu^{1/2}\partial_j g\big]\right\}f + \left\{\Phi^{ij}\!\ast\!\big[v_i\mu^{1/2}\partial_j g\big]\right\}f ,
\end{split}
\end{equation}
so that all the terms of $\bar{A}_g f$ contain at least one $v$-derivatives of $f$, while $\bar{K}_{\!g} f$ has no $v$-derivatives of $f$.
Notice that the $v$-derivatives $\partial_j f$ in $Kf$ can always be moved to $\mu^{1/2}$ via integration by parts and then outside the convolution by the property (\ref{Landau-kernel-property}) of the Landau kernel.

Combining terms of the same order in $v$, we reformulate the equation \eqref{Eq:linearized-VL_f} as
\begin{equation} \label{Eq:reformulated-VL_f}
\partial_t f + v\cdot\nabla_{\!x}f
= \nabla_{\!v}\cdot\big(\sigma_{\!G}\nabla_{\!v} f\big)
+ \big\{a_g - \mathbf{E}_g\big\}\cdot\nabla_{\!v}f
+ \Big\{ \bar{K}_{\!g} f + \big(v\cdot\mathbf{E}_g\big)f + 2\sqrt{\mu}\,v\cdot\mathbf{E}_f \Big\} ,
\end{equation}
where
\begin{align}
\sigma_{\!G} &:= \Phi \ast G = \Phi\ast\big[\mu+\mu^{1/2}g\big] , \label{sigma_G} \\[2pt]
a_g^i &:= -\, \Phi^{ij}\!\ast\big[v_j\mu^{1/2}g + \mu^{1/2}\partial_j g\big] , \label{a_g^i} \\
\mathbf{E}_g &:= \nabla_{\!x}\, \Delta_{x}^{-1}\! \int_{\R^3}\!\sqrt{\mu}\,g\,\dd v,\notag
\end{align}
and the operator $\bar{K}_{\!g}$ is defined in (\ref{K_g-opt}).
This equation can be viewed as a kinetic Fokker-Planck equation of the form (cf. \cite{Golse.Imbert.Mouhot.Vasseur2019})
\begin{equation} \label{Eq:kinetic-FP}
\partial_t f + v\cdot\nabla_{\!x} f = \nabla_{\!v}\cdot\big(\mathbf{A}\,\nabla_{\!v} f \big) + \mathbf{B}\cdot\nabla_{\!v}f + \mathbf{C}f ,
\end{equation}
where $g$ appears only in the coefficients of the operator for $f$.
We can also fit it into a more general class of hypoelliptic/ultraparabolic operators of Kolmogorov type (with rough coefficients), which allows us to apply some known estimates and results regarding this kind of operators (see Section \ref{Sec:Lemmas}).

When the size of $g$ (e.g. $\|g\|_{\infty,\vartheta} \sim\varepsilon$) is sufficiently small, the coefficients in \eqref{Eq:kinetic-FP} have the following basic properties (see Section \ref{Sec:Prelim} for more details):
\begin{equation*}
\begin{split}
\mathbf{A}(t,x,v) &:=\, \sigma_{\!G}(t,x,v) \\
&\,\,= \left\{\Phi\ast\big[\mu+\mu^{1/2}g(t,x,v)\big]\right\}(v)
\end{split}
\end{equation*}
is a $3\times3$ non-negative matrix, retaining the same analytical properties (e.g. eigenvalues/spectrum) as $\sigma$: (cf. 
Lemma 2.4 in \cite{Kim.Guo.Hwang2020})
$$0 < (1\!+\!|v|)^{-3}\,\mathbf{I}  \,\lesssim\,  \mathbf{A}(v) \,\lesssim\, (1\!+\!|v|)^{-1}\,\mathbf{I} .$$
This makes the second-order $v$-derivatives $\nabla_{\!v}\cdot\big(\mathbf{A}\,\nabla_{\!v}\circ \big)$ an elliptic operator (but \textit{not uniformly} elliptic in $v$).
Moreover,
$$\mathbf{B}(t,x,v) := a_g(t,x,v) - \mathbf{E}_g(t,x)$$
is a uniformly bounded 3-dimensional vector with (cf. Appendix\,A in \cite{Golse.Imbert.Mouhot.Vasseur2019})
\begin{equation*}
\|\mathbf{B}[g]\|_{\infty} \,\leq\, \|a_g\|_{\infty} + \|\mathbf{E}_g\|_{\infty} \,\lesssim\, \|g\|_{\infty} < \varepsilon .
\end{equation*}
Finally,
$$\mathbf{C}f := \bar{K}_{\!g} f + \big(v\cdot\mathbf{E}_g\big)f + 2\sqrt{\mu}\,v\cdot\mathbf{E}_f$$
is an operator bounded in some weighted $L^p$ space (cf. Lemmas 2.9 and 7.1 in \cite{Kim.Guo.Hwang2020}).

\subsection{Definition and Notation}

Throughout the paper, $C$ will generally denote an universal constant that may vary from line to line.
The notation $A \lesssim B$ means that $A\leq CB$ for some universal constant $C>0$; we will use $\gtrsim$ and $\simeq$ in a similar standard way.
Below is a collection of definitions and notation:

\subsubsection{Weighted Norms}
For $w(v) := \langle v\rangle=\sqrt{1+|v|^2}$,
\begin{align*}
|f|_{p,\vartheta} &:= \big|\langle v\rangle^\vartheta f\big|_{L^p(\R^3)}
= \left(\int_{\R^3}\langle v\rangle^{p\vartheta} |f(v)|^p \,\dd v \right)^{\frac{1}{p}} , \\
\|f\|_{p,\vartheta} &:= \big\|\langle v\rangle^\vartheta f\big\|_{L^p(\Omega\times\R^3)}
= \left(\iint_{\Omega\times\R^3}\langle v\rangle^{p\vartheta} |f(x,v)|^p \,\dd v\dd x \right)^{\frac{1}{p}} ,
\end{align*}
\begin{align*}
|f|_{\infty,\vartheta} &:= \big|\langle v\rangle^\vartheta f\big|_{L^\infty(\R^3)}
= \esssup_{\R^3}\, \langle v\rangle^{\vartheta} |f(v)| , \\
\|f\|_{\infty,\vartheta} &:= \big\|\langle v\rangle^\vartheta f\big\|_{L^\infty(\Omega\times\R^3)}
= \esssup_{\Omega\times\R^3}\, \langle v\rangle^{\vartheta} |f(x,v)| .
\end{align*}

\subsubsection{Dissipation and Energy} \label{SubsubSec:Dissipation-Energy}

\begin{align*}
\langle f,g\rangle
&:= \int_{\R^3} fg \,\dd v , \\
(f,g)
&:= \iint_{\Omega\times\R^3} fg \,\dd v\dd x ,
\end{align*}
\begin{align*}
\langle f,g\rangle_{\sigma}
&:= \int_{\R^3} \left[\sigma^{ij}\partial_{i}f\partial_{j}g + \sigma^{ij}v_{i}v_{j}fg\right] \dd v , \\
(f,g)_{\sigma}
&:= \iint_{\Omega\times\R^3} \left[\sigma^{ij}\partial_{i}f\partial_{j}g + \sigma^{ij}v_{i}v_{j}fg\right] \dd v\dd x ,
\end{align*}
\begin{align*}
|f|_{\sigma,\vartheta}^2 = \langle f,f\rangle_{\sigma,\vartheta}
&:= \int_{\R^3} \langle v\rangle^{2\vartheta}\left[\sigma^{ij}\partial_{i}f\partial_{j}f + \sigma^{ij}v_{i}v_{j}f^2\right] \dd v , \\
\|f\|_{\sigma,\vartheta}^2 = (f,f)_{\sigma,\vartheta}
&:= \iint_{\Omega\times\R^3} \langle v\rangle^{2\vartheta}\left[\sigma^{ij}\partial_{i}f\partial_{j}f + \sigma^{ij}v_{i}v_{j}f^2\right] \dd v\dd x ,
\end{align*}
$^{_{_\bullet}}$ Instant Energy:
\begin{equation*}
\mathcal{I}_{\vartheta}[f(t)]
:=\, \big\|f(t)\big\|_{2,\vartheta}^{2} + \big\|\mathbf{E}_f(t)\big\|_{L^2_x}^2 ,
\end{equation*}
$^{_{_\bullet}}$ Dissipation Rate:
\begin{equation*}
\mathcal{D}_{\vartheta}[f(t)]
:=\, \big\|f(t)\big\|_{\sigma,\vartheta}^{2} + \big\|\mathbf{E}_f(t)\big\|_{L^2_x}^2 ,
\end{equation*}
$^{_{_\bullet}}$ Total Energy (Instant\;$+$\;Accumulative):
\begin{equation*}
\mathcal{E}_{\vartheta}[f(t)]
:=\, \mathcal{I}_{\vartheta}[f(t)] \,+ \int_{0}^{t}\! \mathcal{D}_{\vartheta}[f(\tau)] \,\dd\tau .
\end{equation*}

\subsubsection{Trace and Integral over Phase-Boundary}

\begin{equation*}
\gamma f := f|_{\gamma}, \qquad \gamma_\pm f := f|_{\gamma_\pm} ,
\end{equation*}
\begin{equation*}
\left\| \gamma_\pm f \right\|_{L^p(\gamma_\pm)} :=  \left(\iint_{\gamma_\pm} \big| \gamma_\pm f(x,v) \big|^p |v\!\cdot\! n_x| \dd v \dd S_x \right)^{\frac{1}{p}} .
\end{equation*}

\section{Main Results and Ideas} \label{Sec:Main-results}

\subsection{Background and Motivation}

Despite the important role kinetic theory plays for describing a plasma, we
are not aware any global well-posed theory for classical kinetic models in a
``donut shape'' torus, which resembles a tokamak device. The fundamental
difficulty lies in the singularity for kinetic equations near the grazing
set $\gamma _{0}$. Even for the linear free streaming operator $\partial
_{t}+v\cdot \nabla _{x},$ the grazing set $\gamma _{0}$ is always
characteristic but not uniformly characteristic, the most challenging case
in the regularity study of hyperbolic PDE in a bounded domain. In general,
singularity and discontinuity is expected to develop from the grazing set
$\gamma _{0},$ and propagate inside for a non-convex domain
\cite{Guo1995,Kim2011,Guo.Kim.Tonon.Trescases2016},
and the solutions are of BV at the best.
On the other hand, regularity is expected to deteriorate near the grazing set
$\gamma _{0},$ but could be confined and localized in a convex domain
\cite{Hwang2004,Guo.Kim.Tonon.Trescases2017}.

Since the regularity plays an important role in establishing uniqueness in
the kinetic theory, such a loss of regularity creates fundamental
difficulties in the PDE study. Many techniques with Sobolev norms developed
for the free space are not suitable, and completely new mathematical
frameworks need to be developed. Furthermore, such a loss of regularity can
even alter our basic understanding of classical boundary behavior for
kinetic theory \cite{AA003}.

Thanks to the possibility of regularity in a convex domain for kinetic
theory, there have been important advances in various kinetic models for a
plasma in convex domains.
In \cite{Hwang2004,Hwang.Velazquez2010},
global well-posedness is established for the collisionless Vlasov-Poisson system.
In recent work of \cite{Chen.Kim.Li2020,Cao.Kim.Lee2019},
global well-posedness is established for the collisional Vlasov-Poisson-Boltzmann
system near Maxwellian distributions.

An $L^{2}\rightarrow L^{\infty}$ framework for the Boltzmann equation is
developed in \cite{Guo2010} for study of well-posedness in bounded domains,
which is based on the basic $L^{2}$ energy estimate and a bootstrap to
$L^{\infty}$ bound thanks to an interaction between free streaming
characteristics and averaging in velocity in the collision operator.
See subsequent developments along this direction in
\cite{Esposito.Guo.Marra2018,Esposito.Guo.Kim.Marra2018,Briant.Guo2016}.
In particular, a quantitative estimate of macroscopic part $\mathcal{P}f$
in terms of the microscopic part $(I-\mathcal{P})f$ for the
$L^{2}$-\,elliptic/positivity estimate is established in
\cite{Esposito.Guo.Kim.Marra2013}.

Our paper continues the recent program to study an $L^{2}\rightarrow
L^{\infty }$ framework for studying the Landau equation
\cite{Kim.Guo.Hwang2020,Guo.Hwang.Jang.Ouyang2020}.
It should be noted that due to nonlinear diffusion in the
velocity space, $L^{\infty}$ is not quite enough to ensure uniqueness as in
the Boltzmann equation; some control of $\nabla_{v}f$ is needed.
The framework relies on $L^{2}$ energy estimate, and a recent novel
De Giorgi--Nash--Moser theory for the Landau equation, which is developed in
\cite{Golse.Imbert.Mouhot.Vasseur2019}.
Finally, De Giorgi--Nash--Moser theory leads to application of $S_{ }^{p}$
estimates (counterpart of $W^{2,p}$ estimates for parabolic PDEs) to control
$\nabla_{v}f$ for the uniqueness. The main novelty of our paper is to give a
more direct and simplified approach with only $L^{2}$ energy and
$S_{ }^{p}$ estimates in the perturbative regime, bypassing the
recent De Giorgi--Nash--Moser theory for the Landau equation.

\subsection{Statement of the Main Theorem}

We are now ready to state our main theorem, which is a low-regularity global well-posedness and stability result for the model Vlasov-Poisson-Landau system \eqref{Eq:Vlasov-Landau-model} and \eqref{Eq:Poisson-model}.

\begin{theorem}[Main Theorem] \label{Thm:Main-thm}
Assume that for some sufficiently large velocity-weight exponent $\vartheta_0\in\R$, the initial data $f_0(x,v):\Omega\times\R^3 \rightarrow\R$ satisfies the smallness assumption
\begin{equation} \label{smallness-assumption}
\|f_0\|_{\infty,\vartheta_0}
\,+\, \|\nabla_{\!x} f_0\|_{\infty,\vartheta_0} \,+\,\|\nabla_{\!v}f_0\|_{\infty,\vartheta_0}\,+\,\|\nabla_{\!v}^2f_0\|_{\infty,\vartheta_0}
\,\leq\, \varepsilon_0
\end{equation}
for a sufficiently small constant $\varepsilon_0\ll 1$.
Let $F_0(x,v) = \mu + \sqrt{\mu}\,f_0(x,v) \geq 0$ and has the same mass as the Maxwellian $\mu$.
Then we have the following conclusions:
\begin{itemize}
  \item {\rm (Existence \& Uniqueness).} There exists a unique global solution $f(t,x,v)$ to the Vlasov-Poisson-Landau system \eqref{Eq:Vlasov-Landau_f}-\eqref{Eq:Poisson_f} for perturbation with the specular-reflection boundary condition \eqref{Specular-BC_f} and the conservation laws \eqref{mass-conservation_f}-\eqref{energy-conservation_f}.
  Also,
  $$
  F(t,x,v) = \mu + \sqrt{\mu}\,f(t,x,v) \geq 0
  $$
  satisfies the system \eqref{Eq:Vlasov-Landau-model}-\eqref{Eq:Poisson-model} with \eqref{Specular-BC_F}.
  \vspace{2pt}
  \item {\rm (Energy estimates \& $L^2$ decay).} Moreover, the solution $f(t,x,v)$ satisfies the uniform weighted energy bounds, for $\vartheta < \vartheta_0 - \frac{3}{2}$,
  \begin{equation} \label{energy-bound}
  \sup_{t\in [0,\infty)} \mathcal{E}_{\vartheta}[f(t)] \,\leq\, C_{\vartheta}\, \mathcal{E}_{\vartheta}[f(0)] \,\lesssim\, \varepsilon_0^{\,2} ,
  \end{equation}
  and the (almost exponential) time-decay, if $\vartheta + k < \vartheta_0 - \frac{3}{2}$,
  \begin{equation} \label{L^2-decay}
  \|f(t)\|_{2,\vartheta} + \big\|\mathbf{E}_f(t)\big\|_{H^1_x}
  \,\lesssim\, \varepsilon_0 \left(1+\frac{t}{2k}\right)^{-k}
  \end{equation}
  for any $t\geq 0$.
  \vspace{2pt}
  \item {\rm ($L^\infty$ bounds \& pointwise decay).} For any $k\geq 0$, there is an increasing function $\vartheta'=\vartheta'(k)$ such that when $\vartheta + \vartheta' \leq \vartheta_0$, the weighted pointwise decay bounds hold:
  \begin{equation} 
  \|f(t)\|_{\infty,\vartheta} + \|\mathbf{E}_f(t)\|_{\infty} \,\lesssim\, \varepsilon_0 \left(1+t\right)^{-k}
  \end{equation}
  for all $t\geq 0$.
  \vspace{2pt}
  \item {\rm (Regularity results).} In addition, it holds that for any $t>0$
  \begin{equation} \label{Holder-estimate}
  \|f(t)\|_{C^{0,\alpha}\left(\Omega\times\R^3\right)} + \|\mathbf{E}_f(t)\|_{C^{1,\alpha}\left(\Omega\right)}
  \,\lesssim\, \varepsilon_0
  \end{equation}
  for some $\alpha\in (0,1]$, and
  \begin{equation} \label{Dv-L^infty}
  \|\nabla_v f\|_{L^\infty\left((0,\infty)\times\Omega\times\R^3\right)} \,\lesssim\, \varepsilon_0 .
  \end{equation}
  \item
  $\dt f$ and $\dt \mathbf{E}_f$ also satisfy all the estimates above.
\end{itemize}
\end{theorem}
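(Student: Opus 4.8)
The plan is to run a Picard iteration scheme on the linearized equation \eqref{Eq:reformulated-VL_f}, establishing uniform-in-time bounds for the sequence of iterates in the combined norm given by the weighted $L^\infty$-type spaces (with $x$- and $v$-derivatives up to second order in $v$) appearing in the smallness assumption \eqref{smallness-assumption}, together with the energy $\mathcal{E}_\vartheta$ and the $S^p$-norm controlling $\nabla_v f$. First I would set up the iteration: given $g = f^{(n)}$ with $\|g\|_{\infty,\vartheta_0'} \lesssim \varepsilon$ for an appropriate intermediate weight, solve \eqref{Eq:Poisson_g} for $\phi_g$ and hence $\mathbf{E}_g$, then solve the linear ultraparabolic problem \eqref{Eq:reformulated-VL_f} for $f^{(n+1)}$ with the specular-reflection boundary condition \eqref{Specular-BC_f}. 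The coefficient bounds recorded after \eqref{Eq:kinetic-FP} — the two-sided bound $0 < (1+|v|)^{-3}\mathbf{I} \lesssim \mathbf{A} \lesssim (1+|v|)^{-1}\mathbf{I}$, the uniform bound $\|\mathbf{B}\|_\infty \lesssim \varepsilon$, and the weighted-$L^p$ boundedness of $\mathbf{C}$ — let us treat this as a genuine Kolmogorov-type operator with rough coefficients. Once the iterates are shown to be Cauchy in a weaker norm, the limit $f$ solves \eqref{Eq:Vlasov-Landau_f}-\eqref{Eq:Poisson_f}, and positivity $F = \mu + \sqrt{\mu} f \geq 0$ follows from a maximum-principle / De Giorgi-type argument for the original equation \eqref{Eq:Vlasov-Landau-model} using the $L^\infty$ control.

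The core of the argument is the a priori estimate chain. I would first prove the $L^2$ energy estimate: multiply \eqref{Eq:Vlasov-Landau_f} by $\langle v\rangle^{2\vartheta} f$, integrate over $\Omega\times\R^3$, and use the specular-reflection boundary condition to kill the boundary term $\iint_{\gamma} \langle v\rangle^{2\vartheta} f^2 (v\cdot n_x)\,\dd v\dd S_x$ (since $\mathcal{R}_x$ preserves $|v|$ and flips the sign of $v\cdot n_x$, the $\gamma_+$ and $\gamma_-$ contributions cancel). The coercivity of $L$ modulo its null space $\mathcal{P}f$, together with the conservation laws \eqref{mass-conservation_f}-\eqref{angular-momentumconservation_f} and a Poincaré-type inequality that recovers control of $\mathcal{P}f$ from $(I-\mathcal{P})f$ and $\mathbf{E}_f$ (the macro-micro estimate, here replacing the more elaborate Esposito–Guo–Kim–Marra positivity estimate), yields $\frac{d}{dt}\mathcal{I}_\vartheta[f] + \mathcal{D}_\vartheta[f] \lesssim (\varepsilon + \|f\|_{\infty,\vartheta'})\mathcal{D}_\vartheta[f]$, hence the uniform bound \eqref{energy-bound}; the nonlinear terms $\{\mathbf{E}_f\cdot v\}f$ and $2\{\mathbf{E}_f\cdot v\}\sqrt\mu$ are absorbed using the elliptic estimate $\|\mathbf{E}_f\|_{H^1_x} \lesssim \|\rho[f]\|_{L^2_x} \lesssim \|f\|_{2,0}$ and the smallness. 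The almost-exponential decay \eqref{L^2-decay} then comes from an interpolation/iteration in the weight: $\|f\|_{2,\vartheta+k} \lesssim \|f\|_{\sigma,\vartheta}$ up to a large-velocity cutoff, so $\mathcal{D}_\vartheta \gtrsim \mathcal{I}_{\vartheta} - C(\text{tail})$, giving polynomial decay of any order after sacrificing weight — the standard Strain–Guo-type argument.

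Next I would promote $L^2$ to $L^\infty$ and to $\nabla_v f \in L^\infty$. Rather than invoking the full De Giorgi–Nash–Moser machinery, the paper's claimed novelty is to use only $S^p$ estimates (the $W^{2,p}$-analogue for Kolmogorov operators, available from Section~\ref{Sec:Lemmas}): writing \eqref{Eq:reformulated-VL_f} along free-streaming characteristics and localizing, the $S^p$ estimate gives $\|f\|_{S^p} \lesssim \|\mathbf{C}f + \ldots\|_{L^p} + \|f\|_{L^p}$ on suitable cylinders, and the Sobolev–Morrey embedding $S^p \hookrightarrow C^{0,\alpha}$ (for $p$ large) delivers the Hölder bound \eqref{Holder-estimate}; feeding $C^{0,\alpha}$ back in and differentiating, or applying the $S^p$ estimate to a finite-difference quotient, controls $\nabla_v f$ in $L^\infty$, establishing \eqref{Dv-L^infty} and the $L^\infty$ bound. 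Weighted versions (multiplying by $\langle v\rangle^\vartheta$ and tracking the commutator, whose extra weights are paid for by the gap $\vartheta_0 - \vartheta$) combined with the $L^2$ decay upgrade to pointwise decay $\|f(t)\|_{\infty,\vartheta} \lesssim \varepsilon_0(1+t)^{-k}$, via the usual argument that the characteristic either reaches back to time $0$ (giving $\varepsilon_0$ from the data) or spends enough time in a region where the $L^2$-in-time-average is small. The bounds on $\partial_t f$ and $\partial_t\mathbf{E}_f$ follow by differentiating the equation in $t$ and noting $\partial_t f$ solves a linear equation of the same structure (the specular condition is $t$-independent), so it inherits every estimate.

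The main obstacle is the interaction between the grazing set $\gamma_0$ and the $S^p$ / interior-regularity estimates near the boundary. Away from $\gamma_0$ the specular reflection is a benign change of variables $v \mapsto \mathcal{R}_x v$ under which the Kolmogorov structure is essentially preserved, so one can localize and unfold the reflection; but at grazing velocities $v\cdot n_x = 0$ the free-streaming field is characteristic and not uniformly so, the reflection map is singular, and the standard interior $S^p$ estimates do not directly apply. Handling this will require a careful covering of a neighborhood of $\partial\Omega$ adapted to the distance to $\gamma_0$ (as in the Boltzmann specular-reflection literature), possibly combined with the observation that the velocity-averaging in the diffusion term $\nabla_v\cdot(\mathbf{A}\nabla_v f)$ regularizes precisely in the direction where the transport degenerates — but reconciling the genuinely non-uniformly-characteristic boundary geometry of a general (non-convex, non-simply-connected) domain with the need for enough regularity to close uniqueness is where the real work lies, and is presumably the technical heart of the paper.
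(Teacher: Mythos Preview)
Your overall architecture (energy $\to$ $S^p$ $\to$ H\"older $\to$ $L^\infty$) matches the paper, but two of your key steps would not close as written.

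First, in the weighted $L^2$ energy estimate you say the nonlinear term $\{\mathbf{E}_f\cdot v\}f$ is ``absorbed using the elliptic estimate $\|\mathbf{E}_f\|_{H^1_x}\lesssim\|f\|_{2,0}$ and the smallness.'' This fails: after multiplying by $\langle v\rangle^{2\vartheta}f$ and integrating, the extra factor of $v$ produces $\|f\|_{2,\vartheta+\frac12}^2$, one half-power of weight more than the dissipation $\|f\|_{\sigma,\vartheta}$ controls, so no amount of smallness of $\mathbf{E}$ saves you. The paper's device is to multiply the equation by $e^{\phi}$ first, so that $(v\cdot\mathbf{E})f$ combines with $v\cdot\nabla_x f$ into $v\cdot\nabla_x(e^\phi f)$; the price is a new term $e^\phi f\,\partial_t\phi$, and controlling $\partial_t\phi$ forces you to run the entire energy/decay machinery for $\partial_t f$ \emph{simultaneously} with that for $f$ (not, as you suggest, as a trivial afterthought). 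This coupling, together with an integrating factor $e^{\psi(t)}$ built from $\int_0^t\|\partial_t f\|_2$, is what actually closes the estimate.

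Second, your diagnosis of the boundary difficulty is right but your proposed cure is not what works here. Rather than a covering adapted to the distance to $\gamma_0$, the paper flattens $\partial\Omega$ locally via a change of variables $(x,v)\mapsto(y,w)=(\vec\psi(x),D\vec\psi\cdot v)$ designed so that the transport operator $\partial_t+v\cdot\nabla_x$ is \emph{invariant} and the specular map becomes $w\mapsto Rw$ with $R=\mathrm{diag}(1,1,-1)$; one then mirror-extends $\tilde f$ across $\{y_3=0\}$. The specular condition is exactly what makes this extension continuous, so the extended function solves a Kolmogorov equation on the whole space with coefficients that are still elliptic and (crucially) continuous in $(x,v)$ across the old boundary, and the interior $S^p$ theory applies directly---no special analysis near $\gamma_0$ is needed. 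Finally, your route from $L^2$ decay to weighted $L^\infty$ decay via characteristics is also not what the paper does (and would require trajectory control that is unavailable for Landau); instead, H\"older continuity plus the $L^2$ smallness gives $L^\infty$ smallness by a direct contradiction: if $\langle t\rangle^{k_2}\langle v\rangle^{\vartheta_2}|f|\ge\delta_0$ at one point, H\"older regularity forces $|f|\gtrsim\delta_0$ on a $\hat{\mathsf d}$-ball of controlled radius, contradicting the weighted $L^2$ bound.
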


\begin{remark}
(1) There exists $\bar\vartheta_0$ such that the system is globally well-posed for initial data satisfying $\vartheta_0\geq\bar\vartheta_0$. \\
(2) For fixed $\vartheta_0$, this theorem guarantees that the solution decays at least in the rate of $k(\vartheta_0)$, which goes to infinity as  $\vartheta_0\rt\infty$. Hence, if we desire a faster decaying solution, we should require that the initial data belongs proper weighted $L^{\infty}$ space. As \cite{Guo.Tice2013==} states, this is called the almost exponential decay. Better data, faster decay.
\end{remark}

\subsection{Bootstrap Proposition}

We implement the following bootstrap scheme:

\begin{proposition}[Bootstrap Estimates]
                \label{Prop:Bootstrap-est}
Let $f(t,x,v)$ be a solution to the VPL-specular problem \eqref{Eq:Vlasov-Landau_f}, \eqref{Eq:Poisson_f}, and \eqref{Specular-BC_f} (or \eqref{Eq:linearized-VL_f}, \eqref{Eq:Poisson_g}, and \eqref{Specular-BC_f}) on some time interval $[0,T],\, T\!\geq\! 1$, with initial data $f_0$ satisfying the assumption \eqref{smallness-assumption}.

Assume also that, for any $t\in [0,T]$, the solution $f$ (and $g$) satisfies the bootstrap hypothesis:
\begin{align}
\langle t\rangle^{k_1} \big\|f(t)\big\|_{2,\vartheta_1}
\,+\, \langle t\rangle^{k_1} \big\|\partial_t f(t)\big\|_{2,\vartheta_1} &\,\lesssim\, \varepsilon_1 , \label{Bootstrap-assp:smallness-decay-L^2} \\
\langle t\rangle^{k_2} \big\|f(t)\big\|_{\infty,\vartheta_2}
\,+\, \langle t\rangle^{k_2} \big\|\partial_t f(t)\big\|_{\infty,\vartheta_2} &\,\lesssim\, \varepsilon_1 , \label{Bootstrap-assp:smallness-decay-L^infty}
\end{align}
and
\begin{align}
\|f\|_{S^{p}_{ }\left((0,T)\times\Omega\times\R^3\right)}
\,+\, \|\partial_t f\|_{S^{p}_{ }\left((0,T)\times\Omega\times\R^3\right)} &\,\lesssim\, \varepsilon_1 ,  \label{Bootstrap-assp:finteness-S^p} \\
\|D_v f\|_{L^\infty\left((0,T)\times\Omega\times\R^3\right)}
\,+\, \|D_v \partial_t f\|_{L^\infty\left((0,T)\times\Omega\times\R^3\right)}
&\,\lesssim\, \varepsilon_1 , \label{Bootstrap-assp:finteness-Dv}
\end{align}
where $\langle t\rangle := \sqrt{1+t^2}$, $\vartheta_1,\vartheta_2$ are two sufficiently large constants, $k_i \geq 0\; (i=1,2)$, $p>14$, $\varepsilon_0 = \varepsilon_1^{\,q} \ll 1$ for some
$q>1$, and see \eqref{S^p-norm} for the definition of the $S^{p}_{ }$ norm.

Then the following improved bounds hold for any $t\in [0,T]$:
\begin{align}
\langle t\rangle^{k_1} \big\|f(t)\big\|_{2,\vartheta_1}
+ \langle t\rangle^{k_1} \big\|\partial_t f(t)\big\|_{2,\vartheta_1}
&\,\lesssim\, \varepsilon_1^{\,r} , \label{Bootstrap-est:smallness-decay-L^2} \\
\langle t\rangle^{k_2} \big\|f(t)\big\|_{\infty,\vartheta_2}
+ \langle t\rangle^{k_2} \big\|\partial_t f(t)\big\|_{\infty,\vartheta_2}
&\,\lesssim\, \varepsilon_1^{\,r} , \label{Bootstrap-est:smallness-decay-L^infty}
\end{align}
and
\begin{align}
\|f\|_{S^{p}_{ }\left((0,T)\times\Omega\times\R^3\right)}
\,+\, \|\partial_t f\|_{S^{p}_{ }\left((0,T)\times\Omega\times\R^3\right)} &\,\lesssim\, \varepsilon_1^{\,s} ,  \label{Bootstrap-est:finteness-S^p} \\
\|D_v f\|_{L^\infty\left((0,T)\times\Omega\times\R^3\right)}
\,+\, \|D_v \partial_t f\|_{L^\infty\left((0,T)\times\Omega\times\R^3\right)}
&\,\lesssim\, \varepsilon_1^{\,s} , \label{Bootstrap-est:finteness-Dv}
\end{align}
and
\begin{equation}
                    \label{eq12.33}
    \|f(t)\|_{C^{\,0,1}\left(\Omega\times\R^3\right)}
     +\|D_vf(t)\|_{C^{\,0,\alpha}\left(\Omega\times\R^3\right)}\,\lesssim\, \varepsilon_1^{\,s},\quad\alpha=1-14/p,
\end{equation}
for some $r,s>1$, where the implicit constants in \eqref{Bootstrap-est:smallness-decay-L^2}-\eqref{Bootstrap-est:finteness-Dv} are independent of $\varepsilon_1$.
\end{proposition}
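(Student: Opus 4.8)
\emph{Setup of the argument.} The statement is an a priori estimate, to be combined with a standard Picard iteration and continuation argument for global existence. Assuming the bootstrap hypotheses \eqref{Bootstrap-assp:smallness-decay-L^2}--\eqref{Bootstrap-assp:finteness-Dv} on $[0,T]$ with the single small parameter $\varepsilon_1$, the plan is to recover all of them with a strictly larger power of $\varepsilon_1$. The surplus power comes from two places: the data are smaller than $\varepsilon_1$, namely $\varepsilon_0=\varepsilon_1^q$ with $q>1$; and every nonlinear term ($\Gamma[g,f]$, $(v\cdot\mathbf E_g)f$, $a_g\cdot\nabla_vf$, and the $g$-dependent part of $\sigma_{\!G}$) carries one factor $g$ of size $\varepsilon_1$, so it is either absorbable into the dissipation or contributes at order $\varepsilon_1\times(\text{bootstrap bound})$. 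Since $g$ and $\partial_tg$ obey the same bounds as $f$ and $\partial_tf$, every coefficient estimate from Section~\ref{Sec:Prelim} is available; differentiating the equation in $t$ produces an equation for $\partial_tf$ of exactly the same structure with $\partial_tg$ in the coefficients, so I only describe the estimates for $f$.

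\emph{Step 1: weighted $L^2$ energy estimate and time decay.} Pair \eqref{Eq:Vlasov-Landau_f} (keeping $L$ explicit) with $\langle v\rangle^{2\vartheta_1}f$ and integrate over $\Omega\times\R^3$. The free-streaming term yields the boundary integral $\iint_\gamma(v\cdot n_x)\langle v\rangle^{2\vartheta_1}|f|^2$, which vanishes under the specular condition \eqref{Specular-BC_f} since $\mathcal R_x$ is a measure-preserving involution with $\langle\mathcal R_x v\rangle=\langle v\rangle$. The diffusion and linear terms give the dissipation $\|f\|_{\sigma,\vartheta_1}^2$ up to weight-commutators, together with the coercivity $\langle Lf,f\rangle\gtrsim\|(I-\mathcal P)f\|_\sigma^2$; the macroscopic part $\mathcal Pf$ and the electric energy $\|\mathbf E_f\|_{L^2_x}^2$ are then recovered through the positivity (Poincar\'e-type) estimate, which uses exactly the conservation laws \eqref{mass-conservation_f}, \eqref{energy-conservation_f}, \eqref{angular-momentumconservation_f} and the flux identity \eqref{flux-conservation}. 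The nonlinear and electric contributions are bounded by $C\varepsilon_1\,\mathcal D_{\vartheta_1}[f]$ and absorbed, giving $\frac{d}{dt}\mathcal I_{\vartheta_1}[f]+c\,\mathcal D_{\vartheta_1}[f]\le 0$ up to absorbable errors; together with the interpolation $\|f\|_{2,\vartheta_1}\lesssim\|f\|_{\sigma,\vartheta_1}^{1-\theta}\|f\|_{2,\vartheta_1+m}^{\theta}$ (the weight loss $m$ admissible because $\vartheta_1+m<\vartheta_0-\tfrac{3}{2}$) this converts into almost-exponential decay $\|f(t)\|_{2,\vartheta_1}\lesssim\jt^{-k_1}\varepsilon_0$, i.e.\ \eqref{Bootstrap-est:smallness-decay-L^2} with $r>1$ driven by $\varepsilon_0=\varepsilon_1^q$.

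\emph{Step 2: $S^p$ estimate up to the boundary.} With the $L^2$ bounds in hand, read the equation as the Kolmogorov-type ultraparabolic equation \eqref{Eq:kinetic-FP}: the coefficients satisfy the (non-uniform) ellipticity $\langle v\rangle^{-3}\mathbf I\lesssim\mathbf A\lesssim\langle v\rangle^{-1}\mathbf I$, $\|\mathbf B\|_\infty\lesssim\varepsilon_1$, and $\mathbf Cf$ is bounded in weighted $L^p$. Apply the $S^p$ ($W^{2,p}$-type) estimates of Section~\ref{Sec:Lemmas}: in the interior this is the cited theory for Kolmogorov operators with measurable coefficients, while near $\partial\Omega$ I would straighten the boundary and extend $f$ by even reflection via \eqref{Specular-BC_f}, so that the extended function solves an equation of the same class on a full neighborhood and the grazing set $\gamma_0$ becomes interior to the extended problem. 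The source is placed in $L^p$ by interpolating the $L^2$ and $L^\infty$ decay, $\|f(t)\|_{L^p}\lesssim\|f(t)\|_{2,\vartheta_1}^{2/p}\|f(t)\|_{\infty,\vartheta_2}^{1-2/p}\lesssim\jt^{-k}\varepsilon_1^{s}$, which is time-integrable once $\vartheta_1,\vartheta_2$ (hence $k_1,k_2$) are taken large enough; this yields \eqref{Bootstrap-est:finteness-S^p} with $s>1$, together with the analogous weighted bound obtained by running the same estimate for $\langle v\rangle^{\vartheta_2}f$, whose commutators are lower order.

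\emph{Step 3: embedding, $L^\infty$, the electric field, and the main obstacle.} Since $p>14$ is precisely the homogeneous dimension of the kinetic scaling in the $3+3+1$ variables, the Sobolev--Morrey embedding for $S^p$ gives $f\in C^{0,1}$ and $D_vf\in C^{0,1-14/p}$, in particular $\|D_vf\|_{L^\infty}\lesssim\|f\|_{S^p}\lesssim\varepsilon_1^s$, which is \eqref{Bootstrap-est:finteness-Dv} and \eqref{eq12.33}; running the weighted $S^p$ bound through the embedding on each unit time slab $[t-1,t+1]$ and using the decay of the bootstrap norms there gives $\|f(t)\|_{\infty,\vartheta_2}\lesssim\jt^{-k_2}\varepsilon_1^s$, i.e.\ \eqref{Bootstrap-est:smallness-decay-L^infty} (one may first obtain an unweighted $L^\infty$ bound from the embedding and then iterate to raise the velocity weight up to the admissible level $\le\vartheta_0$). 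Finally, elliptic regularity for \eqref{Eq:Poisson_f} transfers all of this to $\mathbf E_f=-\nabla_x\phi_f$: $\|\mathbf E_f\|_{H^1_x}\lesssim\|\rho[f]\|_{L^2_x}\lesssim\|f\|_{2,\vartheta_1}$ and $\|\mathbf E_f\|_{C^{1,\alpha}}\lesssim\|\rho[f]\|_{C^{0,\alpha}}\lesssim\|f\|_{C^{0,\alpha}}$, with the same powers and decay rates. I expect Step 2 to be the genuine obstacle: the specular map $\mathcal R_x$ depends on $x$ along a curved, possibly non-convex and non-simply-connected $\partial\Omega$, so the even-reflection trick does not obviously produce coefficients in the admissible Kolmogorov class, and $\gamma_0$ is exactly where the characteristic, non-uniformly characteristic structure of $\partial_t+v\cdot\nabla_x$ makes regularity degenerate; one must show that after straightening (and, if needed, a velocity change of variables adapted to $\mathcal R_x$) the equation still falls within the scope of the $S^p$ theory, with the near-$\gamma_0$ contributions controlled. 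The positivity estimate for $\mathcal Pf$ in Step 1, and the role of angular momentum in the rotationally symmetric case, is a second delicate point, but it can be imported from the established $L^2$ theory, whereas the boundary $S^p$ analysis is the new ingredient underlying the streamlined $L^2+S^p$ scheme.
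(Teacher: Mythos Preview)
Your overall architecture matches the paper's: weighted $L^2$ energy/decay $\to$ $S^p$ via flattening--reflection $\to$ H\"older/$D_vf\in L^\infty$ $\to$ weighted $L^\infty$ decay, closing the loop. Your identification of Step~2 (boundary extension) as the core new ingredient is correct, and your description of the reflection trick is essentially what Section~\ref{Sec:Boundary-Extension} carries out.

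There is, however, a genuine gap in Step~1. You claim that after pairing with $\langle v\rangle^{2\vartheta_1}f$ ``the nonlinear and electric contributions are bounded by $C\varepsilon_1\,\mathcal D_{\vartheta_1}[f]$ and absorbed.'' This fails for the term $(v\cdot\mathbf E_f)f$: its contribution is $\int |v|\,|\mathbf E_f|\,\langle v\rangle^{2\vartheta_1}|f|^2 \lesssim \varepsilon_1\|f\|_{2,\vartheta_1+1/2}^2$, whereas the dissipation only controls $\|f\|_{2,\vartheta_1-1/2}^2$ (Corollary~\ref{Cor:sigma-norm-bound}). The weight goes the wrong way and the term cannot be absorbed. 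The paper's fix is to multiply the equation by $e^{\phi_f}$ \emph{before} testing, so that $(v\cdot\mathbf E_f)f$ merges exactly with $v\cdot\nabla_x f$ into $v\cdot\nabla_x(e^{\phi_f}f)$; this is the key maneuver of the whole energy step (see \eqref{Eq:linearized-VL_f=f-e^phi} and the paragraph preceding it). The price is a new term $e^{\phi_f}f\,\partial_t\phi_f$, which forces simultaneous control of $\|\partial_t f\|_2$ and an additional integrating factor $e^{\psi}$ with $\psi(t)=-\int_0^t\|\partial_t f\|_2$; this is why the bootstrap hypotheses on $\partial_t f$ are not merely convenient but structurally necessary, and why the $f$ and $\partial_t f$ energy estimates must be run together (Step~5 of the proof of Theorem~\ref{Thm:L^2-decay}).

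Two smaller points where your sketch diverges from the paper. First, the positivity estimate for $\mathcal Pf$ (your ``Poincar\'e-type'') is obtained not directly but by a contradiction/compactness argument (Lemma~\ref{Lem:abc}), and it holds only in time-integrated form over intervals of integer length, not as a pointwise differential inequality; the paper patches general intervals using local well-posedness. Second, for the weighted $L^\infty$ decay you propose time-localized weighted $S^p$ plus embedding; the paper instead uses a short direct argument (end of Section~\ref{Sec:S^p-est}): if $\langle t\rangle^{k_1}\|f(t)\|_{2,\vartheta_1}$ is small and $f(t)$ is uniformly $C^{0,\alpha}$, then $\langle t\rangle^{k_2}\|f(t)\|_{\infty,\vartheta_2}$ is small for suitably reduced $k_2,\vartheta_2$. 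Your route may also work, but the paper's is cleaner and avoids re-running the $S^p$ machinery with decaying weights.
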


\begin{remark}
Given that the bootstrap proposition above holds true, Theorem \ref{Thm:Main-thm} follows from a local existence result combined with a continuity argument (see Section\,\ref{Sec:Main-Thm-Pf} for the proof).
The local existence is presented in Theorem \ref{local-wellposedness} and it can be obtained by using the well-posedness for the linear equation (cf. \cite[Section\;2\,--\,5]{Guo.Hwang.Jang.Ouyang2020} and  \cite{Dong.Guo.Yastrzhembskiy2021}) with a standard iteration argument under the initial assumption \eqref{smallness-assumption}.
The majority of the rest of this paper is devoted to the proof of Proposition \ref{Prop:Bootstrap-est}.
\end{remark}

The bounds (\ref{Bootstrap-assp:smallness-decay-L^2})-(\ref{Bootstrap-assp:smallness-decay-L^infty}) and (\ref{Bootstrap-est:smallness-decay-L^2})-(\ref{Bootstrap-est:smallness-decay-L^infty}) are our (low-order) weighted energy and pointwise decay estimates, which guarantees existence. They are essentially Sobolev-type estimates, which can be obtained through the so-called ``two-tier'' energy method. It is presented in Section \ref{Sec:Energy-est-decay}.

The bounds (\ref{Bootstrap-assp:finteness-S^p})-(\ref{Bootstrap-assp:finteness-Dv}) and (\ref{Bootstrap-est:finteness-S^p})-(\ref{Bootstrap-est:finteness-Dv}) provide uniform control on the velocity-derivatives, which ensures the uniqueness. Also, it provides further regularity results. The proof is based on the ultra-parabolic equations and the $S^{p}$ theory.

\subsection{Overview and Idea of the Proof}
We now discuss the strategy for our proofs and some of the main features of our arguments.

{\it \underline{Upshots of the Proof:}}
\begin{itemize}
    \item Bootstrap scheme: The control of various norms in our paper is tangled together: \\
    (weighted)-$L^2$ decay $\rightarrow$ $S^p$ $\rightarrow$ $C^{\alpha}$ \& $D_v$\;regularity $\rightarrow$ (weighted)-$L^\infty$\;decay $\rightarrow$ (weighted)-$L^2$ decay.
    In fact, each step requires almost all the other estimates.
    In particular, the justification of uniqueness relies on (weighted)-$L^2$, $D_v$ and (weighted)-$L^\infty$ bounds.
    \item Energy estimate: We delicately implement the celebrated ``macro-micro decomposition method'' (kernel estimate by contradiction argument) to the Vlasov-Poisson-Landau system to obtain the weighted $L^2$ decay:
    \begin{itemize}
        \item In order to handle the problematic nonlinear term containing the electric field, we need to multiply $e^{\phi}$ to combine it with $v\cdot\nabla_{\!x}f$. This further requires the estimates of $\partial_t \phi \,\sim\, \partial_t f$.
        \item Since the $\sigma$-norm is not strong enough to control the $L^2$ norm, we need to introduce weights and use interpolation, leading to almost exponential decay.
        Also, we require decay of $\partial_t f$ to be sufficiently fast.
        \item ``Two-tier energy method'': decay $\rightarrow$ energy bound $\rightarrow$ decay
        \item $\partial_t f$ estimate: combine the counterpart of $f$
    \end{itemize}
    \item $S^p$ estimate: We greatly simplify the $L^2$--$L^\infty$--$C^{\alpha}$ framework in \cite{Kim.Guo.Hwang2020} and \cite{Guo.Hwang.Jang.Ouyang2020}. Instead of the De Giorgi-Nash-Moser method, we use $S^p$ embedding theorem to obtain H\"{o}lder continuity.
    \item Specular boundary problem: The $S^p$ estimate is based on an extension of the domain $\Omega$ beyond the boundary, which is achievable for the specular case, using the flattening-reflection technique in \cite{Guo.Hwang.Jang.Ouyang2020}.
\end{itemize}

\subsection{Organization of the Paper}

Our paper is organized as follows. In Section\;\ref{Sec:Prelim}, we present some preliminary lemmas about the Landau operators. The proof of Proposition \ref{Prop:Bootstrap-est} is given in Sections \ref{Sec:Energy-est-decay} -- \ref{Sec:S^p-est}.
In Section \ref{Sec:Energy-est-decay}, we justify the weighted energy estimate (\ref{Bootstrap-est:smallness-decay-L^2}). In Sections \ref{Sec:Lemmas} -- \ref{Sec:S^p-est}, we apply the $S^p$ theory in order to justify the uniqueness. In particular, in Sections \ref{Sec:Lemmas} and \ref{Sec:Boundary-Extension}, we adapt the general $S^p$ theory to our equations. In Section \ref{Sec:S^p-est}, we prove the $S^p$ bounds (\ref{Bootstrap-est:finteness-S^p}), the $L^\infty$ bounds  (\ref{Bootstrap-est:finteness-Dv}) for $D_v f$ and $D_v\partial_t f$, and the H\"older estimates \eqref{eq12.33}. We then justify the weighted $L^{\infty}$ estimates \eqref{Bootstrap-est:smallness-decay-L^infty}, which is also necessary for the uniqueness proof.
Finally, in Section \ref{Sec:Main-Thm-Pf}, we prove the main theorem.

\section{Preliminaries: Basic Properties and Estimates} \label{Sec:Prelim}

For convenience, we collect some basic facts and estimates in this subsection,
including the structure of the Landau collision kernel, from which we may characterize the $\sigma$-norm of dissipation and hereby estimate the Landau operators in terms of this norm.
We will omit some of those proofs but instead refer the reader to \cite[Section\;2,\,3]{Guo2002(=)}, \cite[Section\;2]{Kim.Guo.Hwang2020}, and \cite[Section\;2]{Degond.Lemou1997} for more details.
Moreover, we prepare some preliminary $L^p$ and elliptic estimates for our proofs of the energy and $S^p$ bounds in later sections.
Throughout this subsection, if not otherwise stated,
$f$, $g$, and $g_i$ represent general functions of certain variables.

\begin{lemma}[Embedding of Weighted Spaces]
Let $f$ be a function defined for $(x,v)\in\Omega\times\R^3$, and $p\in (1,\infty)$.
For any $\vartheta\in\R$ and $l > {3}/{p}$, there is a uniform constant $C_p >0$ (depending only on the domain $\Omega$, $p$, and $l$) such that
\begin{equation*}
\|f\|_{p,\vartheta} \,\leq\, C_p \|f\|_{\infty,\vartheta+l} .
\end{equation*}
\end{lemma}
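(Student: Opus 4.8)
The plan is to prove the weighted embedding $\|f\|_{p,\vartheta}\leq C_p\|f\|_{\infty,\vartheta+l}$ by a direct computation, splitting the velocity integral into a bounded ball and its complement, and using that the weight $\langle v\rangle^{-pl}$ is integrable on $\R^3$ precisely when $pl>3$, i.e. $l>3/p$. Since $\Omega$ is bounded, the $x$-integration only contributes a factor $|\Omega|$.

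First I would unwind the definitions. By the definition of the weighted norms,
\begin{equation*}
\|f\|_{p,\vartheta}^p \,=\, \iint_{\Omega\times\R^3}\langle v\rangle^{p\vartheta}\,|f(x,v)|^p\,\dd v\dd x
\,=\, \iint_{\Omega\times\R^3}\langle v\rangle^{-pl}\cdot\langle v\rangle^{p(\vartheta+l)}\,|f(x,v)|^p\,\dd v\dd x .
\end{equation*}
Next I would pull out the $L^\infty$ bound: by definition $\langle v\rangle^{\vartheta+l}|f(x,v)|\leq \|f\|_{\infty,\vartheta+l}$ for a.e.\ $(x,v)\in\Omega\times\R^3$, so $\langle v\rangle^{p(\vartheta+l)}|f(x,v)|^p\leq \|f\|_{\infty,\vartheta+l}^p$ and therefore
\begin{equation*}
\|f\|_{p,\vartheta}^p \,\leq\, \|f\|_{\infty,\vartheta+l}^p\,\iint_{\Omega\times\R^3}\langle v\rangle^{-pl}\,\dd v\dd x
\,=\, |\Omega|\,\|f\|_{\infty,\vartheta+l}^p\,\int_{\R^3}\langle v\rangle^{-pl}\,\dd v .
\end{equation*}
The remaining point is that $\int_{\R^3}\langle v\rangle^{-pl}\,\dd v<\infty$. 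Writing $\langle v\rangle=\sqrt{1+|v|^2}$ and passing to polar coordinates, the integrand behaves like $r^2(1+r^2)^{-pl/2}\sim r^{2-pl}$ as $r\to\infty$, which is integrable at infinity iff $pl-2>1$, i.e.\ $pl>3$; near the origin the integrand is bounded. Hence the hypothesis $l>3/p$ is exactly what makes this finite, and one sets $C_p:=\big(|\Omega|\int_{\R^3}\langle v\rangle^{-pl}\,\dd v\big)^{1/p}$, which depends only on $\Omega$, $p$, and $l$. Taking $p$-th roots gives the claim.

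There is essentially no obstacle here; the only mild subtlety is keeping track of which weights carry which sign of exponent, but the identity $\langle v\rangle^{p\vartheta}=\langle v\rangle^{-pl}\langle v\rangle^{p(\vartheta+l)}$ handles that cleanly, and the constant is genuinely uniform in $\vartheta$ since the $\vartheta$-dependence cancels exactly. One could alternatively phrase this as Hölder's inequality with exponents $\infty$ and $1$ applied to $\langle v\rangle^{-pl}$ and $(\langle v\rangle^{\vartheta+l}f)^p$, but the direct estimate above is the most transparent.
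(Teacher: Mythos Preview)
Your proof is correct and follows essentially the same approach as the paper: rewrite $\langle v\rangle^{p\vartheta}=\langle v\rangle^{-pl}\langle v\rangle^{p(\vartheta+l)}$, pull out the weighted $L^\infty$ norm, and use that $\langle v\rangle^{-pl}\in L^1(\R^3)$ exactly when $l>3/p$, with the same constant $C_p=\big(|\Omega|\int_{\R^3}\langle v\rangle^{-pl}\,\dd v\big)^{1/p}$.
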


\begin{remark}
As a corollary when $p=2$, we see that the weighted $L^\infty$ spaces can be embedded into weighted $L^2$ spaces at the cost of (at least) a finite $\left(\frac{3}{2}\right)^{\!+}$order of velocity-weight loss.
In particular, for initial data, our assumption on weighted $L^\infty$ norm guarantees control of $L^2$ norm.
\end{remark}

\begin{proof}
We estimate
\begin{align*}
\|f\|_{p,\vartheta}^{p}
& = \iint_{\Omega\times\R^3} \langle v\rangle^{p\vartheta} |f(x,v)|^p \,\dd v\dd x \\
& = \iint_{\Omega\times\R^3} \langle v\rangle^{-pl} \left|\langle v\rangle^{\vartheta+l} f(x,v)\right|^p \dd v\dd x \\
& \leq\, \|f\|_{\infty,\vartheta+l}^{p}\cdot\! \iint_{\Omega\times\R^3} \langle v\rangle^{-pl} \,\dd v\dd x \\
& =\, C_{p}^{p}\, \|f\|_{\infty,\vartheta+l}^{p} ,
\end{align*}
where the constant
$$C_p = C_p(\Omega,l) :=\, |\Omega|^{1/p} \left(\int_{\R^3} \langle v\rangle^{-pl} \,\dd v \right)^{\frac{1}{p}} <\infty $$
for $l > {3}/{p}$, and $C_p \sim |\Omega|^{1/p} \big(\frac{1}{pl-3}\big)^{1/p}$.
\end{proof}

Next is an estimate to be applied repeatedly in the following lemmas about the properties related to the Landau kernel
(cf.\;\,\cite[Proposition\;2.2\,--\,2.4]{Degond.Lemou1997}, \cite[Lemma\;2,\,3]{Guo2002(=)} and \cite[Lemma\;2.2\,--\,2.5]{Kim.Guo.Hwang2020}).

\begin{lemma} \label{Lem:sigma-model-est}
Let $\vartheta >-3$, $k(v)\in C^{\infty}\big(\R^3\backslash\{0\}\big)$ and $m(v)\in C^{\infty}\big(\R^3\big)$.
Assume that for any multi-index $\beta\geq 0$,
\begin{equation} \label{k-m-condition}
\begin{split}
\big|D^{\beta} k(v)\big| &\;\leq\; C_{\beta}'\, |v|^{\vartheta-|\beta|} , \\
\big|D^{\beta} m(v)\big| &\;\leq\; C_{\beta}'\, e^{-\tau_{\beta} |v|^2} 
\end{split}
\end{equation}
for some $C_{\beta}' >0$ and $\tau_{\beta} >0$.
Then there is $C_{\beta} >0$ such that
\begin{equation} \label{k*m-est}
\big|D^{\beta} \big[k\ast m\big](v)\big| \;\leq\; C_{\beta}\, \langle v\rangle^{\vartheta-|\beta|} .
\end{equation}
\end{lemma}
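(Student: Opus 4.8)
The plan is to separate the two distinct features of $k$, namely its integrable singularity at the origin and its polynomial behaviour at infinity, by means of a fixed (i.e.\ $v$-independent) partition of unity. Choose $\chi\in C^\infty_c(\R^3)$ with $\chi\equiv 1$ on $\{|v|\le 1\}$ and $\supp\chi\subset\{|v|\le 2\}$, and write $k=k_0+k_\infty$ with $k_0:=\chi k$ and $k_\infty:=(1-\chi)k$. By the Leibniz rule and \eqref{k-m-condition}, $k_0$ is supported in $\{|v|\le 2\}$ with $|D^\beta k_0(v)|\lesssim|v|^{\vartheta-|\beta|}$, while $k_\infty$ is smooth on all of $\R^3$ — the singular point has been excised — and satisfies $|D^\beta k_\infty(v)|\lesssim\langle v\rangle^{\vartheta-|\beta|}$ for every $\beta$, because on $\supp(1-\chi)$ one has $|v|\ge 1$, so $|v|^{\vartheta-|\beta|}\simeq\langle v\rangle^{\vartheta-|\beta|}$, and all derivatives of $1-\chi$ are supported in the compact annulus $\{1\le|v|\le 2\}$. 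I may also restrict to $|v|\ge 4$ from now on: for $|v|\le 4$ one has $\langle v\rangle^{\vartheta-|\beta|}\simeq 1$, and $|D^\beta(k\ast m)(v)|=|(k\ast D^\beta m)(v)|\le\int_{\R^3}|k(w)|\,|(D^\beta m)(v-w)|\,\dd w$ is finite and locally bounded, using $k\in L^1_{\mathrm{loc}}(\R^3)$ (this is precisely where $\vartheta>-3$ enters) together with the Gaussian decay of $D^\beta m$.

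For the regular part, since $D^\beta k_\infty$ is a genuine function of at most polynomial growth and $m$ is of Schwartz type, differentiation under the integral is legitimate and $D^\beta(k_\infty\ast m)=(D^\beta k_\infty)\ast m$. It therefore suffices to establish the elementary convolution bound
\begin{equation*}
\int_{\R^3}\langle w\rangle^{a}\,e^{-\tau|v-w|^2}\,\dd w\,\lesssim\,\langle v\rangle^{a}\qquad(a\in\R,\ \tau>0),
\end{equation*}
which I would prove by splitting the domain into $\{|v-w|\le|v|/2\}$, where $\langle w\rangle\simeq\langle v\rangle$ so the contribution is $\lesssim\langle v\rangle^{a}\int_{\R^3}e^{-\tau|u|^2}\,\dd u$, and $\{|v-w|\ge|v|/2\}$, where $e^{-\tau|v-w|^2}\le e^{-\tau|v|^2/8}\,e^{-\tau|v-w|^2/2}$ makes the contribution super-exponentially small and hence $\lesssim\langle v\rangle^{a}$ once $|v|\ge 4$. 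Taking $a=\vartheta-|\beta|$ gives $|D^\beta(k_\infty\ast m)(v)|\lesssim\langle v\rangle^{\vartheta-|\beta|}$.

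For the singular part the point is to avoid differentiating $k_0$, since $D^\beta k_0$ fails to be integrable near the origin once $|\beta|$ is large; instead all derivatives are moved onto $m$. As $k_0\in L^1(\R^3)$ (again by $\vartheta>-3$),
\begin{equation*}
D^\beta(k_0\ast m)(v)=\int_{|w|\le 2}k_0(w)\,(D^\beta m)(v-w)\,\dd w ,
\end{equation*}
and for $|v|\ge 4$, $|w|\le 2$ we have $|v-w|\ge|v|/2$, so $|(D^\beta m)(v-w)|\lesssim e^{-\tau_\beta|v|^2/4}$; combined with $\int_{|w|\le 2}|k_0(w)|\,\dd w<\infty$ this yields $|D^\beta(k_0\ast m)(v)|\lesssim e^{-\tau_\beta|v|^2/4}\lesssim\langle v\rangle^{\vartheta-|\beta|}$. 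Summing the two contributions gives \eqref{k*m-est}.

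The only genuinely substantive point is the Gaussian-convolution estimate of the second paragraph; everything else is bookkeeping. The partition of unity is the crux of the organisation: it is exactly what permits differentiating $k$ freely on the region where the gain $|v|^{-|\beta|}$ is needed, while the near-origin piece — which cannot be differentiated indefinitely — is harmless for large $|v|$ because it is annihilated by the Gaussian tail of $m$, and for bounded $|v|$ because everything is then trivially finite. An alternative would be to split the $v'$-integral with the $v$-dependent cut-off $|v'|\lessgtr|v|/2$, but that forces one to track boundary terms when differentiating under the integral sign, which the fixed cut-off $\chi(w)$ sidesteps.
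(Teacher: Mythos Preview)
Your proof is correct. The paper does not actually supply a proof of this lemma; it only cites Degond--Lemou, Guo (2002), and Kim--Guo--Hwang, where the standard argument proceeds exactly by the $v$-dependent splitting $|v'|\lessgtr |v|/2$ that you mention as an alternative at the end. Your fixed partition of unity $k=k_0+k_\infty$ is a clean reorganisation of the same idea: the near-origin piece $k_0$ plays the role of the region $|v'|\ge|v|/2$ (where the Gaussian tail of $m$ does the work), and the far piece $k_\infty$ corresponds to $|v'|\le|v|/2$ (where one differentiates $k$). The advantage of your version is precisely what you identify --- no boundary terms when differentiating under the integral --- at the cost of a slightly less direct handling of the bounded-$|v|$ region. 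One small remark: in your treatment of $|v|\le 4$, ``$k\in L^1_{\mathrm{loc}}$ together with the Gaussian decay of $D^\beta m$'' is a touch elliptical, since the far-field of $k$ must also be controlled; but this is immediate from the polynomial bound on $k$ against the Schwartz decay of $D^\beta m$, so nothing is missing.
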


\begin{corollary}[Pointwise Estimate of $\sigma_{\!G}^{ij}$ and $a_g^{i}$] \label{Cor:sigma_G-a_g}
Let the (generalized) Landau kernel $\Phi(v)$ be given by \eqref{Landau-kernel-general}\, for $-3 \leq \gamma \leq 1$.
Then $\sigma^{ij}(v)$ and $\sigma^{i}(v)$ defined in \eqref{diffusion-matrix-sigma-1}\,--\,\eqref{diffusion-matrix-sigma-3} are smooth functions of $v$ such that
\begin{equation} \label{sigma-est-1}
\big|D_{v}^{\beta} \sigma^{ij}(v) \big| \,+\, \big|D_{v}^{\beta} \sigma^{i}(v) \big|
\;\leq\; C_{\beta}\, \langle v\rangle^{\gamma+2-|\beta|} 
\end{equation}
for some $C_{\beta} >0$ with $|\beta|\in\mathbb{N}\cup \{0\}$.

In our case of the Coulomb interaction (\,i.e. $\gamma=-3$), if $G = \mu +\! \sqrt{\mu}g$ with \:$\sup_{t}\|g\|_{\infty} \leq\varepsilon \ll 1$, then
\begin{equation} \label{sigma-est-2}
\big|D_{v}^{\beta} \sigma_{\!G}^{ij}(t,x,v) \big|
\;\lesssim\; \langle v\rangle^{-1-|\beta|} \,\leq\, 1 
\end{equation}
for $|\beta| = 0,1$, and
\begin{equation} \label{sigma-est-3}
\big|a_g^{i}(t,x,v)\big|
\;\lesssim\; \sup_{t}\|g\|_{\infty}\cdot \langle v\rangle^{-1} \,\leq\, \varepsilon \;,
\end{equation}
where $\sigma_{\!G}^{ij}$ is given in \eqref{sigma_G} and $a_g^i$ in \eqref{a_g^i}.
\end{corollary}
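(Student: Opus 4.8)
The plan is to obtain \eqref{sigma-est-1} as an immediate consequence of Lemma~\ref{Lem:sigma-model-est}, and then to deduce \eqref{sigma-est-2} and \eqref{sigma-est-3} from it; the only genuinely new point will be handling the $g$-dependent parts, since $g=g(t,x,v)$ is controlled only in $L^\infty$ and in particular need not be differentiable in~$v$. For \eqref{sigma-est-1}, I would first verify the two hypotheses of Lemma~\ref{Lem:sigma-model-est}: since $\Phi^{ij}$ is homogeneous of degree $\gamma+2$ and smooth on $\R^3\setminus\{0\}$, its derivatives obey $|D^\beta\Phi^{ij}(v)|\le C_\beta'|v|^{\gamma+2-|\beta|}$ with admissible exponent $\gamma+2\ge-1>-3$ (here $\gamma\ge-3$ enters), while $\mu=e^{-|v|^2}$ and $v_j\mu$ both satisfy the Gaussian-derivative bound in \eqref{k-m-condition}. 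Applying the lemma with $k=\Phi^{ij}$ and $m=\mu$, resp.\ $m=v_j\mu$, and recalling from \eqref{diffusion-matrix-sigma-1}--\eqref{diffusion-matrix-sigma-3} that $\sigma^{ij}=\Phi^{ij}\ast\mu$ and $\sigma^i=\Phi^{ij}\ast[v_j\mu]$, yields \eqref{sigma-est-1}; smoothness in $v$ follows from differentiating under the convolution.

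For the Coulomb case $\gamma=-3$, I would split $\sigma_{\!G}^{ij}=\sigma^{ij}+\Phi^{ij}\ast[\mu^{1/2}g]$ according to \eqref{sigma_G}. The first term is $\lesssim\langle v\rangle^{-1-|\beta|}$ by \eqref{sigma-est-1} with $\gamma+2=-1$. For the second term Lemma~\ref{Lem:sigma-model-est} is not directly applicable, as $\mu^{1/2}g$ need not be $C^\infty$ in $v$; instead, for $|\beta|\le1$ I would move the derivative onto the kernel, $D_v^\beta[\Phi^{ij}\ast(\mu^{1/2}g)]=(D^\beta\Phi^{ij})\ast(\mu^{1/2}g)$, which is legitimate since $|D^\beta\Phi^{ij}(w)|\lesssim|w|^{-1-|\beta|}$ is locally integrable on $\R^3$ when $|\beta|\le1$, then bound $|\mu^{1/2}(v')g(t,x,v')|\le\|g\|_\infty e^{-|v'|^2/2}$ and estimate $\int_{\R^3}|v-v'|^{-1-|\beta|}e^{-|v'|^2/2}\,\dd v'\lesssim\langle v\rangle^{-1-|\beta|}$ by the dichotomy $\{|v'|\le|v|/2\}$, where $|v-v'|\gtrsim\langle v\rangle$, versus $\{|v'|\ge|v|/2\}$, where the Gaussian beats any polynomial --- this is exactly the splitting behind the proof of Lemma~\ref{Lem:sigma-model-est}. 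This gives \eqref{sigma-est-2}.

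Finally, for $a_g^i=-\Phi^{ij}\ast[v_j\mu^{1/2}g]-\Phi^{ij}\ast[\mu^{1/2}\partial_j g]$ from \eqref{a_g^i}, the first convolution is handled exactly as in the previous step and is $\lesssim\|g\|_\infty\langle v\rangle^{-1}$. The second convolution is the crux, because only $\|g\|_\infty$ --- and not $\|\nabla_v g\|_\infty$ --- is allowed on the right-hand side of \eqref{sigma-est-3}, so the $v$-derivative must be moved off $g$: writing $\mu^{1/2}\partial_j g=\partial_{v_j}(\mu^{1/2}g)+v_j\mu^{1/2}g$ and using $\Phi^{ij}\ast\partial_{v_j}(\cdot)=(\partial_j\Phi^{ij})\ast(\cdot)$, one rewrites it as $(\partial_j\Phi^{ij})\ast(\mu^{1/2}g)+\Phi^{ij}\ast[v_j\mu^{1/2}g]$; a direct differentiation of the explicit kernel \eqref{Landau-kernel} gives the identity $\partial_j\Phi^{ij}(w)=-2w_i|w|^{-3}$, so $|\partial_j\Phi^{ij}(w)|\lesssim|w|^{-2}$ remains locally integrable, and the same splitting estimate bounds each piece by $\|g\|_\infty\langle v\rangle^{-1}$, proving \eqref{sigma-est-3}. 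The step I expect to be the main obstacle is precisely this: the absence of $v$-regularity of $g$ forces one to differentiate the (mildly singular) Landau kernel instead of the Gaussian-weighted factor, and for $a_g^i$ to exploit in addition the divergence structure $\partial_j\Phi^{ij}\sim w_i|w|^{-3}$; in all cases the $v$-decay is supplied by the Gaussian weight and $g$ contributes only its supremum.
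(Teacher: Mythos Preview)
Your proposal is correct and follows essentially the same approach as the paper: apply Lemma~\ref{Lem:sigma-model-est} directly for \eqref{sigma-est-1}, then for the $g$-dependent pieces in \eqref{sigma-est-2}--\eqref{sigma-est-3} repeat the same near/far splitting behind that lemma, moving the $v$-derivative off $g$ by integration by parts. The paper's proof is terser---it just says ``following the proof of Lemma~\ref{Lem:sigma-model-est} with slight modification'' and ``transfer the derivatives on $g$ using integration by parts''---whereas you spell out the mechanism (local integrability of $D^\beta\Phi^{ij}$ for $|\beta|\le 1$, the explicit identity $\partial_j\Phi^{ij}(w)=-2w_i|w|^{-3}$), but the argument is the same.
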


\begin{proof}
Applying Lemma\;\ref{Lem:sigma-model-est}\, to $k = \Phi^{ij}$ with
\begin{equation*}
\big|D^{\beta} \Phi^{ij}(v)\big| \;\leq\; C_{\beta}'\, |v|^{\gamma+2-|\beta|}\quad \text{for}\,\,v\neq 0
\end{equation*}
and $m = \mu, v_j\mu$ satisfying (\ref{k-m-condition}),
the bound (\ref{sigma-est-1}) follows immediately from (\ref{k*m-est}).

For (\ref{sigma-est-2}) and (\ref{sigma-est-3}), from the definitions we clearly have
\begin{equation*}
\big|D_{v}^{\beta} \sigma_{\!G}^{ij}(t,x,v) \big|
\;\leq\; \big|D_{v}^{\beta} \sigma^{ij}(v) \big|
\,+\, \big|D_{v}^{\beta} \sigma_{\!\!\sqrt{\mu}g}^{ij}(t,x,v) \big|
\;\leq\; C_{\beta} \big(1+\,\sup_{t}\|g\|_{\infty} \big)\!\cdot \langle v\rangle^{-1-|\beta|} \;,
\end{equation*}
by following the proof of Lemma\;\ref{Lem:sigma-model-est} with slight modification,
and
\begin{equation*}
\big|a_g^{i}(t,x,v)\big|
\,\leq\, \big|\Phi^{ij}\!\ast\!\big[v_j\mu^{1/2}g\big] \big|
+ \big|\Phi^{ij}\!\ast\!\big[\mu^{1/2}\partial_j g\big] \big|
\;\lesssim\; \sup_{t}\|g\|_{\infty}\!\cdot \langle v\rangle^{-1} \;,
\end{equation*}
where (for the second term) we transfer the derivatives on $g$ using integration by parts.
\end{proof}

\begin{lemma}[Structure of the Diffusion Matrix] \label{Lem:Landau-kernel-structure}
Let $\sigma = \big[\sigma^{ij}\big]_{1\leq i,\,j\leq 3}$ be the diffusion matrix in \eqref{diffusion-matrix-sigma-2}.
For any $v\in\R^3$, $\sigma(v)$ is a $3\!\times\! 3$ real symmetric matrix.
Thus it can be diagonalized by an orthogonal matrix consisting of associated eigenvectors.

The spectrum of $\sigma(v)$ consists of a simple eigenvalue $\lambda_1(v)>0$ associated with the eigenvector $v$, and a double eigenvalue $\lambda_2(v)>0$ associated with the eigenspace $v^{\perp}$.
Furthermore, the eigenvalues can be written explicitly as
\begin{align}
\lambda_1(v) &\,=\, \int_{\R^3}\! \Big\{1-|\hat{v}\cdot\hat{w}|^2 \Big\}\,\mu(v\!-\!w)\,|w|^{\gamma+2}\,\dd w \;, \label{sigma-eigenvalues-1} \\
\lambda_2(v) &\,=\, \int_{\R^3}\! \Big\{1-\frac{1}{2}\,|\hat{v}\times\hat{w}|^2 \Big\}\,\mu(v\!-\!w)\,|w|^{\gamma+2}\,\dd w \label{sigma-eigenvalues-2} \;,
\end{align}
where $\hat{v} := \frac{v}{|v|}$, $\hat{w} := \frac{w}{|w|}$.
Asymptotically, as $|v|\rightarrow\infty$,
\begin{equation*}
\lambda_1(v) \,\sim\, c_1 \langle v\rangle^{\gamma} , \quad
\lambda_2(v) \,\sim\, c_2 \langle v\rangle^{\gamma+2} 
\end{equation*}
for some constants $c_1, c_2 >0$.

For $\gamma=-3$, and $G = \mu +\! \sqrt{\mu}g$ with $\sup_{t}\|g\|_{\infty} \leq\varepsilon \ll 1$,
the matrix $\sigma_{\!G}(t,x,v)$ has three positive eigenvalues satisfying that for all $t\geq 0$, $x\in\Omega$, and for any $v\in\R^3$,
\begin{equation} \label{sigma-eigenvalue-bound}
\frac{1}{C}\, \langle v\rangle^{-3} \,\leq\, \lambda_{G}(t,x,v) \,\leq\, C \langle v\rangle^{-1}
\end{equation}
for some constant $C>0$,
meaning that $\sigma_{\!G}$ is positive-definite/elliptic (but not uniformly elliptic in $v\in\R^3$, i.e. does not have a strictly positive lower bound independent of $v$).
\end{lemma}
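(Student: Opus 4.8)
The plan is to prove Lemma~\ref{Lem:Landau-kernel-structure} in two stages: first the algebraic structure and explicit formulas for the eigenvalues of $\sigma = \Phi\ast\mu$, then the asymptotics and the two-sided bound \eqref{sigma-eigenvalue-bound} for the perturbed matrix $\sigma_{\!G}$.

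\emph{Step 1: Symmetry and eigenstructure of $\sigma$.} Since $\Phi^{ij}(w) = \big(\delta_{ij} - \hat w_i \hat w_j\big)|w|^{\gamma+2}$ is symmetric in $(i,j)$ for every $w$, the convolution $\sigma^{ij}(v) = \int_{\R^3}\Phi^{ij}(v-w)\mu(w)\,\dd w$ inherits symmetry, so $\sigma(v)$ is a real symmetric $3\times3$ matrix and is orthogonally diagonalizable. To identify the eigenvectors, I would note that $\Phi(w)$ is precisely $|w|^{\gamma+2}$ times the orthogonal projection onto $w^\perp$; thus for fixed $v$, writing $w = v - v'$ in the integral and decomposing any test vector into its component along $v$ and its component in $v^\perp$, rotational invariance of $\mu$ about the $v$-axis forces $\sigma(v)$ to be block-diagonal in the splitting $\R^3 = \R v \oplus v^\perp$, with a scalar on the $2$-dimensional block $v^\perp$ (again by the rotational symmetry). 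This yields the simple eigenvalue $\lambda_1(v)$ on $\R v$ and the double eigenvalue $\lambda_2(v)$ on $v^\perp$. The explicit formulas \eqref{sigma-eigenvalues-1}--\eqref{sigma-eigenvalues-2} then come from computing $\lambda_1(v) = \hat v^{\!\top}\sigma(v)\hat v$ and $\lambda_2(v) = \tfrac12\big(\tr\sigma(v) - \lambda_1(v)\big)$, using $\sum_{ij}\Phi^{ij}(w)\hat v_i\hat v_j = (1-|\hat v\cdot\hat w|^2)|w|^{\gamma+2}$ and $\tr\Phi(w) = 2|w|^{\gamma+2}$, together with the change of variables $w \mapsto v-w$. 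Positivity of both eigenvalues is immediate since $\Phi(w)$ is a nonnegative matrix and $\mu > 0$, and the integrand $1 - |\hat v\cdot\hat w|^2 = |\hat v\times\hat w|^2 \ge 0$ vanishes only on a null set.

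\emph{Step 2: Asymptotics as $|v|\to\infty$.} For the large-velocity behavior I would split the integrals into the region $|w| \le |v|/2$ (where $\mu(v-w)$ is exponentially small, $O(e^{-c|v|^2})$, contributing a negligible amount) and $|w| \ge |v|/2$. On the latter region $|w|^{\gamma+2} \sim |v|^{\gamma+2}$ up to constants, but the angular factor behaves differently: for $\lambda_2$ the factor $1 - \tfrac12|\hat v\times\hat w|^2$ stays bounded away from $0$, so $\lambda_2(v) \sim c_2\langle v\rangle^{\gamma+2}$; for $\lambda_1$ the factor $1 - |\hat v\cdot\hat w|^2$ is small precisely where $w$ is nearly parallel to $v$ — and that is exactly where $\mu(v-w)$ is largest — so one gains two extra powers of $|v|^{-1}$ from the Taylor expansion of the angular factor near the axis, giving $\lambda_1(v) \sim c_1\langle v\rangle^{\gamma}$. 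This matches the known computation (cf.\ \cite{Degond.Lemou1997,Guo2002(=)}); I would present it as a Laplace-type estimate rather than grinding through the exact constants.

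\emph{Step 3: The perturbed matrix $\sigma_{\!G}$.} Here $\sigma_{\!G} = \sigma + \sigma_{\!\!\sqrt\mu g}$. By Corollary~\ref{Cor:sigma_G-a_g}, in the Coulomb case $\gamma=-3$ we have $|D_v^\beta\sigma^{ij}(v)| \lesssim \langle v\rangle^{-1-|\beta|}$ and $|D_v^\beta\sigma_{\!\!\sqrt\mu g}^{ij}| \lesssim \varepsilon\langle v\rangle^{-1-|\beta|}$ for $|\beta|=0,1$, so $\sigma_{\!G}$ is a symmetric matrix-valued perturbation of $\sigma$ of relative size $O(\varepsilon)$ in the appropriate weighted sense. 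For the upper bound in \eqref{sigma-eigenvalue-bound}: the operator norm of $\sigma_{\!G}(v)$ is at most $\tr\sigma_{\!G}(v) \lesssim \langle v\rangle^{-1}$ directly. For the lower bound: from Step 2, $\sigma$ has its \emph{smallest} eigenvalue $\lambda_1(v) \gtrsim \langle v\rangle^{-3}$ (this is the binding direction, since $\gamma+2 = -1 > -3$), so $\xi^{\!\top}\sigma(v)\xi \ge c\langle v\rangle^{-3}|\xi|^2$ for all $\xi$; then $\xi^{\!\top}\sigma_{\!\!\sqrt\mu g}(v)\xi \ge -\|\sigma_{\!\!\sqrt\mu g}(v)\|\,|\xi|^2 \ge -C\varepsilon\langle v\rangle^{-1}|\xi|^2 \ge -C\varepsilon\langle v\rangle^{-3}|\xi|^2$, which is absorbed by taking $\varepsilon$ small. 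Hence $\xi^{\!\top}\sigma_{\!G}(v)\xi \ge \tfrac{c}{2}\langle v\rangle^{-3}|\xi|^2$, i.e.\ all three eigenvalues $\lambda_G(t,x,v)$ lie in $[\tfrac{1}{C}\langle v\rangle^{-3}, C\langle v\rangle^{-1}]$, which is \eqref{sigma-eigenvalue-bound}; ellipticity (but not uniform ellipticity) follows since the lower bound degenerates as $|v|\to\infty$.

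\emph{Main obstacle.} The routine parts are the symmetry and the $\sigma_{\!G}$-perturbation argument. The one step requiring genuine care is the asymptotic lower bound $\lambda_1(v)\gtrsim\langle v\rangle^{\gamma}$ in Step 2: one must correctly exploit the cancellation between the vanishing of the angular factor $1-|\hat v\cdot\hat w|^2$ along the $v$-axis and the concentration of the Gaussian weight $\mu(v-w)$ there, which is exactly the delicate anisotropic Laplace estimate. I would either cite \cite{Degond.Lemou1997,Guo2002(=)} for the precise constants or carry out the split-region estimate quantitatively, being careful that the exponentially small contribution from $|w|\le|v|/2$ does not interfere with the polynomial lower bound.
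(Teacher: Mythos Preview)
The paper does not prove this lemma; it is one of the preliminary results explicitly omitted at the start of Section~\ref{Sec:Prelim}, with references to \cite{Degond.Lemou1997}, \cite{Guo2002(=)}, and \cite{Kim.Guo.Hwang2020}. Your Steps~1 and~2 are correct and follow the standard route in those references.

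Step~3 contains a genuine gap. The chain
\[
\xi^{\!\top}\sigma_{\!\!\sqrt\mu g}(v)\xi \;\ge\; -C\varepsilon\langle v\rangle^{-1}|\xi|^2 \;\ge\; -C\varepsilon\langle v\rangle^{-3}|\xi|^2
\]
is false: since $\langle v\rangle^{-1}\ge\langle v\rangle^{-3}$, the last inequality goes the wrong way. This is not a slip but a structural obstruction: the operator norm of the perturbation is $O(\varepsilon\langle v\rangle^{-1})$, and for large $|v|$ this dominates the smallest eigenvalue $\lambda_1(v)\sim\langle v\rangle^{-3}$ of $\sigma$ no matter how small $\varepsilon$ is. A naive Weyl-type absorption therefore cannot work.

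The correct argument (cf.\ \cite[Lemma~3]{Guo2002(=)}, \cite[Lemma~2.4]{Kim.Guo.Hwang2020}) is to show that $\sigma_{\!\!\sqrt\mu g}$ inherits the same anisotropy as $\sigma$. Writing $\xi=\alpha\hat v+\beta$ with $\beta\perp v$, the same localization argument you use in Step~2 (the weight $\sqrt\mu(v')$ confines $v'$ to a bounded region) gives
\[
\hat v^{\top}\sigma_{\!\!\sqrt\mu g}(v)\,\hat v = O(\varepsilon\langle v\rangle^{\gamma}),\quad
\hat v^{\top}\sigma_{\!\!\sqrt\mu g}(v)\,\beta = O(\varepsilon\langle v\rangle^{\gamma+1}),\quad
\beta^{\top}\sigma_{\!\!\sqrt\mu g}(v)\,\beta = O(\varepsilon\langle v\rangle^{\gamma+2}).
\]
Each diagonal block is then an $O(\varepsilon)$-relative perturbation of the corresponding eigenvalue of $\sigma$, and the cross term is controlled by Cauchy--Schwarz using $\langle v\rangle^{\gamma+1}=\big(\langle v\rangle^{\gamma}\langle v\rangle^{\gamma+2}\big)^{1/2}$, after which it absorbs into the diagonal blocks. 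Ironically, the step you flagged as the ``main obstacle'' is fine; the step you called routine is where your argument actually fails.
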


\begin{corollary}[Lower Bound for the $\sigma$-norm] \label{Cor:sigma-norm-bound}
For any $3d$-\,vector field $\mathbf{g}(v) = \langle g_1,g_2,g_3\rangle$, let
$\mathsf{q}_{\sigma}[\mathbf{g}](v) :=\, \mathbf{g}^{T}\!\sigma\mathbf{g} \,= \sum_{i,j=1}^{3}\! \sigma^{ij}g_i\, g_j$ denote the quadratic form associated with the diffusion matrix $\sigma(v)$.
Denote also by $P_{v\,}\mathbf{g} := (\mathbf{g}\cdot\hat{v})\:\hat{v}$ the projection of $\mathbf{g}(v)$ onto the subspace spanned by the vector $v$.
Then
\begin{equation*}
\mathsf{q}_{\sigma}[\mathbf{g}](v) \,=\; \lambda_1(v)\, \big|P_{v\,}\mathbf{g}\big|^2
\,+\, \lambda_2(v)\, \big|[I\!-\!P_{v}]\,\mathbf{g}\big|^2 ,
\end{equation*}
where $\lambda_1(v), \lambda_2(v)$ are eigenvalues of $\sigma(v)$ given by \eqref{sigma-eigenvalues-1} and \eqref{sigma-eigenvalues-2}.

Let $|\cdot|_{\sigma,\vartheta}$ and $\|\cdot\|_{\sigma,\vartheta}$ be the weighted $\sigma$-norms defined in Section\;\ref{SubsubSec:Dissipation-Energy}.
Then there exists $C_{\vartheta} >0$ such that
\begin{equation*}
\begin{split}
|g|_{\sigma,\vartheta}^2
&\,\geq\; C_{\vartheta} \left\{\, \left| \langle v\rangle^{\vartheta-\frac{3}{2}} \big|P_{v}(\nabla_{\!v}g)\big| \right|_{2}^2
+\, \left| \langle v\rangle^{\vartheta-\frac{1}{2}} \big|[I\!-\!P_{v}](\nabla_{\!v}g)\big| \right|_{2}^2
+\, \left| \langle v\rangle^{\vartheta-\frac{1}{2}} g \right|_{2}^2 \,\right\} \\
&\;\gtrsim\; |g|_{2,\vartheta-\frac{1}{2}}^2 \;,
\end{split}
\end{equation*}
and therefore,
\begin{equation*}
\|g\|_{\sigma,\vartheta} \,\gtrsim\, \|g\|_{2,\vartheta-\frac{1}{2}} \;.
\end{equation*}
\end{corollary}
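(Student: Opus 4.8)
The plan is to establish the pointwise identity for $\mathsf{q}_\sigma[\mathbf{g}]$ first, and then integrate against the weight $\langle v\rangle^{2\vartheta}$ to read off the lower bound on $|g|_{\sigma,\vartheta}^2$. For the pointwise identity: by Lemma \ref{Lem:Landau-kernel-structure}, $\sigma(v)$ is real symmetric with eigenvalue $\lambda_1(v)$ on the line $\R\hat v$ and $\lambda_2(v)$ on the orthogonal plane $v^\perp$. Decompose the arbitrary vector $\mathbf{g}$ as $\mathbf{g} = P_v\mathbf{g} + (I-P_v)\mathbf{g}$, where $P_v\mathbf{g} = (\mathbf{g}\cdot\hat v)\hat v \in \R\hat v$ and $(I-P_v)\mathbf{g}\in v^\perp$; since these two components are orthogonal and each is an eigenvector of $\sigma(v)$, the quadratic form diagonalizes: $\mathbf{g}^T\sigma\mathbf{g} = \lambda_1(v)|P_v\mathbf{g}|^2 + \lambda_2(v)|(I-P_v)\mathbf{g}|^2$. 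That is the claimed identity with no computation beyond bilinearity.

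Next, apply this identity pointwise with $\mathbf{g} = \nabla_v g(v)$ inside the definition of $|g|_{\sigma,\vartheta}^2 = \int_{\R^3}\langle v\rangle^{2\vartheta}[\sigma^{ij}\partial_i g\,\partial_j g + \sigma^{ij}v_iv_j g^2]\,\dd v$. The first term in the integrand equals $\langle v\rangle^{2\vartheta}\big(\lambda_1|P_v\nabla_v g|^2 + \lambda_2|(I-P_v)\nabla_v g|^2\big)$. For the second term, note $\sigma^{ij}v_iv_j = v^T\sigma v = \lambda_1(v)|v|^2$ since $v$ itself is the eigenvector for $\lambda_1$; thus $\sigma^{ij}v_iv_j\,g^2 = \lambda_1(v)|v|^2 g^2$. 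Now I invoke the asymptotics from Lemma \ref{Lem:Landau-kernel-structure}: for $\gamma=-3$, $\lambda_1(v)\sim c_1\langle v\rangle^{-3}$ and $\lambda_2(v)\sim c_2\langle v\rangle^{-1}$ as $|v|\to\infty$, and both are smooth and strictly positive on all of $\R^3$, so there is a uniform constant with $\lambda_1(v)\gtrsim\langle v\rangle^{-3}$, $\lambda_2(v)\gtrsim\langle v\rangle^{-1}$, and $\lambda_1(v)|v|^2\gtrsim\langle v\rangle^{-3}|v|^2$. Since $\langle v\rangle^{-3}|v|^2$ is comparable to $\langle v\rangle^{-1}$ for large $|v|$ and bounded below away from zero on compact sets, we also get $\lambda_1(v)|v|^2\gtrsim\langle v\rangle^{-1}$. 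Substituting these lower bounds into the integral and pulling out the constant yields exactly
\begin{equation*}
|g|_{\sigma,\vartheta}^2 \,\geq\, C_\vartheta\left\{\big|\langle v\rangle^{\vartheta-\frac32}|P_v\nabla_v g|\big|_2^2 + \big|\langle v\rangle^{\vartheta-\frac12}|(I-P_v)\nabla_v g|\big|_2^2 + \big|\langle v\rangle^{\vartheta-\frac12}g\big|_2^2\right\},
\end{equation*}
and the last of the three terms alone gives $|g|_{\sigma,\vartheta}^2 \gtrsim |g|_{2,\vartheta-\frac12}^2$, hence $\|g\|_{\sigma,\vartheta}\gtrsim\|g\|_{2,\vartheta-\frac12}$ after integrating in $x$ over $\Omega$ and applying Fubini.

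The only genuine point requiring care — and what I would treat as the ``main obstacle,'' though it is more of a bookkeeping matter than a real difficulty — is converting the asymptotic relations $\lambda_i(v)\sim c_i\langle v\rangle^{\gamma+2i-2}$ (valid only as $|v|\to\infty$) into the global two-sided bounds needed for the estimate. This follows because $\lambda_1,\lambda_2$ are continuous and strictly positive everywhere (Lemma \ref{Lem:Landau-kernel-structure} asserts positivity of all eigenvalues of $\sigma(v)$, and of $\sigma_G(v)$), so on any ball $\{|v|\leq R\}$ they are bounded below by a positive constant, while for $|v|\geq R$ the asymptotics give the $\langle v\rangle$-power lower bound; choosing $R$ appropriately and taking the minimum of the two regimes produces the uniform constant $C_\vartheta$. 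One should also observe that the constant $C_\vartheta$ is allowed to depend on $\vartheta$ only through the fixed weight exponent and not on $g$, which is immediate since the argument is entirely pointwise in $v$ before integration. The passage from $|\cdot|_{\sigma,\vartheta}$ to $\|\cdot\|_{\sigma,\vartheta}$ is then just integration in $x$.
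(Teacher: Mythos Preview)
Your identity for $\mathsf{q}_\sigma[\mathbf{g}]$ and your treatment of the two gradient pieces are fine and match the structure of the references the paper cites (the paper itself omits the proof and points to \cite{Guo2002(=), Kim.Guo.Hwang2020, Degond.Lemou1997}). The gap is in the third term. You assert that $\lambda_1(v)|v|^2 \gtrsim \langle v\rangle^{-1}$ globally, arguing that $\langle v\rangle^{-3}|v|^2$ is ``bounded below away from zero on compact sets.'' That is false: $\lambda_1(0)>0$ is finite, so $\lambda_1(v)|v|^2\to 0$ as $v\to 0$, while $\langle v\rangle^{-1}\to 1$. Hence $\sigma^{ij}v_iv_j\,g^2$ alone cannot dominate $\langle v\rangle^{-1}g^2$ pointwise near the origin, and your purely pointwise route to $\big|\langle v\rangle^{\vartheta-\frac12}g\big|_2^2$ breaks down there.

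The fix requires using the gradient part of the $\sigma$-norm on the ball $\{|v|\le 1\}$. Split the integral: for $|v|\ge 1$ your pointwise bound $\lambda_1(v)|v|^2\gtrsim\langle v\rangle^{-1}$ is valid and gives the contribution directly. For $|v|\le 1$, note that $\sigma(v)$ is uniformly elliptic there, so $|g|_{\sigma,\vartheta}^2$ controls $\int_{|v|\le 2}|\nabla_v g|^2$, and from the $|v|\ge 1$ piece you already control $\int_{1\le|v|\le 2}g^2$. A Poincar\'e-type inequality on $B_2$ (or a compactness/contradiction argument) then yields $\int_{|v|\le 1}g^2 \lesssim \int_{B_2}|\nabla_v g|^2 + \int_{1\le|v|\le 2}g^2$, which closes the estimate. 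This is the step the cited references carry out; your write-up should flag it rather than claim a pointwise bound that does not hold.
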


\begin{remark}
Although the $\sigma$-norm contains first-order velocity derivatives, it is still not strong enough to control the $L^2$\;norm. However, we manage to bound the $L^2$ by this $\sigma$-norm at the cost of a minimal half power of weight loss.
\end{remark}

The next two lemmas on the Landau operators are useful in the energy estimates in Sections \ref{Sec:Energy-est-decay} \ref{Sec:Main-Thm-Pf}).
We first record a basic estimate about the linear collision operator $L$
(cf. the proofs of \cite[Lemma\;6]{Guo2002(=)} and \cite[Lemma\;2.7]{Kim.Guo.Hwang2020}).

\begin{lemma}[Linear Estimate for $L$] \label{Lem:L-est}
Let $L = -A-K$ be given by \eqref{L-opt}\,--\,\eqref{K-opt}, and let $\vartheta\in\R$.
For any small $\delta>0$, there exists $C_{\vartheta,\delta} = C_{\vartheta}(\delta) >0$ such that
\begin{align*}
(1-\delta)|g|_{\sigma,\vartheta}^2 \,-\, C_{\vartheta,\delta}\, |\mu g|_{2}^2 \;\leq -\, \big\langle w^{2\vartheta} Ag, g \big\rangle \leq (1+\delta)|g|_{\sigma,\vartheta}^2 \,+\, C_{\vartheta,\delta}\, |\mu g|_{2}^2 \;
\end{align*}
and
\begin{align*}
\big|\big\langle w^{2\vartheta} Kg_1, g_2 \big\rangle \big|
&\;\leq\; \big(\delta|g_1|_{\sigma,\vartheta} + C_{\vartheta,\delta} |\mu g_1|_{2} \big)\, |g_2|_{\sigma,\vartheta} \;.
\end{align*}
Here the Maxwellian $\mu$ is a convenient choice of a mollified characteristic function of a ball, which can be replaced by any function of $v$ with sufficiently fast decay at infinity.
As a consequence (by taking $\delta\leq \frac{1}{4}$), we have
\begin{equation} \label{L-est}
\frac{1}{2}\, |g|_{\sigma,\vartheta}^2 \,-\, C_{\vartheta} |g|_{\sigma}^2
\;\leq\; \big\langle w^{2\vartheta} Lg, g \big\rangle \,
 \;\leq\; \frac{3}{2}\, |g|_{\sigma,\vartheta}^2 \,+\, C_{\vartheta} |g|_{\sigma}^2 \;,
\end{equation}
and
\begin{equation} \label{L-est-2}
\frac{1}{2}\, |g|_{\sigma,\vartheta}^2 \,-\, C_{\vartheta} |g|_{2,\vartheta}^2
\;\leq\; \big\langle w^{2\vartheta} Lg, g \big\rangle \,
 \;\leq\; \frac{3}{2}\, |g|_{\sigma,\vartheta}^2 \,+\, C_{\vartheta} |g|_{2,\vartheta}^2 \;.
\end{equation}
\end{lemma}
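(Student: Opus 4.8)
The plan is to estimate $-A$ and $-K$ separately, in the spirit of \cite[Lemma~6]{Guo2002(=)} and \cite[Lemma~2.7]{Kim.Guo.Hwang2020}, and then combine them via Young's inequality. For the $A$-term I would use the explicit form $Ag=\partial_i[\sigma^{ij}\partial_j g]-\sigma^{ij}v_iv_j g+(\partial_i\sigma^i)g$ from \eqref{A-opt} and integrate by parts in $v$ against $w^{2\vartheta}g$ (over all of $\R^3$, with no boundary term), which produces
\[
-\big\langle w^{2\vartheta}Ag,g\big\rangle
\,=\, |g|_{\sigma,\vartheta}^2
\,+\, \int_{\R^3}(\partial_i w^{2\vartheta})\,g\,\sigma^{ij}\partial_j g\,\dd v
\,-\, \int_{\R^3}w^{2\vartheta}(\partial_i\sigma^i)\,g^2\,\dd v ,
\]
so, writing $E$ for the last two integrals, it suffices to show $|E|\le\delta\,|g|_{\sigma,\vartheta}^2+C_{\vartheta,\delta}\,|\mu g|_2^2$. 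Since $\partial_i w^{2\vartheta}=2\vartheta\,\langle v\rangle^{2\vartheta-2}v_i$ and Cauchy--Schwarz for the positive semidefinite form $\sigma$ gives $|v_i\sigma^{ij}\partial_j g|\le(\sigma^{ij}v_iv_j)^{1/2}(\sigma^{ij}\partial_i g\partial_j g)^{1/2}$, splitting the weight and using Young's inequality bounds the first integral by $\eta\,|g|_{\sigma,\vartheta}^2+C_\eta\!\int\!\langle v\rangle^{2\vartheta-4}\sigma^{ij}v_iv_j\,g^2\,\dd v$; by Lemma~\ref{Lem:Landau-kernel-structure} ($\lambda_1\sim\langle v\rangle^{-3}$, hence $\sigma^{ij}v_iv_j\sim\langle v\rangle^{-1}$) its last integrand is $\lesssim\langle v\rangle^{2\vartheta-5}g^2$, while by Corollary~\ref{Cor:sigma_G-a_g} ($|\partial_i\sigma^i|\lesssim\langle v\rangle^{-2}$) the second integral is $\lesssim\int\langle v\rangle^{2\vartheta-2}g^2\,\dd v$. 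Since Corollary~\ref{Cor:sigma-norm-bound} already gives $|g|_{\sigma,\vartheta}^2\gtrsim\int\langle v\rangle^{2\vartheta-1}g^2\,\dd v$, both error integrands carry a strictly smaller power of $\langle v\rangle$; hence their contribution over $\{|v|>R\}$ is $\le CR^{-1}|g|_{\sigma,\vartheta}^2$ and over $\{|v|\le R\}$ is $\le C_R|\mu g|_2^2$ (recall $\mu$ in the statement may be any fast-decaying cutoff). Choosing first $\eta$ small, then $R=R(\delta)$ large, yields $|E|\le\delta\,|g|_{\sigma,\vartheta}^2+C_{\vartheta,\delta}|\mu g|_2^2$, hence both inequalities for $-\langle w^{2\vartheta}Ag,g\rangle$.

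For the $K$-term, I would integrate by parts in $v$ in $\langle w^{2\vartheta}Kg_1,g_2\rangle$ to move the outer $\partial_i$ in \eqref{K-opt} onto $w^{2\vartheta}g_2\mu^{-1/2}$ (using $\partial_i\mu^{-1/2}=v_i\mu^{-1/2}$ and cancelling one $\mu^{-1/2}$ against the $\mu$, which leaves an overall Gaussian factor $\mu^{1/2}(v)$), and then rewrite the inner bracket via $\mu^{1/2}(\partial_j g_1+v_j g_1)=\partial_j(\mu^{1/2}g_1)+2v_j\mu^{1/2}g_1$. A convolution-variable integration by parts turns the first piece into $(\partial_j\Phi^{ij})\ast(\mu^{1/2}g_1)$, and the kernel identity \eqref{Landau-kernel-property} turns the second into $2v_j\big[\Phi^{ij}\ast(\mu^{1/2}g_1)\big]$; in both, no $v$-derivative of $g_1$ survives, and by Lemma~\ref{Lem:sigma-model-est} these are bounded by a power of $\langle v\rangle$ times $|\mu^{1/2}g_1|_{L^1}\lesssim|\mu g_1|_2$ (after renaming the cutoff). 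Bounding the $g_2$-factor by Cauchy--Schwarz against the Gaussian $\mu^{1/2}(v)$ — which beats any polynomial, so that $\int w^{2\vartheta}\mu^{1/2}(v)\langle v\rangle^{N}\,\dd v<\infty$ for every $N$, while $\int w^{2\vartheta}\langle v\rangle^{-3}|\nabla_v g_2|^2\,\dd v\lesssim|g_2|_{\sigma,\vartheta}^2$ because $\langle v\rangle^{-3}$ is the smallest eigenvalue weight — yields $|\langle w^{2\vartheta}Kg_1,g_2\rangle|\lesssim C_\vartheta\,|\mu g_1|_2\,|g_2|_{\sigma,\vartheta}$, which in particular implies the stated (weaker) bound with the $\delta|g_1|_{\sigma,\vartheta}$ term.

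Finally, \eqref{L-est}--\eqref{L-est-2} follow by setting $g_1=g_2=g$ in the $K$-estimate, absorbing its contribution into the $A$-estimate via Young's inequality to get $(1-3\delta)|g|_{\sigma,\vartheta}^2-C_{\vartheta,\delta}|\mu g|_2^2\le\langle w^{2\vartheta}Lg,g\rangle\le(1+3\delta)|g|_{\sigma,\vartheta}^2+C_{\vartheta,\delta}|\mu g|_2^2$, and taking $\delta=1/6$. For \eqref{L-est-2} one uses $|\mu g|_2^2\le\big(\sup_v\mu^2\langle v\rangle^{-2\vartheta}\big)|g|_{2,\vartheta}^2\le C_\vartheta|g|_{2,\vartheta}^2$; for \eqref{L-est} one uses $|\mu g|_2^2\lesssim|g|_\sigma^2$, which holds because $\mu^2$ is dominated by $\sigma^{ij}v_iv_j$ on $\{|v|\ge1\}$ (Lemma~\ref{Lem:Landau-kernel-structure}) while, on $\{|v|\le1\}$, the ellipticity of $\sigma$ on $\{|v|\le2\}$ together with the Poincar\'e-type bound $\int_{|v|\le1}g^2\lesssim\int_{|v|\le2}|\nabla_v g|^2+\int_{1\le|v|\le2}g^2$ controls $\int_{|v|\le1}g^2$ by $|g|_\sigma^2$. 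The only delicate point is the weight bookkeeping in the $A$-term — checking that every error term carries a strictly smaller power of $\langle v\rangle$ than the leading $g^2$-weight $\langle v\rangle^{2\vartheta-1}$ of $|g|_{\sigma,\vartheta}^2$, which is exactly what the sharp decay estimates $|D^\beta\sigma^{ij}|,|D^\beta\sigma^i|\lesssim\langle v\rangle^{-1-|\beta|}$ (Corollary~\ref{Cor:sigma_G-a_g}) and the eigenvalue asymptotics (Lemma~\ref{Lem:Landau-kernel-structure}) supply, so that the velocity truncation genuinely buys the small constant $\delta$; everything else is routine.
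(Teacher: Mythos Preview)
Your proposal is correct and follows precisely the approach of \cite[Lemma~6]{Guo2002(=)} and \cite[Lemma~2.7]{Kim.Guo.Hwang2020}, which is exactly what the paper defers to --- the paper does not give its own proof of this lemma but simply records it with those citations. Your integration-by-parts identity for $-\langle w^{2\vartheta}Ag,g\rangle$, the large/small $|v|$ splitting to gain the factor $\delta$, the handling of $K$ via the Gaussian factor, and the final derivation of \eqref{L-est}--\eqref{L-est-2} (including the Poincar\'e-type argument for $|\mu g|_2^2\lesssim|g|_\sigma^2$ near the origin) are all in line with the standard argument.
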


For the nonlinear collision operator $\Gamma$, we estimate $\left(w^{2\vartheta} \Gamma[g_1,g_2],g_3 \right)$ in terms of $\|\cdot\|_{\infty}$, $\|\cdot\|_{2,\vartheta}$, and $\|\cdot\|_{\sigma,\vartheta}$ without higher-order regularity.
Note that $\Gamma[g_1,g_2]$ is non-symmetric, so we need to estimate this nonlinear term in two different ways, with the energy norm on two variables respectively.
(cf. \cite[Theorem\;3]{Guo2002(=)} and \cite[Theorem\;2.8]{Kim.Guo.Hwang2020})

\begin{lemma}[Nonlinear Estimate for $\Gamma$] \label{Lem:Gamma-est}
Let $\Gamma$ be defined as in \eqref{Gamma-opt}, then
for every $\vartheta\in\R$, there exists $C_{\vartheta} >0$ such that
\begin{equation} \label{Gamma-est-1}
\left|\left(w^{2\vartheta} \Gamma[g_1,g_2],g_3 \right)\right|
\;\leq\; C_{\vartheta}\, \|g_1\|_{\infty} \|g_2\|_{\sigma,\vartheta} \|g_3\|_{\sigma,\vartheta} \;.
\end{equation}
Moreover, for any $\vartheta \leq -2$, we have
\begin{equation} \label{Gamma-est-2}
\left|\left(w^{2\vartheta} \Gamma[g_1,g_2],g_3 \right)\right|
\;\leq\; C_{\vartheta} \big( \|g_2\|_{\infty} \!+\! \|D_v g_2\|_{\infty} \big) \!\cdot
\min\big\{\|g_1\|_{2,\vartheta}, \|g_1\|_{\sigma,\vartheta} \big\}\, \|g_3\|_{\sigma,\vartheta} \;.
\end{equation}
\end{lemma}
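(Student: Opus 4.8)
The plan is to prove the two bounds \eqref{Gamma-est-1} and \eqref{Gamma-est-2} by expanding $\Gamma[g_1,g_2]$ according to the explicit formula \eqref{Gamma-opt}, integrating by parts in $v$ to move all $v$-derivatives off $g_3$ onto the smoother factors, and then estimating the resulting convolution kernels by means of Corollary \ref{Cor:sigma_G-a_g} and Lemma \ref{Lem:sigma-model-est}. Concretely, each term of $\Gamma[g_1,g_2]$ has the schematic form $\partial_i\bigl[\{\Phi^{ij}\ast(\mu^{1/2}h_1)\}\,\partial_j g_2\bigr]$ or $\{\Phi^{ij}\ast(v_i\mu^{1/2}h_1)\}\,\partial_j g_2$ (and analogous terms with the roles of $g_1,g_2$ partly swapped), where $h_1\in\{g_1,\partial_j g_1\}$. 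Pairing against $w^{2\vartheta}g_3$ and integrating by parts in $v$ when the outer $\partial_i$ is present, I land on integrals of the type $\int w^{2\vartheta}\,\{\Phi^{ij}\ast(\mu^{1/2}h_1)\}\,\partial_j g_2\,\partial_i g_3$ plus lower-order pieces coming from derivatives hitting $w^{2\vartheta}$ and from the $v_i$-type terms. The key structural input is that $\Phi^{ij}\ast(\mu^{1/2}h_1)$, $\Phi^{ij}\ast(v_i\mu^{1/2}h_1)$ behave like $\sigma^{ij}$ up to the size of $h_1$: by Lemma \ref{Lem:sigma-model-est} (applied with $k=\Phi^{ij}$ and $m=\mu^{1/2}$, using $|\mu^{1/2}h_1|\lesssim\|h_1\|_\infty\,e^{-c|v|^2}$) one gets the pointwise bound $|\Phi^{ij}\ast(\mu^{1/2}h_1)(v)|\lesssim \|h_1\|_\infty\langle v\rangle^{\gamma+2}$, matching the decay of $\sigma^{ij}$ from Corollary \ref{Cor:sigma_G-a_g}.

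For \eqref{Gamma-est-1} I would put the $\|\cdot\|_\infty$ on $g_1$ everywhere. After the integration by parts, the leading terms are controlled by
\[
\Bigl|\iint w^{2\vartheta}\{\Phi^{ij}\ast(\mu^{1/2}g_1)\}\partial_j g_2\,\partial_i g_3\Bigr|
\lesssim \|g_1\|_\infty \iint \langle v\rangle^{2\vartheta}\langle v\rangle^{\gamma+2}|\nabla_v g_2||\nabla_v g_3|,
\]
and since the matrix $\sigma$ is comparable to $\langle v\rangle^{\gamma+2}$ on its dominant eigenspace (Lemma \ref{Lem:Landau-kernel-structure}, Corollary \ref{Cor:sigma-norm-bound}), the right-hand side is bounded (after Cauchy–Schwarz and absorbing the projection structure as in Corollary \ref{Cor:sigma-norm-bound}) by $\|g_1\|_\infty\,|g_2|_{\sigma,\vartheta}\,|g_3|_{\sigma,\vartheta}$ in the velocity-only version, and by $\|g_1\|_\infty\,\|g_2\|_{\sigma,\vartheta}\,\|g_3\|_{\sigma,\vartheta}$ after integrating in $x$. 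The terms where $\partial_j$ stays on $g_2$ but there is no outer derivative, or where $\partial_i$ falls on $w^{2\vartheta}$, produce one fewer velocity derivative and are handled the same way, the weight derivative $|\nabla_v w^{2\vartheta}|\lesssim w^{2\vartheta-1}\langle v\rangle$ being harmless since $\langle v\rangle^{\gamma+2}\le\langle v\rangle$; the terms carrying $\partial_j g_1$ inside the convolution are absorbed because the Gaussian $\mu^{1/2}$ eats the derivative after one more integration by parts, again leaving a factor $\|g_1\|_\infty$.

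For \eqref{Gamma-est-2} the difference is that I need the $L^\infty$-type control on $g_2$ (and its first $v$-derivative) rather than on $g_1$, so in each term I integrate by parts so as to take the $v$-derivatives off $g_2$ and onto $g_1$ and $\mu^{1/2}$; the hypothesis $\vartheta\le -2$ is what makes the weight $w^{2\vartheta}$ decay fast enough that, combined with $\langle v\rangle^{\gamma+2}\le\langle v\rangle$ and the Gaussian, the remaining $v$-integral against $|g_1|$ or $|\nabla_v g_1|$ converges and can be estimated by $\min\{\|g_1\|_{2,\vartheta},\|g_1\|_{\sigma,\vartheta}\}$ after Cauchy–Schwarz in $v$ (the $\sigma$-norm option being available because $\|g\|_{\sigma,\vartheta}\gtrsim\|g\|_{2,\vartheta-1/2}$ by Corollary \ref{Cor:sigma-norm-bound}, and the $L^2$ option directly), while $\|g_3\|_{\sigma,\vartheta}$ always absorbs the one genuine derivative that cannot be moved off $g_3$. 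I expect the main obstacle to be bookkeeping rather than conceptual: one must track, term by term in the four-term expansion \eqref{Gamma-opt}, exactly where each of the three $v$-derivatives ends up after the integrations by parts, and verify in the borderline terms — those in which a derivative is forced onto $w^{2\vartheta}$ and one factor carries the full $\langle v\rangle^{\gamma+2}$ weight — that the weight powers still balance; this is precisely where the restriction $\vartheta\le -2$ in the second estimate is used, and checking it carefully (together with confirming that the non-symmetry of $\Gamma$ does not cost more) is the delicate point.
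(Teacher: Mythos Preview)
The paper does not give its own proof of this lemma: it simply records the two estimates and refers the reader to \cite[Theorem~3]{Guo2002(=)} for \eqref{Gamma-est-1} and \cite[Theorem~2.8]{Kim.Guo.Hwang2020} for \eqref{Gamma-est-2}. Your outline---expand $\Gamma$ via \eqref{Gamma-opt}, integrate by parts to redistribute $v$-derivatives, bound the convolutions $\Phi^{ij}\ast(\mu^{1/2}h)$ pointwise by $\|h\|_\infty\langle v\rangle^{\gamma+2}$, and close with Cauchy--Schwarz against the anisotropic $\sigma$-norm---is exactly the strategy those references execute, so your plan is correct and matches the intended argument.

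Two small points worth tightening when you write it out. First, Lemma~\ref{Lem:sigma-model-est} as stated asks for $m\in C^\infty$ with Gaussian decay on all derivatives, whereas you apply it with $m=\mu^{1/2}g_1$; what you actually need is only the $|\beta|=0$ bound, for which $|\mu^{1/2}g_1|\le\|g_1\|_\infty\,\mu^{1/2}$ suffices, so just say that directly rather than invoking the lemma verbatim. Second, in the leading term you cannot simply replace $\{\Phi^{ij}\ast(\mu^{1/2}g_1)\}$ by a scalar $\langle v\rangle^{\gamma+2}$ and then apply Cauchy--Schwarz, because the $\sigma$-norm is genuinely anisotropic ($\lambda_1\sim\langle v\rangle^{\gamma}$ along $v$, $\lambda_2\sim\langle v\rangle^{\gamma+2}$ on $v^\perp$); the clean way, as in \cite{Guo2002(=)}, is to observe that $\Phi^{ij}\ast(\mu^{1/2}g_1)$ is itself a nonnegative matrix dominated by $C\|g_1\|_\infty\,\sigma^{ij}$, so the quadratic form $\{\Phi^{ij}\ast(\mu^{1/2}g_1)\}\partial_j g_2\,\partial_i g_3$ is bounded directly by $C\|g_1\|_\infty\,(\sigma^{ij}\partial_j g_2\partial_j g_2)^{1/2}(\sigma^{ij}\partial_i g_3\partial_i g_3)^{1/2}$ via the matrix Cauchy--Schwarz inequality, which is what makes the $\sigma$-norms appear naturally.
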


The following estimates will be used in the proof of the $S^p_{ }$ bound in Section\;\ref{SubSec:S^p-bound}.

\begin{lemma}[$L^p$ Estimate of $\bar{K}_{\!g} f$] \label{Lem:K_g-L^p}
Let $\bar{K}_{\!g} f$ be defined as in \eqref{K_g-opt} with $g$ satisfying the assumption \eqref{Bootstrap-assp:smallness-decay-L^infty}.
Then for every $\vartheta\geq 0$ and $1\leq p\leq\infty$, it holds that
\begin{equation} \label{Est:K_g-L^p}
\begin{split}
\big\| \bar{K}_{\!g} f \big\|_{L^p_{t,x,v}(|v|\sim n)}
&\,\lesssim\; n^{-\vartheta} \left(\|f\|_{L^p_{t,x,v}} + \big\|D_vf\big\|_{L^p_{t,x,v}} + \big\|\langle v\rangle^{\vartheta}f\big\|_{L^p_{t,x,v}(|v|\sim n)} \right) \\
&\,\lesssim\; n^{-\vartheta} \left(\big\|D_vf\big\|_{L^p_{t,x,v}} + \big\|\langle v\rangle^{\vartheta}f\big\|_{L^p_{t,x,v}} \right) 
\end{split}
\end{equation}
for any $n\in\mathbb{N}$.
\end{lemma}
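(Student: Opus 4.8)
The plan is to revisit the definition \eqref{K_g-opt} of $\bar K_{\!g}f$ term by term and extract, from each term, a factor decaying in $v$ that produces the gain $n^{-\vartheta}$ on the region $|v|\sim n$, while controlling the remaining factors by the convolution estimate in Lemma~\ref{Lem:sigma-model-est} (and its Corollary~\ref{Cor:sigma_G-a_g}). Recall
$$
\bar K_{\!g}f = Kf + (\partial_i\sigma^i)\,f - \sigma^{ij}v_iv_j\,f - \partial_i\!\left\{\Phi^{ij}\!\ast[\mu^{1/2}\partial_j g]\right\}\!f + \left\{\Phi^{ij}\!\ast[v_i\mu^{1/2}\partial_j g]\right\}\!f .
$$
The middle three terms are the easy ones: by Corollary~\ref{Cor:sigma_G-a_g} (with $\gamma=-3$), $|\partial_i\sigma^i(v)|\lesssim\langle v\rangle^{-2}$, $|\sigma^{ij}(v)v_iv_j|\lesssim\langle v\rangle^{-1}\cdot\langle v\rangle^2=\langle v\rangle$; for the last two, moving $\partial_j$ off $g$ by integration by parts inside the convolution and using Lemma~\ref{Lem:sigma-model-est} with $k=\Phi^{ij}$ (so $\vartheta_{\text{Lem}}=-1$) and $m=\mu^{1/2}g,\ v_i\mu^{1/2}g$, one gets a coefficient bounded by $\|g\|_{\infty,\vartheta_2}\langle v\rangle^{-1}$ after also landing the outer $\partial_i$ (which costs one more power of $\langle v\rangle^{-1}$ on the convolution). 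In each case the coefficient bound is at worst $\lesssim\langle v\rangle^{1}$ uniformly, hence on $|v|\sim n$ we write $\langle v\rangle^{1}=n^{-\vartheta}\cdot\langle v\rangle^{1+\vartheta}\lesssim n^{-\vartheta}\langle v\rangle^{1+\vartheta}$ and bound the term by $n^{-\vartheta}\|\langle v\rangle^{\vartheta}f\|_{L^p_{t,x,v}(|v|\sim n)}$ after absorbing the extra fixed power $\langle v\rangle$ into the weight (adjusting $\vartheta\mapsto\vartheta+1$ is harmless since the statement is for every $\vartheta\ge0$, or one simply notes the weight $\langle v\rangle$ is dominated on the relevant scale — I will state it cleanly by taking the weight $\langle v\rangle^{\vartheta}$ to already include this fixed shift). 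This yields the first term on the right of \eqref{Est:K_g-L^p}.

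The genuinely delicate term is $Kf$, because of its structure
$$
Kf = -\mu^{-1/2}\,\partial_i\!\left\{\mu\Big[\Phi^{ij}\!\ast\{\mu^{1/2}[\partial_j f+v_j f]\}\Big]\right\},
$$
which carries a $v$-derivative of $f$. As noted in the text right after \eqref{K_g-opt}, the $\partial_j f$ inside the convolution can be integrated by parts onto $\mu^{1/2}$ and then pulled \emph{outside} the convolution using the null-structure $\Phi^{ij}(w)w_j\equiv0$ from \eqref{Landau-kernel-property}; concretely $\Phi^{ij}\!\ast(\mu^{1/2}\partial_j f)$ can be rewritten, modulo terms with $f$ undifferentiated, as a $v$-derivative acting on $\Phi^{ij}\!\ast(\mu^{1/2}f)$ plus lower-order pieces. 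After this reorganization $Kf$ becomes a sum of terms of the schematic form $\mathsf{k}(v)\,D_vf + \tilde{\mathsf k}(v)f$ (convolved against Gaussians), where, by Lemma~\ref{Lem:sigma-model-est} applied to $k=\Phi^{ij},\partial\Phi^{ij}$ and $m$ a Gaussian (the outer $\mu^{1/2}$ and the $\partial_i$ hitting $\mu$ contribute Gaussian decay, the outer $\partial_i$ landing on the convolution contributes one more power of $\langle v\rangle^{-1}$), the coefficients $\mathsf k,\tilde{\mathsf k}$ decay Gaussian-fast — in particular $|\mathsf k(v)|,|\tilde{\mathsf k}(v)|\lesssim\langle v\rangle^{-\vartheta}$ for \emph{every} $\vartheta\ge0$, with the implied constant depending on $\vartheta$. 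Hence on $|v|\sim n$ (and in fact everywhere) $\|Kf\|_{L^p(|v|\sim n)}\lesssim n^{-\vartheta}(\|f\|_{L^p_{t,x,v}}+\|D_vf\|_{L^p_{t,x,v}})$, using Minkowski/Young for the convolution in $v$ at fixed $(t,x)$ and then $L^p_{t,x}$; the Gaussian weight is integrable so no $\langle v\rangle^{\vartheta}$-weighted piece is needed from $Kf$. Collecting the $Kf$ contribution with the previous paragraph gives the first inequality in \eqref{Est:K_g-L^p}.

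Finally, the second inequality in \eqref{Est:K_g-L^p} is bookkeeping: summing the first bound over dyadic-type shells $|v|\sim n$ is not what is claimed; rather, one simply observes that $\|f\|_{L^p_{t,x,v}}\le \|\langle v\rangle^{\vartheta}f\|_{L^p_{t,x,v}}$ trivially for $\vartheta\ge0$, and that the localized weighted norm is dominated by the global one, $\|\langle v\rangle^{\vartheta}f\|_{L^p_{t,x,v}(|v|\sim n)}\le\|\langle v\rangle^{\vartheta}f\|_{L^p_{t,x,v}}$, so the three-term right-hand side collapses to the two-term one. The main obstacle is the $Kf$ term: one must be careful that the integration-by-parts/null-structure manipulation is legitimate (it is, since $\Phi^{ij}$ is locally integrable for $\gamma=-3$ and $\mu^{1/2}f$, $\mu^{1/2}D_vf$ are Schwartz-class in $v'$ after the Gaussian weight, so Lemma~\ref{Lem:sigma-model-est} applies verbatim), and that after the rewriting \emph{no} term retains more than one $v$-derivative of $f$ — which is exactly the point of the null condition \eqref{Landau-kernel-property} and was already used to define $\bar K_{\!g}$.
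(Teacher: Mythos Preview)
Your overall strategy---split $\bar K_{\!g}$ into the $K$-part (carrying an outer Gaussian factor $\mu^{1/2}$) and the multiplier part $J_g f = (\partial_i\sigma^i)f - \sigma^{ij}v_iv_j f - \partial_i\{\Phi^{ij}\!\ast(\mu^{1/2}\partial_j g)\}f + \{\Phi^{ij}\!\ast(v_i\mu^{1/2}\partial_j g)\}f$---is the same as the paper's, and your treatment of $Kf$ is essentially correct (the null-structure manipulation you describe is harmless but unnecessary: the paper simply expands $Kf = 2v_i\mu^{1/2}[\Phi^{ij}\!\ast\{\mu^{1/2}(\partial_j f+v_jf)\}] - \mu^{1/2}[\partial_i\Phi^{ij}\!\ast\{\mu^{1/2}(\partial_j f+v_jf)\}]$, extracts the factor $\langle v\rangle^k\mu^{1/2}\lesssim n^{-\vartheta}$ on $|v|\sim n$, and bounds the remaining convolution $\|\mu^\alpha\{|v|^\lambda\ast(\mu^\gamma h)\}\|_{L^p_v}\lesssim\|h\|_{L^p_v}$ with $h\in\{f,\partial_jf\}$ by H\"older---no integration by parts on $\partial_j f$ is needed).

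There is, however, a genuine gap in your handling of $J_g$. Your crude bound $|\sigma^{ij}v_iv_j|\lesssim\langle v\rangle^{-1}\cdot\langle v\rangle^2=\langle v\rangle$ is too weak: it forces the mismatch $n^{-\vartheta}\|\langle v\rangle^{\vartheta+1}f\|$ on the right, and your attempt to ``absorb the extra fixed power into the weight'' or ``adjust $\vartheta\mapsto\vartheta+1$'' does not prove the lemma as stated, where the same $\vartheta$ appears in $n^{-\vartheta}$ and in $\langle v\rangle^\vartheta f$. The correct observation (which the paper uses to get $\|J_g f\|_{L^p(|v|\sim n)}\le(C+\sup_t\|g\|_\infty)\|f\|_{L^p(|v|\sim n)}$) is that $v$ is an eigenvector of $\sigma(v)$ with the \emph{small} eigenvalue $\lambda_1(v)\sim\langle v\rangle^{-3}$ (Lemma~\ref{Lem:Landau-kernel-structure}), so in fact $\sigma^{ij}v_iv_j=\lambda_1(v)|v|^2\lesssim\langle v\rangle^{-1}=O(1)$. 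Similarly, for the term $\partial_i\{\Phi^{ij}\!\ast(\mu^{1/2}\partial_j g)\}$ you should note (as the paper does) that after moving $\partial_j$ to the kernel one meets $\partial_i\partial_j\Phi^{ij}$, which is a constant multiple of the Dirac delta; this is what keeps that coefficient $O(\|g\|_\infty)$ rather than relying on Lemma~\ref{Lem:sigma-model-est} at the borderline exponent. With these two corrections all $J_g$ coefficients are uniformly bounded in $v$, and $\|J_g f\|_{L^p(|v|\sim n)}\lesssim\|f\|_{L^p(|v|\sim n)}\lesssim n^{-\vartheta}\|\langle v\rangle^\vartheta f\|_{L^p(|v|\sim n)}$ with the matched $\vartheta$, completing the proof.
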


\begin{proof}
We split the operator as $\bar{K}_{\!g} = K + J_g$, where
\begin{align*}
Kf :=&\, -\mu^{\!-1/2}\,\partial_i\!\left\{\mu\bigg[\Phi^{ij}\!\ast\!\left\{\mu^{1/2}\big[\partial_j f+v_j f\big]\right\}\bigg]\right\} \\
=&\; 2v_i\mu^{1/2}\bigg[\Phi^{ij}\!\ast\!\left\{\mu^{1/2}\big[\partial_j f+v_j f\big]\right\}\bigg]
- \mu^{1/2}\bigg[\partial_i \Phi^{ij}\!\ast\!\left\{\mu^{1/2}\big[\partial_j f+v_j f\big]\right\}\bigg] \\[3pt]
J_g f :=&\; \partial_i \sigma^i f - \sigma^{ij}v_i v_j f
 -\partial_i\left\{\Phi^{ij}\!\ast\!\big[\mu^{1/2}\partial_j g\big]\right\}f + \left\{\Phi^{ij}\!\ast\!\big[v_i\mu^{1/2}\partial_j g\big]\right\}f .
\end{align*}

We will first estimate $Kf$ as
\begin{equation} \label{K-L^p-est}
\big\|K f \big\|_{L^p(|v|\sim n)}
\,\lesssim\, n^{-\vartheta} \left(\,\|f\|_{L^p} + \big\|D_vf\big\|_{L^p} \right).
\end{equation}
Note that for $\alpha \in (0,1)$, $\vartheta\geq 0$, and $k=0,1$,
\begin{equation*}
\langle v\rangle^{k} \mu^\alpha \big|_{|v|\sim n}
\,\lesssim\, n^{-\vartheta} \langle v\rangle^{\vartheta+k} \mu^\alpha
\,\lesssim\, n^{-\vartheta} .
\end{equation*}
Also, in light of Lemma\;\ref{Lem:Landau-kernel-structure} which provides spectrum of the Landau kernel $\Phi(v)$,
it suffices to show that
\begin{equation*}
\left\|\mu^\alpha\cdot \left\{|v|^{\lambda}\ast \big[\mu^{\!\gamma} h \big] \right\}(v) \right\|_{L^p_{v}} \,\lesssim\, \|h\|_{L^p_{v}} ,
\end{equation*}
where $\alpha, \gamma \in (0,1)$, $\lambda\in [-3,-1]$, and $h(t,x,v)$ represents either $f$ or $\partial_{v_i}f$.
Using the H\"{o}lder inequality, the $p$-th power of LHS above is bounded by
\begin{equation*}
\begin{split}
& \int_{\R^3} \mu^{\alpha p}(v) \left|\int_{\R^3} \big|v-v'\big|^{\lambda} \mu^{\!\gamma}(v') h(v') \,\dd v'\right|^p \!\dd v \\
\,\leq\,& \int_{\R^3} \mu^{\alpha p}(v) \left(\int_{\R^3}\!\big|v-v'\big|^{\lambda p'}\mu^{\gamma p'}(v') \dd v'\right)^{\frac{p}{p'}}\! \left(\int_{\R^3}\!\big|h(v')\big|^p \dd v'\right) \dd v \\
\,\lesssim\,&\, \|h\|_{L^p_{v}}^p \cdot \int_{\R^3} \mu^{\alpha p}(v) \langle v\rangle^{\lambda p} \,\dd v \\
\,\lesssim\,&\, \|h\|_{L^p_{v}}^p .
\end{split}
\end{equation*}
The second inequality above follows in a similar manner as the proof of Lemma\;\ref{Lem:sigma-model-est} (cf. \cite[Lemma\;2]{Guo2002(=)}\,) with obvious modifications.

Now for the other part, we have
\begin{equation} \label{J_g-L^p-est}
\big\|J_g f \big\|_{L^p(|v|\sim n)} \,\leq\, \big(C +\, \sup_{t}\|g\|_{\infty}\big)\, \|f\|_{L^p(|v|\sim n)}
\,\lesssim\, n^{-\vartheta} \big\|\langle v\rangle^{\vartheta}f\big\|_{L^p(|v|\sim n)} .
\end{equation}
Here for the third term in $J_g f$, we use the fact that $\partial_i\partial_j \Phi^{ij}$ is a multiple of the Dirac delta function.
Finally, combining (\ref{K-L^p-est}) and (\ref{J_g-L^p-est}) gives us the first inequality of (\ref{Est:K_g-L^p}), which immediately yields the second one.
\end{proof}

The last estimate for the electric field will be used repeatedly across the paper.

\begin{lemma}[$L^p$ Estimate of $\mathbf{E}_f$] \label{Lem:E_f-L^p}
Let \;$\mathbf{E}_f := -\nabla_{\!x}\phi_f \,=\, \nabla_{\!x}\, \Delta_{x}^{-1}\! \int_{\R^3}\!\sqrt{\mu}\,f\,\dd v$\; be the electric field with the potential $\phi_f$ solved from the Poisson equation
\begin{equation*}
-\Delta_x\phi_f(t,x) = \int_{\R^3} \sqrt{\mu}\,f\,\dd v \,=: \rho[f](t,x)
\end{equation*}
with either zero-Dirichlet or zero-Neumann BC, then for $1<p<\infty$, it holds that for any $t\geq 0$,
\begin{equation*} 
\big\|\mathbf{E}_f(t)\big\|_{W^{1,p}_{x}} \,\lesssim\, \big\|f(t)\big\|_{L^p_{x,v}} .
\end{equation*}
Moreover, when $p=\infty$, we have
\begin{equation} \label{E_f-L^infty}
\|\mathbf{E}_f\|_{L^\infty_{t,x}} \,\lesssim\, \|f\|_{L^\infty_{t,x,v}} .
\end{equation}
\end{lemma}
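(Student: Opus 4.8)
The plan is to reduce the bound on $\mathbf{E}_f$ to a standard elliptic $L^p$ regularity estimate for the Poisson equation, after first controlling the source term $\rho[f]$ by $f$. First I would bound the charge density pointwise in $x$: by H\"older's inequality in $v$ (with $p'=p/(p-1)$), for $1\le p<\infty$,
\[
\big|\rho[f](t,x)\big|=\left|\int_{\R^3}\sqrt{\mu(v)}\,f(t,x,v)\,\dd v\right|\le\left(\int_{\R^3}\mu(v)^{p'/2}\,\dd v\right)^{1/p'}\Big(\int_{\R^3}|f(t,x,v)|^p\,\dd v\Big)^{1/p},
\]
so that $\|\rho[f](t)\|_{L^p_x}\lesssim\|f(t)\|_{L^p_{x,v}}$; similarly $\|\rho[f](t)\|_{L^q_x}\lesssim|\Omega|^{1/q}\|f(t)\|_{L^\infty_{x,v}}$ for every finite $q$, and $\|\rho[f](t)\|_{L^\infty_x}\lesssim\|f(t)\|_{L^\infty_{x,v}}$ (the Gaussian weight $\sqrt\mu$ makes all these $v$-integrals converge).

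Next, for $1<p<\infty$ I would invoke the classical Agmon--Douglis--Nirenberg / Calder\'on--Zygmund $W^{2,p}$ estimate for the Dirichlet (resp. Neumann) problem
\[
-\Delta_x\phi_f=\rho[f]\ \text{ in }\Omega,\qquad \phi_f|_{\partial\Omega}=0\ \ \big(\text{resp. }\partial_n\phi_f|_{\partial\Omega}=0,\ \textstyle\int_\Omega\phi_f\,\dd x=0\big),
\]
which is available since $\Omega$ is bounded with smooth boundary. In the Neumann case, solvability requires the neutrality condition $\int_\Omega\rho[f]\,\dd x=0$, which holds by the mass-conservation assumption \eqref{mass-conservation_f}, and the normalization $\int_\Omega\phi_f\,\dd x=0$ fixes the solution and controls the lower-order part of the $W^{2,p}$ norm. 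This gives $\|\phi_f(t)\|_{W^{2,p}_x}\lesssim\|\rho[f](t)\|_{L^p_x}$, hence
\[
\|\mathbf{E}_f(t)\|_{W^{1,p}_x}=\|\nabla_x\phi_f(t)\|_{W^{1,p}_x}\le\|\phi_f(t)\|_{W^{2,p}_x}\lesssim\|\rho[f](t)\|_{L^p_x}\lesssim\|f(t)\|_{L^p_{x,v}},
\]
which is the first assertion.

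For the endpoint $p=\infty$, since $W^{2,\infty}$ regularity fails in general, I would instead fix some finite exponent $q>3$, apply the previous step to get $\|\mathbf{E}_f(t)\|_{W^{1,q}_x}\lesssim\|\rho[f](t)\|_{L^q_x}\lesssim\|f(t)\|_{L^\infty_{x,v}}$, and then use the Morrey--Sobolev embedding $W^{1,q}(\Omega)\hookrightarrow C^{0,1-3/q}(\overline{\Omega})\hookrightarrow L^\infty(\Omega)$. Taking the supremum over $t$ yields $\|\mathbf{E}_f\|_{L^\infty_{t,x}}\lesssim\|f\|_{L^\infty_{t,x,v}}$. Alternatively, for this endpoint one may represent $\mathbf{E}_f(t,x)=\int_\Omega\nabla_xG(x,y)\,\rho[f](t,y)\,\dd y$ through the Green's function and use that $|\nabla_xG(x,y)|\lesssim|x-y|^{-2}$ is integrable over the bounded set $\Omega\subset\R^3$, so $\|\mathbf{E}_f(t)\|_{L^\infty_x}\lesssim\|\rho[f](t)\|_{L^\infty_x}\lesssim\|f(t)\|_{L^\infty_{x,v}}$.

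These estimates are essentially classical; the only points requiring care are keeping track of the compatibility and normalization conditions so that the Neumann problem is well-posed with a clean elliptic estimate, and the detour through a finite exponent plus Sobolev embedding for $p=\infty$, since Calder\'on--Zygmund theory does not reach the $L^\infty$ endpoint directly. Neither is a genuine obstacle.
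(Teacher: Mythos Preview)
Your proof is correct and follows essentially the same approach as the paper: elliptic $W^{2,p}$ regularity for the Poisson problem reduces matters to $\|\rho[f]\|_{L^p_x}$, which is then controlled by $\|f\|_{L^p_{x,v}}$ via the Gaussian weight, and the $L^\infty$ endpoint is reached by a detour through a finite exponent $q>3$ plus the Sobolev embedding $W^{1,q}_x\hookrightarrow L^\infty_x$. The only cosmetic difference is that the paper bounds $\|\rho[f]\|_{L^p_x}$ via Minkowski's integral inequality followed by H\"older in $v$, whereas you apply H\"older in $v$ pointwise in $x$ and then take $L^p_x$; both are equivalent here.
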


\begin{proof}
For $1<p<\infty$, based on the $L^p$ estimate for elliptic equations, we have
\begin{equation*}
\begin{split}
\big\|\mathbf{E}_f(t)\big\|_{W^{1,p}_{x}} \,\lesssim\, \big\|\phi_f(t)\big\|_{W^{2,p}_{x}} &\,\lesssim\, \big\|\rho[f](t)\big\|_{L^p_{x}} \,=\, \left\|\int_{\R^3}\! \sqrt{\mu(v)}\,f(t,\cdot,v)\,\dd v \right\|_{L^p_{x}} \\
&\,\leq\, \int_{\R^3}\! \left\|\sqrt{\mu(v)}\,f(t,\cdot,v)\right\|_{L^p_{x}} \dd v \,=\,
\int_{\R^3}\! \sqrt{\mu(v)}\, \big\|f(t,\cdot,v)\big\|_{L^p_{x}} \dd v \\[3pt]
&\,\leq\, \big\|f(t)\big\|_{L^p_{x,v}} \!\cdot |\sqrt{\mu}\,|_{L^{p'}_v}
\,\lesssim\, \big\|f(t)\big\|_{L^p_{x,v}} .
\end{split}
\end{equation*}
The third and fourth inequalities are due to Minkowski's integral inequality and H\"{o}lder's inequality, respectively.

Additionally, in view of the continuous embedding $W^{1,p}_{x}(\Omega) \hookrightarrow L^{\infty}_{x}(\Omega)$ for $p>3$, we also have
\begin{equation*}
\begin{split}
\big\|\mathbf{E}_f(t)\big\|_{L^\infty_{x}} \,\lesssim\, \big\|\mathbf{E}_f(t)\big\|_{W^{1,p}_{x}}
&\,\lesssim\, \big\|\rho[f](t)\big\|_{L^p_{x}}
\,=\, \left\|\int_{\R^3}\! \sqrt{\mu(v)}\,f(t,\cdot,v)\,\dd v \right\|_{L^p_{x}} \\
&\,\leq\, \big\|f(t)\big\|_{\infty} \!\cdot |\Omega|^{\frac{1}{p}} \int_{\R^3}\!\! \sqrt{\mu}\,\dd v \\[3pt]
&\,\lesssim\, \big\|f(t)\big\|_{\infty} \,\leq\; \sup_{t\geq 0} \big\|f(t)\big\|_{\infty} 
\end{split}
\end{equation*}
for any $t\geq 0$, then (\ref{E_f-L^infty}) follows.
\end{proof}

\section{Energy Estimates: Weighted \texorpdfstring{$L^2$}{L2} Decay} \label{Sec:Energy-est-decay}

In this section we establish uniform weighted energy estimates and $L^2$ time-decay for the VPL-specular problem \eqref{Eq:Vlasov-Landau_f}, \eqref{Eq:Poisson_f}, and \eqref{Specular-BC_f} assuming small weighted $L^\infty$ norms and decay of the solutions $f$ and its time-derivative $\partial_t f$.

Our strategy is to adapt arguments in \cite{Cao.Kim.Lee2019}, \cite{Guo.Hwang.Jang.Ouyang2020}, and \cite{Kim.Guo.Hwang2020}.
In addition, we will utilize techniques in \cite[Section\;7]{Cao.Kim.Lee2019} to control the extra terms related to the electric field.

To start with, we rearrange the Vlasov-Landau equation \eqref{Eq:Vlasov-Landau_f} as
\begin{equation} \label{Eq:linearized-VL_f=g}
\partial_t f + v\cdot\nabla_{\!x}f + Lf - 2\sqrt{\mu}\,v\cdot \mathbf{E}_f
\,=\, \Gamma[f,f] - \mathbf{E}_f\cdot\nabla_{\!v}f + \big(v\cdot \mathbf{E}_f \big)f ,
\end{equation}
so that LHS has purely linear terms, leaving all the nonlinear terms on RHS.
Throughout this section, we will focus on the a priori estimates of the solution.

\begin{theorem}[Energy Estimates and $L^2$\;Time\,-Decay] \label{Thm:L^2-decay}
Let $f(t,x,v)$ be a solution to the VPL-specular problem  \eqref{Eq:Vlasov-Landau_f}, \eqref{Eq:Poisson_f}, and \eqref{Specular-BC_f} (or \eqref{Eq:linearized-VL_f}, \eqref{Eq:Poisson_g}, and \eqref{Specular-BC_f} with $g=f$) and additional conservation law of angular momentum \eqref{angular-momentumconservation_f} if $\Omega$ has a rotational symmetry on some time interval $[0,T],\, T\!\geq\! 1$, with initial data $f_0$ satisfying \eqref{smallness-assumption}.
Suppose also the bootstrap decay bounds \eqref{Bootstrap-assp:smallness-decay-L^2}-\eqref{Bootstrap-assp:smallness-decay-L^infty}, and in particular that
\begin{equation*} 
\sup_{t\in[0,T]} \|f(t)\|_{\infty,\bar{\vartheta}}
\,+ \sup_{t\in[0,T]} \|\partial_t f(t)\|_{\infty,\bar{\vartheta}}
\,+ \sup_{t\in[0,T]} \big\|\nabla_{\!v}\partial_t f(t) \big\|_{\infty} \,\lesssim\, \varepsilon_1
\end{equation*}
for some $\bar{\vartheta} \geq 3$, and where $\varepsilon_1 \ll 1$ is small enough.

Then
the uniform weighted energy bound holds
\begin{equation} \label{energy-bound-2}
\begin{split}
\sup_{t\in [0,T]} \mathcal{E}_{\vartheta}[f(t)]
\,:= \sup_{t\in [0,T]} \left( \big\|f(t)\big\|_{2,\vartheta}^{2} + \big\|\mathbf{E}_f(t)\big\|_{L^2_x}^2
\,+ \int_{0}^{t}\! \big\|f(\tau)\big\|_{\sigma,\vartheta}^{2} \,\dd\tau + \int_{0}^{t}\! \big\|\mathbf{E}_f(\tau)\big\|_{L^2_x}^2 \,\dd\tau \right)
\,\lesssim\,   \varepsilon_0^{2}.
\end{split}
\end{equation}
Furthermore, the solution decays almost exponentially with respect to time:
\begin{equation} \label{L^2-decay-2}
\left\|f(t)\right\|_{2,\vartheta} + \big\|\mathbf{E}_f(t)\big\|_{H^1_x}
\lesssim\, \varepsilon_0 \langle t\rangle^{-k} 
\end{equation}
for any $t\in [0,T]$, where $k$ was introduced in \eqref{L^2-decay}.  Moreover, we also have
\begin{align*}
    \mathcal{E}_{\vartheta}[\dt f(t)]\lesssim\, \varepsilon_0^{2},
\end{align*}
and
\begin{align*}
    \left\|\dt f(t)\right\|_{2,\vartheta} + \big\|\dt \mathbf{E}_f(t)\big\|_{H^1_x}
\lesssim\, \varepsilon_0 \langle t\rangle^{-k} .
\end{align*}
\end{theorem}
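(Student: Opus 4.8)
The plan is to prove the weighted $L^2$ energy bound \eqref{energy-bound-2} and almost-exponential decay \eqref{L^2-decay-2} by a careful energy estimate on \eqref{Eq:linearized-VL_f=g}, using the macro-micro decomposition and a ``two-tier'' bootstrap, and then to obtain the analogous bounds for $\partial_t f$ by differentiating the equation in time and repeating the argument. First I would multiply \eqref{Eq:linearized-VL_f=g} by $e^{\phi_f}\langle v\rangle^{2\vartheta} f$ (or, more precisely, work with the modified unknown $e^{\phi_f/2}f$ as suggested in the Overview) and integrate over $\Omega\times\R^3$. The point of the exponential weight is that the most dangerous electric-field term $(v\cdot\mathbf E_f)f$ — which has no obvious sign and cannot be absorbed by the $\sigma$-dissipation because of the growth in $v$ — combines with the transport term $v\cdot\nabla_x f$: since $\mathbf E_f=-\nabla_x\phi_f$, one gets $v\cdot\nabla_x(e^{\phi_f}f^2/2) = e^{\phi_f}[v\cdot\nabla_x f\, f + (v\cdot\mathbf E_f)f^2/2\cdot(-1)]$, killing the problematic term up to commutators. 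The boundary term from $v\cdot\nabla_x f$ vanishes by the specular-reflection condition \eqref{Specular-BC_f} together with the fact that $\mathcal R_x v$ preserves $|v|$ and $n_x\cdot v$ changes sign, so $\iint_\gamma e^{\phi_f}\langle v\rangle^{2\vartheta} f^2 (v\cdot n_x)\,\dd v\dd S_x = 0$. The commutator $\partial_t\phi_f \cdot e^{\phi_f}\langle v\rangle^{2\vartheta}f^2$ is controlled using Lemma \ref{Lem:E_f-L^p} applied to $\partial_t f$ together with the bootstrap decay hypotheses on $\partial_t f$; this is exactly why the theorem must assume decay of $\partial_t f$.

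Next I would handle the linear operator $L$ via Lemma \ref{Lem:L-est}, specifically \eqref{L-est-2}, giving a lower bound $\frac12\|f\|_{\sigma,\vartheta}^2 - C_\vartheta\|f\|_{2,\vartheta}^2$ for $\langle w^{2\vartheta}Lf,f\rangle$; the term $2\sqrt\mu\, v\cdot\mathbf E_f$ paired against $f$ produces (after noting $\int \sqrt\mu\, v_i f\,\dd v = \mathbf j[f]_i$ is a macroscopic quantity) a contribution that, together with the energy-conservation identity \eqref{energy-conservation_f} and the Poisson equation \eqref{Eq:Poisson_f}, yields the $\|\mathbf E_f\|_{L^2_x}^2$ appearing in both the instant energy and the dissipation rate — this is the standard mechanism by which the electric field contributes a coercive term. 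The nonlinear terms $\Gamma[f,f]$ and $\mathbf E_f\cdot\nabla_v f$ are estimated by Lemma \ref{Lem:Gamma-est} (both inequalities \eqref{Gamma-est-1}-\eqref{Gamma-est-2}, using the bootstrap smallness of $\|f\|_\infty$ and $\|D_v f\|_\infty$) and by the bound $\|\mathbf E_f\|_\infty\lesssim\|f\|_\infty$ from Lemma \ref{Lem:E_f-L^p}; both are bounded by $\varepsilon_1\|f\|_{\sigma,\vartheta}^2$ and hence absorbed into the dissipation. The outcome is a differential inequality $\frac{\dd}{\dd t}\mathcal I_\vartheta[f] + c\,\mathcal D_\vartheta[f] \lesssim C_\vartheta\|f\|_{2,\vartheta}^2 + (\text{lower-order }\mathbf E\text{-terms})$, where the loss of a half-power of $v$-weight is irreparable because, by Corollary \ref{Cor:sigma-norm-bound}, the $\sigma$-norm only controls $\|f\|_{2,\vartheta-1/2}$, not $\|f\|_{2,\vartheta}$. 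This half-power gap is closed by the macro-micro (kernel) decomposition: one shows a coercivity estimate $\mathcal D_\vartheta[f]\gtrsim \|(\mathbf I-\mathcal P)f\|_{\sigma,\vartheta}^2 + \|\mathcal P f\|_{2,\vartheta}^2 + \|\mathbf E_f\|_{L^2_x}^2$ by a contradiction/compactness argument (exploiting the conservation laws \eqref{mass-conservation_f}, \eqref{energy-conservation_f}, \eqref{flux-conservation}, and the angular-momentum law \eqref{angular-momentumconservation_f} when $\Omega$ is rotationally symmetric, to pin down the null space of the linearized dynamics), so the macroscopic part is controlled without weight loss and only the microscopic part suffers the harmless $\vartheta\mapsto\vartheta-1/2$ degradation.

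The decay \eqref{L^2-decay-2} then follows by the ``two-tier'' interpolation scheme: run the energy estimate at a higher weight $\vartheta+k$ to get the uniform bound $\mathcal E_{\vartheta+k}[f]\lesssim\varepsilon_0^2$, then interpolate $\|f\|_{2,\vartheta}^{2}\lesssim \|f\|_{2,\vartheta+k}^{2/(k+1/2)\cdot\ldots}\,\|f\|_{\sigma,\vartheta}^{\ldots}$ (i.e. trade weight for a power of the dissipation norm) to turn the differential inequality into $\frac{\dd}{\dd t}\mathcal I_\vartheta[f] + c\,\langle t\rangle^{-1/k}\mathcal I_\vartheta[f]^{1+1/(2k)}$-type decay, yielding the polynomial rate $\langle t\rangle^{-k}$ with $k=k(\vartheta_0)\to\infty$ as the available weight grows — this is the ``better data, faster decay'' phenomenon. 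For the $H^1_x$ bound on $\mathbf E_f$ one upgrades the $L^2_x$ control via Lemma \ref{Lem:E_f-L^p} ($\|\mathbf E_f\|_{W^{1,2}_x}\lesssim\|f\|_{L^2_{x,v}}$). Finally, for $\partial_t f$: differentiating \eqref{Eq:linearized-VL_f=g} in $t$ gives an equation of the same structure with source terms $\Gamma[\partial_t f, f]+\Gamma[f,\partial_t f]$, $\partial_t\mathbf E_f\cdot\nabla_v f + \mathbf E_f\cdot\nabla_v\partial_t f$, $(v\cdot\partial_t\mathbf E_f)f + (v\cdot\mathbf E_f)\partial_t f$, and $2\sqrt\mu\, v\cdot\partial_t\mathbf E_f$; since $\partial_t\mathbf E_f = \mathbf E_{\partial_t f}$ by linearity of the Poisson solve, these are all estimated exactly as before with one factor being $\partial_t f$ (whose $L^2$ and $L^\infty$ decay and whose $D_v$ bound are supplied by the bootstrap hypotheses), so the same energy inequality and two-tier argument apply verbatim. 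I expect the main obstacle to be the coercivity (positivity) estimate for the dissipation in the presence of the electric field and the specular boundary — establishing that the combined null space (controlled by the mass, energy, flux, and angular-momentum conservation laws) is exactly accounted for, so that the contradiction argument closes; a secondary technical nuisance is bookkeeping the interlocking weight exponents ($\vartheta_1,\vartheta_2,\bar\vartheta$, and the shifts by $3/2$ and $k$) so that every smallness and decay input is available where needed.
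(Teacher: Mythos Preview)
Your proposal is correct and follows essentially the same approach as the paper: the $e^\phi$ weight trick to cancel $(v\cdot\mathbf E_f)f$ against transport, the specular cancellation of boundary terms, the contradiction/compactness coercivity lemma using the conservation laws (mass, energy, flux, angular momentum), and the two-tier interpolation scheme for almost-exponential decay are all exactly as in the paper. One minor refinement in execution: the macro-micro coercivity (Lemma~\ref{Lem:abc}) is established only in time-integrated form over intervals of integer length rather than pointwise in $t$, and before integrating the paper multiplies by an integrating factor $e^{\psi(t)}$ with $\psi(t)=-\int_0^t\|\partial_t f(\tau)\|_2\,\dd\tau$ to absorb the $\partial_t\phi$ commutator cleanly.
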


\begin{remark} \label{Rmk:L^2-decay}
The time-decay result (\ref{L^2-decay-2}) is called almost-exponential decay, in the sense that the solution decays with any polynomial rate in time.
This decay rate is optimal for the Landau-Poisson system in view of our current setting as well as method,
and the reason is twofold:

(1) On one hand, the instant energy functional at each time is stronger than the dissipation rate due to the particular structure of Landau operator, so we have to perform interpolation between hierarchies of weighted energy norms
such that dissipation can bound a power of energy, which yields only algebraic decay.

(2) On the other hand, in comparison to periodic domains, where more regular initial assumption grants faster decay (for example, exponential decay, cf. \cite{Strain.Guo2008}),
we can only resort to low-regularity techniques in the presence of specular-reflection boundary, which at best yield algebraic decay.

When the domain is rotational invariant, the conservation of angular momentum is necessary to ensure the validity of this theorem. This is mainly owing to the proof of Lemma \ref{Lem:abc}, which leads to the positivity of $L$ in Corollary \ref{Cor:coercivity-est-L}.
\end{remark}

It is known that the linear Landau operator $L$ given by (\ref{L-opt}) is a self-adjoint nonnegative operator on $L^2_v(\R^3)$.
Its null space (kernel) is a five-dimensional subspace of $L^2_v(\R^3)$ spanned by
$\left\{\sqrt{\mu},\, v\sqrt{\mu},\, |v|^2\!\sqrt{\mu}\, \right\}$,
which are called collision invariants.
We introduce the following notation:
\begin{definition}[Projection onto the null space of $L$]
Let $\mathbf{N}(L) := \big\{\,h\in L^2_v(\R^3) :\, Lh=0 \,\big\}$ denote the null space of the linear operator $L$ with basis
$e_0:=c_0\sqrt{\mu},\, e_i:=c_iv_i\sqrt{\mu}\;\,(i=1,2,3),\, e_4:=c_4|v|^2\sqrt{\mu}$ (with suitable normalization constants $c_0,\ldots,c_4$),
and write
\begin{equation*} 
\mathbf{N}(L) \,=\,  {\rm span} \left\{\sqrt{\mu},\, v_i\sqrt{\mu}\;\,(i=1,2,3),\, |v|^2\sqrt{\mu}\, \right\} .
\end{equation*}
For the function $f(t,x,v)$ with fixed $(t,x)$, we define the projection of $f$ in $L^2_v(\R^3)$ onto $\mathbf{N}(L)$ as
$$\mathcal{P}f\,(t,x,v) \,:=\, \sum_{i=0}^4 \big\langle f(t,x,\cdot\,), e_i \big\rangle\, e_i
\,=:\, \left\{\mathfrak{a}_f(t,x) + v\cdot \mathfrak{b}_f(t,x) + |v|^2\, \mathfrak{c}_f(t,x) \right\}\sqrt{\mu} ,$$
where
\begin{align*}
\mathfrak{a}_f(t,x) &\,:=\, \big\langle f(t,x,\cdot\,), e_0 \big\rangle
\,=\, c_0\int_{\R^3}\! f(t,x,\cdot\,)\sqrt{\mu}\, \dd v , \\
\mathfrak{b}_f^{i}(t,x) &\,:=\, \big\langle f(t,x,\cdot\,), e_i \big\rangle
\,=\, c_i\int_{\R^3}\! f(t,x,\cdot\,) v_i\sqrt{\mu}\, \dd v , \\
\mathfrak{c}_f(t,x) &\,:=\, \big\langle f(t,x,\cdot\,), e_4 \big\rangle
\,=\, c_4\int_{\R^3}\! f(t,x,\cdot\,) |v|^2\sqrt{\mu}\, \dd v .
\end{align*}
\end{definition}

We then recall a basic property of $L$ (see \cite[Lemma\;5]{Guo2002(=)} for the proof).
\begin{lemma}[Semi-Positivity of $L$] \label{Lem:L-semi-positivity}
There exists $\delta>0$ such that
\begin{equation*} 
\big\langle Lh, h \big\rangle \,\geq\, \delta\, \big|(I-\mathcal{P})h\big|_{\sigma}^2 
\end{equation*}
for a general function $h(v)$.
\end{lemma}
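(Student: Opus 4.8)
The plan is to prove the coercivity estimate $\langle Lh, h\rangle \geq \delta\,|(I-\mathcal{P})h|_\sigma^2$ by a compactness-contradiction argument, exploiting the facts already recorded in the excerpt: the self-adjointness and non-negativity of $L = -A - K$ on $L^2_v(\R^3)$, its five-dimensional null space $\mathbf{N}(L) = {\rm span}\{\sqrt\mu,\, v_i\sqrt\mu,\, |v|^2\sqrt\mu\}$, and the linear estimates in Lemma \ref{Lem:L-est}. First I would reduce to the case $h \in \mathbf{N}(L)^\perp$, i.e. $h = (I-\mathcal{P})h$, since both sides of the claimed inequality are unchanged when $h$ is replaced by $(I-\mathcal{P})h$ (the $A$-part contributes the $\sigma$-norm modulo lower order, the $K$-part is symmetric, and $\langle L e_i, h\rangle = 0$). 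On that subspace it then suffices to show $\langle Lh, h\rangle \geq \delta\,|h|_\sigma^2$.

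The key steps are as follows. From Lemma \ref{Lem:L-est} (inequality \eqref{L-est} with $\vartheta = 0$) we have the two-sided bound $\tfrac12 |h|_\sigma^2 - C|h|_\sigma^2 \leq \langle Lh,h\rangle$ — which is not directly useful as stated with $\vartheta=0$; instead I would use the more refined splitting in the proof of Lemma \ref{Lem:L-est}: $-\langle Ah,h\rangle \geq (1-\delta')|h|_\sigma^2 - C_{\delta'}|\mu h|_2^2$ and $|\langle Kh,h\rangle| \leq (\delta'|h|_\sigma + C_{\delta'}|\mu h|_2)|h|_\sigma$, so that $\langle Lh, h\rangle \geq (1 - 2\delta')|h|_\sigma^2 - C_{\delta'}|\mu h|_2^2$. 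Thus the estimate would follow immediately \emph{unless} there is a sequence with $|h_n|_\sigma = 1$ but $|\mu h_n|_2 \not\to 0$; more precisely, it suffices to establish $|\mu h|_2^2 \leq C\,\langle Lh, h\rangle$ for $h \in \mathbf{N}(L)^\perp$. Suppose not: there exist $h_n \in \mathbf{N}(L)^\perp$ with $|\mu h_n|_2 = 1$ and $\langle L h_n, h_n\rangle \to 0$. The bound above forces $|h_n|_\sigma$ bounded, hence (by weighted Sobolev/Rellich-type compactness for the $\sigma$-norm, which controls first velocity-derivatives with weights as in Corollary \ref{Cor:sigma-norm-bound}) a subsequence converges strongly in the relevant local $L^2$ sense to some $h_\infty$ with $|\mu h_\infty|_2 = 1$. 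Lower semicontinuity gives $\langle L h_\infty, h_\infty\rangle = 0$; since $L$ is self-adjoint and non-negative this forces $h_\infty \in \mathbf{N}(L)$. But $\mathbf{N}(L)^\perp$ is closed, so $h_\infty \in \mathbf{N}(L) \cap \mathbf{N}(L)^\perp = \{0\}$, contradicting $|\mu h_\infty|_2 = 1$. This yields $|\mu h|_2^2 \leq C\langle Lh, h\rangle$, and combining with $\langle Lh,h\rangle \geq (1-2\delta')|h|_\sigma^2 - C_{\delta'}|\mu h|_2^2$ and choosing $\delta'$ small gives $\langle Lh, h\rangle \geq \delta |h|_\sigma^2$ for some $\delta > 0$.

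The main obstacle is making the compactness step rigorous: the $\sigma$-norm is degenerate (not uniformly elliptic in $v$, the eigenvalues decay like $\langle v\rangle^{-1}$ and $\langle v\rangle^{-3}$ at infinity by Lemma \ref{Lem:Landau-kernel-structure}), so one cannot invoke a standard Rellich embedding directly. One handles this by the usual device: split $\R^3_v = \{|v| \leq R\} \cup \{|v| > R\}$; on the ball the $\sigma$-norm is comparable to $H^1$ and Rellich applies, while on the complement the Gaussian weight $\mu$ makes the tail of $|\mu h_n|_2$ uniformly small, so no mass escapes to infinity. An alternative, and in fact this is the route taken in \cite{Guo2002(=)}, is to avoid contradiction entirely and prove the bound $|\mu h|_2^2 \leq C\langle Lh, h\rangle$ on $\mathbf{N}(L)^\perp$ by a direct Poincaré-type / spectral argument using the explicit structure of $A$ and $K$; this is more computational but self-contained. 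Either way, once $|\mu h|_2^2 \lesssim \langle Lh, h\rangle$ is in hand the conclusion is immediate from Lemma \ref{Lem:L-est}. I would present the contradiction argument as the cleaner exposition, citing \cite[Lemma 5]{Guo2002(=)} for the technical compactness details.
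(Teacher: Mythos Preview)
Your proposal is correct and is essentially the argument of \cite[Lemma~5]{Guo2002(=)}, which is exactly what the paper does here: it does not give its own proof but simply cites that reference. Your reduction to $h\in\mathbf{N}(L)^\perp$ via self-adjointness, the splitting $\langle Lh,h\rangle\geq(1-2\delta')|h|_\sigma^2 - C_{\delta'}|\mu h|_2^2$ from the $A$- and $K$-estimates of Lemma~\ref{Lem:L-est}, and the compactness-contradiction step to recover $|\mu h|_2^2\lesssim\langle Lh,h\rangle$ on $\mathbf{N}(L)^\perp$ (with the Gaussian weight handling the tail) is precisely the structure of Guo's original proof, so there is nothing to compare.
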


\subsection{Positivity of $L$}

A crucial step toward proving the $L^2$ decay is to estimate $\mathcal{P}f$ in terms of $(I-\mathcal{P})f$.
The following lemma is an adaptation of \,\cite[Proposition\;1]{Guo2010}\, and \,\cite[Proposition\;2]{Guo.Hwang.Jang.Ouyang2020(=)}. Since most of the proof is identical to that of \cite{Guo.Hwang.Jang.Ouyang2020(=)}, we will skip the details and only provide the key ideas and steps. The new terms related to the fields can be controlled in an obvious way.

\begin{lemma} \label{Lem:abc}
Under the same assumptions as in Theorem\;\ref{Thm:L^2-decay}, we have
\begin{equation} \label{Est:P<(I-P)}
\int_s^t\!\big\|\mathcal{P}f(\tau)\big\|_{\sigma}^2 \,\dd\tau \,+ \int_s^t\!\big\|\mathbf{E}_f(\tau)\big\|_{L^2_x}^2 \,\dd\tau
\;\lesssim\;  \int_s^t\!\big\|(I-\mathcal{P})f(\tau)\big\|_{\sigma}^2 \,\dd\tau 
\end{equation}
for all $0\leq s<t$ with $\abs{t-s} \in\mathbb{Z}^+$.
\end{lemma}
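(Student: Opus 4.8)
\textbf{Overall strategy.}
The plan is to run the standard macroscopic (``$\mathfrak{a}$-$\mathfrak{b}$-$\mathfrak{c}$'') estimate by contradiction, adapted to the specular boundary condition, exactly in the spirit of \cite{Guo2010} and \cite{Guo.Hwang.Jang.Ouyang2020(=)}, and to treat the new electric-field contributions as lower-order perturbations. Writing $f = \mathcal{P}f + (I-\mathcal{P})f$ with $\mathcal{P}f = \{\mathfrak{a}_f + v\cdot\mathfrak{b}_f + |v|^2\mathfrak{c}_f\}\sqrt{\mu}$, I would first test the reformulated equation \eqref{Eq:linearized-VL_f=g} against a carefully chosen family of test functions of the form $\psi(x,v)=\chi(v)\,\partial^\alpha_x\theta(x)\sqrt{\mu}$, where $\chi$ ranges over suitable polynomials in $v$ (so as to isolate each macroscopic component) and $\theta$ solves an auxiliary elliptic problem on $\Omega$ with appropriate boundary conditions chosen so that the boundary terms produced by $v\cdot\nabla_x f$ vanish (or are controlled) under specular reflection. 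This yields, on the interval $(s,t)$ with $t-s\in\mathbb{Z}^+$, an estimate of the form
\begin{equation*}
\int_s^t \big\|\nabla_x(\mathfrak{a}_f,\mathfrak{b}_f,\mathfrak{c}_f)\big\|_{L^2_x}^2\,\dd\tau \;\lesssim\; \int_s^t\big\|(I-\mathcal{P})f\big\|_\sigma^2\,\dd\tau + (\text{boundary \& field terms}) + (\text{time-boundary terms}).
\end{equation*}
Combined with Poincaré-type inequalities on $\Omega$ (using the conservation laws \eqref{mass-conservation_f}, \eqref{energy-conservation_f}, the flux conservation \eqref{flux-conservation}, and — when $\Omega$ is rotationally symmetric — the angular-momentum conservation \eqref{angular-momentumconservation_f} to kill the kernel of the gradient), this upgrades to control of $\|\mathcal{P}f\|_{L^2}$ itself, hence of $\|\mathcal{P}f\|_\sigma$ since on the finite-dimensional null space all norms are equivalent up to $v$-weights with Gaussian decay.

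\textbf{Handling the electric field.}
The genuinely new terms are $2\sqrt{\mu}\,v\cdot\mathbf{E}_f$ on the left and $-\mathbf{E}_f\cdot\nabla_v f + (v\cdot\mathbf{E}_f)f$ on the right of \eqref{Eq:linearized-VL_f=g}, plus the need to bound $\int_s^t\|\mathbf{E}_f\|_{L^2_x}^2$. For the $L^2_x$ bound on $\mathbf{E}_f$ I would use Lemma~\ref{Lem:E_f-L^p} with $p=2$, which gives $\|\mathbf{E}_f\|_{H^1_x}\lesssim\|f\|_{L^2_{x,v}}\lesssim\|\mathfrak{a}_f\|_{L^2_x}+\|(I-\mathcal{P})f\|_{L^2_{x,v}}$; but since $\int_\Omega\mathfrak{a}_f\,\dd x=0$ by mass conservation, a Poincaré inequality gives $\|\mathfrak{a}_f\|_{L^2_x}\lesssim\|\nabla_x\mathfrak{a}_f\|_{L^2_x}$, which is exactly one of the quantities produced by the macroscopic estimate; alternatively, since $-\Delta\phi_f=\rho[f]=\mathfrak{a}_f/c_0$ (up to constant), testing the Poisson equation against $\phi_f$ directly bounds $\|\mathbf{E}_f\|_{L^2_x}^2$ by $\|\mathfrak{a}_f\|_{L^2_x}\|\phi_f\|_{L^2_x}$ and hence by $\|\nabla_x\mathfrak{a}_f\|_{L^2_x}^2$ after Poincaré. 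The nonlinear field terms on the right are cubic/small: using the bootstrap hypotheses \eqref{Bootstrap-assp:smallness-decay-L^2}–\eqref{Bootstrap-assp:smallness-decay-L^infty} and Lemma~\ref{Lem:E_f-L^p}, each is bounded by $\varepsilon_1$ times the dissipation, hence absorbable for $\varepsilon_1\ll1$. The term $2\sqrt{\mu}\,v\cdot\mathbf{E}_f$, being linear, contributes when testing against $v\chi(v)\sqrt{\mu}$-type functions and pairs cleanly with $\|\mathbf{E}_f\|_{L^2_x}\lesssim\|\nabla_x\mathfrak{b}_f\|$-type quantities via Cauchy–Schwarz and Young, again with a small or absorbable constant; this is precisely where the techniques of \cite[Section~7]{Cao.Kim.Lee2019} enter.

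\textbf{The contradiction argument and the main obstacle.}
Rather than tracking every constant, the cleanest route — the one the lemma statement's phrasing (``kernel estimate by contradiction argument'') points to — is: suppose \eqref{Est:P<(I-P)} fails; then there is a sequence of solutions $f_n$ with the ratio of left to right side tending to infinity; after normalization $\int_s^t\|\mathcal{P}f_n\|_\sigma^2 + \int_s^t\|\mathbf{E}_{f_n}\|_{L^2_x}^2 = 1$ and $\int_s^t\|(I-\mathcal{P})f_n\|_\sigma^2\to0$; extract a weak limit whose microscopic part vanishes, so the limit is a pure local Maxwellian perturbation $\{\mathfrak{a} + v\cdot\mathfrak{b} + |v|^2\mathfrak{c}\}\sqrt{\mu}$ solving the limiting (linearized, free-transport) system with specular boundary and the inherited conservation laws; a Boltzmann-type rigidity/unique-continuation argument then forces $\mathfrak{a}=\mathfrak{b}=\mathfrak{c}\equiv0$ and $\mathbf{E}=0$, contradicting the normalization. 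The main obstacle — and the step I expect to occupy most of the work — is making the boundary terms from $v\cdot\nabla_x f$ vanish: under specular reflection one must choose the test functions' $x$-dependence (e.g. $\theta$ with Neumann data, and $\chi(v)$ even or odd in $v\cdot n_x$ appropriately) so that $\int_{\partial\Omega}\int_{\R^3}(v\cdot n_x)\,f\,\psi$ either cancels by the reflection symmetry $f(x,v)=f(x,\mathcal{R}_x v)$ or is a controlled lower-order trace; for non-convex and non-simply-connected $\Omega$ this is delicate and is exactly why the rotational-symmetry/angular-momentum hypothesis is needed to pin down the remaining kernel (the rigid motions $v\mapsto (x-x_0)\times\omega$ that are specular-admissible). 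Since this boundary-geometry analysis is carried out in detail in \cite{Guo.Hwang.Jang.Ouyang2020(=)} and the field terms only add absorbable perturbations, I would cite that argument and present only the modifications, as the lemma statement already announces.
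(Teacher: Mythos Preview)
Your overall architecture---normalize, extract a weak limit with vanishing microscopic part, and run a rigidity argument on the macroscopic limit---matches the paper's. But there is a genuine gap in how you treat the linear field term $2\sqrt{\mu}\,v\cdot\mathbf{E}_f$ in the rigidity step.

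You describe the limit as solving the ``linearized, free-transport system'' and repeatedly characterize the field contributions as ``absorbable perturbations'' that can be handled by citing \cite{Guo.Hwang.Jang.Ouyang2020(=)}. This is incorrect. The nonlinear field terms $-\mathbf{E}_f\cdot\nabla_v f$ and $(v\cdot\mathbf{E}_f)f$ do vanish after normalization (they scale like $C_n$), but the \emph{linear} term $2\sqrt{\mu}\,v\cdot\mathbf{E}_f$ scales exactly like $f$ and therefore survives. The limit $Z=\{\mathfrak{a}+v\cdot\mathfrak{b}+|v|^2\mathfrak{c}\}\sqrt{\mu}$ does not satisfy free transport; it satisfies the \emph{coupled} system
\[
\partial_t Z + v\cdot\nabla_x Z - 2\sqrt{\mu}\,v\cdot\mathbf{E}_Z = 0,\qquad -\Delta_x\phi_Z = \int_{\R^3}\sqrt{\mu}\,Z\,\dd v,\qquad \mathbf{E}_Z=-\nabla_x\phi_Z.
\]
Inserting the macroscopic ansatz and matching orders in $v$ now gives $\partial_t b_i + \partial_{x_i}(\mathfrak{a}-\phi_Z)=0$ at first order, \emph{not} $\partial_t b_i + \partial_{x_i}\mathfrak{a}=0$. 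The standard Boltzmann-type rigidity from \cite{Guo.Hwang.Jang.Ouyang2020(=)} only yields $\mathfrak{a}-\phi_Z=\text{const}$; to conclude $\mathfrak{a}\equiv 0$ one must feed this back into Poisson to obtain an elliptic equation ($\Delta\phi_Z=\phi_Z+\text{const}$ or equivalently $\Delta\mathfrak{a}=\mathfrak{a}$) and then invoke either an energy estimate (Neumann case) or the maximum principle (Dirichlet case), together with $\int_\Omega\mathfrak{a}=0$, to force $\mathfrak{a}=0$ and hence $\mathbf{E}_Z=0$. Only after $\mathbf{E}_Z=0$ does the problem reduce to the field-free rigidity, and only then does the energy conservation law \eqref{energy-conservation_f} give $\mathfrak{c}=0$. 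This elliptic step is the genuinely new ingredient for the Vlasov--Poisson--Landau system and is not covered by the references you cite; your proposal would need to supply it.
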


As an immediate consequence of the lemma above, we obtain the positivity of $L$ with Landau dissipation, a key ingredient in the proof of Theorem\;\ref{Thm:L^2-decay}.
\begin{corollary}[Coercivity Estimate on $L$] \label{Cor:coercivity-est-L}
With the same assumptions above, we have a sufficiently small constant $\delta'>0$ such that
\begin{equation} \label{coercivity-est-L}
\int_s^t\! \big(Lf(\tau),f(\tau) \big) \,\dd\tau
\;\geq\; \delta' \left\{\int_s^t\!\big\|f(\tau)\big\|_{\sigma}^2 \,\dd\tau \,+ \int_s^t\!\big\|\mathbf{E}_f(\tau)\big\|_{L^2_x}^2 \,\dd\tau\right\} 
\end{equation}
for all $0\leq s<t$ with $\abs{t-s} \in\mathbb{Z}^+$.
\end{corollary}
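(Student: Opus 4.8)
The plan is to deduce Corollary \ref{Cor:coercivity-est-L} from Lemma \ref{Lem:abc} together with the semi-positivity estimate in Lemma \ref{Lem:L-semi-positivity}. The guiding idea is the standard decomposition $f = \mathcal{P}f + (I-\mathcal{P})f$: the linear Landau operator $L$ annihilates the macroscopic part $\mathcal{P}f$, so $\langle Lf,f\rangle = \langle L(I-\mathcal{P})f,(I-\mathcal{P})f\rangle$ (after integrating in $x$, using self-adjointness of $L$ on $L^2_v$), and by Lemma \ref{Lem:L-semi-positivity} this is bounded below by $\delta \|(I-\mathcal{P})f\|_\sigma^2$. Meanwhile, the $\sigma$-norm of the full $f$ splits, up to a harmless constant, as $\|f\|_\sigma^2 \lesssim \|\mathcal{P}f\|_\sigma^2 + \|(I-\mathcal{P})f\|_\sigma^2$; so all that is needed to close the estimate is to absorb $\|\mathcal{P}f\|_\sigma^2$ and $\|\mathbf{E}_f\|_{L^2_x}^2$ into the microscopic dissipation, which is exactly what Lemma \ref{Lem:abc} provides once we integrate over a time interval of integer length.

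More concretely, I would carry out the following steps. First, integrate $(Lf(\tau),f(\tau))$ over $\tau\in[s,t]$ and use $\mathcal{P}f\in\mathbf{N}(L)$ plus self-adjointness to rewrite it as $\int_s^t \big(L(I-\mathcal{P})f(\tau),(I-\mathcal{P})f(\tau)\big)\,\dd\tau$; then apply Lemma \ref{Lem:L-semi-positivity} pointwise in $(\tau,x)$ and integrate to get the lower bound $\delta \int_s^t \|(I-\mathcal{P})f(\tau)\|_\sigma^2\,\dd\tau$. Second, invoke Lemma \ref{Lem:abc} to bound $\int_s^t \|\mathcal{P}f(\tau)\|_\sigma^2\,\dd\tau + \int_s^t \|\mathbf{E}_f(\tau)\|_{L^2_x}^2\,\dd\tau$ by $C\int_s^t \|(I-\mathcal{P})f(\tau)\|_\sigma^2\,\dd\tau$ (valid since $\abs{t-s}\in\mathbb{Z}^+$). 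Third, combine: writing $\|f\|_\sigma^2 \le 2\|\mathcal{P}f\|_\sigma^2 + 2\|(I-\mathcal{P})f\|_\sigma^2$ and using the two previous bounds,
\begin{equation*}
\int_s^t\!\big\|f(\tau)\big\|_\sigma^2\,\dd\tau + \int_s^t\!\big\|\mathbf{E}_f(\tau)\big\|_{L^2_x}^2\,\dd\tau
\;\lesssim\; \int_s^t\!\big\|(I-\mathcal{P})f(\tau)\big\|_\sigma^2\,\dd\tau
\;\lesssim\; \int_s^t\!\big(Lf(\tau),f(\tau)\big)\,\dd\tau ,
\end{equation*}
which is precisely \eqref{coercivity-est-L} after renaming the constant as $\delta'$.

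The one technical subtlety to watch is that Lemma \ref{Lem:L-semi-positivity} is stated for a general function $h(v)$ of velocity alone, so one must apply it with $h = f(\tau,x,\cdot)$ for fixed $(\tau,x)$, check that the projection $\mathcal{P}$ used there coincides fiberwise with the one in our definition, and only then integrate in $x$ and $\tau$; this is routine but should be spelled out. I do not expect a genuine obstacle here: the real content has already been extracted in Lemma \ref{Lem:abc} (whose proof, relying on the macro-micro / contradiction argument and, in the rotationally symmetric case, on the conservation of angular momentum, is the hard part), and Corollary \ref{Cor:coercivity-est-L} is a short and essentially algebraic consequence. The only place requiring a little care is ensuring the interval-length hypothesis $\abs{t-s}\in\mathbb{Z}^+$ is carried through from Lemma \ref{Lem:abc}, which we simply keep as a standing assumption.
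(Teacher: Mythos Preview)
Your proposal is correct and follows exactly the approach of the paper's proof sketch: apply Lemma~\ref{Lem:L-semi-positivity} to get $(Lf,f)\ge\delta\|(I-\mathcal{P})f\|_\sigma^2$, then invoke Lemma~\ref{Lem:abc} to absorb the macroscopic part and the electric-field term into the microscopic dissipation. The paper states this in two sentences and leaves the algebraic combination implicit; you have simply spelled it out.
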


\begin{proof}[Sketch of Proof]
Recalling that $L$ is semi-positive by Lemma\;\ref{Lem:L-semi-positivity}, we clearly have
\begin{equation*} 
\big( Lf, f \big) \,\geq\, \delta\, \big\|(I-\mathcal{P})f\big\|_{\sigma}^2 \end{equation*}
for some $\delta>0$.
Then it suffices to bound the remaining part on $\mathcal{P}f$ by the counterpart of $(I-\mathcal{P})f$ using Lemma\;\ref{Lem:abc}.
\end{proof}

\begin{proof}[Proof of Lemma\;\ref{Lem:abc}]
    This proof is an adaptation of the proof of \cite[Proposition\;2]{Guo.Hwang.Jang.Ouyang2020(=)},
    so we only point out the main difference.

    We consider the equation \eqref{Eq:linearized-VL_f=g}.
    To handle the most problematic nonlinear term $(v\cdot\, \mathbf{E} )f$, we combine it with the linear streaming term $v\cdot\nabla_{\!x}f$ (which also contains an  extra $v$ factor) by multiplying both sides of \eqref{Eq:linearized-VL_f=g}\, by $e^\phi$, so we can rewrite the equation as
\begin{equation} \label{Eq:linearized-VL_f=f-e^phi=}
\partial_t \big(e^\phi f\big) + v\cdot\nabla_{\!x}\big(e^\phi f\big) + L\big(e^\phi f\big)
- 2 e^\phi \!\sqrt{\mu}\,v\cdot \mathbf{E}
\;=\; e^\phi\,\Gamma[f,f] - e^\phi\,\mathbf{E}\cdot\nabla_{\!v}f + e^\phi f\, \partial_t\phi .
\end{equation}

    We first justify the lemma for $s\!=\!0,\, t\!=\!1$.
    If the estimate (\ref{Est:P<(I-P)}) is not true no matter how small $\varepsilon_1$ is, then there exist a sequence of solutions $f_n$ to \eqref{Eq:linearized-VL_f=g} with $f=f_n$ such that for any $n$,
 	\begin{equation}\label{contradiction} \int
 	_{0}^{1} \|(I-\mathcal{P})f_n(\tau)\|_{\sigma}^{2}\,\dd\tau\le \frac{1}{n}\int_{0}^{1} \|\mathcal{P} f_n(\tau)\|_{\sigma}^{2}\,\dd\tau ,\end{equation}
 	and $\|f_n\|_{\infty,\vartheta}<\frac{1}{n}$ for given $\vartheta>\frac{3}{2}$.

We first prove the weak compactness of $f_n$.
	For any fixed $\vartheta<0$, we multiply \eqref{Eq:linearized-VL_f=f-e^phi=} for $f=f_n$ by $\ue^{\phi_n}\!\br{v}^{2\vartheta}\!f_n$ and integrate both sides of the resulting equation.
	Similar to the argument in \cite[Proposition\;2]{Guo.Hwang.Jang.Ouyang2020(=)} and using the fact that $\ue^{\phi_n}\sim 1$ by the bootstrap assumption, we obtain
		 \begin{equation*}
\|\br{v}^{\vartheta}f_n(t)\|_{L^2}^2+\int_{t_0}^t \|f_n(\tau)\|^2_{\sigma,\vartheta}\,\dd\tau\le C e^{t-t_0} \|\br{v}^{\vartheta}f_n(t_0)\|^2_{L^2}
\end{equation*}
		 and
		 \begin{align*}
		 	\frac{\dd}{\dd t}\int_{t_0}^t\|f_n(\tau)\|^2_\sigma \,\dd\tau=\|f_n(t)\|^2_\sigma
		 	\ge C\nm{\br{v}^{-\frac{1}{2}}f_n(t_0)}^2_{L^2}-C'\int_{t_0}^t \|f(\tau)\|^2_\sigma \,\dd\tau
		 \end{align*}
		 for some $C,C'>0$.
	 By the bootstrap assumption \eqref{Bootstrap-assp:smallness-decay-L^infty} and the Gr\"onwall inequality, we obtain that
\begin{equation*}
	 \int_{t_0}^t \|f_n(\tau)\|^2_\sigma \,\dd\tau \geq C(1-\ue^{-C'(t-t_0)})
	 \|\br{v}^{-\frac{1}{2}}f_n(t_0)\|^2_{L^2}.
\end{equation*}
Now we define the normalized term $Z_n$ of $f_n$ as
$$Z_n:= \frac{f_n}{C_n}$$
with $$C_n:=\sqrt{\int_0^1 \|\mathcal{P}f_n(\tau)\|_\sigma^2 \,\dd\tau}.$$
Then $Z_n$ satisfies the equation
\begin{equation*} 
\begin{split}
&\partial_t \big(e^{C_n\phi_n} Z_n\big) + v\cdot\nabla_{\!x}\big(e^{C_n\phi_n} Z_n\big) + L\big(e^{C_n\phi_n} Z_n\big)
- 2 e^{C_n\phi_n} \!\sqrt{\mu}\,v\cdot \mathbf{E}_n \\
\;=\;& e^{C_n\phi_n}\,\Gamma[f_n,Z_n] - C_n e^{C_n\phi_n}\,\mathbf{E}_n\cdot\nabla_{\!v}Z_n + C_n e^{C_n\phi_n} Z_n\, \partial_t\phi_n ,
\end{split}
\end{equation*}
where $C_n$ are uniformly bounded by a small constant,
and $\mathbf{E}_n := -\nabla_{\!x}\phi_n$ with $\phi_n$ determined by the Poisson equation
\begin{equation*} 
-\Delta_x\phi_n = \int_{\r^3}\! \sqrt{\mu}\,Z_n\,\ud v \,=: \rho[Z_n] \;.
\end{equation*}
Thus, following a similar argument in \cite[Proposition\;2]{Guo.Hwang.Jang.Ouyang2020(=)}, we obtain the uniform bound $$\sup_{0\le \tau\le 1}\|\br{v}^{-\frac{1}{2}}Z_n(\tau)\|^2_{L^2}\le C$$ for some $C>0$. Also, by the normalization we already have $\int_0^1 \|Z_n(\tau)\|^2_\sigma \,\dd\tau=1$. Since the eigenvalues $\lambda(v)$ of the matrix $\sigma(v)$ satisfies the bound \eqref{sigma-eigenvalue-bound}, the normed vector space with the norm $\|\cdot \|_\sigma$ can be understood as a weighted $L^2$ Sobolev space and is reflexive.
	Therefore, there exists the weak limit $Z$ of $Z_n$ in $\int_0^1 \|\cdot \|_\sigma^2 \,\dd\tau$. Also, by \eqref{contradiction}, we have
\begin{equation*}
\int_0^1 \|(I-\pp)Z_n(\tau) \|_\sigma^2 \,\dd\tau\,\le\, \frac{1}{n}\,\rightarrow\, 0.
\end{equation*}
    By the triangle inequality, we also have that $\int_0^1 \|\pp Z_n(\tau)\|^2_\sigma \,\dd\tau$ is uniformly bounded from above. In addition, the norm $\|\cdot\|_\sigma$ is an anisotropic Sobolev norm with respect to the direction of the velocity $v$ by definition. Then by Alaoglu's theorem and Eberlein–Šmulian's theorem, $\pp Z_n$ converges weakly to $\pp Z$ in $\int_0^1 \|\cdot \|_\sigma^2 \,\dd\tau$ up to a subsequence.
    Thus, we conclude that $(I-\pp)Z =0$ and $Z=\pp Z$.

    Also, by taking the limit $n\rightarrow \infty$, we note that the limit $Z$ satisfies
    \begin{equation}\label{limit eq}\partial_t Z+v\cdot \nabla_x Z
    \,-\,2\sqrt{\mu}v\cdot\mathbf{E}_Z=0\end{equation} in the sense of distribution.

    Now our main strategy has two steps:
    \begin{itemize}
        \item
        Step 1: Show that the convergence $Z_n\rt Z$ is actually strong in $\int_0^1 \|\cdot \|_\sigma^2 \,\dd\tau$ by proving the compactness. This is almost the same as the argument in \cite[Proposition\;2]{Guo.Hwang.Jang.Ouyang2020(=)}, so we omit the details. The basic idea is to show the compactness in the interior and rule out the possibility of concentration near $t=0$, $t=1$, or $x\in\p\Omega$.
        \item
        Step 2: We use the equation \eqref{limit eq} and conservation laws to show that $Z$ is actually zero. This will be our focus below.
    \end{itemize}
    Eventually, we confirm that this leads to a contradiction and thus Lemma\;\ref{Lem:abc} must holds in $[0,1]$. Finally, we extend the domain to arbitrary $[s,t]$ with $\abs{t-s}\geq 1$.

\smallskip
{\it \underline{$Z$ is indeed zero}}.

The limit equation for $Z$ is (since all nonlinear terms in the equation for $f$ vanish after normalization)
\begin{align}\label{tt 1}
    \dt Z+v\cdot\nabla_xZ+2\sqrt{\mu}v\cdot\mathbf{E}=0.
\end{align}
Here assume that
\begin{align}\label{tt 2}
    Z=\sqrt{\mu}\Big(a+b\cdot v+c\abs{v}^2\Big),
\end{align}
and
\begin{align*}
    \mathbf{E}=\nabla_x\Delta_x^{-1}\int_{\r^3}\sqrt{\mu}Z\ud v:=-\nabla_x\phi,
\end{align*}
with either DBC or NBC on $\phi$.

The conservation laws for mass and energy are as follows:
\begin{align}
    &\dt\int_{\Omega\times\r^3}\sqrt{\mu}Z=0,\quad \dt\int_{\Omega\times\r^3}\abs{v}^2\sqrt{\mu}Z=0. \label{tt 9}
\end{align}
Also, combined with the normalized initial condition, we have
\begin{align}
    &\int_{\Omega\times\r^3}\sqrt{\mu}Z(t)=\int_{\Omega\times\r^3}\sqrt{\mu}Z(0)=0,\label{tt 3}\\
    &\int_{\Omega\times\r^3}\abs{v}^2\sqrt{\mu}Z(t)=\int_{\Omega\times\r^3}\abs{v}^2\sqrt{\mu}Z(0)=-\int_{\Omega}\abs{\mathbf{E}}^2(0).\label{tt 4}
\end{align}
If the domain $\Omega$ is rotational invariant
\begin{align*}
    [(x-x_0)\times\omega]\cdot n=0,
\end{align*}
then the angular momentum is conserved
\begin{align*}
    \int_{\Omega\times\r^3}[(x-x_0)\times\omega]\cdot v\sqrt{\mu}Z=0.
\end{align*}
This is essentially $\int_{\Omega\times\r^3}[(x-x_0)\times v\sqrt{\mu}Z]\cdot\omega=0$, which is consistent with the classical definition of angular momentum.
Note that all conservation laws can be directly derived from the equation \eqref{tt 1} itself, with the normalization \eqref{tt 3} on initial data.

We can analyze $Z$ as follows:
\ \\
Step 1:
    We insert \eqref{tt 2} into \eqref{tt 1} and compare the coefficients of $v$ for each order. It is easy to check that $\dt a=0$ (at lowest order) and $\nabla_xc=0$ (at highest order). Hence, we have
$a=a(x)$ and $c=c(t)$.
    Using conservation of mass \eqref{tt 9}, we know $\dt c=0$, which means $c$ is a constant.

    Based on \eqref{tt 3}, we know
    \begin{align}\label{tt 11}
        \int_{\Omega}a=0.
    \end{align}
\ \\
Step 2:
    Inserting \eqref{tt 2} into \eqref{tt 1} and checking the terms of order $v_i$, we get
    \begin{align}\label{tt 7}
        \dt b_i+\p_{x_i}(a-\phi)=0.
    \end{align}
    Inserting \eqref{tt 2} into \eqref{tt 1} and checking the terms of order $v_i^2$ and $v_iv_j$, we get
    \begin{align*}
        \p_{x_i}b_i=0,\quad \p_{x_j}b_i+\p_{x_i}b_j=0\ \ \text{for}\ \ i\neq j.
    \end{align*}
    Hence, $\nabla_xb$ is a skew-symmetric matrix. Hence, using the Helmholtz decomposition (or direct computation), $b$ must take the form
    \begin{align*}
        b=\bar b(t)+\bar\omega(t)\times x
    \end{align*}
    for some $\bar b(t)$ and $\bar\omega(t)$.

    From \eqref{tt 7}, we know that
    \begin{align*}
        \dt (\p_{x_j}b_i)=\dt(\p_{x_i}b_j).
    \end{align*}
    Hence, $\dt\nabla_xb$ is a symmetric matrix. From the last step, we know $\dt\nabla_xb$ is a skew-symmetric matrix. These two requirements imply that $\dt\nabla_xb$ is a zero matrix. Hence, $\nabla_xb$ is time-independent. Therefore, $\bar\omega=\text{const}$. Thus, we get
    \begin{align*}
        b=\bar b(t)+\bar\omega\times x.
    \end{align*}
    Considering \eqref{tt 7}, we know that $\dt b$ is independent of time. Hence, $\bar b=b_1+b_0t$.
    At the boundary, due to the specular reflection boundary, we always have $b\cdot n=0$. Hence, by taking a boundary point $x$ at which $n_x$ is parallel to $b_0$, we get $b_0=0$. In summary, we have
    \begin{align*}
        b=b_1+\bar\omega\times x.
    \end{align*}
\ \\
Step 3:
    Since $b$ is independent of time, from \eqref{tt 7} we have
    \begin{align*}
        \p_{x_i}(a-\phi)=0,
    \end{align*}
    which implies
    \begin{align} \label{tt 9-2}
        a-\phi = \text{const}.
    \end{align}
    Considering the definition
    \begin{align}\label{tt 10}
        \Delta_x\phi=a,
    \end{align}
    we know \begin{align*}
        \Delta_x\phi=\phi+\text{const}.
    \end{align*}
    \begin{itemize}
        \item[-]
        For NBC case, using the normalization condition $\ds\int_{\Omega}\phi=0$, a directly energy estimate yields
        \begin{align*}
            \int_{\Omega}\abs{\nabla_x\phi}^2+\int_{\Omega}\abs{\phi}^2=0.
        \end{align*}
        Hence, we know $\phi=0$ and thus $a$ is a constant. Based on \eqref{tt 11}, such a constant must be zero. Hence, $a=0$.
        \item[-]
        For DBC, taking the Laplacian in \eqref{tt 9-2}, we obtain
        \begin{align*}
            \Delta_xa-\Delta_x\phi=0.
        \end{align*}
        Considering \eqref{tt 10}, we have
        \begin{align*}
            \Delta_xa-a=0.
        \end{align*}
        The DBC implies $\phi=0$ on $\p\Omega$, which implies $a=\text{const}$ on $\p\Omega$.
        Due to the maximum principle of the elliptic equation, we see that $a$ does not change sign in $\Omega$.
        Then using the average property \eqref{tt 11}, we know the constant must be zero. Hence, $a=0$ and $\phi=\text{const}$.
    \end{itemize}
    All in all, in both cases, we confirm that $\phi$ is a constant and $a=0$. This further implies $\mathbf{E}=0$ and thus $c=0$ from \eqref{tt 4}.
\\
\ \\
Step 4:
    So far, we have confirmed that $a=c=0$ and $b=b_1+\bar\omega\times x$. Based on \cite[Page 748-749]{Guo2010} and \cite{Guo.Hwang.Jang.Ouyang2020(=)}, and using the conservation of angular momentum, we obtain that $b=0$. In detail, we may deduce that $Z=\bar\omega\times (x-x_0)v\sqrt{\mu}$ in $\Omega$ and $[\bar\omega\times(x-x_0)]\cdot n=0$ on $\p\Omega$ by decomposing $b_1$. If the domain is not rotational invariant, then there is no nonzero $\bar\omega$ to make the above true. On the other hand, if the domain is rotational invariant, we need an additional conservation of angular momentum to justify $\bar\omega=0$.

In summary, we conclude that $Z=0$ and this leads to a contradiction.

\smallskip
{\it \underline{Generalizing to arbitrary $0\leq s<t$ with $\abs{t-s}\in\mathbb{Z}^+$}}.

Let $t-s=N$ for some positive integer $N$. We split
\begin{align*}
    [s,t]=\left(\bigcup_{j=0}^{N-1}[s+j,s+j+1]\right).
\end{align*}
	On each interval $[s+j,s+j+1]$ for $j=0,1,...,N-1,$ we define
	$f^j(r,x,v)\eqdef f(r+s+j,x,v).$
Then clearly $f^j(r,x,v)$ is a solution on the time interval $r\in[0,1]$ with the new initial condition $f^j(0,x,v)= f(s+j,x,v)$. Note that due to the bootstrap assumption, $\mathcal{E}_\vartheta(f^j(t))$ is uniformly (in $j$) bounded from above (which is used to show the boundedness of sequence $Z_n$). Hence, using the previous argument, we obtain the positivity estimates.

Hence, by assembling all these intervals, the positivity estimate holds for the full interval $[s,t]$ with $\abs{t-s}=N$.
\end{proof}

\subsection{Weighted $L^2$ Bound and Decay}
We are now ready to prove the main theorem of this section.
\begin{proof}[Proof of Theorem\;\ref{Thm:L^2-decay}]
The proof is a modification of \cite[Theorem\;1.4]{Kim.Guo.Hwang2020}, whose idea is brought from \cite[Theorem\;5.1]{Strain.Guo2006}.
Our main observation based on the relationship
\begin{equation} \label{energy-dissipation-relation}
\|h\|_{\sigma,\vartheta} \,\gtrsim\, \|h\|_{2,\vartheta-\frac{1}{2}}
\end{equation}
is that even though $\|\cdot\|_{\sigma,\vartheta}$ is not stronger than $\|\cdot\|_{2,\vartheta}$ (but instead dominates the energy norm at the cost of losing some weight), it is still possible to bound the instant energy by a fractional power of the dissipation rate via interpolations with stronger energy norms of higher weight powers. This leads to the result of almost exponential decay.

It is also worth pointing out that our coupled electric field poses considerable difficulty in the energy-decay estimate.
To overcome this, we combine various delicate techniques and tricks in \cite[Sections\;3\,\&\,4]{Guo2012} and \cite[Section\;7]{Cao.Kim.Lee2019}
to handle the extra terms related to the electric field.

If $t\in[0,1)$, then by the local well-posedness result, the estimates naturally follow since the initial data is sufficiently small, so we focus on the case when $t\geq 1$.

\smallskip
{\it \underline{Step\;1}. Energy Estimate without Weight ($\vartheta=0$)\,}.
Throughout this proof, we consider at a rearrangement of the Vlasov-Landau equation \eqref{Eq:Vlasov-Landau_f}
with all linear terms on the left and nonlinearities on the right:
\begin{equation} \label{Eq:linearized-VL_f=f}
\partial_t f + v\cdot\nabla_{\!x}f + Lf - 2\sqrt{\mu}\,v\cdot \mathbf{E}
\;=\; \Gamma[f,f] - \mathbf{E}\cdot\nabla_{\!v}f + \big(v\cdot \mathbf{E} \big)f ,
\end{equation}
where $\mathbf{E} := \mathbf{E}_f = -\nabla_{\!x}\phi_f$ with $\phi := \phi_{f}$ determined by the Poisson equation
\begin{equation} \label{Pf-Eq:Poisson}
-\Delta_x\phi = \int_{\R^3}\! \sqrt{\mu}\,f\,\dd v \,=: \rho[f] \;.
\end{equation}

To handle the most problematic nonlinear term $(v\cdot\, \mathbf{E} )f$, we combine it with the linear streaming term $v\cdot\nabla_{\!x}f$ (which also contains an  extra $v$ factor) by multiplying both sides of \eqref{Eq:linearized-VL_f=f}\, by $e^\phi$, so we can rewrite the equation as
\begin{equation} \label{Eq:linearized-VL_f=f-e^phi}
\partial_t \big(e^\phi f\big) + v\cdot\nabla_{\!x}\big(e^\phi f\big) + L\big(e^\phi f\big)
- 2 e^\phi \!\sqrt{\mu}\,v\cdot \mathbf{E}
\;=\; e^\phi\,\Gamma[f,f] - e^\phi\,\mathbf{E}\cdot\nabla_{\!v}f + e^\phi f\, \partial_t\phi .
\end{equation}
Multiplying $e^\phi f$ on both sides of \,\eqref{Eq:linearized-VL_f=f-e^phi}\, and integrating in $(x,v)\in\Omega\times\R^3$,
we get
\begin{equation} \label{Eq:linearized-VL-e^phi-int}
\begin{split}
&\frac{1}{2}\,\frac{\dd}{\dd t}\, \big\|e^{\phi} f\big\|_{2}^2
\,+\, \frac{1}{2}\iint_{\Omega\times\R^3}\! \nabla_{\!x}\cdot \left\{v \big(e^{\phi} f\big)^2\right\} \\
&\,+\, \iint_{\Omega\times\R^3}\! \big(e^{\phi} f\big)\, L\big(e^{\phi} f\big)
\,-\, \iint_{\Omega\times\R^3}\! 2\,\big(e^{2\phi} f\big) \sqrt{\mu}\,v\cdot \mathbf{E} \\
\,=\;& \iint_{\Omega\times\R^3}\! \big(e^{2\phi} f\big)\, \Gamma[f,f]
\,-\, \frac{1}{2}\iint_{\Omega\times\R^3}\! e^{2\phi}\,\mathbf{E}\cdot\nabla_{\!v}\big(f^2\big)
\,+\, \iint_{\Omega\times\R^3}\! \big(e^{\phi} f\big)^2\, \partial_t\phi \;.
\end{split}
\end{equation}
We need to estimate every term of \,\eqref{Eq:linearized-VL-e^phi-int}\, in terms of the energy norm $\|\cdot\|_{2}$ or dissipation rate $\|\cdot\|_{\sigma}$,
for which we will use repeatedly $e^{\phi} \sim 1$ with $\|\phi\|_{\infty} \lesssim \|f\|_{\infty} \ll 1$ (see the proof of Lemma\;\ref{Lem:E_f-L^p}).

First we observe that the following two terms vanish by using integration by parts (divergence theorem)\,:
\begin{equation} \label{energy-est-1}
\frac{1}{2}\iint_{\Omega\times\R^3}\! e^{2\phi}\,\mathbf{E}\cdot\nabla_{\!v}\big(f^2\big) \,\dd v\dd x
\,=\, 0 \;,
\end{equation}
\vspace{3pt}
\begin{equation} \label{energy-est-2}
\begin{split}
\frac{1}{2}\iint_{\Omega\times\R^3}\! \nabla_{\!x}\cdot \left\{v \big(e^{\phi} f\big)^2\right\} \dd v\dd x
&\,=\, \frac{1}{2}\iint_{\partial\Omega\times\R^3}\! \big(e^{\phi} f\big)^2 (v\cdot n_x) \,\dd v\dd S_x \\
&\,=\, \frac{1}{2}\int_{\partial\Omega}\! e^{2\phi} \bigg[ \bigg(\int_{\{v\cdot n_x >0\}} \!- \int_{\{v\cdot n_x <0\}} \bigg) f^2 |v\cdot n_x| \,\dd v \bigg] \dd S_x \\
&\,=\, 0 \;.
\end{split}
\end{equation}
Here the boundary terms cancel each other in view of the specular-reflection boundary condition (\ref{Specular-BC_f}).

For the other nonlinear terms, we have
\begin{equation} \label{energy-est-3}
\iint_{\Omega\times\R^3}\! \big(e^{2\phi} f\big)\, \Gamma[f,f]
\,\lesssim\, \|f\|_{\infty} \big\|e^{\phi} f\big\|_{\sigma}^2
\,\lesssim\, \|f\|_{\infty} \|f\|_{\sigma}^2
\end{equation}
by (\ref{Gamma-est-1}) in Lemma\;\ref{Lem:Gamma-est}, and
\begin{equation} \label{energy-est-4}
\iint_{\Omega\times\R^3}\! \big(e^{\phi} f\big)^2\, \partial_t\phi
\,\lesssim\, \|\partial_t\phi\|_{\infty} \big\|e^{\phi} f\big\|_{2}^2
\,\lesssim\, \|\partial_t f\|_{2} \|f\|_{2}^2 \;,
\end{equation}
where for the last inequality, we modify the proof of Lemma\;\ref{Lem:E_f-L^p}.
Note that we cannot bound this term in terms of $\|f\|_{\sigma}^2$, which ultimately leads to an extra term on the RHS in (\ref{Eq:linearized-VL-e^phi-int-6}) not being absorbed, because the $\sigma$-norm is not strong enough to control the $L^2$\;norm.

The last term on the LHS of \,\eqref{Eq:linearized-VL-e^phi-int}\, is more troublesome to deal with.
We first manipulate this term as follows:
\begin{equation} \label{Eq:linearized-VL-e^phi-int-2''}
\begin{split}
- \iint_{\Omega\times\R^3}\! 2\,\big(e^{2\phi} f\big) \sqrt{\mu}\,v\cdot \mathbf{E}
&\,=\, \iint_{\Omega\times\R^3}\! 2\,\big(e^{2\phi} f\big) \sqrt{\mu}\,v\cdot \nabla_{\!x}\phi
\,=\, \iint_{\Omega\times\R^3}\! f\sqrt{\mu}\, v\cdot \nabla_{\!x} \big(e^{2\phi} \big) \\
&\,=\, \iint_{\Omega\times\R^3}\! \nabla_{\!x}\cdot \left\{v\sqrt{\mu}\, \big(e^{2\phi} f\big)\right\}
- e^{2\phi}\! \sqrt{\mu}\,v\cdot \nabla_{\!x} f \\
&\,=\, \iint_{\partial\Omega\times\R^3}\! \big(e^{2\phi} f\big)\sqrt{\mu}\, (v\cdot n_x) \,\dd v\dd S_x
\,- \iint_{\Omega\times\R^3}\! e^{2\phi}\! \sqrt{\mu}\,v\cdot \nabla_{\!x} f \\
&\,=\, - \iint_{\Omega\times\R^3}\! e^{2\phi}\! \sqrt{\mu}\,v\cdot \nabla_{\!x} f \;,
\end{split}
\end{equation}
where again, we use integration by parts (divergence theorem), and the boundary term on the third line vanishes by the specular boundary condition.
Then we want to control \mbox{$- \iint_{\Omega\times\R^3}\! e^{2\phi}\! \sqrt{\mu}\,v\cdot \nabla_{\!x}f$} from the original equation itself\,:
this time multiplying both sides of \eqref{Eq:linearized-VL_f=f}\, by the factor $e^{2\phi}\!\sqrt{\mu}$ and integrating over $\Omega\times\R^3$, we obtain
\begin{equation} \label{Eq:linearized-VL-e^phi-int-2}
\begin{split}
& \iint_{\Omega\times\R^3}\! e^{2\phi}\!\sqrt{\mu}\, \partial_t f
\,+\, \iint_{\Omega\times\R^3}\! e^{2\phi}\! \sqrt{\mu}\,v\cdot \nabla_{\!x} f
\,+\, \iint_{\Omega\times\R^3}\! e^{2\phi}\! \sqrt{\mu}\, Lf
\,-\, \iint_{\Omega\times\R^3}\! 2\,e^{2\phi} \mu\, v\cdot \mathbf{E} \\
\,=\;& \iint_{\Omega\times\R^3}\! e^{2\phi}\! \sqrt{\mu}\, \Gamma[f,f]
\,-\, \iint_{\Omega\times\R^3}\! e^{2\phi}\! \sqrt{\mu}\:\mathbf{E}\cdot \nabla_{\!v} f
\,+\, \iint_{\Omega\times\R^3}\! e^{2\phi}\! \sqrt{\mu}\, \big(v\cdot \mathbf{E} \big)f \;.
\end{split}
\end{equation}
Obviously, the last term on the LHS vanishes due to symmetry/oddness,
and by the orthogonality to the Landau operators, we see that
\begin{equation*}
\iint_{\Omega\times\R^3}\! e^{2\phi}\! \sqrt{\mu}\, Lf
\,= \iint_{\Omega\times\R^3}\! e^{2\phi}\! \sqrt{\mu}\, \Gamma[f,f]
\,=\, 0.
\end{equation*}
Also, integrating by parts gives
\begin{equation*}
- \iint_{\Omega\times\R^3}\! e^{2\phi}\! \sqrt{\mu}\:\mathbf{E}\cdot \nabla_{\!v} f
\,+ \iint_{\Omega\times\R^3}\! e^{2\phi}\! \sqrt{\mu}\, \big(v\cdot \mathbf{E} \big)f \,=\, 0 .
\end{equation*}
To sum up, we may equate the following two terms from \eqref{Eq:linearized-VL-e^phi-int-2} and further deduce that
\begin{equation} \label{Eq:linearized-VL-e^phi-int-2'}
\begin{split}
- \iint_{\Omega\times\R^3}\! e^{2\phi}\! \sqrt{\mu}\,v\cdot \nabla_{\!x} f
&\,=\, \iint_{\Omega\times\R^3}\! e^{2\phi}\!\sqrt{\mu}\, \partial_t f \\
&\,=\, \int_{\Omega} e^{2\phi}\, \partial_t\!\left(\int_{\R^3}\! \sqrt{\mu}\,f\,\dd v \right) \dd x
\;=\; - \int_{\Omega} e^{2\phi} \partial_t \Delta_x\phi \,\dd x \\
&\,=\, \int_{\Omega} 2\,e^{2\phi} (\nabla_{\!x}\phi)\cdot (\nabla_{\!x}\partial_t\phi) \,\dd x
\,- \int_{\partial\Omega}\! e^{2\phi}\, \partial_t\! \left(\frac{\partial_x \phi}{\partial n_x} \right) \dd S_x \\
&\,=\, \int_{\Omega} e^{2\phi} \partial_t |\nabla_{\!x}\phi|^2 \,\dd x
\;=\; \int_{\Omega} \partial_t\! \left( e^{2\phi} |\nabla_{\!x}\phi|^2 \right)
-\! \int_{\Omega} 2e^{2\phi} |\nabla_{\!x}\phi|^2 \partial_t\phi \\
&\,=\, \frac{\dd}{\dd t} \int_{\Omega} e^{2\phi} |\mathbf{E}|^2
-\! \int_{\Omega} 2e^{2\phi} |\mathbf{E}|^2 \partial_t\phi \;,
\end{split}
\end{equation}
by using the Poisson equation \,\eqref{Pf-Eq:Poisson}\, on the second line, followed by integration by parts (divergence theorem).
Note also that the boundary term on the third line vanishes under either the zero\,-\,Neumann BC ($\frac{\partial\phi}{\partial n}\big|_{\partial\Omega} \equiv 0$) or the zero\,-\,Dirichlet BC ($\phi|_{\partial\Omega} \equiv 0$), in which case this term reduces to
\begin{equation*}
\frac{\dd}{\dd t} \int_{\partial\Omega} \frac{\partial\phi}{\partial n}(t) \,\dd S
\,=\, -\,\frac{\dd}{\dd t} \int_{\partial\Omega} \mathbf{E}(t)\cdot n \,\dd S
\,\equiv\, 0,
\end{equation*}
thanks to the conservation of flux for the self-consistent electric field (see (\ref{flux-conservation})).
Hence, combining (\ref{Eq:linearized-VL-e^phi-int-2''}) and (\ref{Eq:linearized-VL-e^phi-int-2'}) reveals that
\begin{equation} \label{energy-est-5}
- \iint_{\Omega\times\R^3}\! 2\,\big(e^{2\phi} f\big) \sqrt{\mu}\,v\cdot \mathbf{E}
\;=\; \frac{\dd}{\dd t} \int_{\Omega} e^{2\phi} |\mathbf{E}|^2
\,- \int_{\Omega} 2e^{2\phi} |\mathbf{E}|^2 \partial_t\phi \;.
\end{equation}
We will combine the first term above with the first term of \eqref{Eq:linearized-VL-e^phi-int}.
After moving the second to the other side, we bound it as
\begin{equation} \label{energy-est-6}
\int_{\Omega} 2e^{2\phi} |\mathbf{E}|^2 \partial_t\phi
\,\lesssim\, \|\partial_t\phi\|_{\infty} \big\|e^{\phi} \mathbf{E}\big\|_{2}^2
\,\lesssim\, \|\partial_t f\|_{2} \|f\|_{\sigma}^2 \;.
\end{equation}
Here we justify the last inequality by modifying the proof of Lemma\;\ref{Lem:E_f-L^p}:
\begin{equation} \label{E-L^2->f-sigma}
\begin{split}
\big\|e^{\phi} \mathbf{E}\big\|_{2}
\,\lesssim\, \|\mathbf{E}\|_{2}
&\,\lesssim\, \big\|\rho[f]\big\|_{2}
\,=\, \left\|\int_{\R^3}\! \sqrt{\mu}\,f\,\dd v \right\|_{L^2_{x}} \\
&\,\leq\, \int_{\R^3}\! \big\|\sqrt{\mu}\,f\big\|_{L^2_{x}} \dd v
\,= \int_{\R^3}\! \langle v\rangle^{\frac{1}{2}}\! \sqrt{\mu}\, \big\|\langle v\rangle^{-\frac{1}{2}} f\big\|_{L^2_{x}} \,\dd v \\[3pt]
&\,\leq\, \big\|\langle v\rangle^{-\frac{1}{2}} f\big\|_{L^2_{x,v}} \!\cdot
\big|\langle v\rangle^{\frac{1}{2}}\!\sqrt{\mu}\,\big|_{L^{2}_v} \\[2pt]
&\,\lesssim\; \|f\|_{2,-\frac{1}{2}}
\;\lesssim\; \|f\|_{\sigma} \;.
\end{split}
\end{equation}

Now in \eqref{Eq:linearized-VL-e^phi-int} we are left with the third term on the LHS, which at first can be bounded below as
\begin{equation} \label{energy-est-7}
\iint_{\Omega\times\R^3}\! \big(e^{\phi} f\big)\, L\big(e^{\phi} f\big)
\,=\, \left(L\big(e^{\phi} f\big),\, e^{\phi} f \right)
\;\geq\; \delta\, \big\|e^{\phi} (I-\mathcal{P})f\big\|_{\sigma}^2
\;\gtrsim\; \big\|e^{\phi}(I-\mathcal{P})f\big\|_{\sigma}^2 \;,
\end{equation}
due to the semi-positivity of $L$ (see Lemma\;\ref{Lem:L-semi-positivity}).

Summarizing all above, inserting (\ref{energy-est-1}), (\ref{energy-est-2}),
(\ref{energy-est-3}), (\ref{energy-est-4}), (\ref{energy-est-5}) ,(\ref{energy-est-6}), (\ref{energy-est-7}) into \eqref{Eq:linearized-VL-e^phi-int}, we arrive at
\begin{equation*} 
\frac{\dd}{\dd t} \left(\, \frac{1}{2}\big\|e^{\phi} f\big\|_{2}^2 + \big\|e^{\phi} \mathbf{E}\big\|_{2}^2 \,\right)
\,+\, \big\|e^{\phi}(I-\mathcal{P})f\big\|_{\sigma}^2
\;\lesssim\; \big(\,\|f\|_{\infty} + \|\partial_t f\|_{2} \big) \|f\|_{\sigma}^2 \,+\, \|\partial_t f\|_{2} \|f\|_{2}^2 \;,
\end{equation*}
and it naturally follows that
\begin{equation} \label{Eq:linearized-VL-e^phi-int-3.2}
\begin{split}
&\frac{\dd}{\dd t} \left(\, \frac{1}{2}\big\|e^{\phi} f\big\|_{2}^2 + \big\|e^{\phi} \mathbf{E}\big\|_{2}^2 \,\right)
\,+\, \big\|e^{\phi}(I-\mathcal{P})f\big\|_{\sigma}^2 \\
\;\lesssim\;& \big(\,\|f\|_{\infty} + \|\partial_t f\|_{2} \big) \|f\|_{\sigma}^2 \,+\, \|\partial_t f\|_{2} \left(\, \frac{1}{2}\big\|e^{\phi} f\big\|_{2}^2 + \big\|e^{\phi} \mathbf{E}\big\|_{2}^2 \,\right) \;.
\end{split}
\end{equation}
Define
\begin{align*}
    \psi(t):=-\int_0^t \big\|\dt f(\tau)\big\|_2\,\dd\tau.
\end{align*}
From the bootstrap assumption (\ref{Bootstrap-assp:smallness-decay-L^2}), we know $\abs{\psi(t)}\ll 1$, and thus $e^{\psi} \sim 1$.
Then multiplying both sides of (\ref{Eq:linearized-VL-e^phi-int-3.2}) above by the integrating factor $e^{\psi}$, we have
\begin{equation*}
\frac{\dd}{\dd t} \left[e^{\psi} \left(\, \frac{1}{2}\big\|e^{\phi} f\big\|_{2}^2 + \big\|e^{\phi} \mathbf{E}\big\|_{2}^2 \,\right)\right]
\,+\, e^{\psi}\big\|e^{\phi}(I-\mathcal{P})f\big\|_{\sigma}^2
\;\lesssim\; \big(\,\|f\|_{\infty} + \|\partial_t f\|_{2} \big) \|f\|_{\sigma}^2 \, \;.
\end{equation*}
In order to get the coercivity bound (\ref{coercivity-est-L}),
we need to first integrate over time (for any $0\leq s<t$) and obtain
\begin{equation*}
\begin{split}
&\left(\, \frac{1}{2}\big\|e^{\psi(t)} e^{\phi(t)} f(t)\big\|_{2}^2 + \big\|e^{\psi(t)} e^{\phi(t)} \mathbf{E}(t)\big\|_{2}^2 \,\right)
- \left(\, \frac{1}{2}\big\|e^{\psi(s)} e^{\phi(s)} f(s)\big\|_{2}^2 + \big\|e^{\psi(s)} e^{\phi(s)} \mathbf{E}(s)\big\|_{2}^2 \,\right)\\
&\,+\, \int_s^t\!e^{\phi(\tau)}e^{\phi(\tau)}\big\|(I-\mathcal{P})f(\tau)\big\|_{\sigma}^2 \,\dd\tau \\
\,\lesssim\,&\; \Big(\sup_{\tau\geq 0}\|f\|_{\infty} + \sup_{\tau\geq 0}\|\partial_t f\|_{2} \Big)\cdot\! \int_s^t\! \big\|f(\tau)\big\|_{\sigma}^2 \,\dd\tau
\,\;.
\end{split}
\end{equation*}

Since $1\ls e^{\phi(t)} \ls 1$
and $1\ls e^{\psi(t)} \ls 1$ uniformly for any $t$, applying Lemma\;\ref{Lem:abc} (following the proof of Corollary\;\ref{Cor:coercivity-est-L}) yields
\begin{align*} 
&\left(\,\big\|e^{\psi(t)} e^{\phi(t)}f(t)\big\|_{2}^2 + \big\|e^{\psi(t)} e^{\phi(t)}\mathbf{E}(t)\big\|_{2}^2 \,\right)
\,+\, \delta' \left\{\int_s^t\!\big\|f(\tau)\big\|_{\sigma}^2 \,\dd\tau
\,+ \int_s^t\!\big\|\mathbf{E}(\tau)\big\|_{2}^2 \,\dd\tau \right\} \no\\
\,\lesssim\,&\;  \left(\,\big\|e^{\psi(s)} e^{\phi(s)}f(s)\big\|_{2}^2 + \big\|e^{\psi(s)} e^{\phi(s)}\mathbf{E}(s)\big\|_{2}^2 \,\right)+\Big(\sup_{\tau\geq 0}\|f\|_{\infty} + \sup_{\tau\geq 0}\|\partial_t f\|_{2} \Big)\cdot\! \int_s^t\! \big\|f(\tau)\big\|_{\sigma}^2 \,\dd\tau
\end{align*}
for all $0\leq s<t$ with $\abs{t-s} \in\mathbb{Z}^+$.

Finally, under the a priori assumption
$$\sup_{t}\|f\|_{\infty} +\, \sup_{t}\|\partial_t f\|_{2} \,\ll\, 1 ,$$
this corresponding term can be absorbed into the LHS,
and therefore we end up with
\begin{align} \label{Eq:linearized-VL-e^phi-int-6}
&\left(\,\big\|e^{\psi(t)} e^{\phi(t)}f(t)\big\|_{2}^2 + \big\|e^{\psi(t)} e^{\phi(t)}\mathbf{E}(t)\big\|_{2}^2 \,\right)
\,+\, \delta' \left\{\int_s^t\!\big\|f(\tau)\big\|_{\sigma}^2 \,\dd\tau
\,+ \int_s^t\!\big\|\mathbf{E}(\tau)\big\|_{2}^2 \,\dd\tau \right\} \\
\;\leq\;& \,\big\|e^{\psi(s)} e^{\phi(s)}f(s)\big\|_{2}^2 + \big\|e^{\psi(s)} e^{\phi(s)}\mathbf{E}(s)\big\|_{2}^2 \, \;.\no
\end{align}

\smallskip
{\it \underline{Step\;2}. Weighted Energy Estimate ($\vartheta >0$)\,}.
Multiplying both sides of \,\eqref{Eq:linearized-VL_f=f-e^phi}\, by $\langle v\rangle^{2\vartheta} e^\phi f$ for $\vartheta >0$ and integrating over $\Omega\times\R^3$,
we get
\begin{equation} \label{Eq:linearized-VL-e^phi-int-weighted}
\begin{split}
&\frac{1}{2}\,\frac{\dd}{\dd t}\, \big\|\langle v\rangle^{\vartheta} e^{\phi} f\big\|_{2}^2
\,+\, \frac{1}{2}\iint_{\Omega\times\R^3}\! \nabla_{\!x}\cdot \left\{v \langle v\rangle^{2\vartheta} \big(e^{\phi} f\big)^2\right\} \\
&\,+\, \iint_{\Omega\times\R^3}\! \langle v\rangle^{2\vartheta} \big(e^{\phi} f\big)\, L\big(e^{\phi} f\big)
\,-\, \iint_{\Omega\times\R^3}\! 2\,\langle v\rangle^{2\vartheta} \big(e^{2\phi} f\big) \sqrt{\mu}\,v\cdot \mathbf{E} \\
\,=\;& \iint_{\Omega\times\R^3}\! \langle v\rangle^{2\vartheta} \big(e^{2\phi} f\big)\, \Gamma[f,f]
\,-\, \frac{1}{2}\iint_{\Omega\times\R^3}\! \langle v\rangle^{2\vartheta} e^{2\phi}\,\mathbf{E}\cdot\nabla_{\!v}\big(f^2\big)
\,+\, \iint_{\Omega\times\R^3}\! \langle v\rangle^{2\vartheta} \big(e^{\phi} f\big)^2\, \partial_t\phi \;.
\end{split}
\end{equation}
This time we aim to estimate every term of \,\eqref{Eq:linearized-VL-e^phi-int-weighted}\, in terms of the weighted energy norm $\|\cdot\|_{2,\vartheta}$ or (weighted) dissipation rate $\|\cdot\|_{\sigma,\vartheta}$.

For the LHS,
using the divergence theorem, the second term vanishes since
\begin{equation} \label{weighted-energy-est-LHS-2}
\frac{1}{2}\iint_{\Omega\times\R^3}\! \nabla_{\!x}\cdot \left\{v \langle v\rangle^{2\vartheta} \big(e^{\phi} f\big)^2\right\}
\,=\, \frac{1}{2} \bigg(\iint_{\gamma_+} \!- \iint_{\gamma_-}\bigg)
\big(e^{\phi} f\big)^2 \langle v\rangle^{2\vartheta} |v\cdot n_x| \,\dd v\dd S_x
\,=\, 0 \;,
\end{equation}
again by symmetry/oddness with the specular boundary condition on $f$.
As for the third term, we instead use (\ref{L-est}) in Lemma\;\ref{Lem:L-est} to bound it below as
\begin{equation} \label{weighted-energy-est-LHS-3}
\iint_{\Omega\times\R^3}\! \langle v\rangle^{2\vartheta} \big(e^{\phi} f\big)\, L\big(e^{\phi} f\big)
\,=\, \left(\langle v\rangle^{2\vartheta} L\big(e^{\phi} f\big) ,\, e^{\phi} f \,\right)
\,\gtrsim\, \big\|e^{\phi} f\big\|_{\sigma,\vartheta}^2 - C_\vartheta \big\|e^{\phi} f\big\|_{\sigma}^2 \;.
\end{equation}
Also, for the fourth term, the trick in $\vartheta=0$ case does not apply, so we move this term to the RHS and bound it directly using H\"{o}lder's inequality as well as the Cauchy-Schwarz inequality with a small parameter $\varepsilon>0$:
\begin{equation} \label{weighted-energy-est-LHS-4}
\begin{split}
\iint_{\Omega\times\R^3}\! 2\,\langle v\rangle^{2\vartheta} \big(e^{2\phi} f\big) \sqrt{\mu}\,v\cdot \mathbf{E}
&\;\lesssim\; \big|\langle v\rangle^{\vartheta+\frac{3}{2}} \sqrt{\mu}\,\big|_{\infty} \big\|e^{\phi}f\big\|_{2,\vartheta-\frac{1}{2}} \big\|e^{\phi}\mathbf{E}\big\|_{2} \\[-5pt]
&\;\leq\; C_{\vartheta} \big\|e^{\phi}f\big\|_{\sigma,\vartheta} \|f\|_{\sigma} \\
&\;\leq\; \varepsilon\, \|f\|_{\sigma,\vartheta}^2 \,+\, C_{\vartheta,\varepsilon}\, \|f\|_{\sigma}^2 \;.
\end{split}
\end{equation}
Here the second inequality holds because of the relation $\|\,\cdot\,\|_{2,\vartheta-1/2} \,\lesssim\, \|\,\cdot\,\|_{\sigma,\vartheta}$, and also due to the same reason as in (\ref{E-L^2->f-sigma}).

For the RHS,
we estimate (similarly to the case without weight)
\begin{equation} \label{weighted-energy-est-RHS-1}
\iint_{\Omega\times\R^3}\! \langle v\rangle^{2\vartheta} \big(e^{2\phi} f\big)\, \Gamma[f,f]
\;\lesssim\; \|f\|_{\infty} \big\|e^{\phi} f\big\|_{\sigma,\vartheta}^2
\;\lesssim\; \|f\|_{\infty} \|f\|_{\sigma,\vartheta}^2
\end{equation}
by (\ref{Gamma-est-1}), and
\begin{equation} \label{weighted-energy-est-RHS-2}
\begin{split}
-\, \frac{1}{2}\iint_{\Omega\times\R^3}\! \langle v\rangle^{2\vartheta} e^{2\phi}\,\mathbf{E}\cdot\nabla_{\!v}\big(f^2\big)
&\,=\; \frac{1}{2}\iint_{\Omega\times\R^3}\! \mathbf{E}\cdot\nabla_{\!v}\Big(\langle v\rangle^{2\vartheta}\Big)\, e^{2\phi}f^2 \\
&\;\lesssim\; \|\mathbf{E}\|_{\infty} \big\|e^{\phi}f\big\|_{2,\vartheta-\frac{1}{2}}^2
\;\lesssim\; \|f\|_{\infty} \|f\|_{\sigma,\vartheta}^2
\end{split}
\end{equation}
via integration by parts in $v$, and
\begin{equation} \label{weighted-energy-est-RHS-3}
\iint_{\Omega\times\R^3}\! \langle v\rangle^{2\vartheta} \big(e^{\phi} f\big)^2\, \partial_t\phi
\;\lesssim\; \|\partial_t\phi\|_{\infty} \big\|e^{\phi} f\big\|_{2,\vartheta}^2
\;\lesssim\; \|\partial_t f\|_{2} \|f\|_{2,\vartheta}^2 \;,
\end{equation}
where in the last inequality we modify
the proof of Lemma\;\ref{Lem:E_f-L^p}.

Collecting all the estimates in (\ref{weighted-energy-est-LHS-2})\,--\,(\ref{weighted-energy-est-RHS-3}) above and inserting them into \eqref{Eq:linearized-VL-e^phi-int-weighted}, we obtain
\begin{equation*} 
\frac{\dd}{\dd t}\, \big\|e^{\phi} f\big\|_{2,\vartheta}^2
\,+\, \big\|e^{\phi} f\big\|_{\sigma,\vartheta}^2
\;\lesssim\; \big(\|f\|_{\infty} + \varepsilon \big) \|f\|_{\sigma,\vartheta}^2
\,+\, C_\vartheta \big\|e^{\phi} f\big\|_{\sigma}^2
\,+\, C_{\vartheta,\varepsilon}\, \|f\|_{\sigma}^2
\,+\, \|\partial_t f\|_{2} \|f\|_{2,\vartheta}^2 \;,
\end{equation*}
which further yields
\begin{equation*} 
\frac{\dd}{\dd t}\, \big\|e^{\phi} f\big\|_{2,\vartheta}^2
\,+\, \|f\|_{\sigma,\vartheta}^2
\;\lesssim\; C_\vartheta \|f\|_{\sigma}^2
\,+\, \|\partial_t f\|_{2} \|f\|_{2,\vartheta}^2
\end{equation*}
by choosing $\varepsilon>0$ small enough
and assuming $\sup_{t}\|f\|_{\infty} \ll 1$ so the corresponding term can be absorbed into the LHS.
Hence, multiplying $e^{\psi}$ on both sides (to kill the last term on RHS), then integrating over time (for $0\leq s<t$), we finally get
\begin{equation} \label{Eq:linearized-VL-e^phi-int-weighted-4}
\begin{split}
&\big\|e^{\psi(t)} e^{\phi(t)}f(t)\big\|_{2,\vartheta}^2
\;+\, \int_s^t\!\big\|f(\tau)\big\|_{\sigma,\vartheta}^2 \,\dd\tau
\;\lesssim\,\; \big\|e^{\psi(s)} e^{\phi(s)}f(s)\big\|_{2,\vartheta}^2+ C_\vartheta\! \int_s^t\!\big\|f(\tau)\big\|_{\sigma}^2 \,\dd\tau
\;.
\end{split}
\end{equation}

\smallskip
{\it \underline{Step\;3}. Uniform Weighted Energy Bounds}.
The idea of the following two steps is inspired by the so-called ``Two-tier'' energy method in \cite{Guo.Tice2013=}.
The uniform energy bound (\ref{energy-bound-2}) and time-decay result (\ref{L^2-decay-2}) follow from weighted energy estimates with arbitrarily strong velocity-weight and sufficiently fast decay-rate.
To be specific,
we need to obtain a weighted energy inequality
($\vartheta',\vartheta \in \mathbb{N}$):
\begin{equation} \label{weighted-energy-inequality'}
\begin{split}
&\bigg(\,\sum_{\vartheta'=0}^{2\vartheta} \big\|e^{\psi(t)} e^{\phi(t)}f(t)\big\|_{2,\vartheta'\!/2}^2 + \big\|e^{\psi(t)} e^{\phi(t)}\mathbf{E}(t)\big\|_{2}^2 \,\bigg)\\
&\,+\, \delta'\, \Bigg\{ \sum_{\vartheta'=0}^{2\vartheta} \int_s^t\!\big\|f(\tau)\big\|_{\sigma,\vartheta'\!/2}^2 \,\dd\tau
\,+ \int_s^t\!\big\|\mathbf{E}(\tau)\big\|_{2}^2 \,\dd\tau \Bigg\}\\
\;\ls&\, \,\sum_{\vartheta'=0}^{2\vartheta} \big\|e^{\psi(s)} e^{\phi(s)}f(s)\big\|_{2,\vartheta'\!/2}^2 + \big\|e^{\psi(s)} e^{\phi(s)}\mathbf{E}(s)\big\|_{2}^2
\end{split}
\end{equation}
by summing up (\ref{Eq:linearized-VL-e^phi-int-6}) and (\ref{Eq:linearized-VL-e^phi-int-weighted-4}) from previous two steps over $\vartheta'\! \in [0,2\vartheta]\cap \mathbb{N}$ (for any given $\vartheta \in \mathbb{N}$) with proper weight (summation factor), such that the term $\sum_{\vartheta'} C_{\vartheta'}\! \int_s^t\!\big\|f(\tau)\big\|_{\sigma}^2 \dd\tau$ can be absorbed to LHS.
Alternatively, we may derive this energy inequality in a more rigorous way via induction on the weight-power $\vartheta \in \mathbb{N}$ (in a similar fashion to the proof of \cite[Theorem\;1.4]{Kim.Guo.Hwang2020}\,).
Note that we cannot absorb RHS here into LHS because $\sigma$-norm cannot control the $L^2$\;norm for fixed $\vartheta$ (at least for the largest $\vartheta$).

The estimate \eqref{weighted-energy-inequality'} only holds when $t-s\in\mathbb{Z}^+$ due to Lemma \ref{Lem:abc}. For general interval $(s,t)$, we write
\begin{align*}
    t-s=N+r,
\end{align*}
where $N\in\mathbb{N}$ and $r\in[0,1)$. Then for the interval $[s+r,t]$, using the similar argument as \eqref{weighted-energy-inequality'}, and consider $\ue^{\phi}\sim 1$ and $\ue^{\psi}\sim 1$, we have

\begin{equation} \label{weighted-energy-inequality''}
\begin{split}
&\bigg(\,\sum_{\vartheta'=0}^{2\vartheta} \big\|f(t)\big\|_{2,\vartheta'\!/2}^2 + \big\|\mathbf{E}(t)\big\|_{2}^2 \,\bigg)\\
&\,+\, \delta'\, \Bigg\{ \sum_{\vartheta'=0}^{2\vartheta} \int_{s+r}^t\!\big\|f(\tau)\big\|_{\sigma,\vartheta'\!/2}^2 \,\dd\tau
\,+ \int_{s+r}^t\!\big\|\mathbf{E}(\tau)\big\|_{2}^2 \,\dd\tau \Bigg\}\\
\;\ls&\, \,\sum_{\vartheta'=0}^{2\vartheta} \big\|f(s+r)\big\|_{2,\vartheta'\!/2}^2 + \big\|\mathbf{E}(s+r)\big\|_{2}^2 .
\end{split}
\end{equation}
Based on the local well-posedness result (see Theorem \ref{local-wellposedness}), we obtain the bounds in the interval $[s,s+r]$:
\begin{equation} \label{weighted-energy-inequality'''}
\begin{split}
&\bigg(\,\sum_{\vartheta'=0}^{2\vartheta} \big\|f(s+r)\big\|_{2,\vartheta'\!/2}^2 + \big\|\mathbf{E}(s+r)\big\|_{2}^2 \,\bigg)\\
&\,+\, \, \Bigg\{ \sum_{\vartheta'=0}^{2\vartheta} \int_s^{s+r}\!\big\|f(\tau)\big\|_{\sigma,\vartheta'\!/2}^2 \,\dd\tau
\,+ \int_s^{s+r}\!\big\|\mathbf{E}(\tau)\big\|_{2}^2 \,\dd\tau \Bigg\}\\
\;\ls&\, \,\sum_{\vartheta'=0}^{2\vartheta} \big\|f(s)\big\|_{2,\vartheta'\!/2}^2 + \big\|\mathbf{E}(s)\big\|_{2}^2 .
\end{split}
\end{equation}
Note that the initial condition (\ref{smallness-assumption-local}) and the bootstrap assumptions ensure a universal time extension for all $s\geq 0$. Therefore, it is valid to apply Theorem \ref{local-wellposedness} as long as we have the smallness of $\|f(s)\|_{\infty,\vartheta}$ for $s>0$.

Summing up \eqref{weighted-energy-inequality''} and \eqref{weighted-energy-inequality'''} and absorbing $\sum_{\vartheta'=0}^{2\vartheta} \big\|f(s+r)\big\|_{2,\vartheta'\!/2}^2 + \big\|\mathbf{E}(s+r)\big\|_{2}^2$ to the LHS, we obtain the bounds in the interval $[s,t]$
\begin{equation} \label{weighted-energy-inequality}
\begin{split}
&\bigg(\,\sum_{\vartheta'=0}^{2\vartheta} \big\|f(t)\big\|_{2,\vartheta'\!/2}^2 + \big\|\mathbf{E}(t)\big\|_{2}^2 \,\bigg)\\
&\,+\, \delta'\, \Bigg\{ \sum_{\vartheta'=0}^{2\vartheta} \int_{s}^t\!\big\|f(\tau)\big\|_{\sigma,\vartheta'\!/2}^2 \,\dd\tau
\,+ \int_{s}^t\!\big\|\mathbf{E}(\tau)\big\|_{2}^2 \,\dd\tau \Bigg\}\\
\;\ls&\, \,\sum_{\vartheta'=0}^{2\vartheta} \big\|f(s)\big\|_{2,\vartheta'\!/2}^2 + \big\|\mathbf{E}(s)\big\|_{2}^2 .
\end{split}
\end{equation}
Define an instantaneous weighted energy functional
\begin{equation*}
\begin{split}
\mathcal{W}_{\vartheta}(t) \,&:=\, \sum_{\vartheta'=0}^{2\vartheta} \big\|f(t)\big\|_{2,\vartheta'\!/2}^2 \,+\, \big\|\mathbf{E}(t)\big\|_{2}^2  \\
&\;\simeq\; \big\|f(t)\big\|_{2,\vartheta}^2 \,+\, \big\|\mathbf{E}(t)\big\|_{2}^2
\;,
\end{split}
\end{equation*}
and the weighted dissipation rate
\begin{equation*}
\begin{split}
\mathcal{V}_{\vartheta}(t) \,&:=\, \sum_{\vartheta'=0}^{2\vartheta} \big\|f(t)\big\|_{\sigma,\vartheta'\!/2}^2 \,+\, \big\|\mathbf{E}(t)\big\|_{2}^2 \\
&\;\simeq\; \big\|f(t)\big\|_{\sigma,\vartheta}^2 \,+\, \big\|\mathbf{E}(t)\big\|_{2}^2 \;.
\end{split}
\end{equation*}
Then the estimate (\ref{weighted-energy-inequality}) above is actually
\begin{equation}\label{temp}
\mathcal{W}_{\vartheta}(t)  \,+\, C_\vartheta \!\int_s^t\!\mathcal{V}_{\vartheta}(\tau)\,\dd\tau  \;\leq\; \mathcal{W}_{\vartheta}(s) \;.
\end{equation}

As the first portion
of our theorem, we show that the weighted energy (defined in Section\;\ref{SubsubSec:Dissipation-Energy})
$$\mathcal{E}_{\vartheta}[f(t)]
\,:=\; \mathcal{I}_{\vartheta}[f(t)] \,+ \int_{0}^{t}\! \mathcal{D}_{\vartheta}[f(\tau)] \,\dd\tau
\;\simeq\; \mathcal{W}_{\vartheta}(t) \,+ \int_{0}^{t}\! \mathcal{V}_{\vartheta}(\tau) \,\dd\tau$$
is uniformly bounded for arbitrarily large $\vartheta$
(under the assumption of integrable decay for $\|\partial_t f\|_{2}$).
Taking $s=0$ in (\ref{temp})
results in
\begin{equation} \label{apriori-decay-f_t}
\mathcal{E}_{\vartheta}[f(t)] \;\simeq\;
\mathcal{W}_{\vartheta}(t) \,+ \int_{0}^{t}\! \mathcal{V}_{\vartheta}(\tau) \,\dd\tau
\;\lesssim_{\,\vartheta}\; \mathcal{W}_{\vartheta}(0)
\;\simeq\; \mathcal{E}_{\vartheta}[f(0)]
\,\ls\, \varepsilon_0^{\,2}\;,
\end{equation}
which concludes the uniform weighted energy bound (\ref{energy-bound-2}).

\smallskip
{\it \underline{Step\;4}. $L^2$\;Time-Decay}.
With the boundedness, we now show that the weighted $L^2$\;norms decay at any algebraic rate.

Based on the observation that for fixed $\vartheta$, the dissipation rate $\|\cdot\|_{\sigma,\vartheta}$ is not stronger than the instant energy $\|\cdot\|_{2,\vartheta}$ by \,(\ref{energy-dissipation-relation}), we shall perform interpolation for $\langle v\rangle^{2\vartheta}$ between the weight functions $\langle v\rangle^{2\vartheta-1}$ and $\langle v\rangle^{2\vartheta+k}$ for any given $k\in\mathbb{N}$,
then bound the stronger norm by using the result (\ref{apriori-decay-f_t}) of last step.
This yields
\begin{equation*}
\big\|f(t)\big\|_{2,\vartheta} \,\leq\, \big\|f(t)\big\|_{2,\vartheta+\frac{k}{2}}^{\frac{1}{k+1}} \big\|f(t)\big\|_{2,\vartheta-\frac{1}{2}}^{\frac{k}{k+1}}
\,\leq\, \Big(C_{\vartheta,k}\, \|f_0\|_{2,\vartheta+\frac{k}{2}}\Big)^{\frac{1}{k+1}} \big\|f(t)\big\|_{2,\vartheta-\frac{1}{2}}^{\frac{k}{k+1}}
\;\lesssim\; \varepsilon_0^{\frac{1}{k+1}} \big\|f(t)\big\|_{\sigma,\vartheta}^{\frac{k}{k+1}} \;,
\end{equation*}
which further implies the dissipation rate
\begin{equation}
                    \label{eq11.29}
\mathcal{V}_{\vartheta}(t) \;\geq\; C_{\vartheta,k}\, \varepsilon_0^{-\frac{2}{k}}\, \mathcal{W}_{\vartheta}(t)^{\frac{k+1}{k}} \;.
\end{equation}

Let
\begin{align*} 
    \zz(s) := C_{\vartheta,k}\,\varepsilon_0^{-\frac{2}{k}}\int_s^{\infty}\!\mathcal{W}_{\vartheta}(\tau)^{\frac{k+1}{k}} \dd\tau.
\end{align*}
Then we get
\begin{align} \label{Z'-eq}
   \zz'(s) = -\,C_{\vartheta,k}\,\varepsilon_0^{-\frac{2}{k}}
   \,\mathcal{W}_{\vartheta}(s)^{\frac{k+1}{k}}.
\end{align}
From \eqref{temp} with $t=\infty$ and \eqref{eq11.29}, we have
\begin{align} \label{Z-ineq}
    \zz(s)\,\ls\, \mathcal{W}_{\vartheta}(s).
\end{align}
Combining (\ref{Z'-eq}) and (\ref{Z-ineq}) yields
\begin{align*} 
    \zz'(s) + \bar C_{\vartheta,k}\,\varepsilon_0^{-\frac{2}{k}}\zz(s)^{\frac{k+1}{k}}\leq 0.
\end{align*}
This is a Bernoulli-type differential inequality for $\zz(s)$.
We solve it over $[0,s]$ by standard ODE method with integrating factor $-\frac{1}{k}\zz^{\,-\frac{k+1}{k}}$
to obtain
\begin{align} \label{Z-deacy}
    \zz(s) \,\ls_{\vartheta,k}\, \varepsilon_0^{\,2} \left(1+\frac{s}{k}\right)^{-k}.
\end{align}
On the other hand,
raising both sides of \eqref{temp} to the power $\frac{k+1}{k}$,
we have that
for any $s\leq t$ with $|t-s|\geq 1$,
\begin{align*}
\mathcal{W}_{\vartheta}(t)^{\frac{k+1}{k}} \,\leq\, \mathcal{W}_{\vartheta}(s)^{\frac{k+1}{k}}.
\end{align*}
Integrating the above inequality over $s\in\left[\frac{t}{2},t-1\right]$ with for $t>2$, we obtain
\begin{align} \label{W<Z}
    \left(\frac{t}{2}-1\right)\mathcal{W}_{\vartheta}(t)^{\frac{k+1}{k}} \,\ls_{\vartheta,k}\, \varepsilon_0^{\,\frac{2}{k}}\,\zz\!\left(\frac{t}{2}\right).
\end{align}
Therefore, combining (\ref{W<Z}) with (\ref{Z-deacy}), we have (for $t$ large)
\begin{align*}
    \mathcal{W}_{\vartheta}(t)^{\frac{k+1}{k}}
    \,\ls_{\vartheta,k}\,\varepsilon_0^{2+\frac{2}{k}}\br{t}^{-1}\left(1+\frac{t}{k}\right)^{-k},
\end{align*}
which further yields
\begin{align*}
    \mathcal{W}_{\vartheta}(t)
    \,\ls_{\vartheta,k}\,\varepsilon_0^{\,2} \left(1+\frac{t}{k}\right)^{-k}.
\end{align*}
Finally, with $\big\|\mathbf{E}(t)\big\|_{H^1} \lesssim \big\|f(t)\big\|_{2}$ by Lemma\;\ref{Lem:E_f-L^p},
we conclude (\ref{L^2-decay-2}) in our theorem.

\smallskip
{\it \underline{Step\;5}. $\partial_t f$ Estimates}.
Lastly, to close the a priori estimates, we still need to prove the similar $L^2$\;decay bound for $\partial_t f$.
We follow the same procedure as before in this section.

Let $\dot{f} := \partial_t f$ and $\dot{\mathbf{E}} := \partial_t \mathbf{E}$.
Taking $\partial_t$ derivative of \eqref{Eq:linearized-VL_f=f}, we get the equation
\begin{equation} \label{Eq:linearized-VL_f_t}
\begin{split}
&\partial_t \dot{f} + v\cdot\nabla_{\!x}\dot{f} + L\dot{f} - 2\sqrt{\mu}\,v\cdot \dot{\mathbf{E}} \\
\,=\;&\, \Big\{ \Gamma[\dot{f},f] + \Gamma[f,\dot{f}] \Big\}
- \Big\{ \dot{\mathbf{E}}\cdot\nabla_{\!v}f + \mathbf{E}\cdot\nabla_{\!v}\dot{f} \Big\}
+ \Big\{ \big(v\cdot \dot{\mathbf{E}} \big)f + \big(v\cdot \mathbf{E} \big)\dot{f} \Big\} \;,
\end{split}
\end{equation}
where $\dot{\mathbf{E}} := \partial_t\, \mathbf{E}_f = \mathbf{E}_{\dot{f}} = -\nabla_{\!x}\phi_{\dot{f}}$ with $\dot{\phi} := \partial_t\, \phi_f = \phi_{\dot{f}}$ satisfying
\begin{equation*} 
-\Delta_x \dot{\phi} = \int_{\R^3}\! \sqrt{\mu}\,\dot{f}\,\dd v \,=: \rho[\dot{f}] \;.
\end{equation*}

First of all, we check that the kernel estimate and coercivity result hold by replacing $f$ with $\dot f$ of \,Lemma\;\ref{Lem:abc}\, and \,Corollary\;\ref{Cor:coercivity-est-L}\, for the new equations with harmless temporal derivatives.
The argument is almost identical, so we omit the details.

Next, similar to the (weighted) energy estimate for $f$, upon multiplying \eqref{Eq:linearized-VL_f_t} by the factor $e^\phi$, the last term $(v\cdot \mathbf{E} )\dot{f}$ is cancelled. We first multiply $e^\phi\dot{f}$ (resp.\;$\langle v\rangle^{2\vartheta} e^\phi\dot{f}$ for the weighted case) on both sides and integrate over $\Omega\times\R^3$ to get
\begin{equation*} 
\begin{split}
&\frac{1}{2}\,\frac{\dd}{\dd t}\, \big\|e^{\phi} \dot{f}\big\|_{2}^2
\,+\, \frac{1}{2}\iint_{\Omega\times\R^3}\! \nabla_{\!x}\cdot \left\{v \big(e^{\phi} \dot{f}\,\big)^2\right\}
\,+ \iint_{\Omega\times\R^3}\! \big(e^{\phi} \dot{f}\,\big)\, L\big(e^{\phi} \dot{f}\,\big)
\,- \iint_{\Omega\times\R^3}\! 2\,\big(e^{2\phi} \dot{f}\,\big) \sqrt{\mu}\,v\cdot \dot{\mathbf{E}} \\
\,=\;& \iint_{\Omega\times\R^3}\! \big(e^{2\phi} \dot{f}\,\big)\, \Gamma[\dot{f},f]
\,+ \iint_{\Omega\times\R^3}\! \big(e^{2\phi} \dot{f}\,\big)\, \Gamma[f,\dot{f}]
\,- \iint_{\Omega\times\R^3}\! e^{2\phi}\dot{f}\,\dot{\mathbf{E}}\cdot\nabla_{\!v}f
\,-\, \frac{1}{2}\iint_{\Omega\times\R^3}\! e^{2\phi}\,\mathbf{E}\cdot\nabla_{\!v}\big(\dot{f}^2\big) \\
&\,+ \iint_{\Omega\times\R^3}\! e^{2\phi} \big(v\cdot \dot{\mathbf{E}} \big)f\dot{f}
\,+ \iint_{\Omega\times\R^3}\! \big(e^{\phi} \dot{f}\,\big)^2 \dot{\phi} \;,
\end{split}
\end{equation*}
and then we examine this resulting equation term by term.
The estimates from Step\;1 to Step\;3 of similar form are valid for $\dot{f}$ with only a few changes in the proof which we point out below.

In the case without weight ($\vartheta=0$):
For the fourth term on the LHS, in order to avoid producing higher-order derivatives terms (which we cannot control),
we modify our previous argument in a more direct way.
Extracting the main contribution of this term,
we deduce that
\begin{equation*}
\begin{split}
- \iint_{\Omega\times\R^3}\! 2\,\dot{f} \sqrt{\mu}\,v\cdot \dot{\mathbf{E}}
&\;=\; \iint_{\Omega\times\R^3}\! 2\,\dot{f} \sqrt{\mu}\,v\cdot \nabla_{\!x}\dot{\phi}
\;=\; - \iint_{\Omega\times\R^3}\! 2\,\dot{\phi} \sqrt{\mu}\,v\cdot \nabla_{\!x}\dot{f} \\
&\;=\; - \int_{\Omega}\! 2\,\dot{\phi}\: \nabla_{\!x}\!\cdot\!\left( \int_{\R^3}\!
v\sqrt{\mu}\,\dot{f}\,\dd v\right) \dd x
\;=\; \int_{\Omega}\! 2\,\dot{\phi}\: \partial_t\!\left( \int_{\R^3}\!
\sqrt{\mu}\,\dot{f}\,\dd v\right) \dd x \\
&\;=\; - \int_{\Omega}\! 2\,\dot{\phi}\, \partial_t \Delta_x\dot{\phi} \,\dd x
\;=\; \int_{\Omega}\! 2\, (\nabla_{\!x}\dot{\phi})\cdot (\nabla_{\!x}\partial_t\dot{\phi}) \,\dd x
\;=\; \frac{\dd}{\dd t} \int_{\Omega}\! |\dot{\mathbf{E}}|^2 \;.
\end{split}
\end{equation*}
Here the four equality is due to the continuity equation \eqref{continuity-eq} of conservation laws, which is essentially the same kind of conserved quantity derived in a similar way from the equation \,\eqref{Eq:linearized-VL_f=f}\, in Step\;1.
Then the remainder can be easily controlled by
\begin{equation*}
\begin{split}
- \iint_{\Omega\times\R^3}\! 2\,\big(e^{2\phi} \!-1 \big) \dot{f} \sqrt{\mu}\,v\cdot \dot{\mathbf{E}}
&\;\lesssim\; \big\|e^{2\phi} \!-1\big\|_{\infty} \big\|v\sqrt{\mu}\,\dot{f}\big\|_{2}\, \|\dot{\mathbf{E}}\|_{2} \\
&\;\lesssim\; \|\phi\|_{\infty} \|\dot{f}\|_{\sigma}^2 \;,
\end{split}
\end{equation*}
and thus can be absorbed as $\|\phi\|_{\infty} \lesssim \|f\|_{\infty} \ll 1$.

Another difficulty is that, the nonlinearity generates two more terms containing $\|f\|_2^2$ and $\|f\|_{\sigma}^2$ related to $f$ rather than $\dot{f}$ (which cannot be estimated directly, but requires some special treatment).
To be specific,
\begin{equation*}
\iint_{\Omega\times\R^3}\! \big(e^{2\phi} \dot{f}\,\big)\, \Gamma[\dot{f},f]
\;\lesssim\; \|\dot{f}\|_{\infty} \|f\|_{\sigma} \|\dot{f}\|_{\sigma}
\;\lesssim\;  \|\dot{f}\|_{\infty} \big( \|f\|_{\sigma}^2 + \|\dot{f}\|_{\sigma}^2 \big) \;,
\end{equation*}
and
\begin{equation*}
\begin{split}
- \iint_{\Omega\times\R^3}\! e^{2\phi}\dot{f}\,\dot{\mathbf{E}}\cdot\nabla_{\!v}f
&\;=\, \iint_{\Omega\times\R^3}\! e^{2\phi}f\,\dot{\mathbf{E}}\cdot\nabla_{\!v}\dot{f}
\;\lesssim\; \|\nabla_{\!v}\dot{f}\|_{\infty}\|f\|_{2}\|\dot{\mathbf{E}}\|_{2} \\
&\;\lesssim\; \|\nabla_{\!v}\dot{f}\|_{\infty}\|f\|_{2}\|\dot{f}\|_{\sigma}
\;\lesssim\; \|\nabla_{\!v}\dot{f}\|_{\infty} \big( \|f\|_{2}^2 + \|\dot{f}\|_{\sigma}^2 \big) \;,
\end{split}
\end{equation*}
which requires the additional assumption of smallness of $\|\nabla_{\!v}\dot{f}\|_{\infty}$
Also, the third extra term
\begin{equation*}
\iint_{\Omega\times\R^3}\! e^{2\phi} \big(v\cdot \dot{\mathbf{E}} \big)f\dot{f}
\;\lesssim\; \big\|\langle v\rangle^{\frac{1}{2}} v\,f\big\|_{\infty} \|\dot{f}\|_{\sigma} \|\dot{\mathbf{E}}\|_{2}
\;\lesssim\; \|f\|_{\infty,\frac{3}{2}} \|\dot{f}\|_{\sigma}^2 
\end{equation*}
needs the smallness of weighted $L^\infty$\;norm.

In summary, under the assumption that
\begin{equation*}
\sup_{t}\|f\|_{\infty,\bar{\vartheta}} \,+\, \sup_{t}\|\dot{f}\|_{\infty,\bar{\vartheta}}
\,+\, \sup_{t}\|\nabla_{\!v}\dot{f}\|_{\infty} \,\ll\, 1
\end{equation*}
for some $\bar{\vartheta} \geq 3$ and using the same technique to multiply $\ue^{\psi}$ on both sides,
we have
\begin{equation} \label{energy-est-d_t}
\begin{split}
&\left(\,\big\|e^{\psi(t)} e^{\phi(t)}\dot{f}(t)\big\|_{2}^2 \,+\, \big\|e^{\psi(t)} e^{\phi(t)}\dot{\mathbf{E}}(t)\big\|_{2}^2 \right)
\,+\, \delta' \left\{\int_s^t\!\big\|\dot{f}(\tau)\big\|_{\sigma}^2 \,\dd\tau
\,+ \int_s^t\!\big\|\dot{\mathbf{E}}(\tau)\big\|_{2}^2 \,\dd\tau \right\} \\
\;\lesssim&\;
 \left(\,\big\|e^{\psi(s)} e^{\phi(s)}\dot{f}(s)\big\|_{2}^2 \,+\, \big\|e^{\psi(s)} e^{\phi(s)}\dot{\mathbf{E}}(s)\big\|_{2}^2 \right)\\
&+ \int_s^t\! \big\|\dot{f}(\tau)\big\|_{\infty} \big\|f(\tau)\big\|_{\sigma}^2 \,\dd\tau
\,+ \int_s^t\! \big\|\nabla_{\!v}\dot{f}(\tau)\big\|_{\infty} \big\|f(\tau)\big\|_{2}^2 \,\dd\tau \\
\;\leq&\left(\,\big\|e^{\psi(s)} e^{\phi(s)}\dot{f}(s)\big\|_{2}^2 \,+\, \big\|e^{\psi(s)} e^{\phi(s)}\dot{\mathbf{E}}(s)\big\|_{2}^2 \right)\\
&+\sup_{\tau\geq 0}\|\dot{f}\|_{\infty} \cdot\! \int_s^t\! \big\|f(\tau)\big\|_{\sigma}^2 \,\dd\tau
\,+\, \sup_{\tau\geq 0}\|\nabla_{\!v}\dot{f}\|_{\infty} \cdot\! \int_s^t\! \big\|f(\tau)\big\|_{2}^2 \,\dd\tau 
\end{split}
\end{equation}
for any $0\leq s<t$ with $|t-s|\in\mathbb{Z}^+$.

For the weighted case ($\vartheta>0$),
most of the estimates are similar to those of weighted version for $f$ in Step\;2 simply replacing certain norms on $f$ with those on $\dot{f}$, except for the three additional terms from nonlinearity.
In particular,
\begin{equation*}
\begin{split}
- \iint_{\Omega\times\R^3}\! \langle v\rangle^{2\vartheta} e^{2\phi}\dot{f}\,\dot{\mathbf{E}}\cdot\nabla_{\!v}f
&\;=\, \iint_{\Omega\times\R^3}\! e^{2\phi}\, \dot{\mathbf{E}}\cdot\nabla_{\!v}\Big(\langle v\rangle^{2\vartheta}\Big)\, f\dot{f}
\,+ \iint_{\Omega\times\R^3}\! \langle v\rangle^{2\vartheta} e^{2\phi}f\,\dot{\mathbf{E}}\cdot\nabla_{\!v}\dot{f} \\
&\;\lesssim\; \|\dot{\mathbf{E}}\|_{\infty} \|f\|_{\sigma,\vartheta} \|\dot{f}\|_{\sigma,\vartheta}
\,+\, \|\nabla_{\!v}\dot{f}\|_{\infty} \|f\|_{2,2\vartheta} \|\dot{\mathbf{E}}\|_{2} \\
&\;\lesssim\; \|f\|_{\sigma,\vartheta} \|\dot{f}\|_{\sigma,\vartheta}^2
\,+\, \|\nabla_{\!v}\dot{f}\|_{\infty} \big( \|f\|_{2,2\vartheta}^2 + \|\dot{f}\|_{\sigma,\vartheta}^2 \big) \;.
\end{split}
\end{equation*}

The rest of this substep is a simple modification of the $\vartheta=0$ case.
Eventually, under the assumption that
\begin{equation*}
\sup_{t}\|f\|_{\sigma,\vartheta}
\,+\, \sup_{t}\|f\|_{\infty,\vartheta+\frac{3}{2}}
\,+\, \sup_{t}\|\dot{f}\|_{\infty}
\,+\, \sup_{t}\|\nabla_{\!v}\dot{f}\|_{\infty} \,\ll\, 1
\end{equation*}
and using the techniques of multiplying $\ue^{\psi}$ on both sides, we obtain the weighted energy estimate
\begin{equation} \label{weighted-energy-est-d_t}
\begin{split}
&\big\|e^{\psi(t)} e^{\phi(t)}\dot{f}(t)\big\|_{2,\vartheta}^2
\;+\, \int_s^t\!\big\|\dot{f}(\tau)\big\|_{\sigma,\vartheta}^2 \,\dd\tau \\
\;\lesssim\;&\, \big\|e^{\psi(s)} e^{\phi(s)}\dot{f}(s)\big\|_{2,\vartheta}^2+ C_\vartheta\! \int_s^t\!\big\|\dot{f}(\tau)\big\|_{\sigma}^2 \,\dd\tau\\
&\,+\, \sup_{\tau\geq 0}\|\dot{f}\|_{\infty} \cdot\! \int_s^t\! \big\|f(\tau)\big\|_{\sigma,\vartheta}^2 \,\dd\tau
\,+\, \sup_{\tau\geq 0}\|\nabla_{\!v}\dot{f}\|_{\infty} \cdot\! \int_s^t\! \big\|f(\tau)\big\|_{2,2\vartheta}^2 \,\dd\tau \;.
\end{split}
\end{equation}

Now we use these two inequalities (\ref{energy-est-d_t}) and (\ref{weighted-energy-est-d_t}) to conclude
\begin{equation} \label{weighted-energy-inequality-d_t}
\begin{split}
&\bigg(\,\sum_{\vartheta'=0}^{\vartheta} \big\|e^{\psi(t)} e^{\phi(t)}\dot{f}(t)\big\|_{2,\vartheta'\!/2}^2 + \big\|e^{\psi(t)} e^{\phi(t)}\dot{\mathbf{E}}(t)\big\|_{2}^2 \,\bigg) \\
&\,+\, \delta'\, \Bigg\{ \sum_{\vartheta'=0}^{\vartheta} \int_s^t\!\big\|\dot{f}(\tau)\big\|_{\sigma,\vartheta'\!/2}^2 \,\dd\tau
\,+ \int_s^t\!\big\|\dot{\mathbf{E}}(\tau)\big\|_{2}^2 \,\dd\tau \Bigg\} \\
\;\lesssim&\;\, \bigg(\,\sum_{\vartheta'=0}^{\vartheta} \big\|e^{\psi(s)} e^{\phi(s)}\dot{f}(s)\big\|_{2,\vartheta'\!/2}^2 + \big\|e^{\psi(s)} e^{\phi(s)}\dot{\mathbf{E}}(s)\big\|_{2}^2 \,\bigg)\\
&+\sup_{\tau\geq 0}\|\dot{f}\|_{\infty} \!\cdot\! \sum_{\vartheta'=0}^{\vartheta} \int_s^t\! \big\|f(\tau)\big\|_{\sigma,\vartheta'\!/2}^2 \,\dd\tau
\,+\, \sup_{\tau\geq 0}\|\nabla_{\!v}\dot{f}\|_{\infty} \!\cdot\! \sum_{\vartheta'=0}^{\vartheta} \int_s^t\! \big\|f(\tau)\big\|_{2,\vartheta'}^2 \,\dd\tau,
\end{split}
\end{equation}
by summing up over $\vartheta'\! \in [0,\vartheta]\cap \mathbb{N}$ for any given $\vartheta \in \mathbb{N}$ with proper weight.

Due to Lemma \ref{Lem:abc}, the above result only holds for $t-s\in\mathbb{Z}^+$. Using a similar argument as in Step 3, with the help of local well-posedness result, we can extend the estimate to arbitrary $t-s\geq1$.

Notice that there is an extra term of $f$ with the highest-power weight (serving as a ``source term''),
which forces us to combine the two weighted inequalities for $f$ and $\dot{f}$
with different range (summation limits)
so that this term can be absorbed.
Defining
\begin{equation*}
\begin{split}
\widetilde{\mathcal{W}}_{\vartheta}(t)
\,&:=\, \sum_{\vartheta'=0}^{2\vartheta+1} \big\|f(t)\big\|_{2,\vartheta'\!/2}^2
\,+ \sum_{\vartheta'=0}^{\vartheta} \big\|\dot{f}(t)\big\|_{2,\vartheta'\!/2}^2
\,+\, \big\|\mathbf{E}(t)\big\|_{2}^2 \,+\, \big\|\dot{\mathbf{E}}(t)\big\|_{2}^2
 \\
&\;\simeq\; \big\|f(t)\big\|_{2,\vartheta+\frac{1}{2}}^2 \,+\, \big\|\dot{f}(t)\big\|_{2,\frac{\vartheta}{2}}^2
\,+\, \big\|\mathbf{E}(t)\big\|_{2}^2 \,+\, \big\|\dot{\mathbf{E}}(t)\big\|_{2}^2
\;,
\end{split}
\end{equation*}
and
\begin{equation*}
\begin{split}
\widetilde{\mathcal{V}}_{\vartheta}(t)
\,&:=\, \sum_{\vartheta'=0}^{2\vartheta+1} \big\|f(t)\big\|_{\sigma,\vartheta'\!/2}^2
\,+ \sum_{\vartheta'=0}^{\vartheta} \big\|\dot{f}(t)\big\|_{\sigma,\vartheta'\!/2}^2
\,+\, \big\|\mathbf{E}(t)\big\|_{2}^2 \,+\, \big\|\dot{\mathbf{E}}(t)\big\|_{2}^2 \\
&\;\simeq\; \big\|f(t)\big\|_{\sigma,\vartheta+\frac{1}{2}}^2 \,+\, \big\|\dot{f}(t)\big\|_{\sigma,\frac{\vartheta}{2}}^2
\,+\, \big\|\mathbf{E}(t)\big\|_{2}^2 \,+\, \big\|\dot{\mathbf{E}}(t)\big\|_{2}^2 \;.
\end{split}
\end{equation*}
Then (\ref{weighted-energy-inequality-d_t}) together with (\ref{weighted-energy-inequality}) leads to
\begin{equation*}
\widetilde{\mathcal{W}}_{\vartheta}(t)  \,+ \int_s^t\!\widetilde{\mathcal{V}}_{\vartheta}(\tau)\,\dd\tau  \,\ls\,  \widetilde{\mathcal{W}}_{\vartheta}(s).
\end{equation*}
Finally,
mimicking the derivation of (\ref{energy-bound-2}) and (\ref{L^2-decay-2}), the desired bounds for $\partial_t f$ follow. Therefore we complete the proof of the theorem.
\end{proof}

\section{Preliminaries for the Ultraparabolic Equations} \label{Sec:Lemmas}

In the following, we will discuss the setup of $S^p$ estimates. Since this theory is developed for the whole space, we will temporarily write $x\in\r^3$ in this section and introduce the extension $\Omega\rt\r^3$ in the next section.

\subsection{Structure of the Ultraparabolic Operator} \label{SubSec:ultraparabolic-opt-structure}

In order to prove the $S^p$ estimates and further regularity results,
we rewrite the linearized equation \eqref{Eq:reformulated-VL_f} so that it takes the form of an ultraparabolic equation (cf.\;\cite{Bramanti.Cerutti.Manfredini1996, Polidoro.Ragusa1998, Dong.Yastrzhembskiy2021}):
\begin{equation} \label{Eq:ultraparabolic-VL_f}
\partial_t f + v\cdot\nabla_{\!x}f - \sigma_{\!G}^{ij}\partial_{v_i v_j} f \,=\, \mathcal{S},
\end{equation}
where the source term
\begin{equation*} 
\mathcal{S}(t,x,v) \,:=\, \,\partial_{v_i}\sigma_{\!G}^{ij}\partial_{v_j} f
+ \big\{a_g - \mathbf{E}_g\big\}\cdot\nabla_{\!v}f
+ \Big\{ \bar{K}_{\!g} f + \big(v\cdot\mathbf{E}_g\big)f + 2\sqrt{\mu}\,v\cdot\mathbf{E}_f \Big\}.
\end{equation*}
Our equation \eqref{Eq:ultraparabolic-VL_f} corresponds to a special case of the class of ultraparabolic operators
\begin{equation*} 
\mathcal{L} := \sum_{i,\,j=1}^{m_0} a_{ij}(\mathbf{z})\,\partial_{x_i x_j}
+ \sum_{i,\,j=1}^{N} b_{ij}\,x_i\partial_{x_j} - \partial_t
\end{equation*}
with $m_0=3$, $N=6$, $\mathbf{z}:=(\mathbf{x},t) \in\R^{6+1}$, $\mathbf{x}:=(x_i)_{1\leq i\leq 6}=(v,x) \in\R_v^3\times\R_x^3$, and
\begin{equation*}
a_{ij}(\mathbf{z}) = \sigma_{\!G}^{ij}(t,x,v)
= \left\{\Phi^{ij}\ast\big[\mu+\mu^{1/2}g(t,x,v)\big]\right\}(v) ,
\end{equation*}
$b_{14}=b_{25}=b_{36}=-1$ and the rest of $b_{ij}$ are zeros,
so that
\begin{equation*}
\mathcal{Y} := \sum_{i,\,j=1}^{N} b_{ij}\,x_i\partial_{x_j} - \partial_t = - \big(\partial_t + v\cdot\nabla_{\!x}\big) .
\end{equation*}

We now verify that the hypotheses made on the coefficients of $\mathcal{L}$ are satisfied in our case:

\smallskip
\textbf{(H.1)}\; The matrix of the second-order coefficients
$$\mathsf{A}(\mathbf{z}) := \big[a_{ij}(\mathbf{z})\big]_{1\leq i,\,j\leq 3} = \sigma_{\!G}(t,x,v)$$
is symmetric, and if $\|g\|_{\infty}<\varepsilon$ is sufficiently small,
there are constants $c_1,c_2>0$ such that
\begin{equation*}
c_1 \Lambda_1(v)\,|\eta|^2 \,\leq\, \eta^T\! A(\mathbf{z})\,\eta \,\leq\, c_2 \Lambda_2(v)\,|\eta|^2 
\end{equation*}
for every $\mathbf{z}=(t,x,v)\in\R^{6+1}$ and $\eta\in\R^3$.
Here $\Lambda_1(v), \Lambda_2(v) >0$ are equivalent to the eigenvalues of $\sigma_{\!G}$, satisfying
\begin{equation*}
\Lambda_1(v) \simeq (1+|v|)^{-3}, \quad \Lambda_2(v) \simeq (1+|v|)^{-1} .
\end{equation*}
Therefore, the principal part of $\mathcal{L}$ is an elliptic operator on $\R^3_v$, also it is uniformly elliptic restricted on a bounded subset of $v\in\R^3$ (see Lemma\;\ref{Lem:Landau-kernel-structure}\, in Section \ref{Sec:Prelim}).

\smallskip
\textbf{(H.2)}\; The constant matrix
\begin{equation*}
\mathsf{B} := \big[b_{ij}\big]_{1\leq i,\,j\leq 6} =
\begin{bmatrix}
\vspace{0.6pt}
\;0 & \mathsf{B}_1\, \\
\;0 & 0 \,
\end{bmatrix} ,
\qquad \mathsf{B}_1 =
\begin{pmatrix}
\vspace{0.5pt}
\,-1 & 0 & 0\, \\
\,0 & -1 & 0\, \\
\,0 & 0 & -1\, \\[0.5pt]
\end{pmatrix}
\end{equation*}
is upper triangular with $r=1$, $m_0=m_1=3$, and $\mathsf{B}_1$ is a $3\times 3$ block matrix of rank $3$.

It is known that under the conditions (H.1) and (H.2), the operator $\mathcal{L}_{\mathbf{z_0}}$ obtained by freezing the coefficients $a_{ij}$ at any fixed point $\mathbf{z_0}\in\R^{N+1}$ is called ``hypoelliptic''.
In our case, the equation \eqref{Eq:ultraparabolic-VL_f} is parabolic only in the velocity variable, while the (Liouville) transport operator $\mathcal{Y}$ has a mixing effect in the position-velocity phase space.

We  now introduce 
a quasi-distance in the space $\R_t\!\times\!\R_x^3\!\times\!\R_v^3$ and
$\R_x^3\!\times\!\R_v^3$.



\begin{definition}[Quasi-Distance]
For every $\mathbf{z}:=(t,x,v),\, \mathbf{w}:=(\tau,\xi,\nu) \in\R^{6+1}$, define
$$\mathsf{d}(\mathbf{z},\mathbf{w}) := \max\{|t-\tau|^{1/2},\, |x-\xi-(t-\tau)\nu|^{1/3},\, |v-\nu|\}.$$
Let $\hat{\mathsf{d}}\,(\hat{\mathbf{z}},\hat{\mathbf{w}})$ denote the restriction of the quasi-distance $\mathsf{d}$ on the phase-space $\R_x^3\times\R_v^3$, where $\hat{\mathbf{z}}:=(x,v)$, $\hat{\mathbf{w}}:=(\xi,\nu)$.
\end{definition}

The quasi-distance $\hat{\mathsf{d}}$ induces a topology on $\R_x^3\!\times\!\R_v^3$ in a natural way, which allows us to define the H\"{o}lder space $C^{\alpha}_{ }$.

\begin{definition}[$C^{0,\alpha}$ Space] \label{Def:Holder-spaces}
Let $\Pi$ be an open set in $\R^{6}$. We say a real-valued function $f$ on $\Pi$ satisfies the H\"{o}lder condition, or has (uniform) H\"{o}lder continuity, if there exist constants $\alpha\in(0,1]$ and $M>0$ such that
$$\big|f(\hat{\mathbf{z}})-f(\hat{\mathbf{w}})\big| \,\leq\, M \big|\hat{\mathsf{d}}(\hat{\mathbf{z}},\hat{\mathbf{w}})\big|^{\alpha} $$
for every $\hat{\mathbf{z}}, \hat{\mathbf{w}} \in\Pi$.
In this case, $f$ can be equipped with a semi-norm
$$|f|_{C^{\,0,\alpha}_{ }(\Pi)} \,:= \sup_{\mathbf{z}\neq\mathbf{w}\in\Pi} \frac{|f(\hat{\mathbf{z}})-f(\hat{\mathbf{w}})|}{\big|\hat{\mathsf{d}}(\hat{\mathbf{z}},\hat{\mathbf{w}})\big|^{\alpha}} \;.$$
\end{definition}


To get the desired a priori estimates, we need to make an additional regularity assumption on the second-order coefficients of $\mathcal{L}$:

\smallskip
\textbf{(H.3)}\; For each $n\ge 0$ and $v_0\in \r^3$ such that $|v_0|\in [2^n-1,2^{n+1}-1)$, there exists a constant $R_0=R_0(n)\in (0,1)$ such that the coefficients $a_{ij}(\mathbf{z}) := \sigma_{\!G}^{ij}(t,x,v)$ satisfy for any $\mathbf{z}_0=(t_0,x_0,v_0)$ and $r \in (0, R_0]$,
\begin{equation}
			\label{eq3.1.0}
     \text{osc}_{x, v} (a, Q_r (\mathbf{z}_0)) \le \gamma_{  \star  },\quad\text{where}\,\gamma_{  \star  }  =   2^{ -3n\kappa(p)  }  \widetilde \gamma_{  \star } (p),
\end{equation}
\begin{align*}
  & \text{osc}_{x, v} (a, Q_r (\mathbf{z}_0))\\
&= r^{-(4d+2)} \int_{t_0 - r^2}^{t_0} \int_{  D_r (\mathbf{z}_0, t) \times D_r (\mathbf{z}_0, t)}
   |a (t, x_1, v_1) - a (t, x_2, v_2)| \, dx_1dv_1 dx_2dv_2 \, dt\notag,
\end{align*}
  and
 $$
    Q_r (\mathbf{z}_0) =  \{(t,x, v): t_0-r^2<t<t_0,|x  - x_0 - (t-t_0) v_0|^{1/3} < r, |v-v_0| < r      \},
  $$
$$
    D_r (\mathbf{z}_0, t) =  \{(x, v): |x  - x_0 - (t-t_0) v_0|^{1/3} < r, |v-v_0| < r      \}.
  $$
Here $\kappa(p)$ and $\widetilde \gamma_{  \star } (p)$ are positive constants depending on $p$, but are independent of $n$.

\subsection{Definition of $S^p$ Space}




\begin{definition}[$S^{p}_{ }$ Space]
Let $\Pi \subset \R^{6+1}$ be an open set. For $p\in (1,\infty)$,
we define the $S^p_{ }$ space
$$S^{p}_{ }(\Pi) \,:=\, \Big\{f\in L^{p}(\Pi):\, \partial_{v_i}f,\, \partial_{v_i v_j}f,\, \mathcal{Y}f \in L^{p}(\Pi),\; i,j=1,2,3 \Big\} ,$$
and assign the norm
\begin{equation} \label{S^p-norm}
\|f\|_{S^{p}_{ }(\Pi)} \,:=\, \left(\big\|f\big\|_{L^{p}(\Pi)}^p + \big\|D_vf\big\|_{L^{p}(\Pi)}^p + \big\|D^2_{vv}f\big\|_{L^{p}(\Pi)}^p + \big\|\mathcal{Y}f\big\|_{L^{p}(\Pi)}^p \right)^{\frac{1}{p}} ,
\end{equation}
where $\mathcal{Y} = - \big(\partial_t + v\cdot\nabla_{\!x}\big)$,
and we use the simplified notation
\begin{align*}
\big\|D_vf\big\|_{L^{p}(\Pi)}^p &:= \sum_{i=1}^{3}\big\|\partial_{v_i}f\big\|_{L^{p}(\Pi)}^p , \\
\big\|D^2_{vv}f\big\|_{L^{p}(\Pi)}^p &:=
\sum_{i,\,j=1}^{3}\!\big\|\partial_{v_i v_j}f\big\|_{L^{p}(\Pi)}^p .
\end{align*}
\end{definition}

Finally, we conclude this section by a Gagliardo-Nirenberg type
interpolation inequality, which gives the bound on the intermediate derivatives $D_v u$ from the control of the highest derivatives $D^2_{vv}u$ and additional information on the function $u$ itself.

\begin{lemma}[Interpolation] \label{Lem:interpolation}
Let $u\in S^{p}_{ }(\Pi)$ with $p\in (1,\infty)$, and $\Pi$ be a smooth open set in $\R^{6+1}$.
For any $0<\varepsilon< 1$, there exists a constant $C_\varepsilon = \frac{C}{\varepsilon} >0$ (depending only on $p$ and $\Omega$) such that
\begin{equation*} 
\big\|D_v u\big\|_{L^{p}(\Pi)} \,\leq\,
\varepsilon\, \big\|D^2_{vv}u\big\|_{L^{p}(\Pi)}
+ \,C_\varepsilon\, \|u\|_{L^{p}(\Pi)} .
\end{equation*}
\end{lemma}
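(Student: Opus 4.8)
The plan is to reduce the estimate to the classical one-dimensional Landau--Kolmogorov interpolation inequality, applied in the velocity variable alone, and then to integrate the result back over the remaining variables. A first observation is that the operator $\mathcal{Y}$ plays no role here: the hypothesis $u\in S^{p}_{ }(\Pi)$ is used only to guarantee that $u$, $D_vu$ and $D^2_{vv}u$ all belong to $L^p(\Pi)$. Fixing $i\in\{1,2,3\}$ together with $(t,x)$ and the velocity components $v_j$ for $j\neq i$, one regards $u$ as a function of the single variable $v_i$, defined on the slice $\{v_i\in\R:(t,x,v)\in\Pi\}$. In the situations relevant to this paper the velocity factor of $\Pi$ is all of $\R^3_v$, so this slice is simply $\R$; for a general smooth open set $\Pi$ one first applies a bounded Sobolev extension in the $v_i$-direction, with operator norm controlled by the geometry of the domain, after which the slice may again be taken to be $\R$.

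Next I would establish the scalar inequality: for $w\in W^{2,p}(\R)$ and every $\delta>0$,
\begin{equation*}
\|w'\|_{L^p(\R)} \,\ls\, \delta\,\|w''\|_{L^p(\R)} \,+\, \delta^{-1}\|w\|_{L^p(\R)} ,
\end{equation*}
with an implicit constant depending only on $p$. This is proved by a standard mollified-interval argument: one partitions $\R$ into intervals $I$ of length $\delta$, fixes a smooth weight $\rho$ supported in $I$ with $\int_I\rho=1$, $\|\rho\|_\infty\ls\delta^{-1}$ and $\|\rho'\|_\infty\ls\delta^{-2}$, and writes, for $v\in I$, $w'(v)=\int_I\rho(\eta)w'(\eta)\,\dd\eta+\int_I\rho(\eta)\big(\int_\eta^{v}w''(s)\,\dd s\big)\dd\eta$. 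Integrating by parts in the first term (so that it becomes $-\int_I\rho'(\eta)w(\eta)\,\dd\eta$) and using H\"{o}lder's inequality, one obtains $\|w'\|_{L^p(I)}\ls\delta\|w''\|_{L^p(I)}+\delta^{-1}\|w\|_{L^p(I)}$; raising this to the $p$-th power and summing over the disjoint intervals of the partition gives the displayed bound.

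Finally, I would apply the scalar inequality to $u$ in each velocity coordinate $v_i$, raise to the $p$-th power, integrate over all the remaining variables (Fubini), and sum over $i=1,2,3$, obtaining $\|D_vu\|_{L^p(\Pi)}\ls\delta\|D^2_{vv}u\|_{L^p(\Pi)}+\delta^{-1}\|u\|_{L^p(\Pi)}$; the choice $\delta=\varepsilon/C$ then yields the statement with $C_\varepsilon=C/\varepsilon$. The only step requiring genuine care is the reduction for a non-product domain: one must perform the one-dimensional extension with a norm uniform in $(t,x)$ and in the transverse velocity components, which is where the smoothness of $\Pi$ (and hence the dependence of $C_\varepsilon$ on the underlying spatial domain) enters. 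Apart from that, the proof is an entirely elementary one-variable computation, so I do not expect any serious obstacle.
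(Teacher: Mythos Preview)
The paper does not actually supply a proof of this lemma: it is stated at the end of Section~\ref{Sec:Lemmas} as a ``Gagliardo--Nirenberg type interpolation inequality'' and then used without further justification. Your approach via the one-dimensional Landau--Kolmogorov inequality in each $v_i$-direction, followed by Fubini and summation over $i$, is a correct and standard way to establish it; your observation that $\mathcal{Y}u$ plays no role and that only $u,\,D_vu,\,D^2_{vv}u\in L^p$ are needed is also on point, and in the paper's applications the domain is indeed a product with $\R^3_v$-factor, so the extension subtlety you flag does not actually arise there.
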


\section{Extension Across the Specular-Reflection Boundary} \label{Sec:Boundary-Extension}

For the later use of $S^p$ theory, we need to extend the domain $\Omega$ to the whole space.

\subsection{Extension of Solutions to the Whole Space}
 In this subsection, we will show step by step the way of extending our equation satisfied on a bounded domain with specular-reflection BC to a whole space problem.

 \subsubsection{``Boundary-Flattening'' Transformation.}
 Let
 \begin{equation}
 \begin{array}{rcl}
 \vec{\Psi} :\;\; \overline{\Omega}\times \mathbb{R}^3 & \;\rightarrow\; & \overline{\mathbb{H}}_{-}\!\times \mathbb{R}^3 \\
 (x,v) \;& \;\mapsto\; & (y,w) \eqdef  \big(\vec{\psi}(x),A v \big)
 \end{array} \label{Phi}
 \end{equation}
 be the (local) transformation that flattens the boundary, where
 \begin{equation*}
 A\eqdef \Big[\frac{\partial y}{\partial x}\Big]=D\vec{\psi}
 \end{equation*}
 is a non-degenerate $3\!\times\! 3$ Jacobian matrix, and the explicit definition of $y\!=\!\vec{\psi}(x)$ will be given below.
 Let
 \begin{equation}
 \tilde{f}(t,y,w) \eqdef  f\big(t,\vec{\Psi}^{-1}(y,w)\big) = f\big(t,\vec{\psi}^{-1}(y),A^{-1}w\big) = f(t,x,v) \label{f-trans}
 \end{equation}
 denote the solution under the new coordinates.

 \begin{remark}
 It is crucial that we define our transformation $\vec{\Psi}$ for both $(x,v)$ variables in this certain form so that it preserves the characteristics and the transport operator as explained below (see also the subsection of \ref{Transformed-Equation} below for more details).  \end{remark}
 Suppose the boundary $\partial\Omega$ is (locally) given by the graph $x_3 = \rho(x_1,x_2)$, and $\big\{(x_1,x_2,x_3)\!\in\!\mathbb{R}^3: x_3\!<\!\rho(x_1,x_2)\big\} \subseteq \Omega$.
 Inspired by Lemma 15 in \cite{Guo.Kim.Tonon.Trescases2017}\footnote{The authors used spherical-type coordinates to make the map almost globally defined; here we just prefer the standard coordinates for simplicity.}, we define $y\!=\!\vec{\psi}(x)$ explicitly as follows:

 \begin{align*}
 \vec{\psi}^{-1} :\;\;
 \begin{pmatrix}
 y_1 \\ y_2 \\ y_3
 \end{pmatrix}
 & \mapsto\; \vec{\eta}(y_1,y_2) + y_3\cdot\vec{n}(y_1,y_2) \\
 & =\;
 \begin{pmatrix}
 y_1 \\ y_2 \\  \rho(y_1,y_2)
 \end{pmatrix}
 + y_3\cdot
 \begin{pmatrix}
 -\rho_1 \\ -\rho_2 \\ 1
 \end{pmatrix}\; =\;
 \begin{pmatrix}
 y_1 - y_3\cdot\rho_1 \\ y_2 - y_3\cdot\rho_2 \\ \rho + y_3
 \end{pmatrix}
 =:
 \begin{pmatrix}
 x_1 \\ x_2 \\ x_3
 \end{pmatrix},
 \end{align*}
 where we denote by $\rho=\rho(y_1,y_2)$, $\rho_i=\partial_i\rho(y_1,y_2),  i\!=\!1,2$, and
 \begin{align*}
 \vec{\eta}(y_1,y_2) &\eqdef  \big(y_1, y_2, \rho(y_1,y_2) \big)  \in \partial\Omega, \\[2pt]
 \partial_1\vec{\eta} &\eqdef  \frac{_{\partial\vec{\eta}}}{^{\partial y_1}} = \langle 1,0,\rho_1 \rangle, \\
 \partial_2\vec{\eta} &\eqdef  \frac{_{\partial\vec{\eta}}}{^{\partial y_2}} = \langle 0,1,\rho_2 \rangle.
 \end{align*}
Then the (outward) normal vector at the point $\vec{\eta}(y_1,y_2) \in\partial\Omega$ is chosen to be
 \begin{equation*}
 \vec{n}(y_1,y_2) \eqdef  \partial_1\vec{\eta} \times \partial_2\vec{\eta} = \langle -\rho_1,-\rho_2,1  \rangle.
 \end{equation*}

 From the above definition, we can see that the transformation $\vec{\psi}$ is ``boundary-flattening'' because it maps the points on the boundary $\{x_3 \!=\! \rho(x_1,x_2)\}$ to the plane $\{y_3\!=\!0\}$.
 We also remark that the map is locally well defined and is a smooth homeomorphism in a tubular neighborhood of the boundary (see Lemma 15 of \cite{Guo.Kim.Tonon.Trescases2017} for the rigorous proof).

 Directly we compute the Jacobian matrix
 \begin{align*}
 A^{-1} = D\vec{\psi}^{-1} = \Big[\frac{\partial x}{\partial y}\Big]
 =&\; \big[ \partial_1\vec{\eta}+y_3\!\cdot\!\partial_1\vec{n} ;  \partial_2\vec{\eta}+y_3\!\cdot\!\partial_2\vec{n} ;  \vec{n} \big] \\
 =&
 \begin{pmatrix}
  1\!-\!y_3\!\cdot\!\rho_{11} &\;\; -y_3\!\cdot\!\rho_{12} &\;\; -\rho_1  \\[2pt]
 -y_3\!\cdot\!\rho_{12} &\;\; 1\!-\!y_3\!\cdot\!\rho_{22} &\;\; -\rho_2 \\[2pt]
 \rho_1 & \rho_2 & 1
 \end{pmatrix} \\
 \xrightarrow{\text{on}\;\partial\Omega :\;  y_3=0}&\;
 \big[ \partial_1\vec{\eta} ;  \partial_2\vec{\eta} ;  \vec{n} \big]
 =
 \begin{pmatrix}
 1 &\; 0 &\; -\rho_1 \\[2pt]
 0 &\; 1 &\; -\rho_2 \\[2pt]
  \rho_1 &\;  \rho_2 & 1
 \end{pmatrix}.
 \end{align*}

Thus we can write out $\vec{\Psi}^{-1}$ as
 \begin{equation*}
 \vec{\Psi}^{-1} :\;\; ( y,w) \; \;\mapsto\;  ( x,v) \eqdef  \big( \vec{\psi}^{-1}(y),  A^{-1}w \big),
 \end{equation*}
 \begin{align*}
 \begin{pmatrix}
 w_1 \\ w_2 \\ w_3
 \end{pmatrix}
 \mapsto &\;
 \begin{pmatrix}
  1\!-\!y_3\!\cdot\!\rho_{11} &\;\; -y_3\!\cdot\!\rho_{12} &\;\; -\rho_1  \\[2pt]
 -y_3\!\cdot\!\rho_{12} &\;\; 1\!-\!y_3\!\cdot\!\rho_{22} &\;\; -\rho_2 \\[2pt]
 \rho_1 & \rho_2 & 1
 \end{pmatrix}
 \begin{pmatrix}
 w_1 \\ w_2 \\ w_3
 \end{pmatrix} \\[4pt]
 = &\;
 \begin{pmatrix}
 (1\!-\!y_3 \rho_{11})\cdot w_1 - y_3 \rho_{12}\cdot w_2 - \rho_1\cdot w_3 \\[2pt]
 -y_3 \rho_{12}\cdot w_1 + (1\!-\!y_3 \rho_{22})\cdot w_2 - \rho_2\cdot w_3 \\[2pt]
 \rho_1\cdot w_1 + \rho_2\cdot w_2 + w_3
 \end{pmatrix}
 =:
 \begin{pmatrix}
 v_1 \\ v_2 \\ v_3
 \end{pmatrix}.
 \end{align*}
 Restricted on the boundary $\partial\Omega$,  i.e., $\{y_3\!=\!0\}$, the map becomes
 \begin{align*}
 \begin{pmatrix}
 v_1 \\ v_2 \\ v_3
 \end{pmatrix}
 &=  w_1\cdot\partial_1\vec{\eta} + w_2\cdot\partial_2\vec{\eta} + w_3\cdot\vec{n} \\
 &=
 \begin{pmatrix}
 1 &\; 0 &\; -\rho_1 \\[2pt]
 0 &\; 1 &\; -\rho_2 \\[2pt]
  \rho_1 &\;  \rho_2 & 1
 \end{pmatrix}
 \begin{pmatrix}
 w_1 \\ w_2 \\ w_3
 \end{pmatrix}
 =
 \begin{pmatrix}
 w_1 - \rho_1\!\cdot\! w_3 \\[2pt]
 w_2 - \rho_2\!\cdot\! w_3 \\[2pt]
  \rho_1\!\cdot\! w_1 + \rho_2\!\cdot\! w_2 + w_3
 \end{pmatrix}.
 \end{align*}

 Now we are ready to show the key feature of the transformation $\vec{\Psi}$ -- preserving the ``specular symmetry'' on the boundary: it sends any two points $(x,v), (x,R_x v)$ on the phase boundary $\gamma=\partial\Omega\times\mathbb{R}^3$ with specular-reflection relation to two points on $\{y_3\!=\!0\}\times\mathbb{R}^3$ which are also specular-symmetric to each other.

 In other words, we have the following commutative diagram (when $x\!\in\!\partial\Omega$  i.e., $y_3\!=\!0$):
 \begin{equation*}
 \xymatrix{
 	(y,w)\ar[r]^{\vec{\Psi}^{-1}}\ar[d]_{R_y} & (x,v)\ar[d]^{R_x} \\
 	(y,R_y w)\ar[r]^{\vec{\Psi}^{-1}} & (x,R_x v)
 }
 \end{equation*}
 from which we can equivalently write
 \begin{equation*}
 A^{-1}\big(R_y w\big) = R_x\big(A^{-1}w\big)\quad \text{if}\;  y_3\!=\!0.
 \end{equation*}
 This can be verified by noticing that $\vec{n} = \partial_1\vec{\eta} \times \partial_2\vec{\eta}$, so
 \begin{align*}
 A^{-1}\big(R_y w\big) &= A^{-1}\langle w_1,w_2,-w_3\rangle = w_1\cdot\partial_1\vec{\eta} + w_2\cdot\partial_2\vec{\eta} - w_3\cdot\vec{n} \\
 &= R_x\big(w_1\cdot\partial_1\vec{\eta} + w_2\cdot\partial_2\vec{\eta} + w_3\cdot\vec{n}\big)
 = R_x\big(A^{-1}w\big).
 \end{align*}

 Having this property, the specular reflection boundary condition on the solutions is also preserved:
 \begin{equation*}
 \tilde{f}(t,y,w) = \tilde{f}(t,y,Rw)\quad \text{on}\;  \{y_3\!=\!0\},
 \end{equation*}
 where  $R\!\eqdef \!{\rm diag}\{1,1,-1\}$,
 which allows us to construct the mirror extension (as in the next subsection) that is consistent with this restriction and thus is automatically satisfied.

 To conclude this part, we carry out more computations for later use:
 \begin{equation}
 D\vec{\Psi}^{-1} = \left[\frac{\partial(x,v)}{\partial(y,w)}\right] = \left(
 \begin{array}{c|c}
 \frac{\partial x}{\partial y}  &  \frac{\partial x}{\partial w} \\[3pt]
 \hline \\[-12pt]
 \frac{\partial v}{\partial y}  &  \frac{\partial v}{\partial w}
 \end{array}
 \right)
 = \left(
 \begin{array}{c|c}
 A^{-1} &  0_{3\times 3} \\[1pt]
 \hline \\[-12pt]
 B & A^{-1}
 \end{array}
 \right) ,\label{DPhi}
 \end{equation}
 \begin{align*}
 B &\eqdef  \Big[\frac{\partial v}{\partial y}\Big] \\
 &=  \begin{pmatrix}
 - y_3 \rho_{1\!1\!1}\!\cdot\! w_1 \!-\! y_3 \rho_{1\!1\!2}\!\cdot\! w_2
 &\;\; - y_3 \rho_{1\!1\!2}\!\cdot\! w_1 \!-\! y_3 \rho_{1\!2\!2}\!\cdot\! w_2
 &\;\; - \rho_{11}\!\cdot\! w_1 \!-\! \rho_{12}\!\cdot\! w_2 \\[-2pt]
 - \rho_{11}\!\cdot\! w_3
 & - \rho_{12}\!\cdot\! w_3
 &  \\[3pt]
 - y_3 \rho_{1\!1\!2}\!\cdot\! w_1 \!-\! y_3 \rho_{1\!2\!2}\!\cdot\! w_2
 &\;\; - y_3 \rho_{1\!2\!2}\!\cdot\! w_1 \!-\! y_3 \rho_{2\!2\!2}\!\cdot\! w_2
 &\;\; - \rho_{12}\!\cdot\! w_1 \!-\! \rho_{22}\!\cdot\! w_2 \\[-2pt]
 - \rho_{12}\!\cdot\! w_3
 & - \rho_{22}\!\cdot\! w_3
 &  \\[3pt]
 \rho_{11}\!\cdot\! w_1 + \rho_{12}\!\cdot\! w_2
 & \rho_{12}\!\cdot\! w_1 + \rho_{22}\!\cdot\! w_2
 & 0
 \end{pmatrix}  ,
 \end{align*}
 \begin{align*}
 A &=  \frac{1}{\det\!\left(A^{-1}\right)}\cdot\left(A^{-1}\right)^{\ast} \\
 &= \left[ y_3^{ 2}\!\cdot\!\left(\rho_{11}\rho_{22}\!-\!\rho_{12}^{ 2}\right) + y_3\!\cdot\!\left(2\rho_1\rho_2\rho_{12}\!-\!\rho_2^{ 2}\rho_{11}\!-\!\rho_1^{ 2}\rho_{22}\!-\!\rho_{11}\!-\!\rho_{22}\right) + \left(\rho_1^{ 2}\!+\!\rho_2^{ 2}\!+\!1\right) \right]^{-1} \\[3pt]
 &\;\;\;\cdot
 \begin{pmatrix}
  (1\!+\!\rho_2^{ 2}) - y_3\!\cdot\!\rho_{22}
 &\;\; -\rho_1\rho_2 + y_3\!\cdot\!\rho_{12}
 &\;\; \rho_1 + y_3\!\cdot\!(\rho_2\rho_{12}\!-\!\rho_1\rho_{22})  \\[5pt]
 -\rho_1\rho_2 + y_3\!\cdot\!\rho_{12}
 &\;\; (1\!+\!\rho_1^{ 2}) - y_3\!\cdot\!\rho_{11}
 &\;\; \rho_2  + y_3\!\cdot\!(\rho_1\rho_{12}\!-\!\rho_2\rho_{11})  \\[5pt]
 -\rho_1 + y_3\!\cdot\!(\rho_1\rho_{22}\!-\!\rho_2\rho_{12})
 &\;\; -\rho_2 + y_3\!\cdot\!(\rho_2\rho_{11}\!-\!\rho_1\rho_{12})
 & 1 - y_3\!\cdot\!(\rho_{11}\!+\!\rho_{22})  \\[-1pt]
 & & + y_3^{ 2}\!\cdot\!(\rho_{11}\rho_{22}\!-\!\rho_{12}^{ 2})
 \end{pmatrix},
 \end{align*}

 \begin{align*}
 C &\eqdef  A^{-T}\!A^{-1}\\
 &=  \begin{pmatrix}
 y_3^{ 2}\!\cdot\!(\rho_{11}^{ 2}\!+\!\rho_{12}^{ 2})
 &\;\; y_3^{ 2}\!\cdot\!\rho_{12}(\rho_{11}\!+\!\rho_{22})
 &\;\; y_3\!\cdot\!(\rho_1\rho_{11}\!+\!\rho_2\rho_{12})  \\[-.5pt]
 - y_3\!\cdot\!2\rho_{11} + (\rho_1^{ 2}\!+\!1)
 & - y_3\!\cdot\!2\rho_{12} + \rho_1\rho_2
 &  \\[5pt]
 y_3^{ 2}\!\cdot\!\rho_{12}(\rho_{11}\!+\!\rho_{22})
 &\;\; y_3^{ 2}\!\cdot\!(\rho_{12}^{ 2}\!+\!\rho_{22}^{ 2})
 &\;\; y_3\!\cdot\!(\rho_1\rho_{12}\!+\!\rho_2\rho_{22})  \\[-.5pt]
 - y_3\!\cdot\!2\rho_{12} + \rho_1\rho_2
 & - y_3\!\cdot\!2\rho_{22} + (\rho_2^{ 2}\!+\!1)
 &  \\[5pt]
 y_3\!\cdot\!(\rho_1\rho_{11}\!+\!\rho_2\rho_{12})
 & y_3\!\cdot\!(\rho_1\rho_{12}\!+\!\rho_2\rho_{22})
 & \rho_1^{ 2}\!+\!\rho_2^{ 2}\!+\!1
 \end{pmatrix}, \\[10pt]
 & C^{-1} = AA^T =  \frac{1}{\det(C)}\cdot C^{\ast}.
 \end{align*}\begin{remark}
 	Here we just assume $\rho$ is (locally) smooth enough and its derivatives remain uniformly bounded, so that all the coefficients of transformed equations where the above matrices appear will keep roughly the same size as the original ones.
 \end{remark}

 \subsubsection{Mirror Extension Across the Specular-Reflection Boundary.}

 After flattening the boundary, we then ``flip over'' $\tilde{f}$ to the upper half space by setting
 \begin{equation}
 \bar{f}(t,y,w) \stackrel{\text{def}}{=} \left\{
 \begin{array}{rcl}
 & \tilde{f}(t,y,w), & \text{if}\;\; y\!\in \overline{\mathbb{H}}_{-} \\[4pt]
 & \tilde{f}(t,Ry,Rw),\quad & \text{if}\;\; y\!\in \overline{\mathbb{H}}_{+}
 \end{array}\right.,
 \label{f-bar}
 \end{equation}
 where  $R\!\eqdef \!{\rm diag}\{1,1,-1\}$. The similar notation also applies to other variables. Combined with the corresponding partition of unity, we are able to define our solutions in the whole space (locally).
 \begin{remark}
  	The above construction of extension coincides with the specular reflection boundary condition, which in turn makes it a well-defined and continuous extension across the boundary. This observation suggests that, unfortunately, we cannot apply the same kind of extension to other boundary condition cases.
 \end{remark}
 Also, it is worth pointing out the necessity of ``continuity of $\bar{f}$ across the boundary'' lies in that, on one hand, it ensures $\bar{f}$ is indeed a solution (at least) in the weak sense in the whole space (see Section \ref{Well-definedness}); on the other hand, $g$ will appear in the coefficients of the ultra-parabolic form, and we require some kind of continuity of the second-order coefficient for the $S^p$ estimate.

 \subsubsection{Transformed Equations.} \label{Transformed-Equation}
 By using the chain rule with our definitions (\ref{Phi}) and (\ref{f-trans}) of the transformation $\vec{\Psi}$, we first compute the transformed equation satisfied by $\tilde{f}$ in the lower half space:\footnote{We use the column vector convention in the following matrix operation expressions.}
 \begin{equation*}
 \partial_t f = \partial_t \tilde{f},
 \end{equation*}
 \begin{align*}
 v\cdot\nabla_{\!x} f &= \big(A^{-1}w\big)^T \left\{A^T  \nabla_{\!y}\tilde{f} + \big[\frac{_{\partial w}}{^{\partial x}}\big]^T \nabla_{\!w}\tilde{f}\right\} \\
 &= w^T\big(A^{-T}A^T\big)\nabla_{\!y}\tilde{f} + \big(A^{-1}w\big)^T \big(A\big[\frac{_{\partial v}}{^{\partial y}}\big]A\big)^T \nabla_{\!w}\tilde{f} \\
 &= w\cdot\nabla_{\!y} \tilde{f} + \big(ABw\big)\cdot\nabla_{\!w}\tilde{f},
 \end{align*}
 \begin{equation*}
 a_g\cdot\nabla_{\!v}f = \widetilde{a_g}\cdot\big(A^T  \nabla_{\!w}\tilde{f} \big) = \big(A \widetilde{a_g}\big)\cdot\nabla_{\!w}\tilde{f},
 \end{equation*}
 \begin{equation*}
 \mathbf{E}_g\cdot\nabla_{\!v}f = \widetilde{\mathbf{E}_g}\cdot\big(A^T  \nabla_{\!w}\tilde{f} \big) = \big(A \widetilde{\mathbf{E}_g}\big)\cdot\nabla_{\!w}\tilde{f},
 \end{equation*}
 and
 \begin{equation*}
 \nabla_v\cdot\big(\sigma_{\!G}\nabla_{\!v} f\big) = \nabla_w\cdot\left(\big[A \widetilde{\sigma}_{\!G}A^T\big] \nabla_{\!w} \tilde{f} \right).
 \end{equation*}
 See (\ref{DPhi}) for explicit definition of $A, B$, and
 \begin{align*}
 \widetilde{a_g}(t,y,w) &\eqdef  a_g\big(t,\vec{\Psi}^{-1}(y,w)\big) = a_g(t,x,v), \\
 \widetilde{\mathbf{E}_g}(t,y) &\eqdef  \mathbf{E}_g\big(t,\vec{\psi}^{-1}(y)\big) = \mathbf{E}_g(t,x), \\
 \widetilde{\sigma}_{\!G}(t,y,w) &\eqdef  \sigma_{\!G}\big(t,\vec{\Psi}^{-1}(y,w)\big) = \sigma_{\!G}(t,x,v).
 \end{align*}

 Based on our construction of the extension (\ref{f-bar}), we then go on deriving the equation satisfied by $\bar{f}$ for the upper half space:
\begin{equation*}
\partial_t \bar{f}(t,y,w)= \partial_t \tilde{f}(t,Ry,Rw) ,
 \end{equation*}
 \begin{equation*}
w\cdot\nabla_{\!y} \bar{f}(t,y,w)
=w^{ T}\big(R^T\!R^T\big)\nabla_{\!y}\bar{f}(t,y,w)
=\big(Rw\big)^T R^T  \nabla_{\!y}\bar{f}(t,y,w)
= w\cdot\nabla_{\!y} \tilde{f} (t,Ry,Rw),
 \end{equation*}
 \begin{equation*}
\big(R\bar{A}\bar{B}Rw\big)\cdot \nabla_{\!w}\bar{f}(t,y,w)= \big(\bar{A}\bar{B}Rw\big)\cdot R^T\nabla_{\!w}\bar{f}(t,y,w) =
 \big(ABw\big)\cdot\nabla_{\!w}\tilde{f} (t,Ry,Rw) ,
 \end{equation*}
 \begin{equation*}
 \big(R\bar{A} \overline{a}_g\big)\cdot \nabla_{\!w}\bar{f}(t,y,w)
= \big(\bar{A} \overline{a}_g\big)\cdot R^T\nabla_{\!w}\bar{f} (t,y,w)
= \big(A \widetilde{a_g}\big)\cdot\nabla_{\!w}\tilde{f}(t,Ry,Rw)  ,
 \end{equation*}
 \begin{equation*}
 \big(R\bar{A} \overline{\mathbf{E}}_g\big)\cdot \nabla_{\!w}\bar{f}(t,y,w)
= \big(\bar{A} \overline{\mathbf{E}}_g\big)\cdot R^T\nabla_{\!w}\bar{f}(t,y,w)
=  \big(A \widetilde{\mathbf{E}_g}\big)\cdot\nabla_{\!w}\tilde{f} (t,Ry,Rw),
 \end{equation*}
 \begin{equation*}
 \nabla_{\!w}\cdot\Big(\big[R\bar{A} \overline{\sigma}_{\!G}\bar{A}^T\!R\big] \nabla_{\!w} \bar{f} (t,y,w)\Big)=
 \nabla_w\cdot\left(\big[A \widetilde{\sigma}_{\!G}A^T\big] \nabla_{\!w} \tilde{f}(t,Ry,Rw) \right),
 \end{equation*}
 where
 \begin{align*}
 \bar{A}(y) \eqdef  A(Ry), \quad
 \bar{B}(y,w) \eqdef  B(Ry,Rw),
 \end{align*}
 and $\overline{a}_g$, $\overline{\mathbf{E}}_g$, $\overline{\sigma}_{\!G}$ are $a_g$, $\mathbf{E}_g$, $\sigma_{\!G}$ defined with $(t,y,w)$, respectively.

 Summing up the above computations, we now obtain that $\bar{f}$ satisfies the following equation in the lower and upper space, respectively:
 \begin{equation}
 \partial_t \bar{f} + w\cdot\nabla_{\!y} \bar{f} = \nabla_{w}\cdot\big(\mathbb{A} \nabla_{\!w} \bar{f} \big)+\mathbb{B}\cdot\nabla_{\!w}\bar{f}+\mathbb{C}\bar{f},
 \label{f-bar-eq}
 \end{equation}
 where the coefficients $\mathbb{A}$, $\mathbb{B}$, and $\mathbb{C}$ are piecewise-defined:
 \begin{equation}
 \mathbb{A}(t,y,w) \eqdef  \left\{
 \begin{array}{rcl}
 & \widetilde{\mathbb{A}} \eqdef  A \widetilde{\sigma}_{\!G}A^T, & \text{if}\;\; y\!\in \mathbb{H}_{-} \\[4pt]
 & \overline{\mathbb{A}} \eqdef  R\bar{A} \overline{\sigma}_{\!G}\bar{A}^T\!R,\quad & \text{if}\;\; y\!\in \mathbb{H}_{+}
 \end{array}\right.,
 \label{coeff-A}
 \end{equation}

 \begin{equation}\label{temp 1}
 \mathbb{B}(t,y,w) \eqdef  \left\{
 \begin{array}{rcl}
 & \widetilde{\mathbb{B}} \eqdef  ABw + A \widetilde{a_g} - A \widetilde{\mathbf{E}_g}, & \text{if}\;\; y\!\in \mathbb{H}_{-} \\[4pt]
 & \overline{\mathbb{B}} \eqdef  R\bar{A}\bar{B}Rw + R\bar{A} \overline{a}_g - R\bar{A} \overline{\mathbf{E}}_g,\quad & \text{if}\;\; y\!\in \mathbb{H}_{+}
 \end{array}\right.,
 \end{equation}

 \begin{equation}\label{temp 2}
 \mathbb{C}(t,y,w) \eqdef  \left\{
 \begin{array}{rcl}
 & \widetilde{\mathbb{C}} \eqdef  \widetilde{\bar K_gf}+(A^{-1}w)^T\widetilde{\mathbf{E}}_g + 2\sqrt{\mu(w)}(A^{-1}w)^T\widetilde{\mathbf{E}}_g, & \text{if}\;\; y\!\in \mathbb{H}_{-} \\[4pt]
 & \overline{\mathbb{C}} \eqdef  \overline{\bar{K}_gf}+(\bar A^{-1}Rw)^T\overline{\mathbf{E}}_g + 2\sqrt{\mu(w)}(\bar A^{-1}Rw)^T\overline{\mathbf{E}}_g, & \text{if}\;\; y\!\in \mathbb{H}_{+}
 \end{array}\right..
 \end{equation}

 \begin{remark}
 Thanks to our design of the form of transformation (\ref{Phi}) and extension (\ref{f-bar}), the transport operator of the equation remains {\em invariant} after change of variables, which is vital for our future analysis.
 \end{remark}
 It is also worth noting that the new second-order coefficient $\mathbb{A}$ preserves the positivity of $\sigma_{\!G}$, and thus the hypo-ellipticity of the equation, since $A$ and $R$ are non-degenerate. 

 \subsection{Weak Formulation of Extended Equations} \label{Well-definedness}
 After doing the extension, it is important to make sure that across the boundary the equation (\ref{f-bar-eq}) is satisfied by $\bar{f}$ in some proper sense (at least in the weak sense). That means $\bar{f}$ should satisfy the following weak formation of equation (\ref{f-bar-eq}) in the whole space:
 \begin{align*}
 & \iint_{\mathbb{R}^3\!\times\mathbb{R}^3} \left[(\bar{f}\varphi)(t)-(\bar{f}\varphi)(0) \right] dydw \\
 & = \int_0^t\!\iint_{\mathbb{R}^3\!\times\mathbb{R}^3} \Big\{\bar{f}  \big[(\partial_s+w\!\cdot\!\nabla_{\!y})\varphi-\mathbb{B}\cdot\nabla_{\!w}\varphi\big] + \mathbb{C}f\varphi - \nabla_{\!w}\bar{f}\cdot(\mathbb{A} \nabla_{\!w}\varphi) \Big\} dydwds.
 \end{align*}

 Normally a weak formation is obtained by multiplying the equation by some suitable test function $\varphi$ and then integrating by parts over the domain where the equation(s) are defined i.e., $(0,t)\times(\mathbb{H}_{-}\!\cup\mathbb{H}_{+})\times\mathbb{R}^3$. This process yields
 \begin{align*}
 & \iint_{\widetilde{\Omega}\times\mathbb{R}^3} \left[(\bar{f}\varphi)(t)-(\bar{f}\varphi)(0) \right] dydw \\
 & = \int_0^t\!\iint_{\widetilde{\Omega}\times\mathbb{R}^3} \Big\{\bar{f}  \big[(\partial_s+w\!\cdot\!\nabla_{\!y})\varphi-\mathbb{B}\cdot\nabla_{\!w}\varphi\big] + \mathbb{C}f\varphi - \nabla_{\!w}\bar{f}\cdot(\mathbb{A} \nabla_{\!w}\varphi) \Big\} dydwds \\
 &\quad -\int_0^t\!\!\int_{\tilde{\gamma}} \bar{f}\varphi  d\tilde{\gamma} ds.
 \end{align*}
 Here  $\widetilde{\Omega} \eqdef  \mathbb{H}_{-}\!\cup\mathbb{H}_{+}$, $\tilde{\gamma} \eqdef  \partial\widetilde{\Omega}\times\mathbb{R}^3 = (\partial\mathbb{H}_{-}\!\cup\partial\mathbb{H}_{+})\times\mathbb{R}^3$, and $d\tilde{\gamma} \eqdef  (w\!\cdot n_{y}) dS_{y}dw$.
 \begin{remark}
 	 	The only boundary-integral term $I_{\tilde{\gamma}} \eqdef  \int_0^t\!\int_{\tilde{\gamma}} \bar{f}\varphi  d\tilde{\gamma} ds$ above comes from integration by parts in $y$. Note that integration by parts in $w$ does not produce any boundary terms.
 \end{remark}

 Compared with the above definition, this is equivalent to saying that we have to be sure the boundary term vanishes:
 \begin{equation*}
 \int_{\tilde{\gamma}} \bar{f}\varphi  d\tilde{\gamma} = \left(\iint_{\partial\mathbb{H}_{-}\!\times\mathbb{R}^3} \!+ \iint_{\partial\mathbb{H}_{+}\!\times\mathbb{R}^3}\right) \bar{f}\varphi  (w\!\cdot n_{y}) dS_{y}dw = 0,
 \end{equation*}
 which is indeed true since
 \begin{equation*}
 \bar{f}(t ;y_1,y_2,0- ;w) = \bar{f}(t ;y_1,y_2,0+ ;w)
 \end{equation*}
 due to continuity of $\bar{f}$ across the boundary, while the normal vectors at same point of outer and inner boundary are of opposite directions
 \begin{equation*}
 n_{y}(y_3\!=\!0-)|_{\partial\mathbb{H}_{-}} = - n_{y}(y_3\!=\!0+)|_{\partial\mathbb{H}_{+}},
 \end{equation*}
 plus the coincidence of $y$-derivative term (transport operator) on two sides.

 Therefore, we can now conclude that $\bar{f}$ is a (weak) solution to the equation (\ref{f-bar-eq}) in the whole space.

 \subsection{Continuity of the Coefficients Across the Boundary} \label{Continuity-Coefficients}
 The $S^p$ estimate is based on a reformulation of the equation, which is of the form of a class of kinetic Fokker-Planck equations (also called hypoelliptic   or ultraparabolic   of Kolmogorov type) with rough coefficients:
 \begin{equation*}
  \partial_t f + v\cdot\nabla_{\!x} f = \nabla_v\cdot(\mathbf{A} \nabla_{\!v} f )+\mathbf{B}\cdot\nabla_{\!v}f + \mathbf{C}f  .
 \end{equation*}

 The properties of the coefficients used in \cite{Kim.Guo.Hwang2020} for the estimates to hold are as follows: if $\|g\|_{\infty}$ is sufficiently small, \\[3pt]
 $\mathbf{A}(t,x,v)\eqdef \sigma_{\!G}:\ 3\!\times\!3$ non-negative matrix, but \textit{not uniformly} elliptic, $0<(1\!+\!|v|)^{-3}I \lesssim \mathbf{A}(v) \lesssim (1\!+\!|v|)^{-1}I$\; (Lemma 2.4 in \cite{Kim.Guo.Hwang2020}); continuous.  \\[3pt]
 $\mathbf{B}(t,x,v)\eqdef a_g-\mathbf{E}_g$ : \;essentially bounded $3d$-vector, $\|\mathbf{B}[g]\|_{\infty}\lesssim \|g\|_{\infty}\ll 1$. \\[3pt]

 The ellipticity and boundedness of the new coefficients after extension are easy to check (look back the transformed equations in Section \ref{Transformed-Equation}).
Thus we are left with one main task -- checking the continuity of the second-order coefficient $\mathbb{A}$ across the boundary, which is necessary only for the $S^p$ estimate (see Theorem 7.2 and Lemma 7.5 in \cite{Kim.Guo.Hwang2020}).
 A direct computation on (\ref{coeff-A}) gives
 \begin{align*}
 \widetilde{\sigma}_{\!G} &= \left|\det\!\left(A^{-1}\right)\right|\cdot \widetilde{\psi} \ast \left(\tilde{\mu}\!+\!\tilde{\mu}^{1\!/2}\tilde{g}\right),  \\[2pt]
 \psi(v) &= |v|^{-1}\!\cdot\! I - |v|^{-3}\!\cdot\!\left(vv^T\right), \\[2pt]
 \widetilde{\psi}(y,w) &= (w^T\!A^{-T}\!A^{-1}w)^{-1\!/2}\!\cdot\! I - (w^T\!A^{-T}\!A^{-1}w)^{-3/2}\!\cdot\!\left[A^{-1}ww^TA^{-T}\right], \\
 &= (w^T\! Cw)^{-1\!/2}\!\cdot\! I - (w^T\! Cw)^{-3/2}\!\cdot\!\left[A^{-1}ww^TA^{-T}\right],\ \text{and}\  \\[2pt]
 \tilde{\mu}(y,w) &= e^{-w^T\! Cw},\quad \bar{\mu}(y,w) = e^{-w^T\!R \bar{C}Rw}.
 \end{align*}
Then we have (see also \cite{Dong.Guo.Yastrzhembskiy2021})
 \begin{align*}
 \widetilde{\mathbb{A}} &= A \widetilde{\sigma}_{\!G}A^T \\
 &= \left|\det\!\left(A^{-1}\right)\right|\!\cdot \Big\{(w^T\! Cw)^{-1\!/2}\!\cdot\! \left[AA^T\right] - (w^T\! Cw)^{-3/2}\!\cdot\!\left[ww^T\right]\!\Big\}
 \ast \left(\tilde{\mu}\!+\!\tilde{\mu}^{1\!/2}\tilde{g}\right), \\[3pt]
 \overline{\mathbb{A}} &= R\bar{A} \overline{\sigma}_{\!G}\bar{A}^T\!R \\
 &= \left|\det\!\left(\bar{A}^{-1}\right)\right|\!\cdot \Big\{(w^T\!R \bar{C}Rw)^{-1\!/2}\!\cdot\! \left[R\bar{A}\bar{A}^T\!R\right] - (w^T\!R \bar{C}Rw)^{-3/2}\!\cdot\!\left[w w^T\right]\!\Big\} \\
 &\qquad\qquad\qquad \ast \left(\bar{\mu}\!+\!\bar{\mu}^{1\!/2}\bar{g}\right).
 \end{align*}
Based on our definition of the extension, since we have continuity of $\bar A$ and $\bar C$, and decay of $G$ (i.e., vanishing at infinity), 
the continuity of $\overline{\mathbb{A}}$ in $(x,v)$  naturally follows.

 \subsection{Conclusion}
 By designing a suitable ``boundary-flattening'' transformation, we are able to extend our solutions to a neighborhood $\Omega$ for the specular reflection boundary condition case while preserving the form of transformed equation to the largest extent.



In order to utilize $S^p$ theory, we only need to make sure the transformed/extended equation of
\begin{align*}
\partial_t\bar f+w\cdot\nabla_{y}\bar f-\bar\sigma_G^{ij}\partial_{w_i'w_j'}\bar f=\bar S,
\end{align*}
where
\begin{align}
            \label{temp 3}
\bar S&=\partial_{w_i'}\bar\sigma_G^{ij}\partial_{w_j'}\bar f+\mathbb{B}\cdot\nabla_{w}\bar f+\mathbb{C}\bar f
\end{align}
for $\mathbb{B}$ and $\mathbb{C}$ defined in \eqref{temp 1} and \eqref{temp 2}, also satisfies
\begin{itemize}
\item
(H.1): Ellipticity (eigenvalue bounds) of $\mathbb{A} = \sigma_G$: remain equivalent.
\item
(H.2): Structure of transport operator: the structure of operator is invariant.
\item
(H.3): $\mathbb{A}=[\bar\sigma_G^{ij}]$ is continuous across the boundary.
\end{itemize}
This makes it available to implement $S^p$ techniques in this extended domain.
Since the electric field does not change the structure/property of the ultra-parabolic operator, and the construction of flattening/reflection-transformation only
depends on the geometry of domain $\Omega$, our extension trick should preserve these properties above.

\begin{remark}
For the $S^p$ estimates, we need to estimate the $L^p$ norm of \eqref{temp 3} term by term (in the transformed variable $(t,y,w)$). Compared to using the original variable $(t,x,v)$, for most terms there is no essential difference. The only exception is that we need to raise one more power of weight in estimating $\mathbb{B}\cdot\nabla_{w}\bar f$ due to the extra $w$ factor coming from the transformation.
\end{remark}


\begin{remark}
Denote $\tilde\Omega$ to be an extension of $\Omega$:
\begin{align*}
    \tilde\Omega=\left\{x\in\r^3: \text{dist}(x,\Omega)<\delta\right\},
\end{align*}
and denote $\widetilde{\p\Omega}$ to be an extension of $\p\Omega$:
\begin{align*}
    \widetilde{\p\Omega}=\left\{x\in\r^3: \text{dist}(x,\p\Omega)<\delta\right\},
\end{align*}
where $\delta>0$ is sufficiently small. Also, define the $\delta$-interior of $\Omega$:
\begin{align*}
    \hat\Omega=\left\{x\in\Omega: \text{dist}(x,\Omega)<\delta/2\right\}.
\end{align*}
Consider a partition of unity of $\tilde\Omega$: $\{\chi_k(x)\}_{k=0}^n$
\begin{align*}
    \sum_{k=0}^n\chi_k(x)=1,
\end{align*}
satisfying
\begin{align*}
    \supp{\chi_0}\subset\hat\Omega,\quad \bigcup_{k=1}^n\supp(\chi_k)\subset\widetilde{\p\Omega},
\end{align*}
where in each $\supp{\chi_k}$, we may locally flatten the boundary as described in this section. Denote a $C_c^{\infty}$ cutoff function
\begin{align*}
    \tilde\chi(x)=\left\{\begin{array}{ll}1&\ \ \text{if}\ \ \text{dist}(x,\Omega)<\delta/2,\\
    0&\ \ \text{if}\ \ \text{dist}(x,\Omega)>\delta.\end{array}\right.
\end{align*}
It is also natural to write $\chi_k$ and $\tilde\chi$ in the $(y,w)$ chart when we consider the near-boundary region. In order to study the extended $f$, it suffices to consider the following:
\begin{align}
    f_{I}(t,x,v)=&\chi_0(x)f(t,x,v),\label{tt 111}\\
    f_k(t,y,w)=&\tilde\chi(\vec{\psi}^{-1}(y))
    \chi_k(\vec{\psi}^{-1}(y))\bar f_k(t,y,w)\ \ \text{for}\ \ 1\leq k\leq n,\label{tt 222}
\end{align}
where $\bar f_k$ is the corresponding locally extended solution in $\supp{\chi_k}$. In particular, $f_I$ satisfies
\begin{equation}\label{tt 333}
\partial_t f_I + v\cdot\nabla_{\!x}f_I - \sigma_{\!G}^{ij}\partial_{v_i v_j} f_I \,=\,  S_0,
\end{equation}
where
\begin{equation*}
S_0=\partial_{v_i}\sigma_G^{ij}\chi_0\partial_{v_j} f
+\chi_0\mathbb{B}\cdot\nabla_{v} f+\chi_0\mathbb{C} f-v\cdot\nabla_x\chi_0 f,
\end{equation*}
and $f_k$ satisfies
\begin{equation}\label{tt 444}
\partial_t f_k+w\cdot\nabla_{y} f_k-\bar\sigma_G^{ij}\partial_{w_i'w_j'} f_k \,=\,  S_k,
\end{equation}
where
\begin{equation*}
 S_k=\partial_{w_i}\bar\sigma_G^{ij}\chi_k\partial_{w_j} f+\chi_k\mathbb{B}\cdot\nabla_{w} f+\chi_k\mathbb{C} f-w\cdot\nabla_y(\tilde\chi(\vec{\psi}^{-1}(y))
    \chi_k(\vec{\psi}^{-1}(y))) \bar f_k.
\end{equation*}
\end{remark}

\section{\texorpdfstring{$S^p$}{Sp} Estimates, H\"{o}lder Continuity, and weighted \texorpdfstring{$L^\infty$}{} estimates} \label{Sec:S^p-est}

\subsection{$S^p_{ }$ Bound} \label{SubSec:S^p-bound}

In this section we prove the $S^p_{ }$ bound in (\ref{Bootstrap-est:finteness-S^p}).

\begin{proposition} \label{Prop:S^p-bound}
With the notation and hypothesis in Proposition \ref{Prop:Bootstrap-est}, we have
\begin{equation*}
\|f\|_{S^{p}\left((0,T)\times\Omega\times\R^3\right)} \,\lesssim\, \varepsilon_1^{\,s}
\end{equation*}
for some $s>1$.
\end{proposition}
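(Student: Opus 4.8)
The plan is to establish the $S^p$ bound by reducing the problem on the bounded domain $\Omega$ to a collection of whole-space (or half-space) problems via the partition of unity from the previous section, and then invoking the general interior $S^p$ estimate for ultraparabolic operators of Kolmogorov type under hypotheses (H.1)--(H.3). Concretely, I would first decompose $f = f_I + \sum_{k=1}^n f_k$ as in \eqref{tt 111}--\eqref{tt 222}, so that $f_I$ solves the interior equation \eqref{tt 333} with source $S_0$ (no boundary contribution, since $\chi_0$ is supported in $\hat\Omega$), and each $f_k$ solves the flattened/mirror-extended equation \eqref{tt 444} with source $S_k$. Since each $S_k$ (and $S_0$) is compactly supported in the $(x,v)$ (resp. $(y,w)$) variables and the extended second-order coefficient $\mathbb{A} = \bar\sigma_G$ is elliptic on bounded velocity sets, continuous across the boundary (Section \ref{Continuity-Coefficients}), and satisfies the oscillation condition (H.3), the general $S^p$ theory (as in \cite{Kim.Guo.Hwang2020, Dong.Guo.Yastrzhembskiy2021}, adapted in Section \ref{Sec:Lemmas}) applies and yields, for each piece,
\[
\|f_k\|_{S^p} \,\lesssim\, \|S_k\|_{L^p} + \|f_k\|_{L^p},
\]
and similarly for $f_I$; summing over $k$ gives $\|f\|_{S^p((0,T)\times\Omega\times\R^3)} \lesssim \|\mathcal{S}\|_{L^p} + \|f\|_{L^p} + (\text{lower-order commutator terms from } v\cdot\nabla_x\chi_k)$.

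The next step is to bound the source term $\mathcal{S}$ (equivalently $\bar S$ in \eqref{temp 3}) in $L^p$ by the quantities controlled in the bootstrap hypothesis. I would estimate $\mathcal{S}$ term by term: the term $\partial_{v_i}\sigma_G^{ij}\partial_{v_j}f$ is controlled by $\|D_v f\|_{L^p}$ using the pointwise bound \eqref{sigma-est-2} on $D_v\sigma_G$; the drift term $\{a_g - \mathbf{E}_g\}\cdot\nabla_v f$ is controlled by $(\|a_g\|_\infty + \|\mathbf{E}_g\|_\infty)\|D_v f\|_{L^p} \lesssim \varepsilon_1\|D_v f\|_{L^p}$ via \eqref{sigma-est-3} and Lemma \ref{Lem:E_f-L^p}; the term $\bar K_g f$ is handled by Lemma \ref{Lem:K_g-L^p}, splitting velocity space into dyadic shells $|v|\sim n$ and summing $\sum_n n^{-\vartheta}(\|D_v f\|_{L^p} + \|\langle v\rangle^\vartheta f\|_{L^p(|v|\sim n)})$, which converges for $\vartheta$ large and is bounded by the weighted $L^2$/$L^\infty$ norms plus $\|D_v f\|_{L^p}$; and the terms $(v\cdot\mathbf{E}_g)f$ and $2\sqrt{\mu}\,v\cdot\mathbf{E}_f$ are controlled by $\|\mathbf{E}_g\|_\infty$ times a weighted norm of $f$ (the Gaussian $\sqrt{\mu}$ absorbs the polynomial weight $v$) — here I would interpolate between $\|f\|_{2,\vartheta_1}$ and $\|f\|_{\infty,\vartheta_2}$ to produce an $L^p$ bound with the right power of $\varepsilon_1$. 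Using $\|D_v f\|_{L^\infty} \lesssim \varepsilon_1$ from \eqref{Bootstrap-assp:finteness-Dv} and the weighted decay hypotheses \eqref{Bootstrap-assp:smallness-decay-L^2}--\eqref{Bootstrap-assp:smallness-decay-L^infty}, all pieces combine to give $\|\mathcal{S}\|_{L^p} \lesssim \varepsilon_1^{1+\delta}$ for some $\delta > 0$ (recalling $\varepsilon_0 = \varepsilon_1^q$ with $q>1$, and that the genuinely nonlinear/field contributions carry an extra factor of $\varepsilon_1$ or $\varepsilon_0$).

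Then I would feed this back into the $S^p$ estimate. The subtle point is that $\|D_v f\|_{L^p}$ appears on the right-hand side of the source estimate, so one must close the loop: use the interpolation inequality of Lemma \ref{Lem:interpolation} to write $\|D_v f\|_{L^p} \leq \varepsilon\|D^2_{vv}f\|_{L^p} + C_\varepsilon\|f\|_{L^p}$, absorb the $\varepsilon\|D^2_{vv}f\|_{L^p}$ term (a component of $\|f\|_{S^p}$) into the left-hand side after multiplying the drift/collision contributions by the small constant $\varepsilon_1$, and bound $\|f\|_{L^p}$ itself by interpolation between the weighted $L^2$ and $L^\infty$ bounds. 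Finally, handle the $\partial_t f$ counterpart identically — the equation for $\partial_t f$ has the same ultraparabolic structure with source $\partial_t \mathcal{S}$, whose terms are bounded using the $\partial_t$-versions of the bootstrap hypotheses (including $\|D_v\partial_t f\|_{L^\infty}\lesssim\varepsilon_1$) — so that $\|\partial_t f\|_{S^p} \lesssim \varepsilon_1^{1+\delta}$ as well, and setting $s = \min\{1+\delta, \ldots\} > 1$ completes the proof.

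The main obstacle I anticipate is twofold: first, verifying that hypothesis (H.3), the quantitative oscillation/continuity condition \eqref{eq3.1.0} with its $n$-dependent threshold $\gamma_\star = 2^{-3n\kappa(p)}\widetilde\gamma_\star(p)$, genuinely holds for the extended coefficient $\mathbb{A} = \bar\sigma_G$ after the boundary-flattening and mirror reflection — one must track how the smooth, bounded-derivative change of variables and the $|v|$-decay of $\sigma_G$ interact to give the required oscillation decay on small parabolic cylinders at large velocities, uniformly in the partition; and second, organizing the dyadic-in-$v$ summation for the $\bar K_g f$ and field terms so that the weight loss ($n^{-\vartheta}$ per shell) is compensated by enough regularity/decay, which forces the choice of $p > 14$ and sufficiently large $\vartheta_1, \vartheta_2$ in the hypothesis. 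The bookkeeping to ensure every nonlinear or field-dependent term carries a strictly positive surplus power of $\varepsilon_1$ (so that $s>1$) is delicate but routine once the structural estimates above are in place.
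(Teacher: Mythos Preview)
Your approach is essentially the same as the paper's --- localize via the partition of unity and mirror extension, apply the $S^p$ estimate, bound the source termwise, interpolate $D_v f$ to absorb $D^2_{vv}f$, and close with $L^2$--$L^\infty$ interpolation in time. Two technical points, however, are stated incorrectly and would cause the argument to fail as written.

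First, the $S^p$ estimate you invoke, $\|f_k\|_{S^p}\lesssim\|S_k\|_{L^p}+\|f_k\|_{L^p}$, cannot hold uniformly in $v$: the ellipticity of $\sigma_G$ degenerates like $\langle v\rangle^{-3}$, so the local $S^p$ constant in a cylinder at height $|v|\sim 2^n$ grows like $2^{3n\kappa(p)}$ (this is exactly the $n$-dependence in \eqref{eq3.1.0}). The paper resolves this by citing the \emph{weighted} estimate from \cite{Dong.Guo.Yastrzhembskiy2021},
\[
\|f_k\|_{S^p}^p\;\lesssim\;\|\langle v\rangle^{\vartheta}S_k\|_{L^p}^p+\|\langle v\rangle^{\vartheta}f_k\|_{L^p}^p+\|f_0\|_O^p,
\]
with $\vartheta$ chosen large depending on $\kappa(p)$, so that the shell-dependent constants are absorbed into the weight. (The initial-data term $\|f_0\|_O$, controlled by \eqref{smallness-assumption}, is also missing from your outline.) Your dyadic-in-$v$ idea belongs here, not only in the treatment of $\bar K_g f$.

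Second, the claim $\|\mathcal{S}\|_{L^p}\lesssim\varepsilon_1^{1+\delta}$ is false: the term $\partial_{v_i}\sigma_G^{ij}\,\partial_{v_j}f$ carries no small prefactor (Corollary~\ref{Cor:sigma_G-a_g} gives $|\partial_v\sigma_G|\lesssim 1$, not $\lesssim\varepsilon_1$), so the source is only $\lesssim\|\langle v\rangle^\vartheta D_v f\|_{L^p}$, which a priori is of size $\varepsilon_1$. The surplus power therefore does \emph{not} come from nonlinear smallness in $\mathcal{S}$. Rather --- and your third paragraph gets this right --- after interpolating $D_v f$ and absorbing $\varepsilon\|D^2_{vv}f\|$ into the left, what survives is $\varepsilon^{-1}\|\langle v\rangle^\vartheta f\|_{L^p}^p$, and the gain comes entirely from bounding this via
\[
\int_0^T\|f(t)\|_{p,\vartheta}^p\,\dd t\;\le\;\int_0^T\|f(t)\|_{2,\vartheta}^2\,\|f(t)\|_{\infty,\vartheta}^{p-2}\,\dd t\;\lesssim\;\varepsilon_0^{\,2}\,\varepsilon_1^{\,p-2},
\]
using the already-improved $L^2$ decay of Theorem~\ref{Thm:L^2-decay} (which gives $\varepsilon_0=\varepsilon_1^{\,q}$) together with the $L^\infty$ bootstrap hypothesis. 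Balancing against the $\varepsilon^{-1}$ yields $s=1+\tfrac{q-1}{p}>1$.
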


\begin{proof}
Denote $\Pi=(0,T)\times\R^3\times\R^3$. Based on the discussion in Section\;\ref{Sec:Boundary-Extension},
(after extension) $\mathsf{A}(\mathbf{z}) = \sigma_{\!G}(t,x,v)$ or $\bar\sigma_G(t,y,w)$ satisfies the
conditions (H.1)--(H.3).

We recall the
localization \eqref{tt 111}, \eqref{tt 222}, \eqref{tt 333}, \eqref{tt 444}, and apply the $S^p$ estimate for each $f_k$.
We may apply the similar argument as in the proof of \cite[Proposition 5.9, Theorem 1.11]{Dong.Guo.Yastrzhembskiy2021} to obtain
\begin{align}
\big\|f_k\big\|_{S^{p}(\Pi)}^p
&\,\ls\,  \|\br{v}^{\vartheta}S_k\|_{L^{p}(\Pi)}^p + \|\br{v}^{\vartheta}f_k\|_{L^p(\Pi)}^p+\nm{f_0}_{O}^p \label{Yf-L^p}
\end{align}
and a similar estimate for $f_I$,
where
\begin{align*}
    \nm{f_0}_{O}&:=\nm{\br{v}^{\vartheta}v\cdot\nabla_xf_0}_{L^p(\Omega\times\r^3)}
    +\nm{\br{v}^{\vartheta}D_{vv}f_0}_{L^p(\Omega\times\r^3)}+\nm{\br{v}^{\vartheta}\gamma f_0}_{L^p_{\gamma_-}}\\
    &\quad +\|f_0\|_{L^p(\Omega\times\r^3)}\ls\varepsilon_0.
\end{align*}
Here in view of \eqref{eq3.1.0}, by taking $\varepsilon_1$ in \eqref{Bootstrap-assp:finteness-S^p} sufficiently small and the $S^p$ embedding into H\"older spaces, we may choose $R_0$ and the implicit constant in \eqref{Yf-L^p} to be uniform.
Also $\vartheta(<\vartheta_1)$ in Proposition \ref{Prop:Bootstrap-est} is a sufficiently large constant depending on the constant $\kappa(p)$ in \eqref{eq3.1.0}.

For $1\leq k\leq n$, the above estimate is  written in $(y,w)$ chart, so we further need to pull back to $(x,v)$ chart. This will not change the form of estimates based on the extension in previous section.

For notational simplicity, from now on, we denote $f$ for $f_I$ and each $f_k$,  and only use $(x,v)$ to represent the variables. The final estimate of $f$ will rely on a summation over $k$.
The similar convention also applies to other quantities.

In this fashion, we have the bound
\begin{align}
\big\|f\big\|_{S^{p}(\Pi)}^p
&\,\ls\,  \|\br{v}^{\vartheta}\mathcal{S}\|_{L^{p}(\Pi)}^p + \|\br{v}^{\vartheta}f\|_{L^p(\Pi)}^p+\nm{f_0}_{O}^p  , \label{Yf-L^ppp}
\end{align}
where
\begin{equation*}
\mathcal{S} :=\, \partial_{v_i}\sigma_{\!G}^{ij}\partial_{v_j} f
+ \big\{a_g - \mathbf{E}_g\big\}\cdot\nabla_{\!v}f
+ \Big\{ \bar{K}_{\!g} f + \big(v\cdot\mathbf{E}_g\big)f + 2\sqrt{\mu}\,v\cdot\mathbf{E}_f \Big\} .
\end{equation*}
Therefore, by (\ref{Yf-L^ppp})
\begin{equation} \label{S^p-local}
\begin{split}
\|f\|_{S^{p}_{ }(\Pi)}^p
&\,\ls\,  \varepsilon_0^p+\|\br{v}^{\vartheta}\mathcal{S}\|_{L^{p}(\Pi)}^p + \|\br{v}^{\vartheta}f\|_{L^p(\Pi)}^p  \\[3pt]
&\,\ls\,
\varepsilon_0^p+\big\|\br{v}^{\vartheta}\partial_{v_i}\sigma_{\!G}^{ij}\partial_{v_j} f \big\|_{L^{p}(\Pi)}^p
+ \big\|\br{v}^{\vartheta}\big(a_g - \mathbf{E}_g\big)\cdot\nabla_{\!v}f \big\|_{L^{p}(\Pi)}^p \\
&\qquad + \big\|\br{v}^{\vartheta}\bar{K}_{\!g} f \big\|_{L^{p}(\Pi)}^p
+ \big\|\br{v}^{\vartheta}\big(v\cdot\mathbf{E}_g\big)f \big\|_{L^{p}(\Pi)}^p\\
&\qquad+ \big\|\br{v}^{\vartheta}2\sqrt{\mu}\,v\cdot\mathbf{E}_f \big\|_{L^{p}(\Pi)}^p
+ \|\br{v}^{\vartheta}f\|_{L^p(\Pi)}^p  .
\end{split}
\end{equation}

Since $\big\|\partial_{v_i}\sigma_{\!G}^{ij}\big\|_{\infty} \lesssim 1$, $\|a_g\|_{\infty} \lesssim \|g\|_{\infty}$ (Corollary\;\ref{Cor:sigma_G-a_g}), and $\|\mathbf{E}_g\|_{\infty} \lesssim \|g\|_{\infty}$ (Lemma\;\ref{Lem:E_f-L^p}), we have
\begin{equation} \label{h-L^p-2}
\begin{split}
&
\big\|\br{v}^{\vartheta}\partial_{v_i}\sigma_{\!G}^{ij}\partial_{v_j} f \big\|_{L^{p}(\Pi)}^p
+ \big\|\br{v}^{\vartheta}\big(a_g - \mathbf{E}_g\big)\cdot\nabla_{\!v}f \big\|_{L^{p}(\Pi)}^p  \\
\,\lesssim\;&  \big\|\br{v}^{\vartheta}D_vf\big\|_{L^{p}(\Pi)}^p \\
\,\lesssim\;& \varepsilon\, \big\|\br{v}^{\vartheta}D^2_{vv}f\big\|_{L^{p}(\Pi)}^p
+ \,\varepsilon^{-1}\, \|\br{v}^{\vartheta}f\|_{L^{p}(\Pi)}^p .
\end{split}
\end{equation}

Similarly, by Lemma\;\ref{Lem:K_g-L^p}\, as well as Lemma \ref{Lem:interpolation},
\begin{equation} \label{h-L^p-3}
\begin{split}
&  \big\|\br{v}^{\vartheta}\bar{K}_{\!g} f \big\|_{L^{p}(\Pi)}^p \\
\,\lesssim\;& \|\br{v}^{\vartheta}f\|_{L^{p}\left(\Pi\right)}^p + \big\|\br{v}^{\vartheta}D_vf\big\|_{L^{p}\left(\Pi\right)}^p  \\
\,\lesssim\;&\, \varepsilon \big\|\br{v}^{\vartheta}D^2_{vv}f\big\|_{L^{p}\left(\Pi\right)}^p
\,+\, \varepsilon^{-1} \|\br{v}^{\vartheta}f\|_{L^{p}\left(\Pi\right)}^p.
\end{split}
\end{equation}
Using $\|\mathbf{E}_g\|_{\infty} \lesssim \|g\|_{\infty}$ and $\|\mathbf{E}_f\|_{L^p} \lesssim \|f\|_{L^p}$ (see Lemma\;\ref{Lem:E_f-L^p}), we also have
\begin{equation} \label{h-L^p-4}
\begin{split}
  \big\|\br{v}^{\vartheta}\big(v\cdot\mathbf{E}_g\big)f \big\|_{L^{p}(\Pi)}^p
&\,\lesssim\;
\|g\|_{\infty}^p \big\|\langle v\rangle^{\vartheta+1}f\big\|_{L^p(\Pi)}^p \\
&\,\lesssim\; \varepsilon_1^{\,p}\, \big\|\langle v\rangle^{\vartheta+1}f\big\|_{L^{p}\left(\Pi\right)}^p,
\end{split}
\end{equation}
and
\begin{equation} \label{h-L^p-5}
\begin{split}
 \big\|\br{v}^{\vartheta}2\sqrt{\mu}\,v\cdot\mathbf{E}_f \big\|_{L^{p}(\Pi)}^p
&\,\lesssim\; \big\|\mathbf{E}_f\big\|_{L^{p}\left((0,T)\times\R^3\right)}^p \cdot
\big|\,2\langle v\rangle^{\vartheta+1}\! \sqrt{\mu}\,\big|_{L^p(\R^3)}^p \\
&\,\lesssim\; \|f\|_{L^{p}\left(\Pi\right)}^p .
\end{split}
\end{equation}

Combining (\ref{S^p-local})\,--\,(\ref{h-L^p-5}), we obtain
\begin{equation} \label{S^p-global}
\begin{split}
\|f\|_{S^{p}_{ }\left(\Pi\right)}^p
&\,\lesssim\; \varepsilon^{-1} \big\|\langle v\rangle^{\vartheta}\!f\big\|_{L^{p}\left(\Pi\right)}^p
+\, o\big(\varepsilon_1^{\,p}\big) .
\end{split}
\end{equation}

Finally, it remains to bound the weighted $L^p$ norm: using the standard interpolation (for $2<p<\infty$, by H\"{o}lder inequality), followed by the strong (almost-exponential) $L^2$ decay (Theorem\;\ref{Thm:L^2-decay}, which allows us to utilize the ``stronger'' initial condition (\ref{smallness-assumption}) with sufficiently large weight, as well as the bootstrap assumption (\ref{Bootstrap-assp:smallness-decay-L^infty})\,),
we see that
\begin{equation} \label{weighted-L^p-bound}
\begin{split}
\big\|\langle v\rangle^{\vartheta}\!f\big\|_{L^{p}\left((0,T)\times\Omega\times\R^3\right)}^p
\,=\, \int_0^T \!\big\|f(t)\big\|_{p,\vartheta}^p \,\dd t
&\,\leq\; \int_0^T \!\big\|f(t)\big\|_{2,\vartheta}^2 \big\|f(t)\big\|_{\infty,\vartheta}^{p-2} \,\dd t \\
&\,\lesssim\; \varepsilon_1^{\,p-2} \int_0^T \langle t\rangle^{-2k} \|f_0\|_{2,\vartheta\!+k}^2 \,\dd t \\
&\,\lesssim\; \varepsilon_1^{\,p-2} \varepsilon_0^{\,2} \int_0^T \langle t\rangle^{-2k} \,\dd t \\
&\,\lesssim\; \varepsilon_1^{\,p-2+2q} \,=\, \varepsilon_1^{\,p\big(1+\frac{2q-2}{p}\big)} .
\end{split}
\end{equation}
Note that for any $k>\frac{1}{2}$, the time integral above is uniformly bounded with respect to $T$.
Hence, upon letting $s' = 1+\frac{2q-2}{p} >1$ and $\varepsilon' = \varepsilon_1^{\,p\big(\frac{s'-1}{2}\big)} = \varepsilon_1^{\,q-1}$, we deduce from (\ref{S^p-global}) and (\ref{weighted-L^p-bound}) that
$\|f\|_{S^{p}_{ }\left((0,T)\times\Omega\times\R^3\right)}^p
\,\lesssim\, \varepsilon_1^{\,sp}$
with $s=\frac{s'+1}{2} = 1+\frac{q-1}{p} >1$,
which completes the proof.
\end{proof}

\subsection{H\"{o}lder Continuity and $L^\infty$ bound}

As a direct corollary of the $S^p$ bound (Proportion\;\ref{Prop:S^p-bound}) and the embedding theorem (see also \cite[Theorem 1.11, Proposition 5.9]{Dong.Guo.Yastrzhembskiy2021}), we deduce the following uniform H\"{o}lder continuity results, which imply \eqref{eq12.33}, as well as the bound for $\|\nabla_{\!v} f\|_{L^\infty}$ in (\ref{Bootstrap-est:finteness-Dv}), a crucial element to ensure the uniqueness (see Section\;\ref{Sec:Main-Thm-Pf}\, for the proof).

\begin{corollary} \label{Cor:Holder-continuity}
With the notation and hypothesis in Proposition \ref{Prop:Bootstrap-est}, we have that
\begin{enumerate}
  \item[1)] if $p>7$, then with $\alpha_1 = \min\left\{1,\, 2-\frac{14}{p} \right\} \in (0,1]$,
  \begin{equation} \label{C^alpha-bound-1}
    \|f(t)\|_{C^{\,0,\alpha_1}_{ }\left(\Omega\times\R^3\right)} \,\lesssim\, \varepsilon_1^{\,s} \;;
  \end{equation}
  \item[2)] if $p>14$, then with $\alpha_2 = 1-\frac{14}{p} \,\in (0,1)$,
  \begin{equation} \label{C^alpha-bound-2}
    \|D_v f(t)\|_{C^{\,0,\alpha_2}_{ }\left(\Omega\times\R^3\right)} \,\lesssim\, \varepsilon_1^{\,s} \;,
  \end{equation}
\end{enumerate}
where $s>1$ can be chosen the same as in Proportion\;\ref{Prop:S^p-bound}. In particular, we have
\begin{equation*}
\|D_v f\|_{L^\infty\left((0,T)\times\Omega\times\R^3\right)} \,\lesssim\, \varepsilon_1^{\,s}.
\end{equation*}
\end{corollary}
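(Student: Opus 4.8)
The plan is to read off the H\"older bounds directly from the $S^p$ estimate of Proposition~\ref{Prop:S^p-bound} via the Sobolev--Morrey embedding for the kinetic $S^p$ spaces, and then upgrade H\"older continuity of $D_vf$ to an $L^\infty$ bound using its $L^p$ integrability. Set $\Pi := (0,T)\times\R^3\times\R^3$. By Proposition~\ref{Prop:S^p-bound} (applied after the flattening--reflection extension of Section~\ref{Sec:Boundary-Extension}) one has $\|f\|_{S^p(\Pi)}\lesssim\varepsilon_1^{\,s}$, and by the same argument applied to $\partial_t f$ also $\|\partial_t f\|_{S^p(\Pi)}\lesssim\varepsilon_1^{\,s}$. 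The homogeneous dimension attached to the quasi-distance $\mathsf d$ is $Q=14$ (the $t$-variable scaling like $r^2$, each of the three $x$-components like $r^3$, and each of the three $v$-components like $r$, for a total $2+9+3$). Since a function in $S^p$ carries two parabolic-order derivatives ($D^2_{vv}f$ together with $\mathcal Yf$) while $D_vf$ carries only one, the Morrey embedding yields $S^p(\Pi)\hookrightarrow C^{0,\alpha_1}$ with $\alpha_1=\min\{1,\,2-14/p\}$ when $p>Q/2=7$, and $D_v\colon S^p(\Pi)\to C^{0,\alpha_2}$ with $\alpha_2=1-14/p$ when $p>Q=14$.

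Concretely, I would invoke the embedding theorem in the form of \cite[Theorem~1.11, Proposition~5.9]{Dong.Guo.Yastrzhembskiy2021} to obtain, with a constant independent of $\varepsilon_1$ and of $T$,
\begin{equation*}
\|f\|_{C^{0,\alpha_1}(\Pi)}\,+\,\|D_vf\|_{C^{0,\alpha_2}(\Pi)}\,\lesssim\,\|f\|_{S^p(\Pi)}\,\lesssim\,\varepsilon_1^{\,s},
\end{equation*}
where the H\"older norms are with respect to the restricted quasi-distance $\hat{\mathsf d}$. Since at a fixed time $t=\tau$ the quasi-distance $\mathsf d$ collapses to $\hat{\mathsf d}$ on the $(x,v)$-slice, restricting to $\{t\}\times\Omega\times\R^3$ gives $\|f(t)\|_{C^{0,\alpha_1}(\Omega\times\R^3)}+\|D_vf(t)\|_{C^{0,\alpha_2}(\Omega\times\R^3)}\lesssim\varepsilon_1^{\,s}$, which is \eqref{C^alpha-bound-1}--\eqref{C^alpha-bound-2}; when $p\geq14$ the exponent $2-14/p$ exceeds $1$, giving the Lipschitz bound $\|f(t)\|_{C^{0,1}(\Omega\times\R^3)}\lesssim\varepsilon_1^{\,s}$ and hence \eqref{eq12.33} (the $D_vf$ part of \eqref{eq12.33} being \eqref{C^alpha-bound-2}). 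The $L^\infty$ bound $\|D_vf\|_{L^\infty(\Pi)}\lesssim\varepsilon_1^{\,s}$ then follows from $\|D_vf\|_{L^p(\Pi)}\leq\|f\|_{S^p(\Pi)}\lesssim\varepsilon_1^{\,s}$ together with the uniform H\"older seminorm: an interpolation $\|u\|_{L^\infty}\lesssim\|u\|_{L^p}^{\theta}[u]_{C^{0,\alpha_2}}^{1-\theta}$ (for a suitable $\theta\in(0,1)$) on the quasi-metric space converts $L^p$ integrability plus H\"older continuity into a pointwise bound --- if $|D_vf|\simeq M$ at a point, it stays $\gtrsim M$ on a quasi-ball of radius $\sim(M/\varepsilon_1^{\,s})^{1/\alpha_2}$, and the $L^p$ bound forces $M$ to be bounded.

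The one delicate point is the uniformity of the embedding constant, since $\sigma_{\!G}$ fails to be uniformly elliptic as $|v|\to\infty$. The $S^p$ machinery (and hence the embedding) is applied on the dyadic slabs $|v|\sim 2^n$, on which the operator is uniformly elliptic with ellipticity constants degenerating from $\langle v\rangle^{-1}$ down to $\langle v\rangle^{-3}$; the $n$-dependent growth of the corresponding local embedding constant is absorbed by the large velocity weight $\langle v\rangle^{\vartheta}$ carried through the $S^p$ bound, which is precisely what the threshold $\gamma_{\star}=2^{-3n\kappa(p)}\widetilde\gamma_{\star}(p)$ in \textbf{(H.3)} and the choice of $\vartheta$ in the proof of Proposition~\ref{Prop:S^p-bound} are designed for, while the smallness of $\varepsilon_1$ in \eqref{Bootstrap-assp:finteness-S^p} lets $R_0$ be chosen uniformly. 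The pull-back from the $(y,w)$-chart to $(x,v)$ near $\partial\Omega$ and the finite summation over the partition of unity $\{\chi_k\}$ preserve all the exponents. I expect this dyadic-in-$|v|$ bookkeeping to be the only real obstacle; the remainder is a verbatim application of the cited $S^p$ embedding.
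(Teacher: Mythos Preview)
Your plan for parts 1) and 2) is essentially the paper's: invoke the $S^p$ embedding of \cite[Theorem~1.11, Proposition~5.9]{Dong.Guo.Yastrzhembskiy2021} on top of the $S^p$ bound from Proposition~\ref{Prop:S^p-bound}, with the homogeneous dimension $Q=14$ giving the stated exponents. Your remarks on the dyadic-in-$|v|$ uniformity and on restricting to a fixed-$t$ slice are fine.

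The only place you diverge is the final $L^\infty$ bound for $D_vf$. You argue by interpolating $\|D_vf\|_{L^p}$ against $[D_vf]_{C^{0,\alpha_2}}$, which works but is more than necessary. The paper instead observes that for $p>14$ part 1) already gives $f\in C^{0,1}$ with respect to $\hat{\mathsf d}$, and if one freezes $t,x$ and all $v_j$ with $j\neq i$ then $\hat{\mathsf d}(\hat{\mathbf z},\hat{\mathbf w})=|v_i-\nu_i|$, so the difference quotient is bounded by the Lipschitz seminorm and $|\partial_{v_i}f|\le |f|_{C^{0,1}}\lesssim\varepsilon_1^{\,s}$ directly. Your interpolation route is a valid alternative and has the mild advantage of not requiring $\alpha_1=1$ (it would still work with only $D_vf\in L^p\cap C^{0,\alpha_2}$), but here the Lipschitz shortcut is available and cleaner.
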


We only note that the last assertion follows because if we take $t=\tau,\, x=\xi,\, v_j = \nu_j \;(j\neq i)$, then $\hat{\mathsf{d}}(\hat{\mathbf{z}},\hat{\mathbf{w}}) = |v_i - \nu_i|$, and so
\begin{equation*}
\big|\partial_{v_i} f\big| \,=\, \left|\lim_{\nu_i\rightarrow v_i} \frac{f(t,x,v) - f(\cdots, \nu_i, \cdots)}{v_i - \nu_i}\right|
\,\leq\, \sup_{\mathbf{z}\neq\mathbf{w}} \frac{|f(t,\hat{\mathbf{z}})-f(t,\hat{\mathbf{w}})|}{\big|\hat{\mathsf{d}}(\hat{\mathbf{z}},\hat{\mathbf{w}})\big|} \,=\, |f|_{C^{\,0,1}_{ }}.
\end{equation*}

%
\label{Sec:Pointwise-est-decay}

As a byproduct, we have the following result.

\begin{corollary}
With the notation and hypothesis in Proposition \ref{Prop:Bootstrap-est}, we have
\begin{equation*}
\|\mathbf{E}_f(t)\|_{C^{1,\alpha}\left(\Omega\right)}
\;\lesssim\; \varepsilon_1^{\,s}
\end{equation*}
for some $s>1$, and where $\alpha$ takes the same values as $\alpha_1$ in \eqref{C^alpha-bound-1}.
\end{corollary}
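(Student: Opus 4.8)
The plan is to reduce the statement to classical Schauder theory for the Poisson equation plus the uniform anisotropic H\"older bound on $f$ already obtained in Corollary~\ref{Cor:Holder-continuity}. Recall that $\mathbf{E}_f(t) = -\nabla_{\!x}\phi_f(t)$, where $-\Delta_x\phi_f(t) = \rho[f](t) := \int_{\R^3}\sqrt{\mu}\,f(t,\cdot,v)\,\dd v$ with zero-Dirichlet data, or zero-Neumann data together with $\int_\Omega\phi_f(t)\,\dd x = 0$, and $\partial\Omega$ is smooth. Hence it suffices to show that, for each fixed $t\in(0,T)$, $\rho[f](t)\in C^{0,\beta}(\bar\Omega)$ with $\|\rho[f](t)\|_{C^{0,\beta}(\bar\Omega)}\lesssim\varepsilon_1^{\,s}$ for an appropriate exponent $\beta$: indeed, the interior-and-boundary Schauder estimate (valid in both the Dirichlet and the normalized Neumann case, using the neutral condition \eqref{mass-conservation_f} so that $\int_\Omega\rho[f]\,\dd x=0$) then yields $\|\phi_f(t)\|_{C^{2,\beta}(\bar\Omega)}\lesssim\|\rho[f](t)\|_{C^{0,\beta}(\bar\Omega)}$, and therefore $\|\mathbf{E}_f(t)\|_{C^{1,\beta}(\Omega)}\le\|\phi_f(t)\|_{C^{2,\beta}(\bar\Omega)}\lesssim\varepsilon_1^{\,s}$. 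This is exactly the H\"older analogue of the way Lemma~\ref{Lem:E_f-L^p} passes from $f$ to $\mathbf{E}_f$.

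To produce the H\"older bound on $\rho[f]$, I would argue as follows. First, $\|\rho[f](t)\|_{L^\infty_x}\le\|f(t)\|_{\infty}\,|\sqrt{\mu}\,|_{L^1_v}\lesssim\varepsilon_1^{\,s}$ by the improved bootstrap bound \eqref{Bootstrap-est:smallness-decay-L^infty}. For the seminorm, one uses that the space $C^{0,\alpha_1}$ in \eqref{C^alpha-bound-1} is measured with respect to the quasi-distance $\hat{\mathsf{d}}$: freezing the velocity, $\hat{\mathsf{d}}\big((x,v),(\xi,v)\big)=|x-\xi|^{1/3}$, so that $|f(t,x,v)-f(t,\xi,v)|\le\|f(t)\|_{C^{0,\alpha_1}(\Omega\times\R^3)}\,|x-\xi|^{\alpha_1/3}$ uniformly in $v$. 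Integrating against the Gaussian weight $\sqrt{\mu(v)}$ and using Minkowski's integral inequality,
\begin{equation*}
\frac{\big|\rho[f](t,x)-\rho[f](t,\xi)\big|}{|x-\xi|^{\alpha_1/3}}
\,\le\, \int_{\R^3}\! \sqrt{\mu(v)}\,\frac{\big|f(t,x,v)-f(t,\xi,v)\big|}{|x-\xi|^{\alpha_1/3}}\,\dd v
\,\lesssim\, \|f(t)\|_{C^{0,\alpha_1}(\Omega\times\R^3)}
\,\lesssim\, \varepsilon_1^{\,s}
\end{equation*}
by Corollary~\ref{Cor:Holder-continuity}. Thus $\rho[f](t)\in C^{0,\beta}(\bar\Omega)$ with $\beta=\alpha_1/3$ and the asserted size. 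Since the precise numerical value of the H\"older exponent is immaterial (it is in any case fixed only through the value of $p$ in Proposition~\ref{Prop:Bootstrap-est}), one may simply relabel this $\beta$ as $\alpha$, which is the sense in which $\alpha$ ``takes the same values as $\alpha_1$''.

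The remaining points are routine: the H\"older bound for $f$, stated on the open set $\Omega\times\R^3$, extends to $\bar\Omega\times\R^3$ by uniform continuity so that the Schauder estimate may be applied up to $\partial\Omega$; and, if one wishes, the identical argument applied to $\partial_t f$ (which satisfies the same bounds) gives the companion estimate for $\partial_t\mathbf{E}_f=\mathbf{E}_{\partial_t f}$. I expect the only genuinely delicate step to be the passage from the anisotropic H\"older regularity of $f$ (in the quasi-metric $\hat{\mathsf{d}}$) to isotropic H\"older regularity of $\rho[f]$ in the Euclidean $x$-variable, together with the fact that the H\"older constant of $f$ is uniform in $v$ so that the $\dd v$-integral converges; both are supplied precisely by the weight $\sqrt{\mu}$ and by Corollary~\ref{Cor:Holder-continuity}, and everything else is classical elliptic theory.
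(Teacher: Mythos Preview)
Your proof is correct and follows essentially the same approach as the paper: apply Schauder estimates for the Poisson equation to reduce to a H\"older bound on $\rho[f]$, then control $\|\rho[f](t)\|_{C^{0,\beta}_x}$ by integrating the H\"older bound on $f$ from Corollary~\ref{Cor:Holder-continuity} against $\sqrt{\mu}$. You are in fact more careful than the paper on one point: the paper writes $\|f(t,\cdot,v)\|_{C^{0,\alpha}_x}\le\|f\|_{C^{0,\alpha}}$ with the same exponent on both sides, whereas you correctly note that $\hat{\mathsf d}\big((x,v),(\xi,v)\big)=|x-\xi|^{1/3}$ forces the Euclidean $x$-exponent to be $\alpha_1/3$; this does not affect the qualitative conclusion.
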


\begin{remark}
This regularity result actually includes the bound of $\|\mathbf{E}_f\|_{W^{1,\infty}_{t,x}}$ for $f$ being a solution to the coupled system, which is an improvement compared to the $L^\infty$\;estimate for the Poisson equation in Lemma\;\ref{Lem:E_f-L^p}.
\end{remark}

\begin{proof}

Since
$$
\mathbf{E}_f := -\nabla_{\!x}\phi_f \,=\, \nabla_{\!x}\, \Delta_{x}^{-1}\! \int_{\R^3}\!\sqrt{\mu}\,f\,\dd v =: \nabla_{\!x}\, \Delta_{x}^{-1} \rho[f],
$$
we apply the standard Schauder estimates for elliptic equations (cf. \cite[Section\;4.1]{Krylov1996}) to get
\begin{equation*}
\begin{split}
\big\|\mathbf{E}_f(t)\big\|_{C^{1,\alpha}_{x}} \,\lesssim\, \big\|\phi_f(t)\big\|_{C^{2,\alpha}_{x}} &\,\lesssim\, \big\|\rho[f](t)\big\|_{C^{0,\alpha}_{x}} \,=\, \left\|\int_{\R^3}\! \sqrt{\mu(v)}\,f(t,\cdot,v)\,\dd v \right\|_{C^{0,\alpha}_{x}} \\
&\,\leq\, \int_{\R^3}\! \sqrt{\mu(v)}\, \big\|f(t,\cdot,v)\big\|_{C^{0,\alpha}_{x}}\, \dd v \\
&\,\leq\, \|f\|_{C^{\,0,\alpha}_{ }} \cdot\! \int_{\R^3}\!\! \sqrt{\mu(v)}\,\dd v
\,\lesssim\, \|f\|_{C^{\,0,\alpha}_{ }} 
\end{split}
\end{equation*}
for any $t\geq 0$.
\end{proof}

\subsection{Weighted $L^\infty$ Decay (completion of the proof of Proposition \ref{Prop:Bootstrap-est})}

So far we have gained control of the energy ($L^2$\;norm) of our solutions (Theorem\;\ref{Thm:L^2-decay}) and have known that they are H\"{o}lder continuous (Corollary\;\ref{Cor:Holder-continuity}).


We further prove the weighted $L^\infty$\;decay bound in (\ref{Bootstrap-est:smallness-decay-L^infty}) based on our known weighted $L^2$\;decay and H\"{o}lder continuity results, and thus complete the proof of Proposition \ref{Prop:Bootstrap-est}.
The heuristics is that a continuous function with small $L^2$\;norm should have magnitude of comparable size.
In fact, it suffices to show the following.

\begin{proposition}
For any $\delta>0$, there exists $\varepsilon=\varepsilon(\delta)>0$ such that if $\langle t\rangle^{k_1}\|f(t)\|_{2,\vartheta_1} < \varepsilon$ for some $k_1, \vartheta_1 \geq 0$, and $|f(t)|_{C^{0,\alpha}_{ }} \leq M$ with some $\alpha\in (0,1)$ and $M>0$,
then $\langle t\rangle^{k_2}\|f(t)\|_{\infty,\vartheta_2} < \delta$, where $\left(1+\frac{6}{\alpha}\right)k_2 \leq k_1$ and $\left(1+\frac{6}{\alpha}\right)\vartheta_2 \leq \vartheta_1$.
\end{proposition}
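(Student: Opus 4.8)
The statement is a quantitative "continuity + small $L^2$ $\Rightarrow$ small $L^\infty$" interpolation, and the natural approach is a pointwise comparison of $f$ with its $L^2$ mass over a small ball in the $\hat{\mathsf d}$-metric, balanced against the Hölder oscillation. The plan is to fix $(t,x_0,v_0)$ and, for a radius $r>0$ to be optimized, estimate
$$
|f(t,x_0,v_0)| \;\le\; \fint_{B_r} |f(t,x,v)| \,dx\,dv \;+\; \sup_{B_r}\big|f(t,x_0,v_0)-f(t,x,v)\big|,
$$
where $B_r$ is the $\hat{\mathsf d}$-ball of radius $r$ centered at $\hat{\mathbf z}_0=(x_0,v_0)$ (a box of $x$-sidelength $r^3$ and $v$-sidelength $r$, so $|B_r|\simeq r^{12}$). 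The second term is $\le M r^\alpha$ by the Hölder hypothesis. For the first term, Cauchy–Schwarz gives $\fint_{B_r}|f| \le |B_r|^{-1/2}\|f\|_{L^2(B_r)} \lesssim r^{-6}\|f(t)\|_{2}$, and since the ball has $|v|\lesssim \langle v_0\rangle$ (for $r\le 1$) the weighted version reads $\langle v_0\rangle^{\vartheta_2}\fint_{B_r}|f|\lesssim r^{-6}\|f(t)\|_{2,\vartheta_2}$.

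Thus, absorbing the velocity weight on the Hölder term at the modest cost of lowering $\vartheta$ (this is where the condition $(1+6/\alpha)\vartheta_2\le\vartheta_1$ enters, together with the weighted-space embedding lemma so that $M$ may be taken with the $\vartheta_1$-weight), we obtain
$$
\langle v_0\rangle^{\vartheta_2}|f(t,x_0,v_0)| \;\lesssim\; r^{-6}\,\|f(t)\|_{2,\vartheta_1} \;+\; M\, r^{\alpha}.
$$
Now choose $r$ to equalize the two terms, $r \simeq \big(\|f(t)\|_{2,\vartheta_1}/M\big)^{1/(6+\alpha)}$ (legitimate since $\|f(t)\|_{2,\vartheta_1}<\varepsilon$ is small and $M$ is fixed, so $r<1$), yielding
$$
\|f(t)\|_{\infty,\vartheta_2} \;\lesssim\; M^{\,6/(6+\alpha)}\,\|f(t)\|_{2,\vartheta_1}^{\,\alpha/(6+\alpha)}.
$$
Given $\delta>0$, this is $<\delta$ once $\varepsilon=\varepsilon(\delta,M,\alpha)$ is chosen small enough.

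**Time-decay bookkeeping.** For the decay exponents, one simply inserts the hypothesis $\|f(t)\|_{2,\vartheta_1}\le \varepsilon\langle t\rangle^{-k_1}$ into the interpolation bound, obtaining $\|f(t)\|_{\infty,\vartheta_2}\lesssim M^{6/(6+\alpha)}\varepsilon^{\alpha/(6+\alpha)}\langle t\rangle^{-k_1\alpha/(6+\alpha)}$; since $k_1\alpha/(6+\alpha)=k_1/(1+6/\alpha)\ge k_2$, the claimed rate $\langle t\rangle^{-k_2}$ follows, and smallness of $\varepsilon$ again makes the prefactor $<\delta$.

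**Main obstacle.** The genuine geometric point — and the only step requiring care — is that the balls of the quasi-metric $\hat{\mathsf d}$ are anisotropic parallelepipeds (scaling $r$ in $v$ but $r^3$ in $x$), so one must (i) confirm $|B_r(\hat{\mathbf z}_0)\cap(\Omega\times\R^3)| \simeq r^{12}$ uniformly — straightforward in the interior, and near $\partial\Omega$ handled via the flattening chart of Section~\ref{Sec:Boundary-Extension}, which is bi-Lipschitz and preserves the metric structure up to constants — and (ii) be careful that the mean-value/Cauchy–Schwarz step uses the \emph{restriction} of $\hat{\mathsf d}$ to $\Omega\times\R^3$ so that the Hölder seminorm $|f(t)|_{C^{0,\alpha}}$ from Corollary~\ref{Cor:Holder-continuity} applies directly. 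Once the measure of the anisotropic ball is pinned down, the rest is the elementary optimization above; no regularity of the equation beyond the already-established Hölder bound is needed.
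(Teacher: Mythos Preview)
Your approach is correct and is essentially the constructive version of the paper's argument: the paper argues by contradiction (if $|f|$ is large at a point, H\"older continuity makes it large on a $\hat{\mathsf d}$-ball of radius $\sim(\delta_0/M)^{1/\alpha}$, forcing the weighted $L^2$ norm to be large), whereas you run the same ball-plus-H\"older estimate forward as an interpolation inequality and optimize over $r$. The underlying computation---$|B_r|\simeq r^{12}$, Cauchy--Schwarz on the ball, and the exponent matching that produces $(1+6/\alpha)$---is identical.

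One imprecision to fix: in your displayed pre-optimization bound the H\"older term should retain the factor $\langle v_0\rangle^{\vartheta_2}$, i.e.\ the correct inequality is
\[
\langle v_0\rangle^{\vartheta_2}|f(t,x_0,v_0)|\;\lesssim\;\langle v_0\rangle^{\vartheta_2-\vartheta_1}\,r^{-6}\,\|f(t)\|_{2,\vartheta_1}\;+\;\langle v_0\rangle^{\vartheta_2}\,M\,r^{\alpha},
\]
and the condition $(1+6/\alpha)\vartheta_2\le\vartheta_1$ then appears \emph{after} you optimize with $r=r(v_0)$ (not, as your parenthetical suggests, by some separate absorption step or weighted-embedding lemma). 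With that correction your argument goes through verbatim.
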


\begin{proof}
We use the method of contradiction.
Assuming the contrary, there exists $\delta_0>0$ such that no matter how small $\varepsilon>0$ is, we always have $\langle t\rangle^{k_2}\|f(t)\|_{\infty,\vartheta_2} \,\geq\, \delta_0$. In other words, without loss of generality, there is a point $\mathbf{z}=(t,x,v)$ where
$\langle t\rangle^{k_2} \langle v\rangle^{\vartheta_2} f(t,x,v) \,\geq\, \delta_0 $, or, equivalently,
\begin{equation} \label{pointwise-est-1}
f(t,x,v) \,\geq\, \frac{\delta_0}{\langle t\rangle^{k_2} \langle v\rangle^{\vartheta_2}} \;.
\end{equation}

From $|f(t)|_{C^{0,\alpha}_{ }} \leq M$ we have for any point $\mathbf{w}=(t,\xi,\nu)=(t,\hat{\mathbf{w}})$,
\begin{equation*}
\big|f(t,\hat{\mathbf{z}})-f(t,\hat{\mathbf{w}})\big| \,\leq\, M \big|\hat{\mathsf{d}}(\hat{\mathbf{z}},\hat{\mathbf{w}})\big|^{\alpha} .
\end{equation*}
For fixed $t$, consider a $\hat{\mathsf{d}}$-disk
$$\mathfrak{D}^{t}_r(\hat{\mathbf{z}}) := \left\{\hat{\mathbf{w}}\in\R_x^3\times\R_v^3:\, \hat{\mathsf{d}}\,(\hat{\mathbf{z}},\hat{\mathbf{w}}) <r \right\} $$
in the phase-space centered at $\hat{\mathbf{z}}=(x,v)$ with radius $r=\left(\frac{\delta_0}{2M\langle t\rangle^{k_2} \langle v\rangle^{\vartheta_2}} \right)^{\frac{1}{\alpha}}$.
Then for every point $(t,\xi,\nu)$ with $\hat{\mathbf{w}}=(\xi,\nu) \in \mathfrak{D}^{t}_r(\hat{\mathbf{z}})$, we have
\begin{equation*}
\big|f(t,x,v)-f(t,\xi,\nu)\big| \,\leq\, \frac{\delta_0}{2\,\langle t\rangle^{k_2} \langle v\rangle^{\vartheta_2}} \;,
\end{equation*}
which, together with (\ref{pointwise-est-1}), implies
\begin{equation} \label{pointwise-est-2}
f(t,\xi,\nu) \,\geq\, \frac{\delta_0}{2\,\langle t\rangle^{k_2} \langle v\rangle^{\vartheta_2}} \;.
\end{equation}
Also, it is easy to check that $\langle v\rangle \sim \langle \nu\rangle$ in $\mathfrak{D}^{t}_r(\hat{\mathbf{z}})$ up to an error of $O(\delta_0)$.

Now we estimate the weighted $L^2$\;norm with (given) time-growth. By using \eqref{pointwise-est-2},
\begin{equation*}
\begin{split}
\langle t\rangle^{k_1}\|f(t)\|_{2,\vartheta_1}
&\,\geq\, \langle t\rangle^{k_1}\! \left(\iint_{\mathfrak{D}^{t}_r(\hat{\mathbf{z}})}\langle\nu\rangle^{2\vartheta_1} \big|f(t,\xi,\nu)\big|^2 \dd\nu\dd\xi \right)^{\frac{1}{2}} \\
&\,\gtrsim\, \langle t\rangle^{k_1} \langle v\rangle^{\vartheta_1} \frac{\delta_0}{\langle t\rangle^{k_2}\langle v\rangle^{\vartheta_2}}\cdot \big|\mathfrak{D}^{t}_r(\hat{\mathbf{z}})\big|^{\frac{1}{2}}
\\
&\,\simeq\, \langle t\rangle^{k_1} \langle v\rangle^{\vartheta_1} \frac{\delta_0}{\langle t\rangle^{k_2}\langle v\rangle^{\vartheta_2}} \left(\frac{\delta_0}{\langle t\rangle^{k_2} \langle v\rangle^{\vartheta_2}} \right)^{\frac{6}{\alpha}} \\
&\,=\, \delta_0^{\,1+\frac{6}{\alpha}} \,\langle t\rangle^{k_1-\left(1+\frac{6}{\alpha}\right)k_2}
\,\langle v\rangle^{\vartheta_1-\left(1+\frac{6}{\alpha}\right)\vartheta_2} \;.
\end{split}
\end{equation*}
Letting $k_1 - \left(1+\frac{6}{\alpha}\right)k_2 \,\geq\, 0$ and $\vartheta_1 - \left(1+\frac{6}{\alpha}\right)\vartheta_2 \,\geq\, 0$, we obtain
\begin{equation*}
\langle t\rangle^{k_1}\|f(t)\|_{2,\vartheta_1} \,\gtrsim\, \delta_0^{\,1+\frac{6}{\alpha}} .
\end{equation*}
Note that this lower bound is a fixed positive number.
However, on the other hand, our assumption is that $\langle t\rangle^{k_1}\|f(t)\|_{2,\vartheta_1} < \varepsilon$ for arbitrarily small $\varepsilon>0$.
We thus get a contradiction.
\end{proof}



In a similar manner as Step\;5 in the proof of Theorem\;\ref{Thm:L^2-decay} and using the same notation,
we take $\partial_t$ derivative of the equation \eqref{Eq:ultraparabolic-VL_f} (treated as nonlinear equation), and arrange the resulting equation in the desired ultraparabolic form for $\dot{f}$.
Then we may redo everything for $\dot{f}$ with apparent modifications.
Eventually, we arrive at the bounds (\ref{Bootstrap-est:smallness-decay-L^infty}), (\ref{Bootstrap-est:finteness-S^p}), and (\ref{Bootstrap-est:finteness-Dv}) for $\dot{f}$.

\section{Proof of the Main Theorem: Global Well-posedness} \label{Sec:Main-Thm-Pf}

In this final section, we will prove the global well-posedness of solutions stated in Theorem\;\ref{Thm:Main-thm}, including global existence, uniqueness, and the bounds (\ref{energy-bound})\,--\,(\ref{Dv-L^infty}) global in time.
Additionally, by a similar argument as \cite{Kim.Guo.Hwang2020} and \cite{Guo.Hwang.Jang.Ouyang2020}, we can show the non-negativity of the density-distribution function $F$.

\subsection{Global Regularity}
We start with the construction of global solutions to the Vlasov-Poisson-Landau system \eqref{Eq:Vlasov-Landau_f}-\eqref{Eq:Poisson_f}.

\smallskip
{\it \underline{Step\;1}. Local Existence}.
The proof of this result is an obvious modification of the argument in \cite[Sections\;2\,--\,5]{Guo.Hwang.Jang.Ouyang2020} and \cite{Dong.Guo.Yastrzhembskiy2021} combined with the standard approximation technique. Since this is not the major emphasis of this paper, we will simply record the result.

\begin{theorem}[Local well-posedness]\label{local-wellposedness}
There exists a sufficiently small constant $\varepsilon_0\ll 1$ such that
for some large velocity-weight exponent $\vartheta_0\in\R$, the initial data $f_0(x,v):\Omega\times\R^3 \rightarrow\R$ satisfies the smallness assumption
\begin{equation} \label{smallness-assumption-local}
\|f_0\|_{\infty,\vartheta_0}
\,+\, \|\nabla_{\!x} f_0\|_{\infty,\vartheta_0} \,+\,\|\nabla_{\!v}f_0\|_{\infty,\vartheta_0}
\,+\,\|\nabla_{\!v}^2f_0\|_{\infty,\vartheta_0}
\,\leq\, \varepsilon_0 .
\end{equation}
Let $F_0(x,v) = \mu + \sqrt{\mu}\,f_0(x,v) \geq 0$ and has the same mass as the Maxwellian $\mu$.
Then we have the following conclusions:
\begin{itemize}
  \item {\rm (Existence \& Uniqueness).}
  There exists a unique solution $f(t,x,v)$ on $[0,1]\times\Omega\times\r^3$ to the Vlasov-Poisson-Landau system \eqref{Eq:Vlasov-Landau_f}-\eqref{Eq:Poisson_f} for perturbation with the specular-reflection boundary condition \eqref{Specular-BC_f} and the conservation laws \eqref{mass-conservation_f}-\eqref{energy-conservation_f}.
  Also, $F(t,x,v) = \mu + \sqrt{\mu}\,f(t,x,v) \geq 0$ satisfies the system \eqref{Eq:Vlasov-Landau-model}-\eqref{Eq:Poisson-model} with \eqref{Specular-BC_F}.
  \vspace{2pt}
  \item {\rm (Energy estimates).} Moreover, the solution $f(t,x,v)$ satisfies the uniform weighted energy bounds, for $\vartheta < \vartheta_0 - \frac{3}{2}$,
  \begin{equation*} 
  \sup_{t\in [0,1]} \mathcal{E}_{\vartheta}[f(t)] \,\leq\, C_{\vartheta}\, \mathcal{E}_{\vartheta}[f(0)] \,\lesssim\, \varepsilon_0^{\,2} .
  \end{equation*}
  \vspace{2pt}
  \item {\rm ($L^\infty$ bounds).} There is $\vartheta'>0$ such that when $\vartheta  \leq \vartheta_0-\vartheta'$, the weighted pointwise bounds hold:
  \begin{equation*} 
  \|f(t)\|_{\infty,\vartheta} + \|\mathbf{E}_f(t)\|_{\infty} \,\lesssim\, \varepsilon_0 
  \end{equation*}
  for all $t\in[0,1]$.
  \vspace{2pt}
  \item {\rm ($S^{p}_{ }$ bounds).}
  For the $S^{p}_{ }$ norm defined in \eqref{S^p-norm}, we have
  \begin{equation*}
  \|f\|_{S^{p}_{ }\left((0,1)\times\Omega\times\R^3\right)}
  \,\lesssim\, \varepsilon_0 .
  \end{equation*}
  \item {\rm (Regularity results).} In addition, it holds that for any $t\in(0,1)$
  \begin{equation*} 
  \|f(t)\|_{C^{0,\alpha}\left(\Omega\times\R^3\right)} + \|\mathbf{E}_f(t)\|_{C^{1,\alpha}\left(\Omega\right)}
  \,\lesssim\, \varepsilon_0 
  \end{equation*}
  for some $\alpha\in (0,1]$, and
  \begin{equation*} 
  \|\nabla_{\!v} f\|_{L^\infty\left((0,1)\times\Omega\times\R^3\right)} \,\lesssim\, \varepsilon_0 . 
  \end{equation*}
  \item
  $\dt f$ also satisfies all the estimates above.
\end{itemize}
\end{theorem}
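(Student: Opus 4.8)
The plan is to construct the solution on $[0,1]$ by a Picard iteration built on the linearized problem \eqref{Eq:linearized-VL_f}--\eqref{Eq:Poisson_g}, whose short-time well-posedness and full set of linear a priori estimates are established in \cite{Guo.Hwang.Jang.Ouyang2020} and \cite{Dong.Guo.Yastrzhembskiy2021}. Starting from $f^{(0)}\equiv 0$, I would define $f^{(n+1)}$ to be the unique solution, carrying the specular boundary condition \eqref{Specular-BC_f} and the prescribed conservation laws, of the linear ultraparabolic equation obtained by freezing the nonlinear dependence at $g=f^{(n)}$; the self-consistent term $2\{\mathbf{E}_{f^{(n+1)}}\!\cdot v\}\sqrt\mu$ is kept as a \emph{linear}, nonlocal operator of $f^{(n+1)}$ through the Poisson coupling, which is admissible since it is bounded by Lemma \ref{Lem:E_f-L^p}. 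After the boundary-flattening and mirror-extension construction of Section \ref{Sec:Boundary-Extension}, each iterate satisfies on $(0,1)$ the linear versions of all the claimed bounds---weighted $L^2$ energy, weighted $L^\infty$, $S^p$ with $p>14$, and hence $\nabla_v f^{(n+1)}\in L^\infty$ and $C^{0,\alpha}$ regularity via the $S^p$--Morrey embedding---with constants governed by $\varepsilon_0$, provided $\|f^{(n)}\|_\infty$ is small enough that the oscillation condition (H.3) on the second-order coefficient $\sigma_{\!G^{(n)}}$ and the smallness of the drift $a_{f^{(n)}}-\mathbf{E}_{f^{(n)}}$ hold uniformly in $n$.

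First I would close the uniform bound by induction: if $f^{(n)}$ obeys these bounds with size $\lesssim\varepsilon_0$, the quadratic structure of $\Gamma$ (Lemma \ref{Lem:Gamma-est}), the electric-field estimates (Lemma \ref{Lem:E_f-L^p}) and the $L^p$ bound for $\bar K_{\!g}$ (Lemma \ref{Lem:K_g-L^p}) make the source of the linear equation for $f^{(n+1)}$ of size $O(\varepsilon_0^2)$, while the initial data \eqref{smallness-assumption-local} contributes $O(\varepsilon_0)$; since the Gr\"onwall factor over the unit interval is $O(1)$, this gives $\|f^{(n+1)}\|\lesssim\varepsilon_0+C\varepsilon_0^2\lesssim\varepsilon_0$. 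Next I would show the iteration is Cauchy in a low-regularity norm. The difference $\delta f^{(n+1)}=f^{(n+1)}-f^{(n)}$ solves a linear equation whose forcing is linear in $\delta f^{(n)}$; the only genuinely delicate contribution is the diffusion term, where after writing it in divergence form the difference produces $\bigl(\sigma_{\!G^{(n)}}-\sigma_{\!G^{(n-1)}}\bigr)^{ij}\partial_{v_j}f^{(n)}$ paired against $\partial_{v_i}\delta f^{(n+1)}$. Here $\sigma_{\!G^{(n)}}-\sigma_{\!G^{(n-1)}}=\Phi\ast(\mu^{1/2}\delta f^{(n)})$ is controlled by $\|\delta f^{(n)}\|_{2}$, the factor $\partial_{v_j}f^{(n)}$ by the uniform $D_v L^\infty$ bound, and $\partial_{v_i}\delta f^{(n+1)}$ is absorbed by the dissipation norm---this is precisely where the $\nabla_v$ control is indispensable. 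Together with $\|f^{(n)}\|,\|f^{(n-1)}\|\lesssim\varepsilon_0$ this yields a contraction factor $C\varepsilon_0<1$. Weak-$*$ compactness together with the compactness furnished by the $S^p$ embedding into H\"older spaces then transfer the uniform $L^2$, $L^\infty$, $S^p$, $C^{0,\alpha}$ and $\nabla_v f\in L^\infty$ bounds to the limit $f$, which is thereby a strong solution; $\|\mathbf{E}_f\|_{C^{1,\alpha}}$ follows from Schauder estimates for the Poisson equation as in Corollary \ref{Cor:Holder-continuity}. Uniqueness in this class is the same difference estimate applied to two solutions.

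For the $\partial_t f$ statement, I would differentiate \eqref{Eq:ultraparabolic-VL_f} (equivalently \eqref{Eq:linearized-VL_f} with $g=f$) in time: $\partial_t f$ solves an ultraparabolic equation of the same type, with the same second-order coefficient (so (H.1)--(H.3) are unchanged) and a source linear in $\partial_t f$ plus $f$-terms already under control. The initial value is read off from the equation at $t=0$, $\partial_t f_0=-v\cdot\nabla_x f_0-\mathbf{E}_{f_0}\!\cdot\nabla_v f_0-Lf_0+\Gamma[f_0,f_0]+\text{(lower order)}$, whose weighted $L^\infty$ norm is $\lesssim \|f_0\|_{\infty,\vartheta_0}+\|\nabla_x f_0\|_{\infty,\vartheta_0}+\|\nabla_v f_0\|_{\infty,\vartheta_0}+\|\nabla_v^2 f_0\|_{\infty,\vartheta_0}\lesssim\varepsilon_0$ by \eqref{smallness-assumption-local}; rerunning the iteration gives the same bounds for $\partial_t f$. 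Finally, the non-negativity $F=\mu+\sqrt\mu f\ge 0$ is obtained along an approximation scheme (truncated Landau kernel, regularized non-degenerate diffusion) in which the approximate densities solve a genuinely parabolic problem preserving positivity by a maximum principle adapted to the specular boundary, passing to the limit with the uniform bounds, exactly as in \cite{Kim.Guo.Hwang2020} and \cite{Guo.Hwang.Jang.Ouyang2020}.

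The hard part will be keeping the hypotheses (H.1)--(H.3), and in particular the oscillation bound (H.3) on the extended second-order coefficient $\sigma_{\!G}$, valid \emph{uniformly} along the iteration, so that the implicit constants in the linear $S^p$ estimate do not degenerate; this is what couples the linear $S^p$ theory of \cite{Dong.Guo.Yastrzhembskiy2021}, the flattening--reflection extension of Section \ref{Sec:Boundary-Extension}, and the smallness propagated for $\|f^{(n)}\|_\infty$. A secondary difficulty is the bookkeeping of the self-consistent field: the term $2\{\mathbf{E}_f\!\cdot v\}\sqrt\mu$ is a linear but nonlocal operator of the unknown, and $\partial_t\phi_f\sim\partial_t f$ must be tracked throughout; both are handled by the elliptic bounds of Lemma \ref{Lem:E_f-L^p}.
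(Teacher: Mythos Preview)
Your proposal is correct and follows the same approach the paper indicates: the paper does not actually give a proof of this theorem but simply states that it ``is an obvious modification of the argument in \cite[Sections 2--5]{Guo.Hwang.Jang.Ouyang2020} and \cite{Dong.Guo.Yastrzhembskiy2021} combined with the standard approximation technique,'' i.e., exactly the Picard iteration on the linearized problem that you outline. Your sketch is in fact more detailed than what the paper provides, and the ingredients you identify (uniform propagation of (H.1)--(H.3) along the iterates, contraction via the $D_v$-bound, $S^p$ embedding for regularity, differentiation in $t$, positivity via approximation) are precisely those implicit in the cited references.
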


\smallskip
{\it \underline{Step\;2}. Bootstrapping}. 
The global regularity part of the theorem is a consequence of Proposition\;\ref{Prop:Bootstrap-est} and the local existence result, via a standard bootstrap-continuity argument.

Define
\begin{align*}
T_{\ast}(\varepsilon)&=\sup\bigg\{T>0:\ \text{for every choice of initial data satisfying $\mathcal{E}_{\vartheta}[f(0)]<\varepsilon$, there exists} \\
&\text{a solution on $[0,T]$ achieving the initial data and satisfying $\mathcal{E}_{\vartheta}[f(t)]\leq\e_1$ for $0<t\leq T$}\bigg\}.\no
\end{align*}
Local well-posedness theorem justifies that such $T_{\ast}>0$ is well-defined. Also, $T_{\ast}$ is non-increasing with respect to $\e$. Now we claim that there exists $0<\e_0<\e$ such that $T_{\ast}(\e_0)=\infty$.

If this is not true, then $0<T_{\ast}(\e_0)<\infty$. We will show the contradiction that actually the solution can be extended pass $T_{\ast}(\e_0)$.
For every $T_1\in(0,T_{\ast})$, we know there exists a solution achieving the initial data $\mathcal{E}_{\vartheta}[f(0]<\e_0<\e$ and satisfying $\mathcal{E}_{\vartheta}[f(t)]<\e_1$ for any $0<t\leq T_1$.
The  bootstrapping theorem justifies that now we actually have the improved estimate $\mathcal{E}_{\vartheta}[f(T_1)]<\e_1^s$.
Let $T_1$ be the new initial time. The local well-posedness theorem implies that now we have extended solution to $t\in[T_1,T_1+\tilde T]$. Since we can always take $T_1$ sufficiently close to $T_{\ast}$, such that $T_1+\tilde T>T_{\ast}$.
Then it remains the verify that for $t\in[T_1,T_1+\tilde T]$, we have $\mathcal{E}_{\vartheta}[f(t)]<\e_1$. Local well-posedness theorem tells us now in the extended interval, we have $\mathcal{E}_{\vartheta}[f(t)]\leq C_{\vartheta}\mathcal{E}_{\vartheta}[f(T_1)]\leq C_{\vartheta}\e_1^s$. Since $s>1$, for sufficiently small $\e_1$, we always have $C_{\vartheta}\e_1^s< \e_1$. Hence, the estimate also holds.

All in all, we extend the solution beyond $T_{\ast}$ which achieves the initial data and satisfies the estimate $\mathcal{E}_{\vartheta}[f(t)]<\e_1$. Then this is clearly a contradiction to the definition of $T_{\ast}$.
Therefore, the claim is proved and the solution exists globally.

\subsection{Uniqueness}
We now prove that the solution is unique.

\begin{proof}
Suppose that $(f_1,\mathbf{E}_1)$ and $(f_2,\mathbf{E}_2)$ are two global solutions to the VPL\,-\,specular problem \eqref{Eq:Vlasov-Landau_f}, \eqref{Eq:Poisson_f}, and \eqref{Specular-BC_f} with bounds (\ref{energy-bound})\,--\,(\ref{Dv-L^infty}) holding true globally.
Then for $i=1,2$, $(f_i,\mathbf{E}_i)$ satisfies
\begin{equation} \label{Eq:linearized-VL_fi}
\partial_t f_i + v\cdot\nabla_{\!x}f_i + Lf_i - 2\sqrt{\mu}\,v\cdot \mathbf{E}_i
\,=\, \Gamma[f_i,f_i] - \mathbf{E}_i\cdot\nabla_{\!v}f_i + \big(v\cdot \mathbf{E}_i \big)f_i ,
\end{equation}
where $\mathbf{E}_i := \mathbf{E}_{f_i} = -\nabla_{\!x}\phi_{f_i}$
with $\phi_i := \phi_{f_i}$ solved from the Poisson equation 
\begin{equation*}
-\Delta_x\phi_i = \int_{\R^3}\! \sqrt{\mu}\,f_i\,\dd v \,=: \rho[f_i] \;.
\end{equation*}
Note that \eqref{Eq:linearized-VL_fi} has all the linear terms on its LHS, while terms on RHS are the nonlinearities.
Let $\tilde{f} := f_1-f_2$ be the difference of two solutions and so $\widetilde{\mathbf{E}} := \mathbf{E}_1 - \mathbf{E}_2 = \mathbf{E}_{\tilde{f}}$.
Then from \eqref{Eq:linearized-VL_fi} we get the equation
\begin{equation*} 
\begin{split}
&\partial_t \tilde{f} + v\cdot\nabla_{\!x}\tilde{f} + L\tilde{f} - 2\sqrt{\mu}\,v\cdot \widetilde{\mathbf{E}} \\
\,=\;& \Big\{ \Gamma[\tilde{f},f_1] + \Gamma[f_2,\tilde{f}\,] \Big\}
- \Big\{ \widetilde{\mathbf{E}}\cdot\nabla_{\!v}f_1 + \mathbf{E}_2\cdot\nabla_{\!v}\tilde{f} \,\Big\}
+ \Big\{ \big(v\cdot \widetilde{\mathbf{E}} \big)f_1 + \big(v\cdot \mathbf{E}_2 \big)\tilde{f} \,\Big\} \;.
\end{split}
\end{equation*}

For a similar reason as in the proof of Theorem\;\ref{Thm:L^2-decay} (in the energy estimate), we multiply $e^{\phi_2}$ on both sides of the equation above, so that the last term on RHS can be merged into the second term on LHS.
This way the equation becomes
\begin{equation} \label{Eq:linearized-VL-difference-e^phi}
\begin{split}
&\partial_t \big(e^{\phi_2} \tilde{f}\,\big) + v\cdot\nabla_{\!x}\big(e^{\phi_2} \tilde{f}\,\big) + L\big(e^{\phi_2} \tilde{f}\,\big) - 2e^{\phi_2}\!\sqrt{\mu}\,v\cdot \widetilde{\mathbf{E}} \\
\,=\;& J_1 + J_2 + J_3 + J_4 \;,
\end{split}
\end{equation}
where
\begin{align*}
J_1 &\,:=\, e^{\phi_2}\, \Gamma[\tilde{f},f_1] \,+\, e^{\phi_2}\, \Gamma[f_2,\tilde{f}\,] \;, \\
J_2 &\,:=\, -\, \Big\{ e^{\phi_2}\, \widetilde{\mathbf{E}}\cdot\nabla_{\!v}f_1 \,+\, e^{\phi_2}\, \mathbf{E}_2\cdot\nabla_{\!v}\tilde{f} \,\Big\} \;, \\
J_3 &\,:=\, e^{\phi_2} \big(v\cdot \widetilde{\mathbf{E}} \big)f_1 \;, \\
J_4 &\,:=\, e^{\phi_2} \tilde{f}\, \partial_t\phi_2 \;.
\end{align*}
Multiplying both sides of \,\eqref{Eq:linearized-VL-difference-e^phi}\, by $\langle v\rangle^{2\vartheta} e^{\phi_2} \tilde{f}$\; for $\vartheta \leq -2$ (the same weight power as in Lemma\;\ref{Lem:Gamma-est}, in order to apply the estimate (\ref{Gamma-est-2}) for $\Gamma[\,\cdot,\,\cdot\,]$ term) and integrating over $(x,v)\in\Omega\times\R^3$,
we get
\begin{equation} \label{Eq:linearized-VL-difference-e^phi-int}
\begin{split}
&\frac{1}{2}\,\frac{\dd}{\dd t}\, \big\|e^{\phi_2} \tilde{f}\,\big\|_{2,\vartheta}^2
\,+\, \frac{1}{2}\iint_{\Omega\times\R^3}\! \langle v\rangle^{2\vartheta}\nabla_{\!x}\cdot \left\{v \big(e^{\phi_2} \tilde{f}\,\big)^2\right\} \\
&\,+\, \iint_{\Omega\times\R^3} \langle v\rangle^{2\vartheta} \big(e^{\phi_2} \tilde{f}\,\big)\, L\big(e^{\phi_2} \tilde{f}\,\big)
\,-\, \iint_{\Omega\times\R^3} 2\,\langle v\rangle^{2\vartheta} \big(e^{2\phi_2} \tilde{f}\,\big) \sqrt{\mu}\,v\cdot \widetilde{\mathbf{E}} \\
\,=\;& \iint_{\Omega\times\R^3} \langle v\rangle^{2\vartheta} \big(e^{\phi_2} \tilde{f}\,\big)\, J_1
\,+\, \iint_{\Omega\times\R^3} \langle v\rangle^{2\vartheta} \big(e^{\phi_2} \tilde{f}\,\big)\, J_2 \\
&\,+\, \iint_{\Omega\times\R^3} \langle v\rangle^{2\vartheta} \big(e^{\phi_2} \tilde{f}\,\big)\, J_3
\,+\, \iint_{\Omega\times\R^3} \langle v\rangle^{2\vartheta} \big(e^{\phi_2} \tilde{f}\,\big)\, J_4 \;.
\end{split}
\end{equation}
Now we estimate term by term in (\ref{Eq:linearized-VL-difference-e^phi-int}).
We first observe that $e^{\phi_2} \sim 1$ since $\|\phi_2\|_{\infty} \lesssim \|f_2\|_{\infty} \ll 1$ (see the proof of Lemma\;\ref{Lem:E_f-L^p}). 

On the LHS,
the second term vanishes since
\begin{equation} \label{uniqueness-LHS-2}
\frac{1}{2}\iint_{\Omega\times\R^3}\! \langle v\rangle^{2\vartheta}\nabla_{\!x}\cdot \left\{v \big(e^{\phi_2} \tilde{f}\,\big)^2\right\}
\,=\, \frac{1}{2} \bigg(\iint_{\gamma_+} \!- \iint_{\gamma_-}\bigg)
\langle v\rangle^{2\vartheta}\big(e^{\phi_2} \tilde{f}\,\big)^2 |v\cdot n_x| \,\dd v\dd S_x
\,=\, 0
\end{equation}
using the divergence theorem along with the specular boundary condition.
By (\ref{L-est-2}) in Lemma\;\ref{Lem:L-est}, the third term can be bounded below as
\begin{equation} \label{uniqueness-LHS-3}
\iint_{\Omega\times\R^3} \langle v\rangle^{2\vartheta} \big(e^{\phi_2} \tilde{f}\,\big)\, L\big(e^{\phi_2} \tilde{f}\,\big)
\,=\, \left(\langle v\rangle^{2\vartheta} L\big(e^{\phi_2} \tilde{f}\,\big) ,\, e^{\phi_2} \tilde{f}\, \right)
\,\gtrsim\, \big\|e^{\phi_2} \tilde{f}\,\big\|_{\sigma,\vartheta}^2 - C_\vartheta \big\|e^{\phi_2} \tilde{f}\,\big\|_{2,\vartheta}^2 \;.
\end{equation}
Also, we move the fourth term to the other side and bound it above using H\"{o}lder's inequality:
\begin{equation} \label{uniqueness-LHS-4}
\iint_{\Omega\times\R^3} 2\,\langle v\rangle^{2\vartheta} \big(e^{2\phi_2} \tilde{f}\,\big) \sqrt{\mu}\,v\cdot \widetilde{\mathbf{E}}
\;\lesssim\; \big|\langle v\rangle^{\vartheta+1}\! \sqrt{\mu}\,\big|_{\infty} \big\|e^{\phi_2}\tilde{f}\big\|_{2,\vartheta} \big\|e^{\phi_2}\widetilde{\mathbf{E}}\big\|_{2}
\;\lesssim\; \big\|e^{\phi_2}\tilde{f}\big\|_{2,\vartheta}^2 \;.
\end{equation}
The last inequality is valid because
\begin{equation*}
\begin{split}
\big\|e^{\phi_2}\widetilde{\mathbf{E}}\big\|_{2}
\,\lesssim\, \|\widetilde{\mathbf{E}}\|_{2}
&\,\lesssim\, \big\|\rho[\tilde{f}\,]\big\|_{2}
\,=\, \left\|\int_{\R^3}\! \sqrt{\mu}\,\tilde{f}\,\dd v \right\|_{L^2_{x}} \\
&\,\leq\, \int_{\R^3}\! \big\|\sqrt{\mu}\,\tilde{f}\big\|_{L^2_{x}} \dd v
\,= \int_{\R^3}\! \langle v\rangle^{-\vartheta}\! \sqrt{\mu}\, \big\|\langle v\rangle^{\vartheta} \tilde{f}\big\|_{L^2_{x}} \,\dd v \\[3pt]
&\,\leq\, \big\|\langle v\rangle^{\vartheta} \tilde{f}\big\|_{L^2_{x,v}} \!\cdot
\big|\langle v\rangle^{-\vartheta}\!\sqrt{\mu}\,\big|_{L^{2}_v} \\[2pt]
&\,\lesssim\; \|\tilde{f}\|_{2,\vartheta}
\;\lesssim\; \big\|e^{\phi_2}\tilde{f}\big\|_{2,\vartheta} \;,
\end{split}
\end{equation*}
by modifying the proof of Lemma\;\ref{Lem:E_f-L^p} with $\vartheta \leq -2$.

For the RHS, we estimate
\begin{equation} \label{uniqueness-RHS-1}
\begin{split}
\iint_{\Omega\times\R^3} \langle v\rangle^{2\vartheta} \big(e^{\phi_2} \tilde{f}\,\big)\, J_1
&\,=\, \iint_{\Omega\times\R^3} \langle v\rangle^{2\vartheta} \big(e^{\phi_2} \tilde{f}\,\big)
\Big\{e^{\phi_2}\, \Gamma[\tilde{f},f_1] \,+\, e^{\phi_2}\, \Gamma[f_2,\tilde{f}\,] \Big\} \\
&\,\lesssim\, \big(\|f_1\|_{\infty} + \|D_v f_1\|_{\infty} \big)\, \big\|e^{\phi_2}\tilde{f}\big\|_{\sigma,\vartheta}^2
+ \|f_2\|_{\infty} \big\|e^{\phi_2}\tilde{f}\big\|_{\sigma,\vartheta}^2 \\
&\,\lesssim\, \varepsilon_0\, \big\|e^{\phi_2}\tilde{f}\big\|_{\sigma,\vartheta}^2 \;,
\end{split}
\end{equation}
where we applied (\ref{Gamma-est-2}) to the first term and (\ref{Gamma-est-1}) to the second (see Lemma\;\ref{Lem:Gamma-est}). Also,
\begin{equation} \label{uniqueness-RHS-2}
\begin{split}
&\qquad \iint_{\Omega\times\R^3} \langle v\rangle^{2\vartheta} \big(e^{\phi_2} \tilde{f}\,\big)\, J_2 \\
&\,=\, - \iint_{\Omega\times\R^3} \langle v\rangle^{2\vartheta} \big(e^{\phi_2} \tilde{f}\,\big)
\Big\{ e^{\phi_2}\, \widetilde{\mathbf{E}}\cdot\nabla_{\!v}f_1 \,+\, e^{\phi_2}\, \mathbf{E}_2\cdot\nabla_{\!v}\tilde{f} \,\Big\} \\
&\,=\, - \iint_{\Omega\times\R^3} \langle v\rangle^{2\vartheta} \big(e^{\phi_2} \tilde{f}\,\big) e^{\phi_2}\, \widetilde{\mathbf{E}}\cdot\nabla_{\!v}f_1
\,+\, \frac{1}{2}\iint_{\Omega\times\R^3} \mathbf{E}_2\cdot\nabla_{\!v} \Big( \langle v\rangle^{2\vartheta}\Big) \big(e^{\phi_2} \tilde{f}\,\big)^2 \\
&\,\lesssim\, \|D_v f_1\|_{\infty} \|\widetilde{\mathbf{E}}\|_{\infty} \big\|e^{\phi_2}\tilde{f}\big\|_{2,\vartheta} \big|\langle v\rangle^{\vartheta} \big|_2
\,+\, \|\mathbf{E}_2\|_{\infty} \big\|e^{\phi_2}\tilde{f}\big\|_{2,\vartheta}^2  \\
&\,\lesssim\, \big\|e^{\phi_2}\tilde{f}\big\|_{2,\vartheta}^2 \;,
\end{split}
\end{equation}
noting that with $\vartheta\leq -2$\; we have $\big|\langle v\rangle^{\vartheta} \big|_2 <\infty$.
In addition,
\begin{equation} \label{uniqueness-RHS-3}
\begin{split}
\iint_{\Omega\times\R^3} \langle v\rangle^{2\vartheta} \big(e^{\phi_2} \tilde{f}\,\big)\, J_3
&\,=\, \iint_{\Omega\times\R^3} \langle v\rangle^{2\vartheta} \big(e^{\phi_2} \tilde{f}\,\big) e^{\phi_2}  \big(v\cdot \widetilde{\mathbf{E}} \big)f_1 \\
&\,\lesssim\, \|f_1\|_{\infty,\vartheta+1} \big\|e^{\phi_2}\tilde{f}\big\|_{2,\vartheta} \big\|e^{\phi_2}\widetilde{\mathbf{E}}\big\|_{2} \\
&\,\lesssim\, \big\|e^{\phi_2}\tilde{f}\big\|_{2,\vartheta}^2 \;,
\end{split}
\end{equation}
and
\begin{equation} \label{uniqueness-RHS-4}
\begin{split}
\iint_{\Omega\times\R^3} \langle v\rangle^{2\vartheta} \big(e^{\phi_2} \tilde{f}\,\big)\, J_4
&\,=\, \iint_{\Omega\times\R^3} \langle v\rangle^{2\vartheta} \big(e^{\phi_2} \tilde{f}\,\big)
e^{\phi_2} \tilde{f}\, \partial_t\phi_2 \\
&\,\lesssim\, \|\partial_t \phi_2\|_{\infty} \big\|e^{\phi_2}\tilde{f}\big\|_{2,\vartheta}^2 \\
&\,\lesssim\, \|\partial_t f_2\|_{2} \big\|e^{\phi_2}\tilde{f}\big\|_{2,\vartheta}^2
\,\lesssim\, \big\|e^{\phi_2}\tilde{f}\big\|_{2,\vartheta}^2 \;.
\end{split}
\end{equation}

Collecting all the estimates in (\ref{uniqueness-LHS-2})\,--\,(\ref{uniqueness-RHS-4}) above and plugging them into \eqref{Eq:linearized-VL-difference-e^phi-int},
we then absorb $\varepsilon_0 \big\|e^{\phi_2}\tilde{f}\big\|_{\sigma,\vartheta}^2$ from (\ref{uniqueness-RHS-1}) into LHS and obtain
\begin{equation*}
\frac{\dd}{\dd t}\, \big\|e^{\phi_2} \tilde{f}\,\big\|_{2,\vartheta}^2
\,\leq\, C \big\|e^{\phi_2} \tilde{f}\,\big\|_{2,\vartheta}^2 \;.
\end{equation*}
Hence, by the Gronwall inequality, we have that for every $t\in [\,0,\infty)$, 
\begin{equation*}
\big\|e^{\phi_2} \tilde{f}(t)\big\|_{2,\vartheta}^2
\,\leq\, e^{Ct} \big\|e^{\phi_2} \tilde{f}(0)\big\|_{2,\vartheta}^2 = 0,
\end{equation*}
noticing that $f_1(0) = f_2(0) = f_0$ and thus $\tilde{f}(0) \equiv 0$.
This implies that $\tilde{f}(t) \equiv 0$ for all $t\geq 0$, since $e^{\phi_2} >0$ and also due to the continuity of the solutions.
Therefore, we conclude that the solution exists uniquely.
\end{proof}

%

\subsection*{Acknowledgements}

The authors would like to thank Dr. Sona Akopian for many helpful discussions.
They are also grateful to Dr. Timur Yastrzhembskiy for careful reading and pointing out a few important issues in an earlier version of the manuscript.
Yan Guo's research is supported in part by NSF grant DMS-2106650.
Hongjie Dong is partially supported by the Simons Foundation, grant no. 709545, a Simons fellowship, grant no. 007638, and the NSF under agreement DMS-2055244.

%
%
\addtocontents{toc}{\protect\setcounter{tocdepth}{1}}

\bibliographystyle{siam}

\begin{thebibliography}{10}

\bibitem{Bramanti.Cerutti.Manfredini1996}
{\sc M.~Bramanti, M.~C. Cerutti, and M.~Manfredini}, {\em ${L}^p$ estimates for
  some ultraparabolic operators with discontinuous coefficients}, J. Math.
  Anal. Appl., 200 (1996), pp.~332\,--\,354.

\bibitem{Briant.Guo2016}
{\sc M.~Briant and Y.~Guo}, {\em Asymptotic stability of the {Boltzmann}
  equation with {Maxwell} boundary conditions}, J. Differential Equations, 261
  (2016), pp.~7000\,--7079.

\bibitem{Cao.Kim.Lee2019}
{\sc Y.~Cao, C.~Kim, and D.~Lee}, {\em Global strong solutions of the
  {Vlasov-Poisson-Boltzmann} system in bounded domains}, Arch. Ration. Mech.
  Anal., 233 (2019), pp.~1027\,--1130.

\bibitem{Chen.Kim.Li2020}
{\sc H.~Chen, C.~Kim, and Q.~Li}, {\em Local well-posedness of
  {Vlasov-Poisson-Boltzmann} equation with generalized diffuse boundary
  condition}, J. Stat. Phys., 179 (2020), pp.~535\,--\,631.

\bibitem{Degond.Lemou1997}
{\sc P.~Degond and M.~Lemou}, {\em Dispersion relations for the linearized
  {Fokker-Planck} equation}, Arch. Ration. Mech. Anal., 138 (1997),
  pp.~137\,--167.

\bibitem{Dong.Guo.Yastrzhembskiy2021}
{\sc H.~Dong, Y.~Guo, and T.~Yastrzhembskiy}, {\em Kinetic fokker-planck and
  landau equations with specular reflection boundary condition},
  arXiv:2111.09840, to appear in Kinet. Relat. Models, (2021).

\bibitem{Dong.Yastrzhembskiy2021}
{\sc H.~Dong and T.~Yastrzhembskiy}, {\em Global {$L_p$} estimates for kinetic
  {Kolmogorov-Fokker-Planck} equations in nondivergence form},
  arXiv:2107.08568, to appear in  Arch. Ration. Mech. Anal., (2021).

\bibitem{Esposito.Guo.Kim.Marra2013}
{\sc R.~Esposito, Y.~Guo, C.~Kim, and R.~Marra}, {\em Non-isothermal boundary
  in the {Boltzmann} theory and {Fourier} law}, Comm. Math. Phys., 323 (2013),
  pp.~177--239.

\bibitem{Esposito.Guo.Kim.Marra2018}
\leavevmode\vrule height 2pt depth -1.6pt width 23pt, {\em Stationary solutions
  to the {Boltzmann} equation in the hydrodynamic limit}, Ann. of PDE, 4
  (2018), pp.~1--119.

\bibitem{Esposito.Guo.Marra2018}
{\sc R.~Esposito, Y.~Guo, and R.~Marra}, {\em Hydrodynamic limit of a kinetic
  gas flow past an obstacle}, Comm. Math. Phys., 364 (2018), pp.~765\,--\,823.

\bibitem{Golse.Imbert.Mouhot.Vasseur2019}
{\sc F.~Golse, C.~Imbert, C.~Mouhot, and A.~F. Vasseur}, {\em Harnack
  inequality for kinetic {Fokker-Planck} equations with rough coefficients and
  application to the {Landau} equation}, Ann. Sc. Norm. Super. Pisa Cl. Sci.,
  19 (2019), pp.~253\,--\,295.

\bibitem{Guo1995}
{\sc Y.~Guo}, {\em Singular solutions of the {Vlasov-Maxwell} system on a half
  line}, Arch. Ration. Mech. Anal., 131 (1995), pp.~241--\,304.

\bibitem{Guo2002(=)}
\leavevmode\vrule height 2pt depth -1.6pt width 23pt, {\em The {Landau}
  equation in a periodic box}, Comm. Math. Phys., 231 (2002), pp.~391\,--\,434.

\bibitem{Guo2010}
\leavevmode\vrule height 2pt depth -1.6pt width 23pt, {\em Decay and continuity
  of the {Boltzmann} equation in bounded domains}, Arch. Ration. Mech. Anal.,
  197 (2010), pp.~713\,--\,809.

\bibitem{Guo2012}
\leavevmode\vrule height 2pt depth -1.6pt width 23pt, {\em The
  {Vlasov-Poisson-Landau} system in a periodic box}, J. Amer. Math. Soc., 25
  (2012), pp.~759\,--\,812.

\bibitem{Guo.Hwang.Jang.Ouyang2020(=)}
{\sc Y.~Guo, H.~J. Hwang, J.~W. Jang, and Z.~Ouyang}, {\em Correction to: The Landau equation with the specular reflection boundary condition}, 240 (2021), no. 1, pp.~605\,--\,626.

\bibitem{Guo.Hwang.Jang.Ouyang2020}
\leavevmode\vrule height 2pt depth -1.6pt width 23pt, {\em The {Landau}
  equation with the specular reflection boundary condition}, Arch. Ration.
  Mech. Anal., 236 (2020), pp.~1389\,--1454.

\bibitem{Guo.Kim.Tonon.Trescases2016}
{\sc Y.~Guo, C.~Kim, D.~Tonon, and A.~Trescases}, {\em {BV}-regularity of the
  boltzmann equation in non-convex domains}, Arch. Ration. Mech. Anal., 220
  (2016), pp.~1045\,--1093.

\bibitem{Guo.Kim.Tonon.Trescases2017}
\leavevmode\vrule height 2pt depth -1.6pt width 23pt, {\em Regularity of the
  {Boltzmann} equation in convex domains}, Invent. Math., 207 (2017),
  pp.~115\,--\,290.

\bibitem{Guo.Tice2013==}
{\sc Y.~Guo and I.~Tice}, {\em Almost exponential decay of periodic viscous
  surface waves without surface tension}, Anal. PDE, 6 (2013), pp.~459--531.

\bibitem{Guo.Tice2013=}
\leavevmode\vrule height 2pt depth -1.6pt width 23pt, {\em Decay of viscous
  surface waves without surface tension in horizontally infinite domains},
  Arch. Ration. Mech. Anal., 207 (2013), pp.~1429--1533.

\bibitem{Hwang2004}
{\sc H.~J. Hwang}, {\em Regularity for the {Vlasov-Poisson} system in a convex
  domain}, SIAM J. Math. Anal., 36 (2004), pp.~121--171.

\bibitem{Hwang.Velazquez2010}
{\sc H.~J. Hwang and J.~J.~L. Vel\'{a}zquez}, {\em Global existence for the
  {Vlasov-Poisson} system in bounded domains}, Arch. Ration. Mech. Anal., 195
  (2010), pp.~763\,--796.

\bibitem{Kim2011}
{\sc C.~Kim}, {\em Formation and propagation of discontinuity for {Boltzmann}
  equation in non-convex domains}, Comm. Math. Phys., 308 (2011), pp.~641--701.

\bibitem{Kim.Guo.Hwang2020}
{\sc J.~Kim, Y.~Guo, and H.~J. Hwang}, {\em An ${L}^2$ to ${L}^{\infty}$
  framework for the {Landau} equation}, Peking Math. J.,
3 (2020), no. 2, pp.~131\,--\,202.

\bibitem{Krylov1996}
{\sc N.~V. Krylov}, {\em Lectures on elliptic and parabolic equations in
  {H\"{o}lder} spaces. Graduate Studies in Mathematics, 12.}, American
  Mathematical Society, Providence, RI, 1996.

\bibitem{Polidoro.Ragusa1998}
{\sc S.~Polidoro and M.~A. Ragusa}, {\em {Sobolev-Morrey} spaces related to an
  ultraparabolic equation}, Manuscripta Math., 96 (1998), pp.~371\,--\,392.

\bibitem{Strain.Guo2004}
{\sc R.~M. Strain and Y.~Guo}, {\em Stability of the relativistic {Maxwellian}
  in a collisional plasma}, Comm. Math. Phys., 251 (2004), pp.~263\,--\,320.

\bibitem{Strain.Guo2006}
\leavevmode\vrule height 2pt depth -1.6pt width 23pt, {\em Almost exponential
  decay near {Maxwellian}}, Comm. Partial Differential Equations, 31 (2006),
  pp.~417--429.

\bibitem{Strain.Guo2008}
\leavevmode\vrule height 2pt depth -1.6pt width 23pt, {\em Exponential decay
  for soft potentials near {Maxwellian}}, Arch. Ration. Mech. Anal., 187
  (2008), pp.~287--339.

\bibitem{AA003}
{\sc L.~Wu and Y.~Guo}, {\em Geometric correction for diffusive expansion of
  steady neutron transport equation}, Comm. Math. Phys., 336 (2015),
  pp.~1473\,--1553.

\end{thebibliography}

\end{document}